\definecolor{dgreen}{RGB}{0,180,0}
\theoremstyle{plain}
\newtheorem{thm}{Theorem}[section]
\newtheorem{cor}[thm]{Corollary}
\newtheorem{lem}[thm]{Lemma}
\newtheorem{prop}[thm]{Proposition}
\newtheorem{notation}[thm]{Notation}
\def\@rst #1 #2other{#1}
\newcommand\MR[1]{\relax\ifhmode\unskip\spacefactor3000 \space\fi
  \MRhref{\expandafter\@rst #1 other}{#1}}
\newcommand{\MRhref}[2]{\href{http://www.ams.org/mathscinet-getitem?mr=#1}{MR#2}}
\theoremstyle{definition}
\newtheorem{defn}[thm]{Definition}
\newtheorem{remark}[thm]{Remark}
\newtheorem{example}[thm]{Example}
\numberwithin{equation}{section}
\newcommand{\dsb}{\begin{adjustwidth}{2.5em}{0pt}
\begin{footnotesize}}
\newcommand{\dse}{\end{footnotesize}
\end{adjustwidth}}
\newcommand{\ssb}{\begin{adjustwidth}{2.5em}{0pt}}
\newcommand{\sse}{\end{adjustwidth}}
\newcommand{\aryb}{\begin{eqnarray*}}
\newcommand{\arye}{\end{eqnarray*}}
\def\alb#1\ale{\begin{align*}#1\end{align*}}
\def\allb#1\alle{\begin{align}#1\end{align}}
\newcommand{\eqb}{\begin{equation}}
\newcommand{\eqe}{\end{equation}}
\newcommand{\eqbn}{\begin{equation*}}
\newcommand{\eqen}{\end{equation*}}
\newcommand{\BB}{\mathbbm}
\newcommand{\ol}{\overline}
\newcommand{\ul}{\underline}
\newcommand{\op}{\operatorname}
\newcommand{\la}{\langle}
\newcommand{\ra}{\rangle}
\newcommand{\im}{\operatorname{Im}}
\newcommand{\re}{\operatorname{Re}}
\newcommand{\frk}{\mathfrak}
\newcommand{\eqD}{\overset{d}{=}}
\newcommand{\ep}{\epsilon}
\newcommand{\rta}{\rightarrow}
\newcommand{\wt}{\widetilde}
\newcommand{\wh}{\widehat} 
\newcommand{\mcl}{\mathcal}
\newcommand{\bdy}{\partial}
\newcommand{\rng}{\mathring}
\newcommand{\el}{l}
\newcommand{\indshift}{\theta}
\newcommand{\diam}{\mathrm{diam}}
\newcommand{\area}{\mathrm{Area}}
\let\originalleft\left
\let\originalright\right
\renewcommand{\left}{\mathopen{}\mathclose\bgroup\originalleft}
\renewcommand{\right}{\aftergroup\egroup\originalright}
\title{An invariance principle for ergodic scale-free random environments}
\date{  }
\author{
\begin{tabular}{c} Ewain Gwynne\\[-5pt]\small MIT \end{tabular}
\begin{tabular}{c} Jason Miller\\[-5pt]\small Cambridge \end{tabular}
\begin{tabular}{c} Scott Sheffield\\[-5pt]\small MIT \end{tabular}
}
\begin{document}

\maketitle

\begin{abstract}
There are many classical random walk in random environment results that apply to ergodic random planar environments. We extend some of these results to random environments in which the length scale varies from place to place, so that the law of the environment is in a certain sense only translation invariant {\em modulo scaling}. For our purposes, an ``environment'' consists of an infinite random planar map embedded in $\mathbb C$, each of whose edges comes with a positive real conductance. Our main result is that under modest constraints (translation invariance modulo scaling together with the finiteness of a type of specific energy) a random walk in this kind of environment converges to Brownian motion modulo time parameterization in the quenched sense. 

Environments of the type considered here arise naturally in the study of random planar maps and Liouville quantum gravity. In fact, the results of this paper are used in separate works to prove that certain random planar maps (embedded in the plane via the so-called Tutte embedding) have scaling limits given by SLE-decorated Liouville quantum gravity, and also to provide a more explicit construction of Brownian motion on the Brownian map.  However, the results of this paper are much more general and can be read independently of that program. 

One general consequence of our main result is that if a translation invariant (modulo scaling) random embedded planar map and its dual have {\em finite} energy per area, then they are close on large scales to a {\em minimal energy} embedding (the harmonic embedding). To establish Brownian motion convergence for an {\em infinite} energy embedding, it suffices to show that one can perturb it to make the energy finite.
\end{abstract}

\tableofcontents

\section{Introduction}
\label{sec-intro}

\subsection{Basic definitions}
\label{sec-overview}

The goal of this paper is to prove that simple random walks on certain random planar lattices have Brownian motion as a scaling limit. We will consider random planar lattices whose laws are ``translation invariant modulo scaling'' in a sense we will define, but not necessarily translation invariant in the usual sense. Along the way, we will also explain how certain familiar tools (such as the ergodic theorem) can be adapted to this setting. In this subsection, we define and discuss three terms: embedded lattice (with conductances), translation invariant modulo scaling, and ergodic modulo scaling. The impatient reader can skim this subsection quickly on a first read (referring back later as needed) and proceed to the results overview in Section~\ref{sec-resultoverview}.

\begin{defn} \label{def-embedded-map}
An \emph{embedded lattice} $\mcl M$ is an infinite, locally finite, planar, undirected graph (multiple edges and self-loops allowed) embedded into $\BB C$ in such a way that each edge is a simple (unparametrized) curve with zero Lebesgue measure, the edges intersect only at their endpoints, each compact subset of $\BB C$ intersects at most finitely many edges of $\mcl M$, and each connected component of $\BB C\setminus \mcl M$ is compact. 
For an embedded lattice $\mcl M$, we write $\mcl E\mcl M$ for the set of edges of $\mcl M$ and $\mcl F \mcl M$ for the set of faces of $\mcl M$, i.e., the set of closures of connected components of $\BB C\setminus \mcl M$. An \emph{embedded lattice with conductances} is an embedded lattice $\mcl M$ together with a function $\frk c = \frk c_{\mcl M} : \mcl E\mcl M \rta (0,\infty)$. By a slight abuse of notation we write $\mcl M$ instead of $(\mcl M,\frk c)$. We write $\BB M$ for the space of all embedded lattices with conductances. 

A \emph{finite embedded lattice} is defined in the same manner as above, except that the graph is required to be finite and $\BB C$ is replaced by $\ol U$, where $U$ is a bounded sub-domain of $\BB C$.
\end{defn}

For $\mcl M \in \BB M$ and a set $A\subset\BB C$, we write $\mcl M(A)$ for the embedded lattice consisting of all of the vertices and edges of $\mcl M$ which lie on the boundaries of faces of $\mcl M$ which intersect $A$ (the conductances are the same). 
We endow $\BB M$ with a metric in which $\mcl M'$ and $\mcl M$ are ``close'' when they look very similar on a large finite ball. There are many ways to do this, but we fix the following for concreteness: 
for embedded lattices $\mcl M,\mcl M'$ with conductances $\frk c , \frk c'$, we define their distance by
\eqb \label{eqn-map-metric}
\BB d^{\op{EL}}(\mcl M,\mcl M') := \int_0^\infty e^{-r} \wedge \inf_{f_r} \left\{ \sup_{z\in \BB C} |z - f_r(z)| + \max_{e\in \mcl E\mcl M(B_r(0))} |\frk c(e) - \frk c'(f_r(e)) | \right\}  \,dr
\eqe 
where the infimum is over all homeomorphisms $f_r : \BB C\rta \BB C$ such that $f_r$ takes each vertex (resp.\ edge) of $\mcl M(B_r(0))$ to a vertex (resp.\ edge) of $\mcl M'(B_r(0))$, and $f_r^{-1}$ does the same with $\mcl M$ and $\mcl M'$ reversed. Note that the integrand is equal to $e^{-r}$ for each $r > 0$ such that no such homeomorphism exists.  When we speak of random embedded lattices, we implicitly assume that their laws are defined w.r.t.\ the Borel $\sigma$-algebra associated to the topology defined by \eqref{eqn-map-metric}.  
 
It is a classical problem to determine whether a random walk on a random embedded lattice converges to Brownian motion (or to some other limiting process). 
There is a vast literature on this question, falling under the heading of \emph{random walk in random environment}.  See, e.g.,~\cite{klo-fluctiations,bf-rwre-survey,biskup-rwre-survey,kumagai-rwre-survey} for recent surveys. Within this literature, the term \emph{random conductance model} is used to describe walks which choose an edge to traverse (starting from the current vertex) with probability proportional to the conductance assigned to that edge. These are the types of walks we consider in this paper. 

  Many existing results show the convergence (in the {\em quenched} sense) of random walk to Brownian motion for random embedded lattices that are \emph{translation invariant} (stationary in law with respect to spatial translations), i.e., $\mcl M -z \eqD \mcl M$ for each $z\in \BB C$ or for each $z\in\BB Z^2$, in the discrete case. Examples of such lattices include periodic lattices such as $\BB Z^2$ with conductances assigned in a stationary way; the infinite cluster of supercritical percolation on such a lattice~\cite{berger-biskup-perc-rw}; and various graph structures on stationary point processes in $\BB C$. Much of this work is based on the ``environment seen from the particle" approach initiated in~\cite{kv-rwre,dfgw-rwre}. Brownian motion convergence results for random walks in random environments are often called {\em invariance principles}. 
  
In this paper, we will be interested in random embedded lattices whose laws are in some sense only stationary with respect to translations \emph{modulo spatial scaling}, so that there is no universal length scale between different points. To get some intuition about what this means, suppose the embedded lattice $\mcl M$ is ``fractal" in nature (like the embedded lattice illustrated in Figure~\ref{fig-embedding-sim}, right), so that there are some regions in the plane where the vertices of $\mcl M$ are much more concentrated than others. To quantify this effect, we can consider a point $z$ sampled uniformly from Lebesgue measure on a large ball $B_r(0)$, independently from $\mcl M$, and look at the law of the diameter of the face of $\mcl M$ which contains $z$. We are interested in the case when the law of this diameter does not have a weak limit as $r\rta\infty$, so that the possible face sizes become more and more spread out at larger scales. In other words, there is no ``typical" size for a face.

\begin{figure}[ht!]
 \begin{center} 
\includegraphics[scale=.65]{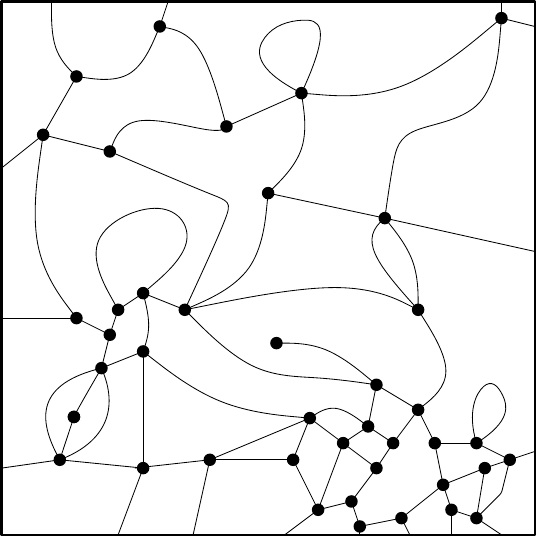} \hspace{15pt}
\includegraphics[scale=.045]{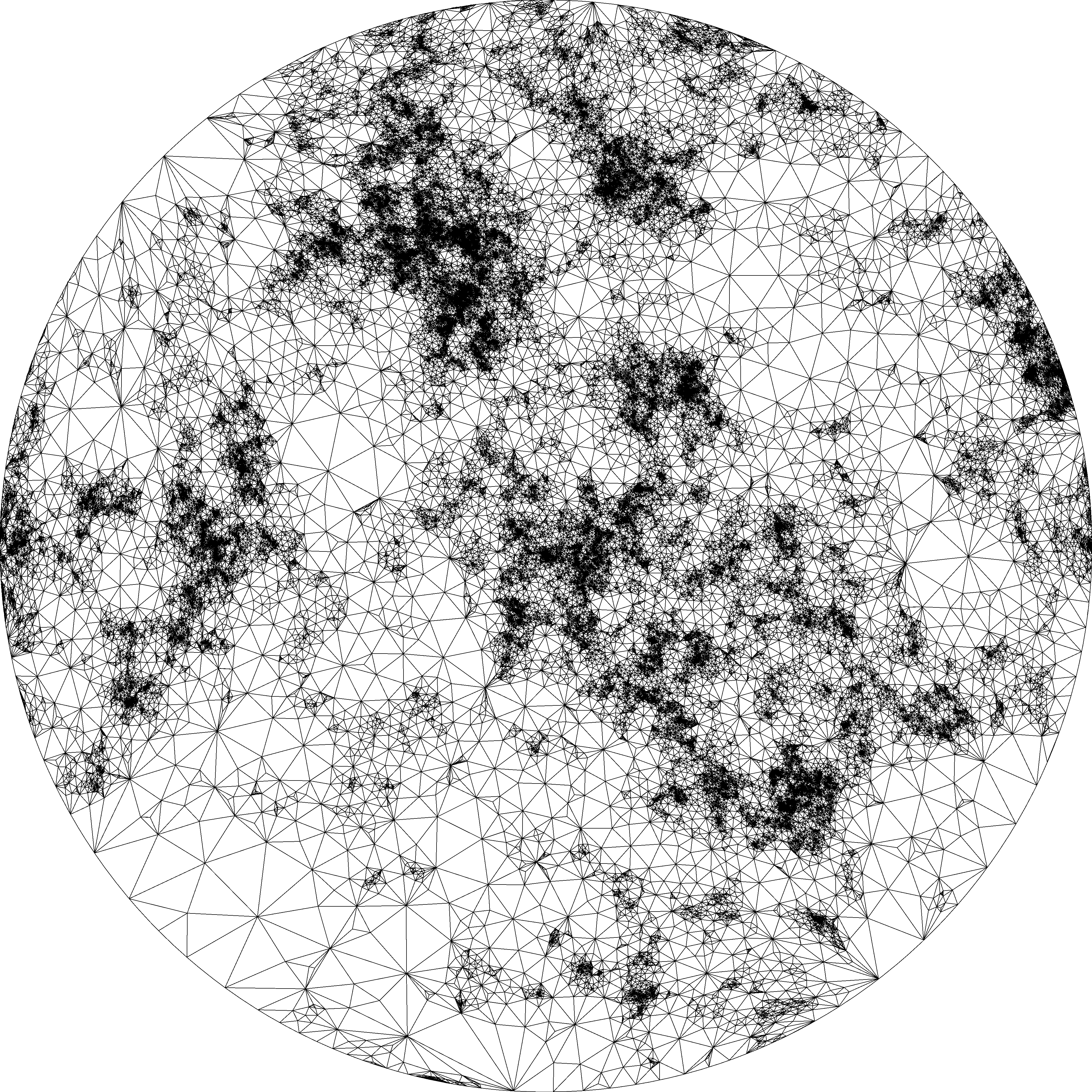}
\vspace{-0.01\textheight}
\caption{\textbf{Left:} The subgraph of an embedded lattice which intersects a square in $\BB C$. \textbf{Right:} A numerical simulation of random planar map (in particular, the $\sqrt2$-mated-CRT map studied in~\cite{gms-tutte}) embedded into $\BB C$ via the Tutte embedding, which means that the two coordinates of the embedding function are discrete harmonic. Note that the faces have very different sizes. 
}\label{fig-embedding-sim}
\end{center}
\vspace{-1em}
\end{figure}

If $\mcl M$ has no typical face size in the above sense, then $\mcl M$ cannot be stationary with respect to spatial translations.
However, it is still possible for such a lattice $\mcl M$ to satisfy a weaker analog of translation invariance, which can be described as ``translation invariance modulo scaling."

Before explaining what this means, let us first define embedded lattices $\mcl M_1$ and $\mcl M_2$ to be {\em equivalent modulo scaling} if $\mcl M_1 = C \mcl M_2$ for some real $C > 0$, where $C \mcl M$ denotes the image of $\mcl M$ under the map $z \to C z$ (the conductances are unchanged, i.e., $\frk c_{C\mcl M}(Ce) = \frk c_{\mcl M}(e)$). Note that if $\mcl M_1$ and $\mcl M_2$ are equivalent modulo scaling, then the constant $C$ is determined by $\mcl M_1$ and $\mcl M_2$: indeed, $C$ is the ratio of the diameters of the origin-containing faces in the two lattices.

If $\mcl M_1$ and $\mcl M_2$ are both random, we say their {\em laws} agree modulo scaling if they assign the same probabilities to any scale invariant Borel measurable subset of $\BB M$ (or, equivalently, if there is a random $C > 0$ such that $C \mcl M_1$ agrees in law with $\mcl M_2$). We will mostly prove statements about $\mcl M$ that would obviously remain true if its law were replaced by a different-but-equivalent-modulo-scaling law. Note that the law of a random embedded lattice $\mcl M$ is equivalent modulo scaling to a unique law w.r.t.\ which the origin-containing face a.s.\ has area one; thus, for most of this paper, there is no lost generality if one assumes that the law of $\mcl M$ is such that the origin-containing face a.s.\ has area one.\footnote{There are two (subtly different) ways to formulate ``modulo scaling'' analogs of probability measures on the space $\BB M$ of embedded lattices. The first approach is to define the probability measure only for the $\sigma$-algebra of Borel measurable (w.r.t.\ \eqref{eqn-map-metric}) events that are {\em scale invariant} (i.e., that are unions of modulo scaling equivalence classes). Such a $\sigma$-algebra contains no information about scale, so a ``sample'' from such a measure could be interpreted as a random modulo-scaling equivalence class. The second approach is to require that the probability measure be defined for all Borel measurable subsets of $\BB M$, but to prove statements that {\em only depend} on the restriction of the measure to the scale-invariant $\sigma$-algebra. We take the latter approach (which has the minor cosmetic advantage of allowing us to speak of ``random embedded lattices'' rather than ``random equivalence classes of embedded lattices'') but the former would work as well.}

Definition~\ref{def-translation-invariance} below describes what it means for a law to be translation invariant modulo scaling. On a first read, it is enough to internalize {\em any one} of the equivalent conditions in Definition~\ref{def-translation-invariance}. Most of the proofs in this paper will make use of a fifth condition, defined in terms of a so-called \emph{dyadic system} (see Section~\ref{sec-dyadic-system}) which is implied by any of the four conditions listed below. Conceptual motivation for the conditions in Definition~\ref{def-translation-invariance} is given {\em just after} the full statement of Definition~\ref{def-translation-invariance}.

\begin{defn} \label{def-translation-invariance}
A random embedded lattice $\mcl M$, endowed with conductance function $\frk c$, is {\bf translation invariant modulo scaling} if a.s.\ all of its faces are compact and it satisfies one of the following equivalent conditions.
\begin{enumerate}
\item {\bf Attainability as Lebesgue-centered infinite volume limit (modulo scaling):} There is a sequence $\{\mcl M_n\}_{n\in\BB N}$ of random finite embedded lattices such that the following is true.
\begin{enumerate}  
\item The union of the closures of the bounded faces of each $\mcl M_n$ is either a closed disk or a closed square (as in Figure~\ref{fig-embedding-sim}). 
\item For each $n$, conditional on $\mcl M_n$ let $z_n$ be sampled uniformly from Lebesgue measure on the union of faces in $\mcl M_n$. There is a random $C_n >0$ (depending on $z_n$ and $\mcl M_n$) such that $C_n(\mcl M_n - z_n)$ converges in law to $\mcl M$ as $n \to \infty$ with respect to the metric~\eqref{eqn-map-metric}.
\end{enumerate}
\label{item-inf-volume}
\item {\bf Attainability as spatial average of sample instance (modulo scaling):} There is a way of associating with $\mcl M$ a (possibly random) sequence $U_n$ of origin-containing sets, each of which is a square or a disk, such that $\bigcup_{n=1}^\infty U_n = \BB C$ a.s.,  and the following is true: suppose that, given $\mcl M$ and $U_n$, a point $z_n$ is sampled from Lebesgue measure on $U_n$.  Then there is a random $C_n >0$ (depending on $z_n$, $U_n$, and $\mcl M$) such that the random embedded lattices $C_n (\mcl M  -z_n)$ converge in law to $\mcl M $ as $n \to \infty$.
\label{item-averaging}
\item {\bf Invariance under repositioning origin within an origin-containing ``block'':} Suppose we associate with each embedded lattice $\mcl M$ a random partition $\mcl B(\mcl M)$ of the plane into a countable collection of {\em blocks}, where each block is a closed set with finite diameter and positive area, and each $z \in \BB C$ a.s.\ belongs to exactly one block\footnote{One can define a topology, hence a $\sigma$-algebra, on block decompositions in a similar manner to~\eqref{eqn-map-metric}, but with the $f_r$'s required to preserve blocks instead of vertices and edges. See also~\eqref{eqn-cell-metric}. Also note that a block decomposition differs from a cell configuration (Definition~\ref{def-cell-config}) in that blocks are not required to be connected.} 
(we allow $\mcl B(\mcl M)$ to depend on $\mcl M$ and possibly on additional randomness; see examples after this definition below).
Suppose further that the procedure for constructing $\mcl B$ from $\mcl M$ commutes with translations and dilations in the sense that 
\[
(C(\mcl M-z) , C(\mcl B(\mcl M) -z) )\eqD (C(\mcl M-z) , \mcl B(C(\mcl M-z) )) 
\]
for each $C>0$ and $z\in\BB C$. For each possible choice of $\mcl B(\mcl M)$, the following is true. Conditional on $\mcl M$ and $\mcl B(\mcl M)$, let $z$ be sampled uniformly from Lebesgue measure on the origin-containing block. Then for some random $C>0$ (depending on $\mcl M$ and $z$), the embedded lattice $C(\mcl M - z)$ agrees in law with $\mcl M$. 
\label{item-block}
\item {\bf Mass transport:} Consider the set of non-negative measurable functions $F : \BB M \times \BB C^2 \rta [0,\infty)$ on the space of embedded lattices with two marked points with the property that $F$ is covariant with respect to dilations and translations of the plane in the sense that 
\eqb \label{eqn-mass-transport-commutation}
F\left(C (\mcl M-z) , C(w_0-z) , C(w_1-z) \right) = C^{-2} F\left(  \mcl M, w_0 , w_1 \right) ,\quad \forall C> 0, \quad \forall z\in\BB C .
\eqe
For every such function $F$,
\eqb \label{eqn-mass-transport} 
\BB E\left[ \int_{\BB C} F(\mcl M, 0, w)\, dw\right] = \BB E\left[ \int_{\BB C} F(\mcl M, w, 0)\, dw \right].
\eqe  
\label{item-mass-transport}
\end{enumerate}
\end{defn}

It is not hard to show that the four conditions listed in Definition~\ref{def-translation-invariance} are equivalent for embedded lattices with compact faces. We will prove that this is the case in Appendix~\ref{sec-equivalence} (see also Lemma~\ref{lem-dyadic-resample}).

The infinite volume limit condition~\ref{item-inf-volume} of Definition~\ref{def-translation-invariance} is natural from the point of view of random planar maps: if $\{\mcl M_n\}_{n\in\BB N}$ is any sequence of random planar maps embedded in the unit disk in some way (e.g., via a circle packing procedure), then we may choose $z$ uniformly on the unit disk and consider the random embedded lattices $\mcl M_n - z$.  If $\mcl M_n - z$ (or some subsequence) converges in law modulo scaling to an unbounded embedded lattice $\mcl M$ then $\mcl M$ satisfies translation invariance modulo scaling.

The spatial averaging condition~\ref{item-averaging} says that although sizes of the faces of $\mcl M$ near two different points of $\BB C$ can differ dramatically, the shapes of these faces and their adjacency structure are such that the local behavior near zero (modulo scaling) looks like the average behavior over a sufficiently large set.  Note that when we speak of the law of $C_n(\mcl M - z_n)$, we mean the overall law not the conditional law given $\mcl M$.  (In other settings, analogous statements involving conditional laws are sometimes used to define ergodicity; but for the moment we are only defining translation invariance modulo scaling, not ergodicity modulo scaling.)
 
For a simple example of a block decomposition satisfying the condition~\ref{item-block} given in Definition~\ref{def-translation-invariance}, one may let the blocks of $\mcl M$ be the faces of $\mcl M$. Another option is to ``mark'' each vertex of $\mcl M$ independently with some small probability $p$, and let the blocks be the cells of the Voronoi tessellation corresponding to the marked vertices. Section~\ref{sec-dyadic-system} will introduce a particularly convenient block decomposition defined in terms of a so-called \emph{dyadic system} of squares, which we will use throughout most of our proofs 

The mass transport condition~\ref{item-mass-transport} is a continuum analog of the discrete mass transport principle used to define the concept of {\em unimodularity} for a random rooted graph~\cite{aldous-lyons-unimodular}, which says that the root vertex is {\em some} sense ``typical'' w.r.t.\ counting measure on vertices. The analogous statement in our story is that the origin-containing face is ``typical'' w.r.t.\ the measure that assigns each face its Lebesgue measure.  We interpret $F$ itself as a ``mass transport'' rule in which $\int_A \int_B F(\mcl M, w, z) dzdw$ is the amount of mass ``transported'' from the set $A$ to the set $B$. The $C^{-2}$ in~\eqref{eqn-mass-transport-commutation} ensures that neither integral in~\eqref{eqn-mass-transport} changes if we replace $\mcl M$ by $C\mcl M$. It also implies that for $C>0$ we have 
\eqbn
\int_{CA} \int_{CB} F(C \mcl M, w, z) dzdw = C^2\int_A \int_B F(\mcl M, w, z) dzdw,
\eqen
 so that in a sense the total mass transported scales like area. For a concrete example, suppose that every face $H$ of $\mcl M$ is assigned a {\em target} face $H'$, and let $F(\mcl M, w, z) = 1_{z \in H_w'}/\area(H_w')$ where $H_w$ is the face containing $w$. Then $F$ describes a transport rule that sends an $\area(H)$ amount of mass from $H$ to $H'$. Informally, the transport rule takes all of the Lebesgue measure contained in $H$ and spreads it uniformly over $H'$. And \eqref{eqn-mass-transport} states that the expected amount of mass flowing into (an infinitesimal neighborhood of) the origin is equal to the expected amount flowing out.
 
The following is one of two main conditions we need to impose to show that the simple random walk on $\mcl M$ converges to Brownian motion with a deterministic covariance matrix. 

\begin{defn} \label{def-ergodic}
We say the law of $\mcl M$ is {\em ergodic modulo scaling} if it is translation invariant modulo scaling and it assigns probability $0$ or $1$ to every Borel measurable event which is invariant under translations/dilations of the form $\mcl M \mapsto C (\mcl M - z)$ for $C>0$ and $z\in\BB C$.
\end{defn}

We remark that if $\mcl I$ is the $\sigma$-algebra of events invariant under translations/dilations of the form $\mcl M \mapsto C (\mcl M - z)$, and the law of $\mcl M$ is translation invariant modulo scaling, then the regular conditional law of $\mcl M$ given $\mcl I$ (which exists because $\mathbb M$ is a separable metric space) is a.s.\ ergodic modulo scaling. In other words, the law of $\mcl M$ is a weighted average of laws that are ergodic modulo scaling.

The other main condition involved in our theorem statements is a bound on some notion of ``expected Dirichlet energy per unit area". 
Let us now briefly explain what this means.

\begin{defn} \label{def-discrete-dirichlet}
For a graph $G$ with a conductance function $\frk c : \mcl E(G) \rta (0,\infty)$ and a function $f : \mcl V(G) \rta \BB C$, we define its \emph{Dirichlet energy} to be the sum over unoriented edges
\eqbn
\op{Energy}(f; G) := \sum_{\{x,y\} \in \mcl E(G)} \frk c(x,y) |f(x) - f(y)|^2  . 
\eqen 
\end{defn}

If the law of a random embedded lattice $\mcl M$ is translation invariant modulo scaling, then the {\em specific Dirichlet energy} is defined to be (roughly speaking) the expected amount of Dirichlet energy per area for the function which takes each vertex of $\mcl M$ to its position in $\BB C$.  Informally, each embedded edge has an energy (equal to its conductance times the square of its diameter) and the specific Dirichlet energy is the expected energy per area of $\mcl M$ in a region containing the origin.

In fact, there are various ways to make this definition precise (depending how one chooses the origin-containing region and how each edge's energy is ``localized'').  Appendix~\ref{sec-specific} formalizes these definitions and uses mass transport to show that they are all equivalent. One of these definitions (the first precise definition of specific Dirichlet energy we present) appears as \eqref{eqn-dual-hyp-moment} within the statement of Theorem~\ref{thm-dual-clt} (where the main theorem hypothesis is the finiteness of the specific Dirichlet energy of both $\mcl M$ and its dual). Several other theorems in this paper require the finiteness of the expectation of different but closely related quantites (which provide upper bounds on the ``Dirichlet energy per unit area"; see, e.g., condition~\ref{item-hyp-moment} in Section~\ref{sec-main-results}).  These alternative {\em finite expectation} conditions will be explained within the theorem statements themselves.

\subsection{Overview of results} 
\label{sec-resultoverview}
    
Sections~\ref{sec-main-results} and~\ref{sec-variants} present our main theorem statements, all of which are variants of the following: if a random embedded lattice $\mcl M$ is ergodic modulo scaling and satisfies an appropriate ``finite specific Dirichlet energy'' condition, then simple random walk on $\mcl M$ scales to Brownian motion modulo time parameterization in the \emph{quenched} sense.  That is, given $\mcl M$, it is a.s.\ the case that simple random walk on $\epsilon \mcl M$ (appropriately time changed) converges as $\epsilon \to 0$ to Brownian motion with some deterministic diffusion matrix.

We will prove variants of this statement for random walk on the faces (Theorem~\ref{thm-general-clt0}) and vertices (Theorems~\ref{thm-graph-clt}, \ref{thm-dual-clt}, and~\ref{thm-general-clt}) of $\mcl M$ with slightly different energy conditions. In Theorem~\ref{thm-dual-clt} the condition is that both $\mcl M$ and a simultaneously embedded dual lattice $\mcl M^*$ satisfy a ``finite specific Dirichlet energy" condition. In this case, roughly speaking, the constraint on $\mcl M$ gives an upper bound on the asymptotically homogenized {\em conductance}, while the constraint on $\mcl M^*$ gives an upper bound on the asymptotically homogenized {\em resistance} (i.e., the inverse of the conductance). Taken together, these bounds imply that the average effective conductance or resistance in any direction is strictly between zero and infinity. 
Theorem~\ref{thm-general-clt} is a generalization of Theorem~\ref{thm-general-clt0} that allows for some degree of non-planarity. This is the version cited in the companion paper~\cite{gms-tutte}, as well as in \cite{gms-poisson-voronoi}. It is not the most general non-planar theorem that we expect to be true, but it is at least reasonably easy to state.

Theorems~\ref{thm-general-clt0} and~\ref{thm-graph-clt} can be derived\footnote{Theorems~\ref{thm-general-clt0} and~\ref{thm-graph-clt} are derived from Theorem~\ref{thm-general-clt} in Section~\ref{sec-variant-proof}; they can alternatively be derived from Theorem~\ref{thm-dual-clt} by imagining one chooses the ``worst possible'' choice for $\mcl M^*$ given $\mcl M$.  }
as consequences of either Theorem~\ref{thm-dual-clt} or Theorem~\ref{thm-general-clt}, but it is useful to state them separately since they can be stated without introducing any extra objects besides the embedded lattice itself, so the statements are slightly simpler.

The main novelty of this paper is that we are able to treat environments that are translation invariant modulo scaling, not translation invariant in the usual sense. On the other hand, our main theorems have non-trivial content even for translation invariant random conductance-weighted subgraphs of $\mathbb Z^2$. In particular, our results include as special cases many existing theorems for random walk in {\em stationary} planar random environments (see examples in 
Sections~\ref{sec-main-results} and~\ref{sec-variants} below) which is not too surprising given that we borrow some of our techniques from these earlier papers.

Environments that are only translation invariant modulo scaling (not stationary in the conventional sense) arise in the study of random planar maps. In particular, the results of this paper are a key input in the companion paper~\cite{gms-tutte} which gives the first rigorous proof that certain embedded random planar maps converge to Liouville quantum gravity, as well as the more recent paper~\cite{gms-poisson-voronoi} which shows that, modulo time parameterization, Brownian motion on the Brownian map is a scaling limit of simple random walks on discretizations of the Brownian map. See Section~\ref{sec-lqg-application} for further discussion of these points. 

Finally, we remark that there are many other works which study random walks in ``fractal" environments: see~\cite{kumagai-rwre-survey,barlow-rwre-survey} for surveys. Recent examples of work include papers by Murugan~\cite{murugan-heat-kernel}, and work by Biskup. Ding, and Goswami~\cite{bdg-lqg-rw} on random walk on $\BB Z^2$ with conductances determined by the exponential of the discrete Gaussian free field.  The setting of the latter work differs from that of the present paper in that the \emph{conductances} (rather than the lattice itself) are what fails to be translation invariant. We do not expect that the random walk in the setting of~\cite{bdg-lqg-rw} converges to Brownian motion modulo time parameterization.

\bigskip

\noindent\textbf{Acknowledgements.} 
We thank an anonymous referee for helpful comments on an earlier version of this article.
We thank the Mathematical Research Institute of Oberwolfach for its hospitality during a workshop where part of this work was completed.  E.G.\ was partially funded by NSF grant DMS-1209044.  S.S.\ was partially supported by NSF grants DMS-1712862 and DMS-1209044 and a Simons Fellowship with award number 306120. We thank Marek Biskup, Jean-Dominique Deuschel, Jian Ding, and Tom Hutchcroft for helpful conversations.

\subsection{Random walk on the faces of an embedded lattice}
\label{sec-main-results}

We will first state and discuss a version of our main result for random walk on the faces of an embedded lattice $\mcl M$, or equivalently, random walk on the dual planar map $\mcl M^*$ of $\mcl M$. One convenient aspect of this approach is that Lebesgue-a.e.\ point of $\BB C$ corresponds to a unique face (namely, the one containing it) and thus $\mcl M$ comes with a distinguished origin-containing face (which is a natural place to start the walk).  Theorems~\ref{thm-graph-clt} and~\ref{thm-dual-clt} below are variants involving random walk on vertices. 

For $z\in \BB C$, we write $H_z$ for the face of $\mcl M$ containing $z$, chosen in some arbitrary manner if $z$ lies on one of the edges of $\mcl M$ (the union of these edges has zero Lebesgue measure). 
For two faces $H,H' \in \mcl F\mcl M$, we write $H\sim H'$ if $H$ and $H'$ share an edge, in which case we write $\frk c(H,H') $ for the conductance of this edge (or the sum of the conductances of the edges, if there is more than one). 

For $H\in\mcl F\mcl M$, define the primal and dual stationary measures by
\eqb \label{eqn-stationary-measure}
\pi(H) := \sum_{\substack{H'\in\mcl F\mcl M : H'\sim H}} \frk c(H,H') \quad \op{and} \quad
\pi^*(H) := \sum_{\substack{H'\in\mcl F\mcl M : H'\sim H}} \frac{1}{ \frk c(H,H') }. 
\eqe
Note that in the case of unit conductances, $\pi(H)= \pi^*(H)$ is the degree of $H$. 

We will be interested in the simple random walk on the faces of a random embedded lattice $\mcl M$ with conductances $\frk c$ satisfying the following hypotheses. 
\begin{enumerate} 
\item \textbf{Ergodicicity modulo scaling.} $\mcl M$ is ergodic modulo scaling, i.e., $\mcl M$ satisfies one (equivalently, all) of the conditions of Definition~\ref{def-translation-invariance} as well as the additional condition of Definition~\ref{def-ergodic}. \label{item-hyp-resampling} 
\item \textbf{Finite expectation.} With $H_0$ the face containing 0 and $\pi,\pi^*$ as in~\eqref{eqn-stationary-measure}, we have \label{item-hyp-moment}
\eqb\label{eqn-hyp-moment}
\BB E\left[ \frac{\op{diam}(H_0)^2}{\op{Area}(H_0)}   \pi(H_0)   \right] < \infty 
\quad \op{and} \quad
\BB E\left[ \frac{\op{diam}(H_0)^2}{\op{Area}(H_0)}   \pi^*(H_0)   \right] < \infty .
\eqe  
\end{enumerate}

\begin{thm} \label{thm-general-clt0}
Let $\mcl M$ be a random embedded lattice satisfying the above two hypotheses. 
Let $X : [0,\infty) \rta   \mcl F\mcl M$ be the continuous-time simple random walk on the dual of $\mcl M$ (viewed as an edge-weighted graph with conductances $\frk c(H,H')$) which spends exactly $\op{Area}(H)/\pi(H)$ units of time at each face before jumping to the next. Let $\wh X = \phi\circ X : [0,\infty)\rta \BB C$, where $\phi$ is a function that sends each face $H$ to an (arbitrary) point of $H$. There is a deterministic covariance matrix $\Sigma$ with $\det\Sigma\not=0$, depending on the law of $\mcl M$, such that a.s.\ as $\ep\rta 0$ the conditional law of $t\mapsto \ep \wh X_{t/\ep^2}$ given $\mcl M$ converges to the law of Brownian motion on $\BB C$ started from 0 with covariance matrix $\Sigma$ with respect to the local uniform topology. 
\end{thm}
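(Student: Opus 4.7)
I would follow the ``environment seen from the particle'' and corrector strategy of Kipnis--Varadhan and De Masi--Ferrari--Goldstein--Wick, adapted to the modulo-scaling setting. Write $\wh X_n = p(X_n)$, where $p : \mcl F\mcl M \rta \BB C$ is the chosen point-selection function appearing in the statement. The aim is to construct a corrector $\chi : \mcl F\mcl M \rta \BB C$ such that $p + \chi$ is discrete harmonic for the weighted walk; then $M_n := \wh X_n + \chi(X_n) - \chi(X_0)$ is a square-integrable martingale, to which the martingale functional CLT can be applied, and the theorem reduces to showing $\ep \chi(X_{\lfloor t/\ep^2\rfloor}) \rta 0$ uniformly on compacts so that $\ep \wh X_{t/\ep^2}$ inherits the limit of $\ep M_{\lfloor t/\ep^2 \rfloor}$.

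\textbf{Step 1: Stationary ergodic environment chain.} Let $\omega_n$ denote $\mcl M$ translated so that the face $X_n$ becomes origin-containing and rescaled modulo scaling so that this face has unit area. Translation invariance modulo scaling together with the mass transport principle (condition \ref{item-mass-transport} of Definition \ref{def-translation-invariance}) shows that $\{\omega_n\}$ is stationary provided the law of $\omega_0$ is biased by $\pi(H_0)/\op{Area}(H_0)$; the first inequality of \eqref{eqn-hyp-moment} is exactly what makes this bias integrable. Ergodicity modulo scaling, combined with a.s.\ irreducibility of the walk on $\mcl M$, promotes this to ergodicity of the $\omega$-chain, so Birkhoff's ergodic theorem controls Ces\`aro averages along trajectories of every local functional of the environment.

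\textbf{Step 2: Corrector, martingale CLT, nondegeneracy.} Consider the Hilbert space of $\BB C$-valued antisymmetric edge functions $g = g_{\mcl M}(H,H')$ that are covariant under dilations and square integrable against the $\pi$-weighted Palm measure. The first half of \eqref{eqn-hyp-moment} places the position-increment cocycle $g_0(H,H') := p(H') - p(H)$ in this space (using mass transport to exchange $\diam(H_0)$ and $\diam(H')$ in the summation over neighbors). Orthogonally decompose $g_0 = \nabla\chi + g_\perp$, where the first summand is the projection onto the closed subspace of gradient cocycles; orthogonality to this subspace is precisely the divergence-free condition, i.e., harmonicity of $p + \chi$. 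The increments of $M$ are then stationary ergodic with finite second moment under the $\omega$-chain, so the martingale functional CLT yields convergence of $t\mapsto \ep M_{\lfloor t/\ep^2\rfloor}$ to a Brownian motion with deterministic covariance $\Sigma$; $\Sigma$ is deterministic by ergodicity modulo scaling. Nondegeneracy $\det\Sigma \neq 0$ is where the \emph{second} (dual) bound in \eqref{eqn-hyp-moment} enters: a direction in which $\Sigma$ vanishes would force the homogenized effective resistance in that direction to be infinite, contradicting Thompson's principle applied with the reciprocal conductances whose specific Dirichlet energy is controlled by the $\pi^*$ bound.

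\textbf{Main obstacle: sublinearity of the corrector along the walk.} The hardest step, and the one where the modulo-scaling framework forces genuinely new arguments beyond the classical translation-invariant case, is upgrading the construction of $\chi$ as an $L^2$ object to the pointwise bound $\ep \chi(X_{\lfloor t/\ep^2\rfloor}) \rta 0$. My plan is to use the dyadic system announced in Section \ref{sec-dyadic-system} to turn the abstract modulo-scaling ergodic theorem into concrete spatial averaging: since $\nabla\chi$ has zero mean against the Palm measure, a mass transport argument applied at each dyadic scale gives that the oscillation of $\chi$ over a dyadic block of ``scale'' $r$ is $o(r)$ in an averaged sense. Upgrading this average control to pointwise control along typical walk trajectories requires a tightness/heat-kernel estimate for $X$---essentially, that the walk does not concentrate on atypical regions where $\chi$ is abnormally large---and this is where both halves of \eqref{eqn-hyp-moment} must be used simultaneously (the $\pi$ bound to control the speed, the $\pi^*$ bound to lower-bound effective conductances and so rule out traps). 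I expect the bulk of the paper's technical work to consist in formulating a modulo-scaling ergodic theorem sharp enough to drive this $L^2\rta L^\infty$ passage without the usual translation invariance.
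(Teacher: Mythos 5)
Your overall architecture (environment seen from the particle, $L^2$ cocycle decomposition, martingale CLT plus a sublinear corrector) is the classical one, and you correctly identify the sublinearity of the corrector as the crux. But the mechanism you propose for that crux is the step that fails under the stated hypotheses. You plan to upgrade averaged $L^2$ control of $\chi$ to pointwise control \emph{along the walk} by means of ``a tightness/heat-kernel estimate for $X$''; no such estimate is available here. The only quantitative inputs are the first-moment bounds \eqref{eqn-hyp-moment} on $\diam(H_0)^2\pi(H_0)/\op{Area}(H_0)$ and its $\pi^*$ analog, which permit arbitrarily heavy-tailed conductances, resistances and face degrees; heat-kernel or exit-time bounds of the kind used in the unbounded-conductance literature require strictly stronger moment assumptions, and indeed the walk's clock is so poorly controlled here that only the specific $\op{Area}(H)/\pi(H)$ parameterization (which makes Lebesgue measure stationary) is homogenized. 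The paper avoids the heat kernel entirely by proving \emph{everywhere} sublinearity, $\max_{H\in\mcl F\mcl M(B_r(0))}|\phi_\infty(H)-\phi_0(H)| = o(r)$ (Theorem~\ref{thm-general-tutte}), through a genuinely two-dimensional argument: the Dirichlet energy of $\phi_\infty-\phi_m$ per unit area is small for large $m$ (Lemmas~\ref{lem-tail-energy} and~\ref{lem-energy-lim-infty}); Cauchy--Schwarz along horizontal and vertical line segments, with the $\pi^*$ moment bound supplying the weight $\sum \diam(H)^2\pi^*(H) \lesssim |S_k|^2$, converts energy into total-variation bounds along most lines (Lemma~\ref{lem-path-sum}); and the maximum principle for the discrete harmonic $\phi_\infty$ inside the resulting grid of lines gives the sup bound (Lemma~\ref{lem-var-lim-infty}). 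Without replacing your heat-kernel step by an argument of this type (or by genuinely new estimates), your plan does not close.

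Two further points of comparison. First, your one-line promotion of ergodicity modulo scaling to ergodicity of the environment chain is also not automatic in this setting: the paper's ergodic theorem along the walk (Lemma~\ref{lem-walk-ergodic}) is proved by first establishing \emph{recurrence} (Proposition~\ref{prop-cell-recurrent}, itself a consequence of the dyadic-square energy estimates) and then using the i.i.d.\ excursion structure and the Kolmogorov zero-one law; recurrence plays no role in your outline, so the quadratic-variation convergence and the time-change homogenization you invoke are not yet justified. Second, the paper constructs the corrector not by abstract $L^2$ projection but as a limit of finite-volume harmonic extensions $\phi_m$ on dyadic squares with boundary data $\phi_0$ (Proposition~\ref{prop-corrector}), with convergence coming from orthogonality of the increments in the Dirichlet inner product; this concrete construction is what makes the line-segment/maximum-principle sublinearity argument possible. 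Your duality-based argument for $\det\Sigma\neq 0$ is plausible, though the paper gets nondegeneracy more cheaply: if $c_{\BB v}=0$ the range of $\phi_\infty$ would lie in a line, contradicting its sublinear closeness to $\phi_0$.
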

 
In many examples we are interested in, the law of the embedded lattice $\mcl M$ will satisfy some sort of rotational symmetry, which will allow us to conclude that $\Sigma$ is a positive scalar multiple of the identity matrix. 

The choice of time parameterization in Theorem~\ref{thm-general-clt0} is somewhat arbitrary. One can obtain convergence under other parameterizations --- e.g., the one where the walk spends $\op{diam}(H)^2$ units of time at each face --- using Lemma~\ref{lem-walk-ergodic} below. 
In general, however, we do \emph{not} expect that the random walk on $\mcl M$ parameterized according to counting measure on its steps converges to standard Brownian motion. Indeed, if $\mcl M$ looks like the embedded lattice in the right panel of Figure~\ref{fig-embedding-sim}, then the random walk on $\mcl M$ will travel much faster through some regions than others. If $\mcl M$ is a discretization of Liouville quantum gravity (such as an embedded random planar map of an appropriate type) then we expect that the random walk on $\mcl M$ parameterized by counting measure on steps converges to a re-parameterized variant of Brownian motion called \emph{Liouville Brownian motion}~\cite{berestycki-lbm,grv-lbm}, but this has not been proven.  
 
We will actually prove a generalization of Theorem~\ref{thm-general-clt0} (see Theorem~\ref{thm-general-clt} below) in which $\mcl F\mcl M$ is replaced by a so-called \emph{cell configuration}. A cell configuration is a generalization of the face set of an embedded lattice where the faces are replaced by ``cells" which can be arbitrary compact connected sets, so the adjacency graph of cells is not necessarily planar.
We will also prove (Theorem~\ref{thm-general-clt-uniform}) that the rate of convergence of random walk to Brownian motion in Theorem~\ref{thm-general-clt0} is uniform on compact subsets of $\BB C$. 

The fact that we are able to replace exact translation invariance with ergodicity modulo scaling is the main novelty of our result.  
The finite expectation hypothesis~\ref{item-hyp-moment}  provides an upper bound for the Dirichlet energy per unit area of the given embedding of $\mcl M$.
Indeed, integrating $\frac{\op{diam}(H_z)^2}{\op{Area}(H_z)} \pi(H_z) $ over all $z$ in some domain $D\subset\BB C$ gives an upper bound on the discrete Dirichlet energy of the function on $\mcl M^*$ which sends each face (dual vertex) which intersects $D$ to a point in the corresponding face (see also Lemmas~\ref{lem-cell-sum} and~\ref{lem-square-energy}). As such hypothesis~\ref{item-hyp-moment} tells us that the expected Dirichlet energy of this function is locally finite. We need to include the finite expectation condition with $ \pi^*$ in place of $\pi$ in order to transfer from Dirichlet energy bounds to pointwise bounds (this is done in Lemma~\ref{lem-path-sum}). Since $\pi(H_0) + \pi^*(H_0) $ is at least the degree (number of neighbors) of $H_0$,~\eqref{eqn-hyp-moment} implies that this degree has finite expectation. 

There is one natural hypothesis which is conspicuously missing from the statement of Theorem~\ref{thm-general-clt0}: namely, that $\mcl M$ does not have any macroscopic faces in the sense that the maximal diameter of the faces of $\mcl M$ which intersect $B_r(0)$ grows sublinearly in $r$. It turns out that this is implied by the given hypotheses; see Lemma~\ref{lem-max-cell-diam}.

A function $f : \mcl F\mcl M \rta \BB C$ is called \emph{$\frk c$-discrete harmonic} if for any $H\in \mcl F\mcl M$, 
\eqbn
f(H) = \frac{1}{\pi(H)} \sum_{\substack{H'\in\mcl F\mcl M : H'\sim H}} \frk c(H,H') f(H')
\eqen 
As is common in the random walk in random environment literature (see, e.g.,~\cite{biskup-rwre-survey}), Theorem~\ref{thm-general-clt0} will be proven by constructing a discrete harmonic function $\phi_\infty$ on $\mcl M^*$ which is close to the a priori embedding (which sends each face to a point in or near the face) at large scales, and then using the martingale central limit theorem to show that the image of random walk on $\mcl F\mcl M$ under $\phi_\infty$ converges to Brownian motion.

\begin{thm} \label{thm-general-tutte}
Recall the restricted embedded lattice $\mcl M(B_r(0))$ for $r>0$ defined just above~\eqref{eqn-map-metric}. 
Almost surely, there exists a $\frk c$-discrete harmonic function $\phi_\infty : \mcl F\mcl M \rta \BB C$ such that 
\eqb \label{eqn-general-tutte}
\lim_{r\rta 0} \frac{1}{r} \sup_{H \in \mcl F\mcl M(B_r(0))} |\phi_\infty(H) - \phi_0(H)|= 0,
\eqe 
where $\phi_0(H) := \op{Area}(H)^{-1} \int_H z \,dz$ is the Lebesgue center of mass of $H$. 
\end{thm}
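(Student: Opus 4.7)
I would follow the classical ``corrector'' construction from the random conductance model (going back to Kipnis--Varadhan and the ``environment seen from the particle'' approach of De~Masi--Ferrari--Goldstein--Wick), suitably adapted to the modulo-scaling setting. Write $\phi_0(H) := \op{Area}(H)^{-1}\int_H z\,dz$ for the center-of-mass embedding and let $\nabla\phi_0$ denote its gradient on oriented edges of $\mcl M^*$, $\nabla\phi_0(H,H') := \phi_0(H') - \phi_0(H)$. Under the modulo-scaling action $\mcl M \mapsto C(\mcl M-z)$ the object $\nabla \phi_0$ transforms covariantly (the edge-gradient scales by $C$), so it makes sense as a ``stationary gradient.'' The plan is to produce a ``sublinear corrector'' $\chi : \mcl F\mcl M \rta \BB C$ with $\phi_\infty := \phi_0 + \chi$ being $\frk c$-discrete harmonic, and then to deduce~\eqref{eqn-general-tutte} from sublinearity of $\chi$.

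\textbf{Hilbert space of shift-covariant gradients.} Using the mass-transport formulation (Condition~\ref{item-mass-transport} of Definition~\ref{def-translation-invariance}) and the block-average formulation (Condition~\ref{item-block}), I would set up a Hilbert space $\mcl H$ of measurable antisymmetric functions $g(\mcl M, H, H')$ defined on adjacent face pairs, transforming covariantly under translations/dilations, with norm
\[
\|g\|^2 := \BB E\!\left[ \frac{1}{\op{Area}(H_0)} \sum_{H'\sim H_0} \frk c(H_0,H')\, |g(H_0,H')|^2 \right].
\]
The first finite-expectation hypothesis in~\eqref{eqn-hyp-moment} (involving $\pi$) is precisely the statement that $\nabla\phi_0 \in \mcl H$ (the factor $\op{diam}(H_0)^2$ dominates the edge-gradients $|\phi_0(H_0)-\phi_0(H')|^2$). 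Inside $\mcl H$ sits the closed subspace $\mcl H^{\op{pot}}$ of ``potential'' gradients (the closure of $\nabla f$ for bounded measurable $f=f(\mcl M,H_0)$), and a Helmholtz-type decomposition holds: its orthogonal complement $\mcl H^{\op{sol}}$ consists of divergence-free fields, as one sees by integrating by parts via mass transport.

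\textbf{Defining $\phi_\infty$.} Let $g^\ast$ be the orthogonal projection of $-\nabla\phi_0$ onto $\mcl H^{\op{pot}}$, so that $\nabla\phi_0 + g^\ast \in \mcl H^{\op{sol}}$. The latter condition is exactly the statement that the corresponding vertex-valued potential is $\frk c$-discrete harmonic at every face. Approximating $g^\ast$ by $\nabla f_n$ for bounded $f_n$'s and using the closedness of $g^\ast$ as a cocycle, I would integrate along a path in $\mcl M^*$ from $H_0$ to $H$ to define $\chi(H)$ (set $\chi(H_0):=0$); the result is independent of the chosen path. Then $\phi_\infty := \phi_0 + \chi$ is the desired $\frk c$-discrete harmonic function.

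\textbf{Sublinearity.} The main obstacle is proving the sublinearity~\eqref{eqn-general-tutte}, which is what makes $\phi_\infty$ genuinely ``close to'' the a priori embedding on large scales. Here one uses three ingredients. (i) Since the law of $\mcl M$ is ergodic modulo scaling, the ergodic theorem for this action (set up via the dyadic system of Section~\ref{sec-dyadic-system}) applied to the stationary gradient $g^\ast$ gives an $L^1$/averaged form of sublinearity of $\chi$ over dyadic blocks inside $B_r(0)$. (ii) To upgrade averages to a pointwise supremum, one needs to control how much $\chi$ can oscillate within a block or between nearby faces; this is where the \emph{second} hypothesis in~\eqref{eqn-hyp-moment} (the one with $\pi^\ast$) enters, via a Cauchy--Schwarz-type bound summing $\frk c^{-1}$ along short paths between neighboring faces (Lemma~\ref{lem-path-sum} in the paper), converting Dirichlet-energy bounds into pointwise control. (iii) The no-macroscopic-face statement (Lemma~\ref{lem-max-cell-diam}) ensures that the set of faces intersecting $B_r(0)$ is, up to sublinear error, contained in a dyadic neighborhood of $B_r(0)$, so the ergodic average over blocks actually controls the supremum in~\eqref{eqn-general-tutte}. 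Combining (i)--(iii) with a maximal-inequality argument along a geometric sequence of radii gives the claimed $o(r)$ bound; this ``$L^2$-to-$L^\infty$'' upgrade in a highly inhomogeneous environment is the technical heart of the argument and the step I would expect to occupy most of the work.
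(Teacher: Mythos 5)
Your construction of $\phi_\infty$ by projecting $-\nabla\phi_0$ onto a space of ``potential'' gradients is the classical stationary-environment route, but it presupposes exactly the structure that is missing here: an honest measure-preserving family of point shifts (the environment seen from the marked face). In the modulo-scaling setting a translation must be accompanied by a random rescaling, and this breaks your setup at points you do not address. First, the potential subspace: gradients of bounded shift-covariant functions $f=f(\mcl M,H_0)$ are scale-invariant (dimensionless), whereas $\nabla\phi_0$ is length-covariant (it picks up a factor $C$ under $\mcl M\mapsto C(\mcl M-z)$), so the subspace you project onto does not consist of objects of the same covariance type as $\nabla\phi_0$, and it is unclear that the projection $g^\ast$ is length-covariant --- which is needed for the sublinearity statement to even be scale-consistent. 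Second, the integration by parts identifying the orthogonal complement with divergence-free fields (hence $\frk c$-harmonicity of $\phi_0+\chi$ at every face) uses stationarity of the shift; redoing it with the mass-transport principle requires nontrivial bookkeeping of the weights $1/\op{Area}(H_0)$ versus $1/\op{Area}(H')$ and of the $C^{-2}$ covariance in~\eqref{eqn-mass-transport-commutation}, and with the area-weighted norm you wrote down the resulting ``divergence-free'' condition is not obviously harmonicity for the walk, whose reversible measure is $\pi$, not area. The paper sidesteps all of this: it builds $\phi_\infty$ as a limit of finite-volume harmonic extensions $\phi_m$ that agree with $\phi_0$ on the boundaries of dyadic squares, with the Cauchy property coming from orthogonality of the increments $\phi_m-\phi_{m-1}$ in the Dirichlet inner product (Lemma~\ref{lem-tail-energy}, Proposition~\ref{prop-corrector}).

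The second gap is the sublinearity itself, which is the actual content of Theorem~\ref{thm-general-tutte} and which you only sketch. Your ingredients (ii) and (iii) do appear in the paper (Lemmas~\ref{lem-path-sum} and~\ref{lem-max-cell-diam}), but the paper's pointwise bound rests on a feature your $\chi$ lacks: for each fixed $m$, $\phi_m$ coincides with $\phi_0$ on every cell meeting the boundary of a square of $\mcl S_m$, so the maximum principle alone gives $\max_{\mcl H(S_k)}|\phi_m-\phi_0|=o(|S_k|)$ (Lemma~\ref{lem-m-0-diff}); the remaining error $\phi_\infty-\phi_m$ has small energy per area and is upgraded to a pointwise bound along a grid of good horizontal and vertical lines via Lemma~\ref{lem-path-sum} and the maximum principle on the small rectangles they bound (Lemma~\ref{lem-var-lim-infty}). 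A projection-defined corrector has no such finite-volume anchoring to $\phi_0$, and in two dimensions the pointwise (as opposed to averaged) sublinearity of the corrector is the hard step even in genuinely stationary environments; invoking ``a maximal-inequality argument'' does not supply it. You would need either to give that argument in full or to re-anchor the construction to finite boxes, which is essentially what the paper does.
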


In the RWRE literature, $\phi_\infty-\phi_0$ is often referred to as the \emph{corrector} for random walk on $\mcl F\mcl M$. Theorem~\ref{thm-general-tutte} says that the corrector grows sublinearly with respect to the Euclidean metric.

Another important intermediate step in the proof of Theorem~\ref{thm-general-clt0} is the following statement, which is proven in Section~\ref{sec-recurrence}.  

\begin{thm} \label{thm-recurrent}
Under the same hypotheses as Theorem~\ref{thm-general-clt0}, it is a.s.\ the case that the random walk on the dual map $ \mcl M^*$ with conductances $\frk c$ is recurrent.
\end{thm}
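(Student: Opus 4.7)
The plan is to show that the effective resistance from the origin-containing face $H_0$ to infinity in the weighted dual network $(\mcl M^*, \frk c)$ is a.s.\ infinite; this is equivalent to recurrence of the continuous-time random walk. The argument is a standard two-dimensional Dirichlet energy computation using a logarithmic cutoff function, adapted to the modulo-scaling setting.

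For $R > e$, I would define a test function $u_R : \mcl F\mcl M \to [0,1]$ by $u_R(H) := \varphi_R(|z_H|)$, where $z_H := \phi_0(H) = \area(H)^{-1}\int_H z \, dz$ is the Euclidean centroid of $H$ and $\varphi_R(t) := \max\{0, \min\{1, (\log R - \log t)/\log R\}\}$, with a harmless one-time adjustment to force $u_R(H_0) = 1$. Then $u_R = 0$ on every face with $|z_H| \geq R$, so the Dirichlet principle gives $R_\mathrm{eff}(H_0 \leftrightarrow \{|z_H|\geq R\}) \geq 1/\op{Energy}(u_R; \mcl M^*)$. Since $|\varphi_R'(t)| \leq 1/(t \log R)$, for any pair of adjacent faces $H \sim H'$ with $\diam(H \cup H') \leq |z_H|/2$ the mean value theorem gives $|u_R(H) - u_R(H')| \lesssim (\diam(H) + \diam(H'))/(|z_H| \log R)$. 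Combining this with the elementary bound $(a+b)^2 \leq 2(a^2 + b^2)$, regrouping the edge sum by face, and converting to a Lebesgue integral (using $\sum_H \area(H)\,g(H) \asymp \int g(H_z)\,dz$), I would obtain
\[
\op{Energy}(u_R; \mcl M^*) \;\lesssim\; \frac{1}{(\log R)^2} \int_{B_R(0) \setminus B_1(0)} \frac{1}{|z|^2} \cdot \frac{\diam(H_z)^2 \pi(H_z)}{\area(H_z)} \, dz \;+\; O\bigl((\log R)^{-1}\bigr),
\]
where the error absorbs the region near $0$ and the contribution of edges incident to ``oversized'' faces with $\diam(H_z) \gtrsim |z|$.

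To bound the integral, I would decompose $B_R \setminus B_1$ into dyadic annuli $A_k := B_{2^{k+1}}(0) \setminus B_{2^k}(0)$, on each of which $|z|^{-2} \asymp 4^{-k}$ and $|A_k| \asymp 4^k$. The covariant functional $F(\mcl M, z) := \diam(H_z)^2 \pi(H_z)/\area(H_z)$ has finite expectation by hypothesis~\eqref{eqn-hyp-moment}, so applying the ergodic theorem modulo scaling (developed earlier in the paper via the dyadic system of Section~\ref{sec-dyadic-system}), the spatial average $|A_k|^{-1}\int_{A_k} F(\mcl M, z)\, dz$ converges a.s.\ to $\BB E[F(\mcl M, 0)] < \infty$ as $k \to \infty$. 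Hence each dyadic annulus contributes $O((\log R)^{-2})$ and summing over the $\sim \log_2 R$ scales gives $\op{Energy}(u_R; \mcl M^*) = O(1/\log R) \to 0$ a.s. Taking $R \to \infty$ yields $R_\mathrm{eff}(H_0 \leftrightarrow \infty) = \infty$ a.s., so the walk is recurrent.

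The main obstacle will be making the ergodic-theorem step rigorous in the modulo-scaling framework. In the classical translation-invariant case one simply cites the standard spatial $L^1$ ergodic theorem over Euclidean annuli, but here the paper's machinery produces convergence of averages over blocks of a (random) dyadic system, and translating between block averages and Euclidean annulus averages requires care---particularly since the areas of blocks intersecting $A_k$ are themselves random at scale $2^{2k}$. A secondary technical issue---handling edges incident to unusually large faces, where the Lipschitz bound for $\varphi_R$ degrades---should be resolved using Lemma~\ref{lem-max-cell-diam} on the sublinear growth of the maximum face diameter: truncate to faces with $\diam(H_z) \leq \delta|z|$ and show that the remaining contribution vanishes as $\delta \to 0$.
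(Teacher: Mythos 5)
Your proposal is correct and follows essentially the same route as the paper's proof of Proposition~\ref{prop-cell-recurrent}/Lemma~\ref{lem-log-energy}: a logarithmic test function whose Dirichlet energy is bounded scale-by-scale by the ergodic estimate on $\sum_H \op{diam}(H)^2\pi(H)$ per unit area (Lemma~\ref{lem-cell-sum}), transferred from the origin-containing dyadic squares to the relevant balls/annuli via Lemma~\ref{lem-more-squares}, with Lemma~\ref{lem-max-cell-diam} disposing of oversized faces--which is exactly how the two obstacles you flag are resolved. The only (cosmetic) difference is that the paper centers its annuli at the dyadic square $S_k$ and takes a double limit ($k\rta\infty$ for fixed $r$, then $r\rta\infty$) rather than fixing the inner radius at the origin and sending $R\rta\infty$.
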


Gurel-Gurevich and Nachmias~\cite{gn-recurrence} showed that the simple random walk is a.s.\ recurrent on any random planar map which is the local limit of finite maps based at a uniform random vertex; and which is such that the law of the degree of the root vertex has an exponential tail. This result was later extended to a wider class of graphs by Lee~\cite[Theorem 1.6]{lee-conformal-growth}. 

The criteria of~\cite{gn-recurrence,lee-conformal-growth} do \emph{not} apply to the graph $\mcl M$ above or to its dual. Indeed, $\mcl M^*$ is not a local limit of finite maps based at a uniform random root vertex, nor is it unimodular (except in trivial cases): rather, by translation invariance modulo scaling, the root face $H_0$ is a typical face from the perspective of Lebesuge measure, not a typical face from the perspective of the counting measure on faces of $\mcl M$. Furthermore, our bound on the degree of $H_0$ from hypothesis~\ref{item-hyp-moment} is weaker than the exponential tail bound required in~\cite{gn-recurrence} as well as the degree bound required in~\cite{lee-conformal-growth}.

To illustrate the utility of Theorem~\ref{thm-general-clt0}, we now give three simple examples of its application to particular random environments. The examples do not use the full force of Theorem~\ref{thm-general-clt0} since environments in these examples are exactly stationary with respect to spatial translations. Applications of Theorem~\ref{thm-general-clt0} to non-stationary random environments are discussed in Section~\ref{sec-lqg-application} and worked out in detail in~\cite{gms-tutte,gms-poisson-voronoi}. 

\begin{example}[Random conductance model on $\BB Z^2$] \label{ex-random-conductance}
Consider a random conductance function $\frk c $ on the nearest-neighbor edges of $\BB Z^2$ which is stationary and ergodic with respect to spatial translations of $\BB Z^2$, 
and such that the conductances and their reciprocals have finite expectation. Theorem~\ref{thm-general-clt0} implies that the random walk on $\BB Z^2$ with these conductances converges in law to Brownian motion in the quenched sense. This convergence was originally proven by Biskup~\cite{biskup-rwre-survey} (see~\cite{ss-random-conductance} for an earlier proof in the case that the conductances are bounded above and below). To deduce the convergence from Theorem~\ref{thm-general-clt0}, let $\mcl M$ be the dual lattice $\BB Z^2 + (1/2,1/2)$. If we sample $w$ uniformly from Lebesgue measure on $[0,1]\times [0,1]$, then the randomly shifted lattice $\mcl M - w$ is ergodic (hence ergodic modulo scaling) and the finite expectation hypothesis is immediate from our assumption on the conductances.
\end{example}

\begin{example}[Dual of a random subgraph of $\BB Z^2$] \label{ex-perc-face}
Let $\mcl M$ be a random subgraph of $\BB Z^2$ which is stationary and ergodic with respect to spatial translations, with unit conductances (e.g., $\mcl M$ could be the infinite cluster of a supercritical bond percolation or Ising model on $\BB Z^2$). If the second moment of the number of edges on the boundary of the face of $\mcl M$ containing 0 is finite, then we can apply Theorem~\ref{thm-general-clt0}, with unit conductances, to $\mcl M$ (shifted by a uniform element of $[0,1]\times [0,1]$ as in Example~\ref{ex-random-conductance}) to find that the simple random walk on the faces of $\mcl M$ converges in law to Brownian motion in the quenched sense. If we assume a slightly stronger moment hypothesis, then Theorem~\ref{thm-graph-clt} below shows that random walk on the vertices of $\mcl M$ likewise converges to Brownian motion (Example~\ref{ex-perc-vertex}). 
\end{example}

\begin{example}[Voronoi tessellation of a homogeneous Poisson point process] \label{ex-ppp}
Let $\mcl P$ be a homogeneous Poisson point process on $\BB C$ (so the law of $\mcl P$ is translation invariant). Let $\mcl M$ be the Voronoi tessellation associated with $\mcl P$, viewed as an embedded lattice with unit conductances whose faces are the Voronoi cells, whose edges are the linear segments on the boundaries of the cells, and whose vertices are the points where these linear segments meet. 
It is straightforward to verify that $\mcl M$ satisfies the hypotheses of Theorem~\ref{thm-general-tutte}, so the simple random walk on the faces of $\mcl M$ converges in law to Brownian motion in the quenched sense.  
\end{example}

\subsection{Variants of Theorem~\ref{thm-general-clt0}}
\label{sec-variants}

Here we state several variants and extensions of Theorem~\ref{thm-general-clt0} with different setups.  
The reader who only wants to see the proof of Theorem~\ref{thm-general-clt0} can skip this subsection. 

To avoid having to specify time parameterizations of the walks, for some of the statements in this subsection we will consider the topology on curves viewed modulo time parameterization, which we now recall. If $\beta_1 : [0,T_{\beta_1}] \rta \BB C$ and $\beta_2 : [0,T_{\beta_2}] \rta \BB C$ are continuous curves defined on possibly different time intervals, we set 
\eqb \label{eqn-cmp-metric}
\BB d^{\op{CMP}} \left( \beta_1,\beta_2 \right) := \inf_{\phi } \sup_{t\in [0,T_{\beta_1} ]} \left| \beta_1(t) - \beta_2(\phi(t)) \right| 
\eqe 
where the infimum is over all increasing homeomorphisms $\phi : [0,T_{\beta_1}]  \rta [0,T_{\beta_2}]$ (the CMP stands for ``curves modulo parameterization"). It is shown in~\cite[Lemma~2.1]{ab-random-curves} that $\BB d^{\op{CMP}}$ induces a complete metric on the set of curves viewed modulo time parameterization. 

In the case of curves defined for infinite time, it is convenient to have a local variant of the metric $\BB d^{\op{CMP}}$. Suppose $\beta_1 : [0,\infty) \rta \BB C$ and $\beta_2 : [0,\infty) \rta \BB C$ are two such curves. For $r > 0$, let $T_{1,r}$ (resp.\ $T_{2,r}$) be the first exit time of $\beta_1$ (resp.\ $\beta_2$) from the ball $B_r(0)$ (or 0 if the curve starts outside $B_r(0)$). 
We define 
\eqb \label{eqn-cmp-metric-loc}
\BB d^{\op{CMP}}_{\op{loc}} \left( \beta_1,\beta_2 \right) := \int_1^\infty e^{-r} \left( 1 \wedge \BB d^{\op{CMP}}\left(\beta_1|_{[0,T_{1,r}]} , \beta_2|_{[0,T_{2,r}]} \right) \right) \, dr ,
\eqe 
so that $\BB d^{\op{CMP}}_{\op{loc}} (\beta^n , \beta) \rta 0$ if and only if for Lebesgue a.e.\ $r > 0$, $\beta^n$ stopped at its first exit time from $B_r(0)$ converges to $\beta$ stopped at its first exit time from $B_r(0)$ with respect to the metric~\eqref{eqn-cmp-metric}. 
We note that the definition~\eqref{eqn-cmp-metric} of $\BB d^{\op{CMP}}\left(\beta_1|_{[0,T_{1,r}]} , \beta_2|_{[0,T_{2,r}]} \right)$ makes sense even if one or both of $T_{1,r}$ or $T_{2,r}$ is infinite, provided we allow $\BB d^{\op{CMP}}\left(\beta_1|_{[0,T_{1,r}]} , \beta_2|_{[0,T_{2,r}]} \right) = \infty$ (this doesn't pose a problem due to the $1\wedge$ in~\eqref{eqn-cmp-metric-loc}). 
 
\subsubsection{Random walk on vertices} 
\label{sec-vertices}

If we assume a slightly stronger variant of hypothesis~\ref{item-hyp-moment} in Theorem~\ref{thm-general-clt0}, we also get quenched convergence of the random walk on the vertices of $\mcl M$ to Brownian motion. 
To formulate this statement, for an embedded lattice $\mcl M$ and $x\in\mcl V\mcl M$, define the \emph{outradius} of $x$ by
\eqb \label{eqn-inrad-outrad} 
\op{Outrad}(x) := \op{diam}\left(  \bigcup_{H \in \mcl F\mcl M : x\in\bdy H} H \right)  ,
\eqe   
i.e., the diameter of the union of the faces with $x$ on their boundaries. 
Analogously to~\eqref{eqn-stationary-measure}, if $\mcl M$ is equipped with a conductance function $\frk c$ we also define 
\eqb \label{eqn-graph-stationary}
\pi(x) := \sum_{\substack{y\in \mcl V\mcl M \\ y\sim x}} \frk c(x,y) \quad\op{and} \quad
\pi^*(x) := \sum_{\substack{y\in \mcl V\mcl M \\ y\sim x}} \frac{1}{\frk c(x,y)} ,
\eqe 
where $x\sim y$ means that the vertices $x$ and $y$ are joined by an edge. 
 
\begin{thm} \label{thm-graph-clt}
Suppose $\mcl M$ is a random embedded lattice which is ergodic modulo scaling and satisfies the following stronger version of hypothesis~\ref{item-hyp-moment} of Theorem~\ref{thm-general-clt0}:
\begin{enumerate}

\item[$2'$.] With $H_0$ the face of $\mcl M$ containing 0, 
\eqbn
\BB E\left[ \sum_{x \in \mcl V\mcl M \cap \bdy H_0}  \frac{  \op{Outrad}(x)^2 \pi(x) }{ \op{Area}(H_0) }   \right] < \infty 
\quad \op{and} \quad
\BB E\left[ \sum_{x \in \mcl V\mcl M\cap \bdy H_0}  \frac{  \op{Outrad}(x)^2 \pi^*(x) }{ \op{Area}(H_0) }   \right] < \infty . \label{item-graph-hyp-moment} 
\eqen 
\end{enumerate}
Almost surely, as $\ep\rta 0$ the conditional law given $\mcl M$ of the simple random walk on $\ep \mcl M$ started from any vertex on the boundary of the origin-containing face and extended from $\BB N_0$ to $[0,\infty)$ by linear interpolation converges in law modulo time parameterization to planar Brownian motion started from 0 with some deterministic, non-degenerate covariance matrix. Furthermore, the simple random walk on $\mcl M$ is a.s.\ recurrent and there is a discrete harmonic function $\phi_\infty : \mcl V\mcl M \rta \BB C$ such that a.s.\ 
\eqbn
\lim_{r\rta\infty} \frac{1}{r} \max_{x\in\mcl V\mcl M\cap B_r(0)} |\phi_\infty(x) - x| =  0 .
\eqen
\end{thm}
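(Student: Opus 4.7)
The plan is to deduce Theorem~\ref{thm-graph-clt} from Theorem~\ref{thm-general-clt} by re-encoding the conductance-weighted graph $(\mcl M, \frk c)$ on vertices as a cell configuration indexed by $\mcl V \mcl M$ rather than by $\mcl F \mcl M$. To build such a configuration, use a barycentric-style subdivision: for each face $H \in \mcl F\mcl M$ choose a marked interior point $c_H \in H$ in a manner covariant under translation and dilation (e.g.\ $c_H = \phi_0(H)$, the Euclidean centroid), and for each edge $e \subset \bdy H$ let $m_e$ be its Euclidean midpoint. The segments $\{[c_H, m_e] : e \subset \bdy H\}$ carve $H$ into quadrilaterals $Q(H, x)$, one per vertex $x \in \bdy H$. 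Define
\eqbn
C_x := \bigcup_{H \in \mcl F\mcl M \,:\, x \in \bdy H} Q(H, x), \qquad x \in \mcl V \mcl M.
\eqen
Then $\{C_x\}$ is a measurable partition of $\BB C$ into compact connected sets, each $C_x$ contains $x$ in its interior and satisfies $\diam(C_x) \le \op{Outrad}(x)$, and two cells $C_x, C_y$ share a positive-length boundary arc iff $x \sim y$ in $\mcl M$. Equipping adjacent pairs with conductance $\frk c(x,y)$ makes the cell-adjacency graph coincide with $(\mcl M, \frk c)$.

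Next I would verify the hypotheses of Theorem~\ref{thm-general-clt}. Ergodicity modulo scaling is inherited from $\mcl M$ since the construction of $\{C_x\}$ commutes with translations and dilations. For the finite-expectation condition, taking $H_0$ as the block in Definition~\ref{def-specific} and applying the mass-transport identity to convert ``integrand at $0$'' into ``sum over vertices of $\bdy H_0$'', one gets
\eqbn
\BB E\left[\frac{\diam(C_0)^2\,\pi(C_0)}{\area(C_0)}\right] \le \BB E\left[\sum_{x \in \mcl V\mcl M \cap \bdy H_0} \frac{\op{Outrad}(x)^2\,\pi(x)}{\area(H_0)}\right] ,
\eqen
which is finite by hypothesis~(2$'$); the identical estimate with $\pi^*$ in place of $\pi$ follows from the second half of (2$'$). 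This gives finite specific Dirichlet energy for both the cell configuration and its dual.

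Then I would invoke Theorem~\ref{thm-general-clt} along with the associated recurrence and corrector statements (the analogues of Theorems~\ref{thm-general-tutte} and~\ref{thm-recurrent} in the cell setting). This yields, quenched a.s., convergence of the continuous-time cell random walk to Brownian motion modulo time parameterization, recurrence of the walk on $\mcl V \mcl M$, and a sublinear discrete harmonic corrector $\phi_\infty : \mcl V\mcl M \rta \BB C$. Since the cell walk (parameterized by spending $\area(C_x)/\pi(x)$ units of time at $C_x$) and the simple random walk on $\mcl V \mcl M$ share the same one-step jump kernel, the invariance principle transfers to the discrete-time walk, and linear interpolation is continuous with respect to $\BB d^{\op{CMP}}_{\op{loc}}$. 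Convergence from an arbitrary starting vertex $x \in \bdy H_0$, rather than from the distinguished $x_0$ with $0 \in C_{x_0}$, follows because the graph distance in $\mcl M$ between any two vertices of $\bdy H_0$ is bounded by $\deg(H_0) < \infty$ (a.s.) and the corresponding walks can be coupled to coincide after an $\ep$-independent number of steps.

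The main obstacle will be the hypothesis verification: the cells $C_x$ are unions of quadrilateral pieces drawn from several incident faces, so $\area(C_x)$ is not directly comparable to $\area(H_0)$ for any single face, and likewise the diameter $\diam(C_x)$ can involve faces not touching $H_0$. The mass-transport identity provides the right bookkeeping, but requires care to ensure the conversion from ``sum over cells in a block'' to ``sum over vertices on $\bdy H_0$'' is dominated by the $\op{Outrad}$-based hypothesis (2$'$); this is precisely why Theorem~\ref{thm-graph-clt} must strengthen the $\diam(H_0)$-based hypothesis~(2) of Theorem~\ref{thm-general-clt0} to the $\op{Outrad}(x)$-based hypothesis~(2$'$).
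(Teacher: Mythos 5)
Your strategy is the same as the paper's: the paper proves Theorem~\ref{thm-graph-clt} by cutting each face into one piece per boundary vertex, forming the cell configuration $\{\rng H_x : x\in\mcl V\mcl M\}$ with $\frk c(\rng H_x,\rng H_y)=\frk c(x,y)$, checking ergodicity modulo scaling and the finite expectation hypothesis by resampling the origin uniformly over $H_0$ (your mass-transport step, with the bounds $\diam(\rng H_x)\le \op{Outrad}(x)$ and $\op{Area}(\rng H_x\cap H_0)\le\op{Area}(\rng H_x)$), and then invoking Theorem~\ref{thm-general-clt}. Three local points where your write-up needs repair, all fixable. First, your concrete subdivision does not work as stated: faces of an embedded lattice are bounded by arbitrary simple curves and need not be star-shaped about any interior point, and the centroid $\phi_0(H)$ need not even lie in $H$, so the straight segments $[c_H,m_e]$ can leave the face and the ``quadrilaterals'' $Q(H,x)$ need not be well defined, connected, or contained in $H$. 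The paper avoids this by mapping each face conformally to $\BB D$ (sending a uniformly sampled interior point $w_H$ to $0$) and taking the preimages of radial slices, which produces per-vertex pieces for completely general faces while retaining translation/dilation covariance. Second, you never verify the connectedness-along-lines hypothesis~\ref{item-hyp-adjacency} of Theorem~\ref{thm-general-clt}; sharing a positive-length boundary arc iff $x\sim y$ is not by itself the needed statement. In the paper this is checked by observing that non-adjacent cells can meet only at the single marked interior points $w_H$, so the cells incident to such a point form a cycle in the adjacency graph and a horizontal or vertical segment through it still induces a connected subgraph. Third, your argument for an arbitrary starting vertex of $\bdy H_0$ is not right as stated: two random walks from distinct vertices cannot in general be coupled to coincide after a deterministic, $\ep$-independent number of steps. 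The clean route is either Theorem~\ref{thm-general-clt-uniform} (convergence uniform over starting points in compact sets) or simply the observation that after scaling by $\ep$ the starting points all lie within $\ep\,\diam(H_0)\to 0$ of the origin, combined with the Markov property.
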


We will deduce Theorem~\ref{thm-graph-clt} from Theorem~\ref{thm-general-clt0} by drawing in the dual map $\mcl M^*$ of $\mcl M$ in such a way that the hypotheses of Theorem~\ref{thm-general-clt0} are satisfied for $\mcl M^*$. 
We state the convergence in Theorem~\ref{thm-graph-clt} modulo time parameterization since (unlike for $\mcl M^*$) there is not a canonical way of associating an area with each vertex in order to decide how much time the walk spends there. 

As in the case of Theorem~\ref{thm-general-clt0}, Theorem~\ref{thm-graph-clt} has applications to random walk in random environments on $\BB Z^2$. 

\begin{example}[Random subgraphs of $\BB Z^2$] \label{ex-perc-vertex}
Let $\mcl M$ be a random subgraph of $\BB Z^2$ which is stationary and ergodic with respect to spatial translations, as in Example~\ref{ex-perc-face}. Since each vertex of $\mcl M$ has degree at most 4, Theorem~\ref{thm-graph-clt} (applied to $\mcl M -w$ for $w$ a uniform sample from $[0,1]\times [0,1]$) implies that the random walk on $\mcl M$ converges to Brownian motion modulo time parameterization in the quenched sense provided
\eqb \label{eqn-perc-hypothesis}
\BB E\left[ \left(\sum_{x\in \BB Z^2 \cap \bdy H_0} \op{Outrad}(x) \right)^2 \right]  < \infty ,
\eqe  
where $H_0$ is the face containing $[0,1]\times [0,1]$. 
The bound~\eqref{eqn-perc-hypothesis} is satisfied, e.g., for supercritical percolation on $\BB Z^2$ since the size of the origin-containing cluster in the dual subcritical percolation has an exponential tail~\cite[Theorem 5.4]{grimmett-book}. Other random subgraphs of $\BB Z^2$ satisfying this bound arise in subcritical Ising models, FK models, etc.  Random walk on the infinite cluster of supercritical bond percolation on $\BB Z^d$ was shown to converge to Brownian motion uniformly (not just modulo time parameterization) in~\cite{berger-biskup-perc-rw}; see also~\cite{prs-perc-rw} for a generalization of this result. 
\end{example}

\subsubsection{Embedded primal/dual lattice pair}
\label{sec-dual-clt}

In this subsection we formulate an analog of Theorem~\ref{thm-general-clt0} when we have embeddings of \emph{both} $\mcl M$ and its planar dual $\mcl M^*$. 
The hypotheses of this version of the theorem are in some ways more elegant since the conditions are symmetric in $\mcl M$ and $\mcl M^*$. 
 
If $(\mcl M,\mcl M^*)$ is a pair of embedded lattices, we say that $\mcl M^*$ is \emph{dual} to $\mcl M$ if each face of $\mcl M$ (resp.\ $\mcl M^*$) contains exactly one vertex of $\mcl M^*$ (resp.\ $\mcl M$) and each edge of $\mcl M$ (resp.\ $\mcl M^*$) crosses exactly one edge of $\mcl M^*$ (resp.\ $\mcl M$). 
If $\mcl M$ and $\mcl M^*$ are equipped with edge conductances $\frk c$ and $\frk c^*$, we also require that $\frk c(e) = 1/\frk c^*(e^*)$, for $e^*$ the edge of $\mcl M^*$ which crosses $e$. See Figure~\ref{fig-weird-cell}, left, for an illustration. 
There is an obvious generalization of the notions of translation invariance modulo scaling to the case of a pair of dual embedded lattices: indeed, we just replace all of the statements for re-scaled translated versions of $\mcl M$ in Definition~\ref{def-translation-invariance} by joint statements for re-scaled translated versions of $(\mcl M , \mcl M^*)$ (we require that all translation and scaling factors are the same for both $\mcl M$ and $\mcl M^*$).  The definition of ergodic modulo scaling (Definition~\ref{def-ergodic}) is similarly well defined for the pair $(\mcl M , \mcl M^*)$.

 \begin{thm} \label{thm-dual-clt}
Suppose the pair $(\mcl M ,\mcl M^*)$ (where $\mcl M$ is a.s.\ dual to $\mcl M^*$) is ergodic modulo scaling and that the following finiteness condition holds. If $H_0$ (resp.\ $H_0^*$) is the face of $\mcl M$ (resp.\ $\mcl M^*$) containing the origin and $E^*$ (resp.\ $E $) denotes the set of edges of $\mcl M^*$ (resp.\ $\mcl M$) which cross edges on the boundary of $H_0$ (resp.\ $H_0^*$), then
\eqb \label{eqn-dual-hyp-moment}
\BB E\left[ \sum_{e\in E} \frac{\op{diam}(e)^2 \frk c(e)}{ \op{Area}(H_0^*)}   \right] < \infty  
\quad \op{and} \quad
\BB E\left[  \sum_{e \in E^*} \frac{\op{diam}(e^*)^2 \frk c (e^*) }{\op{Area}(H_0)}  \right] < \infty .
\eqe 
Almost surely, as $\ep\rta 0$ the conditional law given $(\mcl M,\mcl M^*)$ of the simple random walk on $\ep \mcl M$ started from any vertex on the boundary of the origin-containing face and extended from $\BB N_0$ to $[0,\infty)$ by linear interpolation converges in law modulo time parameterization to planar Brownian motion started from 0 with some deterministic, non-degenerate covariance matrix. Furthermore, the simple random walk on $\mcl M$ is a.s.\ recurrent and there is a discrete harmonic function $\phi_\infty : \mcl V\mcl M \rta \BB C$ such that a.s.\ 
\eqbn
\lim_{r\rta\infty} \frac{1}{r} \max_{x\in\mcl V\mcl M\cap B_r(0)} |\phi_\infty(x) - x| =  0 .
\eqen
The same holds with $\mcl M^*$ in place of $\mcl M$. 
\end{thm}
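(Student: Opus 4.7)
My plan is to deduce Theorem~\ref{thm-dual-clt} from Theorem~\ref{thm-general-clt}, the cell-configuration generalization of Theorem~\ref{thm-general-clt0}, using the dual embedding to convert a walk on $\mcl V\mcl M$ into a cell-to-cell walk. First I would define a cell configuration whose cells are the faces of $\mcl M^*$: by the dual relation, each cell $H^*$ is compact and connected and contains a unique vertex $x(H^*)\in \mcl V\mcl M$; declare two cells adjacent whenever their $\mcl M$-vertices share an $\mcl M$-edge $e$, and assign the corresponding cell--cell edge the conductance $\frk c(e)$. With this setup, the simple random walk on $\mcl V\mcl M$ is identified, via $H^*\mapsto x(H^*)$, with the cell-to-cell walk to which Theorem~\ref{thm-general-clt} applies, up to a deterministic time change that is irrelevant since we work modulo time parameterization.

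Next I would verify the two hypotheses of Theorem~\ref{thm-general-clt} for this cell configuration. Ergodicity modulo scaling is immediate since the cells are a measurable function of $\mcl M^*$ and $(\mcl M,\mcl M^*)$ is jointly ergodic modulo scaling by assumption. For the specific Dirichlet energy, I would use the localization rule from Definition~\ref{def-specific} that places the mass $\frk c(e)\op{diam}(e)^2$ of each $\mcl M$-edge $e$ at the endpoints of $e$ as two point masses of equal weight. Under this rule, the local energy density at the origin-containing cell $H_0^*$ is a constant multiple of $\op{Area}(H_0^*)^{-1}\sum_{e\ni x_0}\op{diam}(e)^2\frk c(e)$, whose expectation is precisely the first finiteness assumption in~\eqref{eqn-dual-hyp-moment}.

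Given these hypotheses, Theorem~\ref{thm-general-clt} supplies all the claimed conclusions: the quenched convergence modulo time parameterization of the appropriately time-changed walk on $\mcl V\mcl M$ to Brownian motion with a deterministic nondegenerate covariance $\Sigma$, the existence of a sublinear discrete harmonic corrector $\phi_\infty$ on $\mcl V\mcl M$ (from the cell-configuration analogue of Theorem~\ref{thm-general-tutte}), and recurrence (from the analogue of Theorem~\ref{thm-recurrent}). The linear-interpolation parametrization mentioned in the statement is accommodated because $\BB d^{\op{CMP}}_{\op{loc}}$ is insensitive to time changes and to the difference between discrete- and continuous-time walks with the same jump chain. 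The symmetric statement with $\mcl M$ and $\mcl M^*$ interchanged is then obtained by the mirror argument: cells are the faces of $\mcl M$, each containing a unique $\mcl M^*$-vertex, conductances are $\frk c^*=1/\frk c$, and the second finiteness condition in~\eqref{eqn-dual-hyp-moment} plays the role of the specific-energy hypothesis.

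I expect the main obstacle to be essentially bookkeeping: carefully matching the per-edge form~\eqref{eqn-dual-hyp-moment} to the abstract cell-configuration specific-energy condition of Theorem~\ref{thm-general-clt}, and checking that the dyadic machinery underlying the modulo-scaling arguments (Section~\ref{sec-dyadic-system}) is compatible with the $\mcl M^*$-face cell partition, so that the ``cell'' walk really falls within the scope of Theorem~\ref{thm-general-clt} rather than requiring a fresh proof. Once this identification is in place, no further probabilistic work should be needed beyond invoking Theorem~\ref{thm-general-clt}, since both the sublinearity of the corrector and the nondegeneracy of $\Sigma$ (the latter being where the dual energy condition really earns its keep, by giving an upper bound on the effective resistance in every direction) are already built into that theorem's conclusion.
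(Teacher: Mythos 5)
There is a genuine gap, and it is precisely the point the paper identifies as the hardest part of Theorem~\ref{thm-dual-clt}. You propose to build a cell configuration out of the faces of $\mcl M^*$ and then simply invoke Theorem~\ref{thm-general-clt}, but the finite expectation hypothesis of that theorem is~\eqref{eqn-hyp-moment} with cells in place of faces: it requires \emph{both} $\BB E\left[ \op{diam}(H_0^*)^2 \pi(H_0^*)/\op{Area}(H_0^*) \right] < \infty$ and $\BB E\left[ \op{diam}(H_0^*)^2 \pi^*(H_0^*)/\op{Area}(H_0^*) \right] < \infty$, where $\op{diam}(H_0^*)$ is the diameter of the whole cell and $\pi^*$ sums the \emph{reciprocal} conductances of the incident edges. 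Neither condition follows from~\eqref{eqn-dual-hyp-moment}, which only controls the per-edge quantities $\op{diam}(e)^2 \frk c(e)$ and $\op{diam}(e^*)^2 \frk c^*(e^*)$: a very long primal edge $e$ with very small conductance whose dual edge $e^*$ is short contributes almost nothing to either sum in~\eqref{eqn-dual-hyp-moment}, yet makes $\op{diam}(H_0^*)^2\,\pi^*(H_0^*)$ arbitrarily large. In the setting of Theorem~\ref{thm-general-clt}, the presence of both $\pi$ and $\pi^*$ (whose sum is bounded below) yields $\BB E[\op{diam}(H_0)^2/\op{Area}(H_0)]<\infty$, which is what drives Lemma~\ref{lem-max-cell-diam} (no macroscopic cells); under~\eqref{eqn-dual-hyp-moment} the absence of macroscopic edges is genuinely nontrivial, and the paper proves it separately as Lemma~\ref{lem-dual-max-diam}, occupying all of Appendix~\ref{sec-dual-max-diam} (a construction of dual-edge paths shadowing horizontal and vertical lines, pinned down by special squares of the dyadic system). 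Your claimed verification also only produces the $\pi$-type bound; the $\pi^*$-type bound, which is what Lemma~\ref{lem-path-sum} needs to convert Dirichlet energy bounds into pointwise bounds, is nowhere supplied.

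Accordingly, the paper does not obtain Theorem~\ref{thm-dual-clt} as a corollary of Theorem~\ref{thm-general-clt}. Instead it reruns the arguments of the earlier sections with the quantities of~\eqref{eqn-dual-hyp-moment} replacing those of~\eqref{eqn-hyp-moment}, uses Lemma~\ref{lem-dual-max-diam} in place of Lemma~\ref{lem-max-cell-diam}, and modifies Lemma~\ref{lem-path-sum} so that one sums over edges of $\mcl M$ whose dual edges cross the segment $P_k(r)$ (this set is connected because the two faces adjacent to such a dual edge both meet the segment), which is how the second condition in~\eqref{eqn-dual-hyp-moment} substitutes for the missing $\pi^*$ bound. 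Note also that the logical direction in the paper is the opposite of what your reduction suggests: Theorems~\ref{thm-general-clt0} and~\ref{thm-graph-clt} can be derived from Theorem~\ref{thm-dual-clt} (by a worst-case choice of $\mcl M^*$), because~\eqref{eqn-dual-hyp-moment} is the weaker, more symmetric hypothesis. So the step you describe as bookkeeping — matching~\eqref{eqn-dual-hyp-moment} to the hypothesis of Theorem~\ref{thm-general-clt} — is exactly where the argument breaks, and where the paper has to do substantial new work.
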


We can take the two quantities in \eqref{eqn-dual-hyp-moment} --- both required to be finite --- as the {\em definition} of the specific Dirichlet energy of $\mcl M$ and $\mcl M^*$.  See Appendix~\ref{sec-specific} for further justification of this definition.

As we will see, Theorem~\ref{thm-dual-clt} follows from essentially the same proof as Theorem~\ref{thm-general-clt0} (or Theorem~\ref{thm-general-clt} below) modulo the following technicality: since the finite expectation hypothesis~\eqref{eqn-dual-hyp-moment} only gives a bound for the expectation of squared edge diameter times conductance, it is much harder to show that there are no macroscopic edges. This is carried out in Appendix~\ref{sec-dual-max-diam} to avoid distracting from the main arguments. We now give an example to illustrate the usage of Theorem~\ref{thm-dual-clt}.

\begin{example}[Random conductance model on $\BB Z^2$ without finite expectation] 
We expect that Theorem~\ref{thm-dual-clt} can be used to prove quenched invariance principles (modulo time parameterization) for some kinds of stationary, ergodic random conductance models on $\BB Z^2$ whose conductances and their reciprocals do not have finite expectation. This can be done by ``shrinking" edges with large conductance and the dual edges (i.e., edges of $\BB Z^2 + (1/2,1/2)$) corresponding to edges with small conductance to get a new embedded lattice/dual lattice pair satisfying the hypotheses of Theorem~\ref{thm-dual-clt}. We do not work out any examples in detail here, but we note that similar ideas are used to deal with i.i.d.\ random conductance models without a finite expectaton hypothesis, in general dimension, in~\cite{abdh-random-conductance,barlow-deuschel-unbounded}. 
\end{example}

\begin{figure}[t!]
 \begin{center}
\includegraphics[scale=.55]{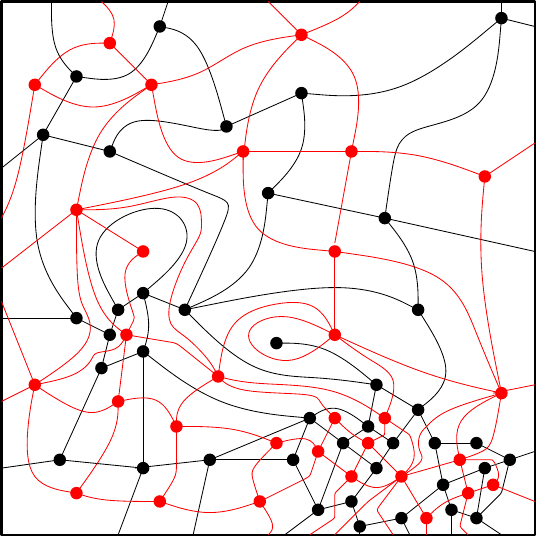} \hspace{15pt}
\includegraphics[scale=.85]{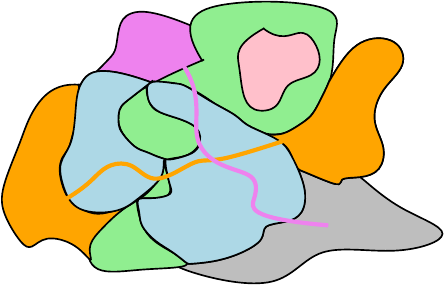}
\vspace{-0.01\textheight}
\caption{\textbf{Left:} An embedded lattice/dual lattice pair. 
\textbf{Right:} A possible arrangement of six cells of a cell configuration. The orange cell consists of two simply connected regions joined by a curve of zero Lebesgue measure. The blue cell is cut in two by the green cell. The pink cell does not share a boundary arc with any cell other than the green one. The purple cell has a ``tentacle" joining it to the grey cell. Note that these sorts of situations show that $\mcl H$, as a graph, need not be planar. 
}\label{fig-weird-cell}
\end{center}
\vspace{-1em}
\end{figure} 

\subsubsection{Random walk on cells}
\label{sec-clt-thm} 

We now state a generalization of Theorem~\ref{thm-general-clt0} (Theorem~\ref{thm-general-clt}) where faces are replaced by general compact connected sets. This allows for graph structures which are not planar. This is the variant which will be used in~\cite{gms-tutte} and the proof is identical to that of Theorem~\ref{thm-general-clt0} (which is a special case). Consequently, the version of the theorem stated in this subsection is the first one which we will prove. 

\begin{defn} \label{def-cell-config}
A \emph{cell configuration} on a domain $D\subset \BB C$ consists of the following objects.
\begin{enumerate}
\item A locally finite collection $\mcl H$ of compact connected subsets of $\ol D$ (``cells") with non-empty interiors whose union is $\ol D$ and such that the intersection of any two elements of $\mcl H$ has zero Lebesgue measure.
\item A symmetric relation $\sim$ on $\mcl H\times \mcl H$ (``adjacency") such that if $H\sim H'$, then $H\cap H'\not=\emptyset$ and $H\not=H'$. 
\item A function $\frk c = \frk c_{\mcl H}$ (``conductance") from the set of pairs $(H,H')$ with $H\sim H'$ to $(0,\infty)$ such that $\frk c(H,H') = \frk c(H',H)$. 
\end{enumerate}
We typically slightly abuse notation by making the relation $\sim$ and the function $\frk c$ implicit, so we write $\mcl H$ instead of $(\mcl H,\sim , \frk c)$. 
\end{defn}

We will almost always consider only the case when $D=\BB C$. When we refer to ``cell configuration" without a specified domain, we mean that $D=\BB C$. The case when $D\not=\BB C$ is only used to formulate the cell configuration analog of Condition~\ref{item-inf-volume} of Definition~\ref{def-translation-invariance}.
We view a cell configuration $\mcl H$ (with arbitrary domain) as a weighted graph whose vertices are the cells of $\mcl H$ and whose edge set is
\eqb \label{eqn-cell-edges}
\mcl E\mcl H := \left\{ \{H,H'\} \in \mcl H\times \mcl H : H\sim H' \right\} ,
\eqe
and such that any edge $\{H,H'\}$ is assigned the weight $\frk c(H,H')$. 
Note that any two cells which are joined by an edge intersect but two intersecting cells need not be joined by an edge. 

A simple example of a cell configuration is the case that $\mcl H $ is the set of faces of an embedded lattice, with two such faces adjacent if and only if they share an edge. 
In this case the resulting cell configuration is isomorphic to $\mcl M^*$. Consequently, one can think of an embedded lattice as a special case of a cell configuration (except that we do not have distinguished vertices and edges for a cell configuration, so we lose some information which does not affect the random walk on the faces). 
However, the cells in a cell configuration need not be simply connected and they are allowed to overlap (so long as the overlap has zero Lebesgue measure), so a cell configuration need not correspond to a planar graph. An example of a cell configuration is illustrated in Figure~\ref{fig-weird-cell}, right. 

Generalizing the above notation for embedded lattices, for a cell configuration $\mcl H$ we define
\eqb \label{eqn-cell-restrict}
 \mcl H(A) := \left\{H\in\mcl F\mcl M : H\cap A \not=\emptyset \right\}   ,\quad \forall A\subset\BB C
\eqe 
and we view $\mcl H(A)$ as an edge-weighted graph with edge set consisting of all of the edges in $\mcl E\mcl H$ joining elements of $\mcl H(A)$ and conductances given by the restriction of $\frk c$. We define a metric on the space of cell configurations via the obvious extension of~\eqref{eqn-map-metric}:
\allb \label{eqn-cell-metric}
\BB d^{\op{CC}}(\mcl H,\mcl H') &:= \int_0^\infty e^{-r} \wedge \inf_{f_r} \big\{ \sup_{z\in \BB C} |z - f_r(z)|   \notag \\
&\qquad \qquad  + \max_{\{H_1,H_2\} \in \mcl E\mcl H(B_r(0))} |\frk c(H_1,H_2) - \frk c'(f_r(H_1) , f_r(H_2) ) | \big\}  \,dr
\alle 
where each of the infima is over all homeomorphisms $f_r : \BB C\rta \BB C$ such that $f_r$ takes each cell in $\mcl H(B_r(0))$ to a cell in $\mcl H'(B_r(0))$ and preserves the adjacency relation, and $f_r^{-1}$ does the same with $\mcl H$ and $\mcl H'$ reversed. We also say that a cell configuration is \emph{translation invariant modulo scaling} if it satisfies any of the equivalent conditions of Definition~\ref{def-translation-invariance}, with $\mcl H$ in place of $\mcl M$ (and cells in place of faces), and \emph{ergodic modulo scaling} if it satisfies the conditions of Definition~\ref{def-ergodic}, with $\mcl H$ in place of $\mcl M$.  

For $z\in \BB C$, we write $H_z$ for one of the cells in $\mcl H$ containing $z$, chosen in some arbitrary manner if there is more than one such cell (the cell is unique for Lebesgue-a.e.\ $z$). Exactly as in~\eqref{eqn-stationary-measure}, we also define
\eqb \label{eqn-stationary-measure-cell}
\pi(H) := \sum_{\substack{H'\in\mcl H : H'\sim H}} \frk c(H,H') \quad \op{and} \quad
\pi^*(H) := \sum_{\substack{H'\in\mcl H : H'\sim H}} \frac{1}{ \frk c(H,H') }. 
\eqe

The following theorem generalizes Theorems~\ref{thm-general-clt0},~\ref{thm-general-tutte}, and~\ref{thm-recurrent}. We need an extra hypothesis to ensure that our cell configuration is sufficiently ``locally connected" (since we do not assume that cells which intersect are joined by an edge). 

\begin{thm} \label{thm-general-clt}
Let $\mcl H$ be a random cell configuration which is ergodic modulo scaling, satisfies the finite expectation hypothesis~\eqref{eqn-hyp-moment} with $\mcl H$ in place of $\mcl F\mcl M$, and satisfies the following additional condition:
\begin{enumerate}
\setcounter{enumi}{2}
\item \textbf{Connectedness along lines.} Almost surely, for each horizontal or vertical  line segment $L \subset \BB C$, the subgraph of $\mcl H$ consisting of the set of cells which intersect $L$ and the edges joining these cells is connected. \label{item-hyp-adjacency}
\end{enumerate} 
Let $X : [0,\infty) \rta \mcl H$ be the continuous-time simple random walk on $\mcl H$ (viewed as an edge-weighted graph as above) which spends $\op{Area}(H)/\pi(H)$ units of time at each cell before jumping to the next. Let $\wh X : [0,\infty)\rta \BB C$ be the process obtained by composing $X$ with a function that sends each cell $H$ to an (arbitrary) point of $H$. There is a deterministic covariance matrix $\Sigma$ with $\det\Sigma\not=0$, depending on the law of $\mcl H$, such that a.s.\ as $\ep\rta 0$ the conditional law of $t\mapsto \ep \wh X_{t/\ep^2}$ given $\mcl H$ converges to the law of planar Brownian motion started from 0 with covariance matrix $\Sigma$ with respect to the local uniform topology. Furthermore, the simple random walk on $\mcl H$ is a.s.\ recurrent and there is a discrete harmonic function $\phi_\infty : \mcl V\mcl H \rta \BB C$ such that a.s.\ 
\eqb \label{eqn-cell-tutte-conv}
\lim_{r\rta\infty} \frac{1}{r} \max_{H \in\mcl H( B_r(0) )} |\phi_\infty(H) - \phi_0(H)| =  0 ,
\eqe 
where $\phi_0(H) := \int_H z\,dz$ is the Euclidean center of $H$. 
\end{thm}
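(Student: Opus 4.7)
The plan is to follow the corrector/martingale-CLT approach standard in the random-conductance-model literature (following~\cite{kv-rwre,dfgw-rwre,ss-random-conductance,berger-biskup-perc-rw,biskup-rwre-survey}), adapted to the modulo-scaling setting using the dyadic block decomposition alluded to in Section~\ref{sec-dyadic-system}. The three assertions of the theorem (CLT, recurrence, sublinear corrector) will all follow once I construct a $\frk c$-discrete harmonic function $\phi_\infty : \mcl H \to \BB C$ that differs from the identity embedding $\phi_0$ only by a sublinear error: then $\phi_\infty(X_t)$ is a martingale with linearly growing quadratic variation, the martingale CLT applies, and sublinearity lets me transfer back to $\wh X$. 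A preliminary ergodic theorem for the modulo-scaling setting, derived from mass transport (Condition~\ref{item-mass-transport}) together with the dyadic block decomposition, gives a.s.\ convergence of area-weighted spatial averages of covariant functionals of $\mcl H$ to the corresponding expectations; combined with~\eqref{eqn-hyp-moment}, this gives control on spatially averaged Dirichlet-energy contributions at every scale.

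To construct $\phi_\infty$ I would work in the Hilbert space of translation-invariant-modulo-scaling $\BB C$-valued oriented edge cocycles with finite specific $\frk c$-weighted energy, and orthogonally project the identity cocycle $(H,H') \mapsto \phi_0(H') - \phi_0(H)$ onto the closed subspace of ``potential'' cocycles (gradients of functions on $\mcl H$). Finiteness of the relevant inner product is exactly~\eqref{eqn-hyp-moment}, and the projection is then the gradient of the function $\phi_\infty - \phi_0$. An equivalent route is to define $\phi_R$ on $\mcl H(B_R(0))$ by energy minimization with boundary condition $\phi_R = \phi_0$ on $\bdy \mcl H(B_R(0))$ and extract a subsequential limit.

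The main obstacle is proving the pointwise sublinearity~\eqref{eqn-cell-tutte-conv}. The $L^2$ version, that $r^{-2}\sum_{H\in\mcl H(B_r(0))} \op{Area}(H) |\phi_\infty(H)-\phi_0(H)|^2 \to 0$, comes for free from the cocycle construction plus the ergodic theorem. Upgrading this to the pointwise statement is where the $\pi^*$ part of~\eqref{eqn-hyp-moment} and the connectedness-along-lines hypothesis~\ref{item-hyp-adjacency} enter: for each cell $H \in \mcl H(B_r(0))$, hypothesis~\ref{item-hyp-adjacency} lets me connect $H$ to a cell with a ``typical'' value of $\phi_\infty - \phi_0$ by a path of cells lying along a horizontal or vertical line, and the $\pi^*$ moment controls $\sum_e \frk c(e)^{-1}$ along such a path, so by Cauchy--Schwarz
\eqbn
|\phi_\infty(H) - \phi_\infty(H')|^2 \leq \Bigl( \sum_{e \in \text{path}} \frk c(e)^{-1} \Bigr) \Bigl( \sum_{e \in \text{path}} \frk c(e) \, |\phi_\infty(e^+) - \phi_\infty(e^-)|^2 \Bigr) ,
\eqen
so the $L^2$ average bound on $|\phi_\infty - \phi_0|$ together with the energy density bound converts into a pointwise bound. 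The a.s.\ absence of macroscopic cells (Lemma~\ref{lem-max-cell-diam}) ensures that ``typical'' cells are abundant near every point, so the paths involved stay within the sublinear regime.

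Once sublinearity is in hand the remaining assertions are routine. Harmonicity of $\phi_\infty$ makes $\phi_\infty(X_t)$ a martingale under the given time parameterization, and its quadratic variation up to time $t$ is $\sum_{s \leq t} |\phi_\infty(X_s) - \phi_\infty(X_{s^-})|^2$, whose rescaled expectation converges to a deterministic matrix $\Sigma$ by the ergodic theorem applied to the Dirichlet-energy density. The martingale CLT then yields that $\ep \phi_\infty(X_{t/\ep^2})$ converges to a planar Brownian motion with covariance $\Sigma$; sublinearity lets me replace $\phi_\infty$ by $\phi_0$, and replace $\phi_0(X_{t/\ep^2})$ by $\wh X_{t/\ep^2}$ up to diameter errors, giving the stated conclusion. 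Non-degeneracy $\det \Sigma \neq 0$ follows because the identity cocycle is not itself a potential cocycle (no translation-invariant-modulo-scaling function on $\mcl H$ has gradient equal to the identity), so its orthogonal projection onto the potentials is strictly different from the identity and the residual martingale has strictly positive variance in every direction. Recurrence then follows from convergence to a non-degenerate planar Brownian motion, which is neighborhood-recurrent, together with local finiteness of $\mcl H$.
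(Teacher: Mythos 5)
Your overall architecture (corrector plus martingale CLT, with connectedness along lines and the $\pi^*$ moment used to upgrade energy bounds to pointwise bounds via Cauchy--Schwarz along cell paths meeting a line) matches the paper's, but there are two genuine gaps. First, your treatment of recurrence is backwards and, as stated, a non sequitur: convergence of the diffusively rescaled walk to a nondegenerate planar Brownian motion does not imply recurrence of the discrete walk (neighborhood recurrence of the limit gives no control on the walk returning to a fixed cell; an invariance principle is compatible with transience absent quantitative resistance or heat-kernel bounds). In the paper recurrence is proved \emph{independently}, by building explicit logarithmic test functions on annuli and bounding their Dirichlet energy by $C/r$ using the ergodic averages (Proposition~\ref{prop-cell-recurrent}, Lemma~\ref{lem-log-energy}), and—crucially—recurrence is then an \emph{input} to the quenched CLT: the a.s.\ convergence of the quadratic variation of $\phi_\infty(X)$ along the walk is obtained from an ergodic theorem for averages along the walk's time parameterization (Lemmas~\ref{lem-walk-translate}, \ref{lem-walk-ergodic}), whose tail triviality is proved by decomposing the walk into i.i.d.\ excursions from $H_0$, which requires recurrence. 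Your proposal only invokes the spatial ergodic theorem and speaks of convergence of the \emph{expected} rescaled quadratic variation; for the quenched statement you need a.s.\ convergence of the time-averaged quadratic variation given $\mcl H$, and the spatial averaging result does not deliver that. This walk-along-the-path ergodic machinery (Section~\ref{sec-walk-ergodic}, including the one-dimensional dyadic system and the truncation control of Lemma~\ref{lem-jump-sum} needed for the Lindeberg-type condition in the martingale CLT) is a substantial missing piece.

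Second, the corrector construction via orthogonal projection in a Hilbert space of ``translation-invariant-modulo-scaling cocycles with finite specific energy'' presupposes exactly the structure that is absent here: a measure-preserving shift action on the environment space, which is what makes the Weyl/cocycle decomposition and the ``for free'' $L^2$-averaged sublinearity work in the stationary random conductance setting. With invariance only modulo scaling there is no canonical stationary environment measure on which shifts act, and your claim that averaged sublinearity is automatic is unsubstantiated. The paper substitutes a hands-on construction: $\phi_m$ is taken harmonic inside the dyadic squares of $\mcl S_m$ with boundary data $\phi_0$, Cauchy-ness in probability is extracted from orthogonality of the increments $\phi_{m-1,m}$ in the Dirichlet inner product together with the uniform specific-energy bound (Lemmas~\ref{lem-square-energy}--\ref{lem-tail-energy}), and sublinearity is proved by combining the elementary bound on $\phi_m-\phi_0$ for fixed $m$ (Lemma~\ref{lem-m-0-diff}, using that they agree on boundaries of many small squares) with the line-segment total-variation estimate (Lemma~\ref{lem-path-sum}) and the maximum principle. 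Your ``equivalent route'' of energy minimization on balls with boundary data $\phi_0$ is close in spirit to this, but subsequential limits alone will not give you the covariance of $\phi_\infty$ under translation/scaling that the later ergodic arguments (and the nondegeneracy argument, which in the paper rules out $c_{\BB v}=0$ by contradiction with sublinearity) rely on; the paper's proof of an honest limit in probability, via the orthogonality-of-increments energy bookkeeping, is what makes that work. Your nondegeneracy claim as stated (``no translation-invariant-modulo-scaling function has gradient equal to the identity'') is likewise an assertion that itself requires the sublinearity result to justify.
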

  
Theorem~\ref{thm-general-clt} can be applied to show that random walk on an embedded lattice which is allowed to have finite-range jumps converges to Brownian motion. 
To illustrate this, we give an example of a random environment on $\BB Z^2$ to which Theorem~\ref{thm-general-clt}, but not any of our previous theorems, applies.

\begin{example}[Finite-range random conductance model on $\BB Z^2$] \label{ex-long-range}
Let $\frk c : \BB Z^2\times \BB Z^2 \rta (0,\infty)$ be a conductance function which has finite range in the sense that there exists $N\in\BB N$ such that $\frk c(x,y) = 0$ whenever $|x-y| > N$. Assume that $\frk c$ is stationary and ergodic with respect to spatial translations and satisfies $\BB E[ \frk c(x,y) ] <\infty$ and $\BB E[\frk c(x,y)^{-1}] < \infty$ whenever $|x-y| \leq N$.  
Then Theorem~\ref{thm-general-clt} implies that a.s.\ the random walk on $\BB Z^2$ with conductances $\frk c$ converges in law to Brownian motion in the quenched sense, under diffusive scaling. To see this, for $x\in\BB Z^2$ let $H_x$ be the cell which is the union of the square of side length $1/2$ centered at $x$ and all of the straight line segments from $x$ to points $y\in\BB Z^2$ with $|x-y|\leq N$. It is easily seen that the hypotheses of Theorem~\ref{thm-general-clt} are satisfied for the cell configuration with $\mcl H = \{H_x : x\in\BB Z^2-w\}$, adjacency defined by $H_x\sim H_y$ whenever $|x-y|\leq N$, and conductances defined by $\frk c(H_x,H_y) = \frk c(x ,y )$ (provided we shift $\BB Z^2$ by a uniform random element of $[0,1]\times [0,1]$, as in previous examples). 
\end{example}

\subsection{Why $d=2$ is special}
\label{sec-d=2}

Our arguments are very specific to two dimensions. To give some intuition about why $d=2$ is special, we remark that it is a simple exercise to show that random walk started at the center of Figure~\ref{fig-split-grid} approximates an ordinary boundary reflected Brownian motion (up to time change), while the analogous statement in other dimensions is false. For example, the analogous figure in dimension one is a vertical interval, whose upper half is subdivided into twice as many pieces as its lower half; a simple random walk started at the midpoint of the vertical interval would have a $2/3$ chance to hit the bottom before the top.

Similarly, if one sums the squared edge lengths in the upper half of Figure~\ref{fig-split-grid}, one gets the same value as if one sums the squared edge lengths in the lower half (as both quantities are proportional to area).  In other words, the Dirichlet energy per unit area of the embedded lattice is the same in both halves, a statement that would also be false in any dimension $d\not=2$.  Because Dirichlet energy and area both scale as length squared when $d=2$, we will be able to make sense of the expected Dirichlet energy per area of $\mcl M$ as a quantity that does not depend on scaling, and to show that on large regions, the Dirichlet energy per area averages out to be close to a deterministic constant with high probability.

If one wishes to extend the results of this paper to other dimensions, one might try the following: instead of working modulo scalings of the form $(\mcl M, \frk c) \to (C \mcl M, \frk c)$, work modulo scalings of the form $(\mcl M, \frk c) \to (C \mcl M, C^{d-2}\frk c)$ (where $\frk c$ applied to an edge of $C \mcl M$ is just defined equal $\frk c$ applied to the corresponding edge of $\mcl M$). This way Dirichlet energy per volume would still be a scale invariant notion.  It is an interesting open question to determine the extent to which the results of this paper can be extended to higher dimensions in this context.  To start with a specific example, one could try to prove an invariance principle for some discretization of higher dimensional Gaussian multiplicative chaos. We will not further discuss this problem here.

\begin{figure}[t!]
 \begin{center}
\includegraphics[scale=.55]{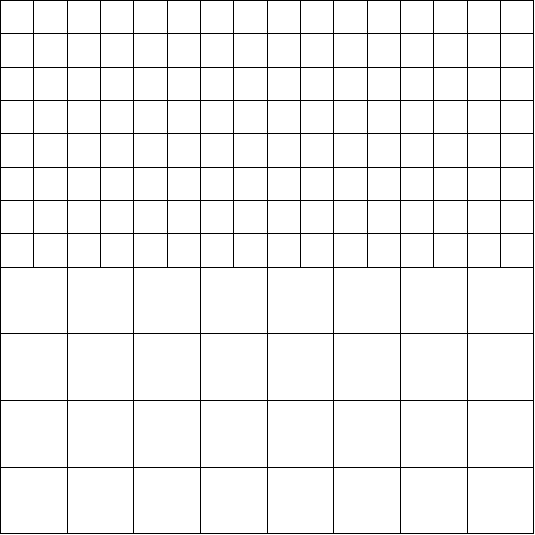} \vspace{-0.01\textheight}
\caption{ A unit square with lower half divided into grid of $2^{-k} \times 2^{-k}$ squares and upper half divided into a grid of $2^{-k-1}$ by $2^{-k-1}$ squares, where $k = 3$. In the $k \to \infty$ limit, a simple random walk on this figure converges to a time change of (boundary-reflected) Brownian motion, but the analogous statement would be false in any dimension other than $2$.}\label{fig-split-grid}
\end{center}
\vspace{-1em}
\end{figure}

\subsection{Outline}
\label{sec-outline}

\begin{figure}[t!]
 \begin{center}
\includegraphics[scale=.85]{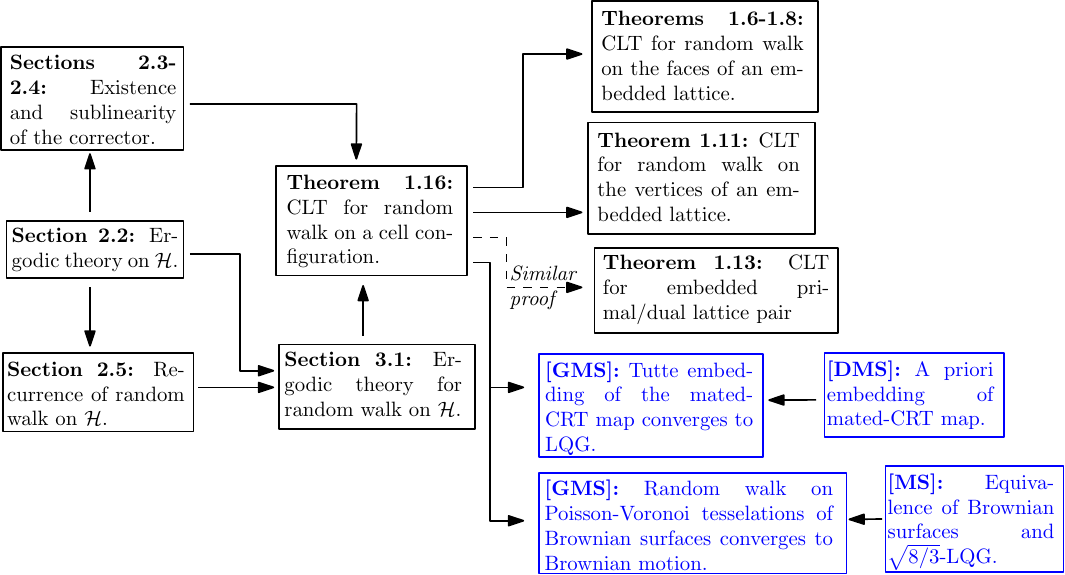}
\vspace{-0.01\textheight}
\caption{Schematic illustration of how results which are proven in this paper (black) and related results proven in other papers (blue) fit together. 
}\label{fig-outline}
\end{center}
\vspace{-1em}
\end{figure}

In this section we provide a moderately detailed overview of the content of the rest of the paper. See Figure~\ref{fig-outline} for a schematic illustration. 

Most of the paper is devoted to the proof of Theorem~\ref{thm-general-clt}. The proof is essentially identical to that of Theorem~\ref{thm-general-clt0}, so the reader who cares only about embedded lattices can think only of the setting of Theorem~\ref{thm-general-clt0}, which is a special case of Theorem~\ref{thm-general-clt}.  
As one might expect from the hypotheses, the theorem will be proven using ergodic theory. Some of our arguments are inspired by those in~\cite{berger-biskup-perc-rw,bp-bounded-conductance,biskup-rwre-survey}, but in many places very different techniques are needed due to the lack of exact stationarity with respect to spatial translations.   
\medskip

\noindent \textbf{Section~\ref{sec-corrector}} is devoted to the proof of two parts of Theorem~\ref{thm-general-clt} (which correspond to Theorems~\ref{thm-general-tutte} and~\ref{thm-recurrent}): the existence of the discrete harmonic function $\phi_\infty$ satisfying~\eqref{eqn-cell-tutte-conv} and the recurrence of the random walk on $\mcl H$. 
We start in Section~\ref{sec-dyadic-system} by defining a particularly convenient collection of squares, called a \emph{uniform dyadic system}, which allows us to formulate another condition which is equivalent to the conditions of Definition~\ref{def-translation-invariance} and which is the only condition which we will use in our proofs. Roughly speaking, a uniform dyadic system $\mcl D$ contains a unique bi-infinite sequence of squares containing any point in $\BB C$ (except for a Lebesgue measure-zero set of points which lie on boundaries of squares) whose side lengths belong to $\{2^{s+k}\}_{k\in\BB Z^2}$, for $s$ a uniform $[0,1]$ random variable. Throughout the rest of the Sections~\ref{sec-corrector} and~\ref{sec-general-clt} we will fix a uniform dyadic system $\mcl D$ which is independent from $\mcl H$.

In Section~\ref{sec-ergodic-avg} we use the backward martingale convergence theorem and ergodicity modulo scaling to prove a.s.\ limit theorems for the averages of various quantities over the origin-containing squares in $\mcl D$ (Lemma~\ref{lem-ergodic-avg}). 
Combined with our finite expectation hypothesis, this leads to a proof that the maximum diameter of the cells which intersect a large origin-containing square in $\mcl D$ is typically of smaller order than its side length (Lemma~\ref{lem-max-cell-diam}) and a bound for the sum of the squared diameter times degree of the cells which intersect such a square (Lemma~\ref{lem-cell-sum}). 

In Section~\ref{sec-corrector-existence} we construct the discrete harmonic function $\phi_\infty$ of Theorem~\ref{thm-general-clt} as a limit of functions $\phi_m$ for $m\in\BB N$ which agree with the a priori embedding function $\phi_0$ on the boundaries of certain large squares in $\mcl D$ and are discrete harmonic on the interiors of these squares. The key input in the proof is a Dirichlet energy bound for these functions (Lemma~\ref{lem-square-energy}), which comes from the diameter$^2$ times degree bound of the preceding subsection. This allows us to show that the discrete Dirichlet energy of $\phi_m - \phi_{m'}$ is small when $m$ and $m'$ are large (Lemma~\ref{lem-tail-energy}), which in turn implies that the $\phi_m$'s are Cauchy in probability and hence the limit $\phi_\infty$ exists.  

In Section~\ref{sec-corrector-sublinearity} we prove that $\phi_\infty$ approximates $\phi_0$ in the sense of~\eqref{eqn-cell-tutte-conv}, as follows. We observe that for fixed $m \in \BB N$, the maximum of $|\phi_m - \phi_0|$ of a large square in $\mcl D$ is typically of smaller order than the side length of the square since $\phi_m$ and $\phi_0$ agree on the boundaries of many small subsquares of this big square (Lemma~\ref{lem-m-0-diff}). We then use the Cauchy-Schwarz inequality and the maximum principle to bound the maximum of $|\phi_\infty-\phi_m|$ over a large square in terms of its Dirichlet energy (see in particular Lemmas~\ref{lem-path-sum}), which we know is small when $m$ is large by the results of Section~\ref{sec-corrector-existence}.

In Section~\ref{sec-recurrence} we prove the recurrence of the simple random walk on $\mcl H$ using the results of Section~\ref{sec-ergodic-avg} and the criterion for recurrence in terms of effective resistance (equivalently, Dirichlet energy). 
\medskip

\noindent
In \textbf{Section~\ref{sec-general-clt}} we will prove our main results, starting with Theorem~\ref{thm-general-clt}. The basic idea of the proof is to apply the martingale functional central limit theorem to show convergence of the image of the random walk on $\mcl H$ under the discrete harmonic function $\phi_\infty$, then apply Theorem~\ref{thm-general-clt-uniform}. In order to apply the multi-dimensional martingale central limit theorem, we need to know that the quadratic variations and covariation of the two coordinates of the image of the walk differ by (approximately) a constant factor. For this purpose we will need analogs of some of the ergodic theory results of Section~\ref{sec-ergodic-avg} when we translate according to the time parameterization of a simple random walk on $\mcl H$, rather than uniformly from Lebesgue measure on a square. Such results are proven in Section~\ref{sec-walk-ergodic} and rely on the recurrence of the walk to obtain tail triviality. 

In Section~\ref{sec-martingale-clt}, we apply these ergodic theory results and the martingale functional central limit theorem to complete the proof of Theorem~\ref{thm-general-clt}. In Section~\ref{sec-clt-uniform} we prove an extension of Theorem~\ref{thm-general-clt} (Theorem~\ref{thm-general-clt-uniform}) wherein the convergence of the random walk on $\mcl H$ to Brownian motion holds uniformly over all choices of starting point for the walk in a compact subset of $\BB C$. In Section~\ref{sec-variant-proof}, we deduce Theorems~\ref{thm-general-clt0} and~\ref{thm-graph-clt} from Theorem~\ref{thm-general-clt} and explain the adaptions needed to prove Theorem~\ref{thm-dual-clt}. 
\medskip

\noindent
\textbf{Section~\ref{sec-lqg-application}} discusses applications of our work to random planar maps and Liouville quantum gravity (which are explored further in the companion paper~\cite{gms-tutte}).
\medskip

\noindent
\textbf{Appendix~\ref{sec-equivalence}} contains a proof of the equivalence of the definitions of translation invariance modulo scaling stated in Definition~\ref{def-translation-invariance}. \textbf{Appendix~\ref{sec-dual-max-diam}} contains a proof that the hypotheses of Theorem~\ref{thm-dual-clt} imply that a.s.\ the maximal size of the edges or dual edges which intersect $B_r(0)$ is $o_r(r)$. This result is not needed for the proofs of any of the other results in the paper or for any of the applications of our results in~\cite{gms-tutte}.

\subsection*{Basic notation}

We write $\BB N = \{1,2,3,\dots\}$ and $\BB N_0 = \BB N \cup \{0\}$.   
For $a < b$, we define the discrete interval $[a,b]_{\BB Z}:= [a,b]\cap\BB Z$.  
If $f  :(0,\infty) \rta \BB R$ and $g : (0,\infty) \rta (0,\infty)$, we say that $f(\ep) = O_\ep(g(\ep))$ (resp.\ $f(\ep) = o_\ep(g(\ep))$) as $\ep\rta 0$ if $f(\ep)/g(\ep)$ remains bounded (resp.\ tends to zero) as $\ep\rta 0$. We similarly define $O(\cdot)$ and $o(\cdot)$ errors as a parameter goes to infinity. 
If $f,g : (0,\infty) \rta [0,\infty)$, we say that $f(\ep) \preceq g(\ep)$ if there is a constant $C>0$ (independent from $\ep$ and possibly from other parameters of interest) such that $f(\ep) \leq  C g(\ep)$. We write $f(\ep) \asymp g(\ep)$ if $f(\ep) \preceq g(\ep)$ and $g(\ep) \preceq f(\ep)$.

\section{Existence and sublinearity of the corrector}
\label{sec-corrector}

\subsection{Dyadic systems}
\label{sec-dyadic-system}

A key technical tool in our proofs is the concept of a \emph{dyadic system}, which leads to a convenient scale/translation invariant way of decomposing space into ``blocks" and which we will define in this subsection.  
Let $S \subset \BB C$ be a square (not necessarily dyadic). We write $|S|$ for the side length of $S$. A \emph{dyadic child} of $S$ is one of the four squares $S' \subset S$ (with $|S'| = |S|/2$) whose corners include one corner of $S$ and the center of $S$. A \emph{dyadic parent} of $S$ is one of the four squares with $S$ as a dyadic child. Note that each square has 4 dyadic parents and 4 dyadic children. A \emph{dyadic descendant} (resp.\ \emph{dyadic ancestor}) of $S$ is a square which can be obtained from $S$ by iteratively choosing dyadic children (resp.\ parents) finitely many times. 

A \emph{dyadic system} is a collection $\mcl D$ of closed squares (not necessarily dyadic) with the following properties.
\begin{enumerate}
\item If $S\in\mcl D$, then each of the four dyadic children of $S$ is in $\mcl D$.
\item If $S\in\mcl D$, then exactly one of the dyadic parents of $S$ is in $\mcl D$. 
\item Any two squares in $\mcl D$ have a common dyadic ancestor. 
\end{enumerate} 
See Figure~\ref{fig-dyadic-system} for an illustration of the origin-containing squares of a dyadic system. 
The set of all side lengths of the squares in a dyadic system is precisely $\{2^{s+k}\}_{k\in\BB Z}$ for some $s\in [0,1]$ (determined by the system). 
If $\mcl D$ is a dyadic system, then for any $z\in \BB C$ there is a bi-infinite sequence of squares $\{S_k^z \}_{k\in\BB Z} \subset \mcl D$ which contain $z$, numbered so that $S_k^z$ is a dyadic parent of $S_{k-1}^z$ for each $k$. If $z$ does not lie on the boundary of a square in $\mcl D$, then this sequence is unique up to translation of the indices. If we are given any single square $S\in\mcl D$ and all of its dyadic ancestors, then $\mcl D$ is uniquely determined: $\mcl D$ is the set of all dyadic descendants of $S$ and its dyadic ancestors. This in particular allows us to define a topology on the space of dyadic systems, e.g., by looking at the local Hausdorff distance on the union of the origin-containing squares.

In what follows, there will generally be a dyadic system which is understood from the context.  We emphasize that when we refer to a dyadic ancestor of a square $S$, we mean any of the possible dyadic ancestors of $S$ and not just the one which is contained in the dyadic system.

A \emph{uniform dyadic system} is the random dyadic system defined as follows. Let $s$ be sampled uniformly from $[0,1]$ and, conditional on $s$, let $w$ be sampled uniformly from $[0,2^s] \times [0,2^s]$. Set $S_0 := [0,2^s]\times [0,2^s] - w$. For $k \in \BB N$, inductively let $S_k$ be sampled uniformly from the four dyadic parents of $S_{k-1}$. Then let $\mcl D$ be the set of all dyadic descendants of $S_k$ for each $k\in\BB N$. 
We view $\mcl D$ as a dyadic system, so in particular $\mcl D$ only determines the sequence of origin-containing squares up to an index shift.
The main reason for considering a uniform dyadic system is the following lemma.
  
\begin{figure}[t!]
 \begin{center}
\includegraphics[scale=.65]{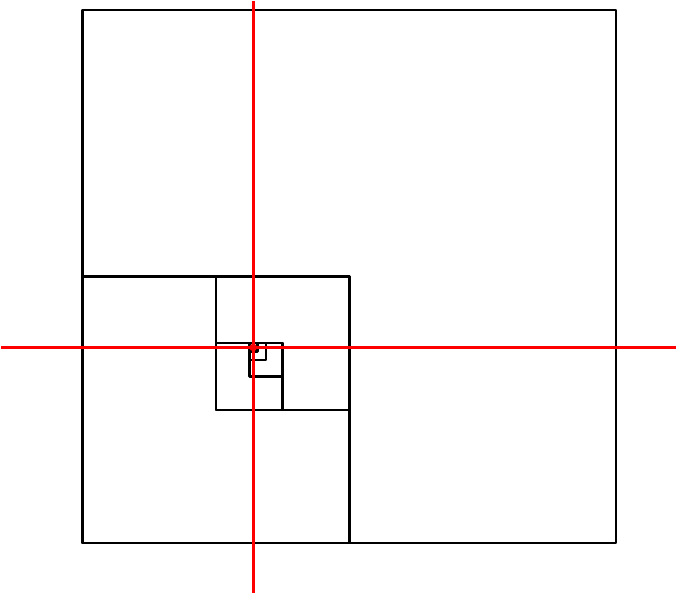}
\vspace{-0.01\textheight}
\caption{The squares $S_k$ which contain the origin (red) in a dyadic system, for $k \leq 0$. Note that the dyadic system also includes the origin-containing squares $S_k$ for $k  > 0$, which have side lengths tending to $\infty$ as $k\rta \infty$, and all of the dyadic descendants of the $S_k$'s. 
}\label{fig-dyadic-system}
\end{center}
\vspace{-1em}
\end{figure} 

\begin{lem} \label{lem-uniform-dyadic}
Let $\mcl D$ be a uniform dyadic system and let $C>0$ and $z\in\BB C$ be deterministic. Then the scaled/translated dyadic system $C ( \mcl D-z )$ obtained by translating and scaling the squares agrees in law with $\mcl D$. 
\end{lem}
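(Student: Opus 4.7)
The plan is to reformulate the uniform dyadic system $\mcl D$ in a way that makes invariance under translation and dilation transparent. For each $k \in \BB Z$, let $\mcl T_k \subset \mcl D$ denote the set of side-$2^{s+k}$ squares of $\mcl D$. The axioms of a dyadic system force $\mcl T_k$ to be the $2^{s+k}\BB Z^2$-periodic tiling of $\BB C$ by axis-aligned squares of side $2^{s+k}$, encoded by the single offset $w_k \in [0,2^{s+k})^2$ such that $[0,2^{s+k}]^2 - w_k$ is the origin-containing square in $\mcl T_k$. The refinement condition (every square in $\mcl T_{k+1}$ is a union of four squares in $\mcl T_k$) is equivalent to $w_{k+1} \equiv w_k \pmod{2^{s+k}\BB Z^2}$, and the pair $(s, \{w_k\}_{k \in \BB Z})$ determines $\mcl D$.

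My first step would be to establish the following \emph{offset characterization}: in the uniform dyadic system, $s$ is uniform on $[0,1]$ and, conditionally on $s$, each $w_k$ is uniform on $[0, 2^{s+k})^2$. The case $k=0$ is immediate from the construction. For $k \ge 0$, an induction uses that choosing a uniform random parent of $T_k$ amounts to setting $w_{k+1} = w_k + 2^{s+k} b_k$ for $b_k \in \{0,1\}^2$ uniform and independent of $(s, w_0, \ldots, w_k)$; if $w_k$ is uniform on $[0, 2^{s+k})^2$ then $w_{k+1}$ is uniform on $[0, 2^{s+k+1})^2$. For $k < 0$, the square $T_k$ is a.s.\ the unique origin-containing dyadic child of $T_{k+1}$, so $w_k = w_0 \bmod 2^{s+k}\BB Z^2$ is the image of $w_0$ under the natural projection $[0, 2^s)^2 \to [0, 2^{s+k})^2$, which pushes the uniform law forward to the uniform law.

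With this characterization in hand, translation invariance reduces to the translation-invariance of uniform measure on each torus $[0,2^{s+k})^2 / 2^{s+k}\BB Z^2$: the translated system $\mcl D - z$ has offsets $w_k' = (w_k + z) \bmod 2^{s+k}\BB Z^2$, the common shift at every scale preserves the nesting compatibility, and hence $(s, \{w_k'\}) \eqD (s, \{w_k\})$. For dilation by $C = 2^t$, I would relabel so that the new scale parameter is $s' := (s + t) \bmod 1$; this is uniform on $[0,1]$ by rotation-invariance of the uniform measure on $\BB R/\BB Z$, and the scaled offsets $C w_k$ (after reindexing scales via $k' = k + \lfloor s + t \rfloor$) are uniform on $[0, 2^{s' + k'})^2$ with the correct nesting. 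Combining the two steps gives $C(\mcl D - z) \eqD \mcl D$.

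The main obstacle I anticipate is purely notational: under dilation by $C = 2^t$ one must renumber scales to absorb $\lfloor s + t\rfloor$ into the index $k$ while reducing $s + t$ modulo $1$, and check that the random parent choices in the rescaled system are jointly consistent with those in the original. Once the offset characterization above is in place, the substantive content reduces to two elementary facts: translation-invariance of Lebesgue measure on each torus $[0, 2^{s+k})^2 / 2^{s+k}\BB Z^2$, and rotation-invariance of the uniform measure on $\BB R/\BB Z$.
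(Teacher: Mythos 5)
Your argument is correct, but it takes a genuinely different route from the paper's. The paper proceeds by a coupling/total-variation argument: it first checks (exactly as in your base case for $w_0$) that the origin-containing square of side $2^s$ in $\mcl D - z$ has the same law as $S_0$, and then shows that the total variation distance between the laws of the ancestor chains $\{S_k\}_{k\ge K}$ of the shifted and unshifted systems tends to $0$ as $K\to\infty$, using that the fraction of dyadic ancestors of $S_0$ at level $k$ which fail to be ancestors of the shifted level-$0$ square decays exponentially in $k$; scaling is then handled by a separate, simpler argument based on the uniformity of the fractional part of $s+\log_2 C$. You instead compute the law of the whole system exactly: you encode $\mcl D$ by the scale parameter $s$ and the compatible tower of offsets $w_k$, show each $w_k$ is conditionally uniform on $[0,2^{s+k})^2$, and observe that translation acts by rotation on each torus $[0,2^{s+k})^2/2^{s+k}\BB Z^2$ while dilation acts by rotation on $s$ modulo $1$ together with a reindexing. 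Your route buys exact invariance at every scale simultaneously, with no limiting or coupling step; its only extra burden is the (easy, but worth stating explicitly) remark that since $w_j$ is a deterministic projection of $w_k$ for $j<k$, the conditional marginals of the $w_k$ together with the compatibility relations determine the joint law of the tower, and hence of $\mcl D$.

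One small imprecision to fix: the axioms of a dyadic system do \emph{not} by themselves force the side-$2^{s+k}$ squares of $\mcl D$ to form the full $2^{s+k}\BB Z^2$-periodic tiling of $\BB C$ (for example, the collection of all standard dyadic squares contained in the closed first quadrant, with ancestor chain $[0,2^j]^2$, is a valid dyadic system). What is true, and all your argument needs, is that for the \emph{uniform} dyadic system one a.s.\ has $\bigcup_k S_k = \BB C$ (each coordinate distance from $0$ to $\bdy S_k$ a.s.\ increases by $2^{s+k}$ infinitely often), so the level-$k$ squares do a.s.\ tile the plane; alternatively, you can avoid the claim altogether by phrasing the characterization purely in terms of the origin-containing chain $\{S_k\}_{k\ge 0}$, which determines $\mcl D$ and is exactly encoded by $(s,\{w_k\}_{k\ge 0})$.
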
 
\begin{proof} 
We first check translation invariance. Fix $z\in\BB C$ and set $\wt{\mcl D} := \mcl D-z$. We seek to show that $\wt{\mcl D} \eqD \mcl D$. 
For $k\in\BB Z$, let $S_k$ (resp.\ $\wt S_k$) be the square in $\mcl D$ (resp.\ $\wt{\mcl D}$) containing 0 with side length $2^{k+s}$.  
Since a dyadic system is a.s.\ determined by any fixed square and all of its dyadic ancestors, it suffices to show that the total variation distance between the laws of $\{S_k\}_{k\geq K}$ and $\{\wt S_k\}_{k\geq K}$ tends to 0 as $K \rta\infty$. 

We first argue that $\wt S_0 \eqD S_0$. Indeed, $\wt S_0 $ is obtained by translating the square of $\mcl D$ which has side length $2^s$ and contains $z$ by $-z$, so $\wt S_0 = [0 , 2^s] \times [0,2^s] - \wt w$, where $\wt w \in [0,2^s ]\times [0,2^s]$ is chosen so that $(z+w) - \wt w \in 2^s\BB Z^2$. Since $w$ is uniform on $[0,2^s] \times [0,2^s]$ conditional on $s$, the same is true of $\wt w$. Thus $\wt S_0 \eqD S_0$. 

We next observe that as $k\rta\infty$, the fraction of the $4^k$ dyadic ancestors of $S_0$ with side length $2^{s+k}$ which are not also dyadic ancestors of the square $\wt S_0 + z\in\mcl D$ decays exponentially in $k$. Since each $S_k$ is sampled uniformly from the set of dyadic ancestors of $S_0$ with side length $2^{s+k}$, it follows that the total variation distance between the law of $\{S_k - z\}_{k \geq K}$ and the law of a sequence of random squares defined in the same manner as $\{S_k  \}_{k \geq K}$ as above but with $\wt S_0$ used in place of $S_0$ tends to 0 as $K \rta\infty$.  
On the other hand, a.s.\ $z\in S_k$ and hence $\wt S_k =  S_k - z$ for large enough $k\in\BB N$.
Combining this with the preceding paragraph shows our claim about the total variation distance, and hence that $\mcl D \eqD \wt{\mcl D}$. 

One checks that $C\mcl D\eqD \mcl D$ for $C>0$ using a similar but simpler argument to the one above, based on the fact that the fractional part of $s  +\log_2 C$ is uniform on $[0,1]$. 
\end{proof}

Throughout the rest of this section, we fix a random cell configuration $\mcl H$ and a uniform dyadic system $\mcl D$ independent from $\mcl H$ (in subsequent subsections we will also require that $\mcl H$ satisfies the hypotheses of Theorem~\ref{thm-general-clt}). We let $\{S_k\}_{k\in\BB Z}$ be the sequence of squares containing the origin in $\mcl D$, labeled so that the side length of $S_0$ is $2^s$, as above.

It will be convenient to have a natural, scale-invariant way to index the squares in $\mcl D$. To this end, for $z\in\BB C$ and $m>0$, we define 
\eqb \label{eqn-mass-time-def}
\wh S_m^z  := \left\{ \text{largest square $S\in\mcl D$ with}\: z\in S \: \text{and} \:  \sum_{H\in \mcl H(S )} \frac{\op{Area}(H\cap S )}{\op{Area}(H)} \leq m \right\} . 
\eqe 
Then $\wh S_m^z$ is well-defined on the full-probability event that $z$ does not lie on the boundary of any square in $\mcl D$. 
We abbreviate $\wh S_m := \wh S_m^0$. 

Roughly speaking, $\wh S_m^z$ is the largest square containing $z$ which contains at most $m$ cells, but fractional parts of cells which intersect the boundary of the square are also counted. We emphasize that we do \emph{not} restrict to integer values of $m$. 

\begin{lem} \label{lem-dyadic-square-contain}
Almost surely, for each square $S$ in $\mcl D$ which contains $z$ in its interior there is a non-trivial interval $I\subset (0,\infty)$ such that $S = \wh S_m^z$ for all $m\in I$. 
Furthermore, $\wh S_m^z = \wh S_m^w$ whenever $w \in \wh S_m^z\setminus \bdy \wh S_m^z$. 
\end{lem}
\begin{proof}
If $S'$ is one of the four dyadic parents of $S$, then 
\eqbn
 \sum_{H\in \mcl H(S')} \frac{\op{Area}(H\cap S' )}{\op{Area}(H)} >  \sum_{H\in \mcl H(S )} \frac{\op{Area}(H\cap S )}{\op{Area}(H)} .
\eqen
Note that this would not be true if we did not include fractional parts of cells in the sum. The lemma statement is immediate from this observation and the definition~\eqref{eqn-mass-time-def} of $\wh S_m^z$. 
\end{proof}

Dyadic systems allow us to formulate yet another equivalent definition of translation invariance modulo scaling (which is the only version which we will use in our proofs) and also provide a convenient useful tool for proving the equivalence of the conditions in Definition~\ref{def-translation-invariance}. 
 
\begin{lem} \label{lem-dyadic-resample}
Let $\mcl H$ be a random cell configuration. Each of the conditions of Definition~\ref{def-translation-invariance} (with $\mcl H$ in place of $\mcl M$) are equivalent to each other and to the following additional condition. 
\begin{enumerate}
\setcounter{enumi}{4}
\item \textbf{Dyadic system re-sampling property.} Let $\mcl D $ be a uniform dyadic system independent from $\mcl H$ and define $\wh S_m =\wh S_m^0$ for $m > 0$ as in~\eqref{eqn-mass-time-def}. For each $m > 0$, the following is true. Conditional on $\mcl H$ and $\mcl D$, let $z$ be sampled uniformly from Lebesgue measure on $\wh S_m$. Then the translated cell configuration/dyadic system pair $(\mcl H - z , \mcl D - z)$ agrees in law with $(\mcl H ,\mcl D)$ modulo scaling, i.e., there is a random $C>0$, possibly depending on $z$, such that $(C(\mcl H -z) , C(\mcl D-z) ) \eqD (\mcl H ,\mcl D)$.   \label{item-dyadic}
\end{enumerate}   
\end{lem}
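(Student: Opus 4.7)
The equivalence of conditions (1)--(4) is deferred to Appendix~\ref{sec-equivalence}, so the task reduces to establishing (5) $\Leftrightarrow$ (3). The plan is to recognize condition (5) as condition (3) applied to the augmented random object $(\mcl H, \mcl D)$, with the natural block decomposition provided by the squares $\wh S_m^z$.

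For the direction (5) $\Rightarrow$ (3), I would fix any $m > 0$ and take the block decomposition $\mcl B(\mcl H) := \{\wh S_m^z : z \in \BB C\}$ (duplicates removed). The observation just after~\eqref{eqn-mass-time-def} that $\wh S_m^z = \wh S_m^w$ whenever $w \in \wh S_m^z \setminus \bdy \wh S_m^z$ shows that this is a.s.\ a partition of $\BB C$ into closed squares of positive area and finite diameter; the construction may depend on the auxiliary randomness $\mcl D$ (as allowed by condition (3)) and commutes with translations and dilations. The origin-containing block is precisely $\wh S_m$, so projecting the law-equality asserted in (5) onto its $\mcl H$-coordinate yields (3).

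For the reverse direction, I would first promote translation invariance modulo scaling from $\mcl H$ to the pair $(\mcl H, \mcl D)$ by verifying the mass transport formulation (4) for the pair. Given any nonnegative measurable $F$ on pairs with two marked points that is covariant in the sense of~\eqref{eqn-mass-transport-commutation}, let $\mu_{\mcl D}$ denote the law of $\mcl D$ and set
\eqbn
G(\mcl H, w_0, w_1) := \BB E[F((\mcl H, \mcl D), w_0, w_1) \mid \mcl H] = \int F((\mcl H, \mcl d), w_0, w_1)\, d\mu_{\mcl D}(\mcl d).
\eqen
A change of variables $\mcl d \mapsto C(\mcl d - z)$, justified by the translation/dilation invariance of $\mu_{\mcl D}$ from Lemma~\ref{lem-uniform-dyadic}, combined with the covariance of $F$ and the independence of $\mcl D$ from $\mcl H$, then shows that $G$ satisfies the covariance relation~\eqref{eqn-mass-transport-commutation} for $\mcl H$ alone. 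Condition (4) applied to $\mcl H$ gives mass transport for $G$, which by Fubini and the tower property yields mass transport for $F$ on the pair. Finally, the same equivalence of (1)--(4) from Appendix~\ref{sec-equivalence}, applied to $(\mcl H, \mcl D)$ viewed as a single random object in an enlarged configuration space, produces condition (3) for the pair with the block decomposition $\{\wh S_m^z\}$ (which depends only on $\mcl D$ and commutes with translations/dilations). This statement is precisely condition (5).

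The main technical step is the promotion of the mass transport principle from $\mcl H$ to the pair $(\mcl H, \mcl D)$: once the covariance of $G$ is in hand, the remaining assembly is essentially bookkeeping. The subtlety is that $z$ depends on $\mcl D$ (since $\wh S_m$ does), so one cannot directly apply the unconditional invariance of $\mcl D$; instead, one must integrate out $\mcl D$ first while exploiting its translation/dilation invariance to reduce the joint statement to an unconditional one for $\mcl H$ alone.
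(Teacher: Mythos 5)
The central problem is the direction you call (5) $\Rightarrow$ (3). Condition~\ref{item-block} is universally quantified: it asserts the resampling invariance for \emph{every} block-decomposition procedure $\mcl B$ that commutes with translations and dilations, not for some particular one. Your argument exhibits a single decomposition, the dyadic squares $\{\wh S_m^z\}$, and observes that for it the conclusion is just the $\mcl H$-marginal of Condition~\ref{item-dyadic}; that is essentially a restatement of (5), not a proof of (3). (In fact, the construction you describe is exactly how the paper proves the \emph{opposite} implication, Condition~\ref{item-block} $\Rightarrow$ Condition~\ref{item-dyadic}: apply (3) to the block decomposition $\{\wh S_m^z\}$.) The genuine content of (5) $\Rightarrow$ (3) is to handle an \emph{arbitrary} admissible $\mcl B$, and the paper does this by a two-step sampling argument: take a uniform dyadic system independent of $(\mcl H,\mcl B)$, sample $z_m$ uniformly from $\wh S_m$, then $w_m$ uniformly from $B_{z_m}\cap \wh S_m$; one checks that $w_m$ is unconditionally uniform on $\wh S_m$ (so (5) applies to $\mcl H-w_m$), while $\BB P[B_{z_m}\subset \wh S_m]\rta 1$ lets one compare, in total variation, $\mcl H - w_m$ with $\mcl H$ translated by a uniform point of the origin-containing block. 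Nothing in your proposal plays this role.

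The reverse direction also has a gap. Your promotion of the mass transport principle from $\mcl H$ to the pair $(\mcl H,\mcl D)$, by integrating out $\mcl D$ and using Lemma~\ref{lem-uniform-dyadic}, is correct and is in the same spirit as a step the paper actually uses. But you then invoke ``the equivalence of (1)--(4) from Appendix~\ref{sec-equivalence} applied to the enlarged object'' to pass from (4)-for-the-pair to (3)-for-the-pair with dyadic blocks. Within this paper that appeal is circular: the appendix does not contain an independent proof that (1)--(4) are equivalent; its entire content is that each of (1)--(4) is equivalent to Condition~\ref{item-dyadic}, i.e.\ it \emph{is} the proof of the present lemma (and the same remark applies to your opening reduction, since the lemma's assertion that (1)--(4) are mutually equivalent is itself obtained only through (5)). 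Moreover the substantive step you are black-boxing --- getting from a mass-transport identity to the resampling statement --- is precisely where an argument is needed; the paper supplies it directly by applying the mass transport principle for $\mcl H$ alone to the covariant kernel $F(\mcl H,w_0,w_1):=\BB E\big[|\wh S_m^{w_0}|^{-2} G(\mcl H,w_1)\BB 1_{(w_1\in \wh S_m^{w_0})}\,\big|\,\mcl H\big]$ for scale/translation-invariant $G$, which yields the identity $\BB E\big[|\wh S_m|^{-2}\int_{\wh S_m}G(\mcl H,z)\,dz\big]=\BB E[G(\mcl H,0)]$ and hence (5). So while your averaging-over-$\mcl D$ idea is salvageable, the proposal as written neither proves (5) $\Rightarrow$ (3) nor gives a non-circular route to (5).
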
  

The proof of Lemma~\ref{lem-dyadic-resample} proceeds by showing that each of the conditions of Definition~\ref{def-translation-invariance} is equivalent to the dyadic system resampling condition in the lemma statement. The proof is routine in nature, so is deferred to Appendix~\ref{sec-equivalence} to avoiding interrupting the main argument.
We emphasize that the condition given in Lemma~\ref{lem-dyadic-resample} is the only form of translation invariance modulo scaling which is used in the proofs of our main results.

\begin{remark} \label{remark-1d-dyadic-system}
The definition of the uniform dyadic system has an obvious $d$-dimensional
generalization for any integer $d\geq 1$, where squares are replaced by $d$-dimensional cubes, and each
cube has $2d$ dyadic children. We will make use of one-dimensional uniform dyadic systems in Section~\ref{sec-walk-ergodic} of  this paper
when we discuss continuous time random walks indexed by $\BB R$. 
\end{remark}

\subsection{Ergodic averages using dyadic systems}
\label{sec-ergodic-avg}
 
Throughout the rest of this section and the next, we assume that the cell configuration $\mcl H$ satisfies the hypotheses of Theorem~\ref{thm-general-clt}. 
Recall the independent dyadic system $\mcl D$ and the sequence $\{S_k\}_{k\in\BB Z}$ of origin-containing squares. In this subsection we will establish some basic properties of the pair $(\mcl H, \mcl D)$ which we will use frequently in what follows. In particular, we will record an a.s.\ convergence statement for averages over the squares $\wh S_m^0$ of~\eqref{eqn-mass-time-def} (Lemma~\ref{lem-ergodic-avg}) which follows from the backward martingale convergence theorem and our ergodicity hypothesis (Definition~\ref{def-ergodic}). We will then use this to show that a.s.\ the maximal size of the cells of $\mcl H$ which lie at distance at most $r$ from the origin grows sublinearly in $r$ (Lemma~\ref{lem-max-cell-diam}). We will also show that the sum of $\op{diam}(H)^2 \left(  \pi(H)+ \pi^*(H) \right)$ over all cells $H$ which intersect one of the origin-containing squares $S_k$ is typically of order $|S_k|^2$ (Lemma~\ref{lem-cell-sum}). 

\begin{defn} \label{def-dyadic-sigma-algebra}
Recall the squares $\wh S_m = \wh S_m^0$ for $m  >0$ from~\eqref{eqn-mass-time-def}. 
For $m  > 0$, let $\mcl F_m$ be the $\sigma$-algebra generated by the measurable functions $F = F(\mcl H,\mcl D)$ of $(\mcl H,\mcl D)$ which satisfy 
\eqb \label{eqn-dyadic-sigma-algebra}
F\left(C(\mcl H-z) , C(\mcl D-z) \right) = F(  \mcl H,   \mcl D) ,\quad \forall z\in \wh S_m ,\quad \forall C >0 .
\eqe
\end{defn}

The $\sigma$-algebra $\mcl F_m$ encodes $(\mcl H,\mcl D)$, viewed modulo translation within $\wh S_m$ and scaling, but not the location of the origin within the square $\wh S_m$.  

\begin{lem} \label{lem-tail-trivial}
Every event in $\bigcap_{m > 0} \mcl F_m$ has probability zero or one.
\end{lem}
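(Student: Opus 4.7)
The plan is to combine the backward martingale convergence theorem with a Birkhoff-type ergodic average. Since $(\mcl F_m)$ is a decreasing sequence of $\sigma$-algebras, backward martingale convergence gives $E[X|\mcl F_m]\rta E[X|\bigcap_m\mcl F_m]$ a.s.\ for every $X \in L^1$, so to show that $\bigcap_m \mcl F_m$ is trivial it is enough to prove $E[X|\mcl F_m]\rta E[X]$ a.s.\ for every bounded measurable $X = X(\mcl H,\mcl D)$.

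As a preliminary observation, $\wh S_m\nearrow \BB C$ a.s.\ as $m\rta\infty$: by local finiteness of $\mcl H$, for each fixed square $S\in\mcl D$ the sum $\sum_{H \in \mcl H(S)} \area(H\cap S)/\area(H)$ is finite, so $S\subset\wh S_m$ for all large $m$. Consequently every functional measurable with respect to $\bigcap_m \mcl F_m$ is invariant under $(\mcl H,\mcl D)\mapsto (C(\mcl H - z), C(\mcl D - z))$ for every $C > 0$ and $z \in \BB C$.

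Next, I would identify $E[X|\mcl F_m]$ explicitly as a spatial average. By condition~5 of Lemma~\ref{lem-dyadic-resample}, if $z$ is sampled uniformly from $\wh S_m$ given $(\mcl H,\mcl D)$, then there is a random $C_z = C(z,\mcl H,\mcl D) > 0$ with $(C_z(\mcl H - z), C_z(\mcl D - z))\eqD (\mcl H,\mcl D)$. Set
\[
\bar X_m(\mcl H,\mcl D) := \frac{1}{\area(\wh S_m)}\int_{\wh S_m} X\bigl(C_z(\mcl H - z), C_z(\mcl D - z)\bigr)\,dz.
\]
For any $Y\in L^\infty(\mcl F_m)$, pointwise $\mcl F_m$-invariance gives $Y(C_z(\mcl H - z), C_z(\mcl D - z)) = Y(\mcl H,\mcl D)$ a.s., so combining the resampling identity with Fubini yields $E[XY] = E[\bar X_m Y]$, and hence $\bar X_m$ is a version of $E[X|\mcl F_m]$.

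Finally, I would invoke a Birkhoff-type ergodic theorem to show $\bar X_m\rta E[X]$ a.s.\ as $\wh S_m\nearrow \BB C$. Ergodicity modulo scaling of $\mcl H$ (Definition~\ref{def-ergodic}) together with the independence and translation/scale invariance of $\mcl D$ (Lemma~\ref{lem-uniform-dyadic}) gives ergodicity of the joint translation-scale action on the quotient of pairs $(\mcl H,\mcl D)$ modulo scaling, from which convergence of the spatial averages follows. Combined with the backward martingale convergence, $E[X|\bigcap_m \mcl F_m] = E[X]$ a.s.\ for every bounded $X$, so $\bigcap_m \mcl F_m$ is a.s.\ trivial. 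The main obstacle is this ergodic-theorem step, because the joint action is not measure-preserving on the sample space (scaling changes the marginal law of $\mcl H$); the natural fix is to pass to the quotient modulo scaling, or equivalently to discretize the average using the dyadic scales $\{S_k\}_{k\in\BB Z}$.
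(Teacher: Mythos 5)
There is a genuine gap, and it sits exactly where you flag "the main obstacle." Your reduction via backward martingale convergence plus the identification of $\BB E[X\,|\,\mcl F_m]$ as a spatial average over $\wh S_m$ is fine (it is the same mechanism the paper uses later, in Lemma~\ref{lem-ergodic-avg}), but it carries no weight for this lemma: for $X=\BB 1_A$ with $A\in\bigcap_m\mcl F_m$ the martingale is constant, so everything reduces to showing that a jointly translation/dilation-invariant function of $(\mcl H,\mcl D)$ is a.s.\ constant. You assert this follows because "ergodicity modulo scaling of $\mcl H$ together with the independence and translation/scale invariance of $\mcl D$ gives ergodicity of the joint action." That implication is not automatic: a product of an ergodic system with another invariant system need not be ergodic (ergodic $\times$ ergodic can fail to be ergodic), and ergodicity of the joint $(\mcl H,\mcl D)$-action modulo scaling is precisely the content of the lemma you are trying to prove. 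Moreover, within the paper's logical structure your plan is circular: the convergence of the spatial averages to the deterministic constant $\BB E[X]$ (Lemma~\ref{lem-ergodic-avg}) is \emph{deduced from} Lemma~\ref{lem-tail-trivial}, not the other way around. Passing to the quotient modulo scaling, or discretizing along $\{S_k\}$, addresses only the (minor) measure-preservation issue you mention, not this product-ergodicity problem.

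What is missing is the paper's two-step argument. First, after observing (as you do) that any $\bigcap_{m>0}\mcl F_m$-measurable $F$ satisfies $F(C(\mcl H-z),C(\mcl D-z))=F(\mcl H,\mcl D)$ for all $C>0$, $z\in\BB C$, the ergodicity-modulo-scaling hypothesis for $\mcl H$, combined with the independence of $\mcl D$ and the invariance in law of $\mcl D$ under $\mcl D\mapsto C(\mcl D-z)$ (Lemma~\ref{lem-uniform-dyadic}), is used to show that $F$ is a.s.\ determined by $\mcl D$ alone. Second, one must prove that a translation/dilation-invariant functional of the \emph{uniform dyadic system} is a.s.\ constant; this is a separate argument exploiting the explicit construction of $\mcl D$: the successive origin-containing squares are chosen by iterated uniform picks among the four dyadic parents, and for any deterministic dyadic system $\mcl D'$ and any $R>0$ one can find $C,z$ so that $C(\mcl D-z)$ and $\mcl D'$ agree on all origin-containing squares with side lengths in $[1/R,R]$, which forces $F(\mcl D)$ to be deterministic. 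Neither step is supplied by your proposal, so the argument as written does not close.
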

\begin{proof}
If $F =F(\mcl H,\mcl D)$ is a $\bigcap_{m > 0} \mcl F_m$-measurable function, then since the union of the squares $\wh S_m$ is a.s.\ equal to all of $\BB C$, it follows that $F(C(\mcl H-z) , C(\mcl D-z)) = F(\mcl H,\mcl D)$ for every $C>0$ and $z\in\BB C$. 
Let $\frk m$ be the law of $\mcl D$ and let
\eqbn
\wt F = \wt F(\mcl H) :=  \int F(\mcl H , \mcl D) \, d\frk m(\mcl D) 
\eqen
so that $\wt F$ depends only on $\mcl H$. 

By Lemma~\ref{lem-uniform-dyadic}, if $C>0$ and $z\in\BB C$ are random and independent from $\mcl D$ (but allowed to depend on $\mcl H$), then $C(\mcl D -z)$ is a uniform dyadic system independent from $\mcl H$, i.e., $(C(\mcl H-z) , C(\mcl D-z)) \eqD (C(\mcl H-z) , \mcl D)$. From this  and the scale/translation invariance of $F$, we infer that for any such choice of $C$ and $z$, we have $\wt F(C(\mcl H-z)) = \wt F(\mcl H)$. 
Our ergodicity modulo scaling hypothesis for $\mcl H$ (Definition~\ref{def-ergodic}) therefore shows that $\wt F$ is a.s.\ equal to a deterministic constant, i.e., $F$ is a.s.\ determined by $\mcl D$. In fact, by the first paragraph $F$ is a.s.\ determined by $\mcl D$ viewed modulo translation and scaling. 
 
If we are given any arbitrary \emph{deterministic} dyadic system $\mcl D'$ and $R>0$, then we can find $C>0$ and $z\in\BB C$ (depending on $\mcl D$ and $\mcl D'$) such that the squares surrounding the origin in $C(\mcl D-z)$ and $\mcl D'$ with side lengths in $[1/R ,R]$ are identical. Since $R>0$ is arbitrary, a dyadic system is determined by its origin-containing squares, and the successive origin-containing squares in $\mcl D$ are chosen uniformly at random, it follows that $F$ is equal to a deterministic constant a.s. 
\end{proof}

From Lemma~\ref{lem-dyadic-resample} and~\ref{lem-tail-trivial}, we obtain the following ergodicity statement, which will be a key tool in what follows.

\begin{lem} \label{lem-ergodic-avg}
Let $F = F(\mcl H,\mcl D)$ be a measurable function on the space of cell configuration/dyadic system pairs which is scale invariant (i.e., $F(C\mcl H , C\mcl D) = F(\mcl H ,\mcl D)$ for each $C>0$). 
If either $\BB E[|F|] < \infty$ or $F\geq 0$ a.s., then a.s.\ 
\eqb \label{eqn-ergodic-avg}
\lim_{k \rta\infty} \frac{1}{|S_{k}|^2} \int_{S_{k}} F(\mcl H-z , \mcl D-z) \, dz  = \BB E[F]  .
\eqe  
\end{lem}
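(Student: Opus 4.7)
The plan is to exhibit the ergodic averages in~\eqref{eqn-ergodic-avg} as terms of an explicit backward martingale and to identify the almost-sure limit via the tail triviality of $\bigcap_m \mcl F_m$ established in Lemma~\ref{lem-tail-trivial}. Define
\eqbn
G_m := \frac{1}{\op{Area}(\wh S_m)} \int_{\wh S_m} F(\mcl H - z, \mcl D - z) \, dz, \qquad m > 0 ,
\eqen
where $\wh S_m = \wh S_m^0$ is as in~\eqref{eqn-mass-time-def}. First I would check that $G_m$ is $\mcl F_m$-measurable: under the transformation $(\mcl H, \mcl D) \mapsto (C(\mcl H - w), C(\mcl D - w))$ with $w \in \wh S_m$ and $C > 0$, the corresponding mass-$m$ origin-containing square in the transformed dyadic system is $C(\wh S_m - w)$, and the substitution $u = z/C + w$ combined with the scale invariance of $F$ recovers the original integral.

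The main step is the conditional expectation identity $G_m = \BB E[F \mid \mcl F_m]$. For any bounded $\mcl F_m$-measurable test function $\Phi$, the invariance of $\Phi$ under $(\mcl H, \mcl D) \mapsto (\mcl H - z, \mcl D - z)$ for $z \in \wh S_m$ gives
\eqbn
\BB E[\Phi \, G_m] = \BB E\left[ \frac{1}{\op{Area}(\wh S_m)} \int_{\wh S_m} (\Phi F)(\mcl H - z, \mcl D - z) \, dz \right] .
\eqen
Reading the inner integral as the conditional expectation (given $(\mcl H,\mcl D)$) of $(\Phi F)(\mcl H - z, \mcl D - z)$ when $z$ is sampled uniformly from $\wh S_m$, the dyadic resampling property (Lemma~\ref{lem-dyadic-resample}) asserts that $(\mcl H - z, \mcl D - z)$ agrees in law with $(\mcl H, \mcl D)$ modulo scaling, so the scale invariance of both $\Phi$ and $F$ yields $\BB E[\Phi \, G_m] = \BB E[\Phi F]$, proving the identity.

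Since enlarging $\wh S_m$ only imposes more invariance conditions, $\mcl F_m$ is decreasing in $m$ and $\{G_m\}$ is a backward martingale. The backward (Lévy) martingale convergence theorem then gives $G_m \rta \BB E[F \mid \bigcap_m \mcl F_m]$ almost surely (and in $L^1$ when $F$ is integrable), and Lemma~\ref{lem-tail-trivial} identifies the limit with the deterministic constant $\BB E[F]$. As $m$ varies in $(0,\infty)$ the squares $\wh S_m$ pass through the full sequence $\{S_k\}_{k\in\BB Z}$ of origin-containing dyadic squares, and $|S_k|^2 = \op{Area}(S_k)$, so restricting $m$ to the transition values where $\wh S_m = S_k$ yields the stated convergence along $k\rta\infty$. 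For the non-integrable case with $F \geq 0$ and $\BB E[F] = \infty$, I apply the integrable case to the truncations $F_N := F \wedge N$, use the pointwise monotonicity $G_m^F \geq G_m^{F_N}$, and let $N \rta \infty$ by monotone convergence to conclude $\liminf_k |S_k|^{-2} \int_{S_k} F(\mcl H - z, \mcl D - z) \, dz \geq \BB E[F_N] \uparrow \infty$ almost surely.

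\noindent\textbf{Main obstacle.} The main technical point is the martingale identity $G_m = \BB E[F \mid \mcl F_m]$: this is where the dyadic resampling property (Lemma~\ref{lem-dyadic-resample}) is essential, since without it one cannot convert the spatial average over $\wh S_m$ into a genuine conditional expectation. Once this identification together with the $\mcl F_m$-measurability of $G_m$ is in hand, the remaining steps are standard backward martingale theory, the tail triviality of $\bigcap_m \mcl F_m$, and a routine truncation.
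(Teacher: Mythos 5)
Your proposal is correct and follows essentially the same route as the paper: the identity $|\wh S_m|^{-2}\int_{\wh S_m} F(\mcl H-z,\mcl D-z)\,dz = \BB E[F\mid \mcl F_m]$ via the dyadic resampling property (Lemma~\ref{lem-dyadic-resample}), the backward martingale convergence theorem, tail triviality (Lemma~\ref{lem-tail-trivial}), and truncation for the nonnegative non-integrable case. You simply spell out in more detail (measurability of the average and the test-function verification of the conditional expectation identity) what the paper states more briefly.
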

\begin{proof}
Suppose first that $\BB E[|F|] < \infty$. 
By Lemma~\ref{lem-dyadic-resample}, with $\mcl F_m$ as above, it holds for each $m > 0$ that
\eqbn
\frac{1}{|\wh S_m |^2} \int_{\wh S_m} F(\mcl H-z, G-z) \, dz  = \BB E\left[ F \,|\, \mcl F_m \right] .
\eqen
Since $\mcl F_m$ is a decreasing sequence of $\sigma$-algebras and each $S_k$ is one of the $\wh S_m$'s, the backward martingale convergence theorem~\cite[Theorem 5.6.1]{durrett} implies that the limit on the left side of~\eqref{eqn-ergodic-avg} exists (as a random variable) a.s.\ and in $L^1$. The limit is $\bigcap_{m > 0} \mcl F_m$-measurable, so is a.s.\ equal to $\BB E[F]$ by Lemma~\ref{lem-tail-trivial}. 
If $F\geq 0$ a.s.\ and $\BB E[F] = \infty$, then applying the above result with $F\wedge C$ for $C>0$ in place of $F$ gives
\eqbn
\liminf_{k \rta\infty} \frac{1}{|S_{k}|^2} \int_{S_{k}} F(\mcl H-z , \mcl D-z) \, dz
\geq \lim_{k \rta\infty} \frac{1}{|S_{k}|^2} \int_{S_{k}} F(\mcl H-z , \mcl D-z) \wedge C \, dz
= \BB E[F\wedge C] ,
\eqen
which tends to $\infty$ as $C\rta\infty$. 
\end{proof}  

Lemma~\ref{lem-ergodic-avg} will allow us to prove a number of properties of the origin-containing squares $\{S_k\}_{k\in \BB Z}$, but we will also have occasion to consider other squares in $\mcl D$. The following lemma will enable us to do so. For the statement, we emphasize that in our terminology each square $S$ has $4^\el$ dyadic ancestors of side length $2^\el |S|$ for each $\el\in\BB N$.

\begin{lem} \label{lem-more-squares}
Let $E(S) = E(S,\mcl H)$ be an event depending on a square $S \subset \BB C$ and our cell configuration $\mcl H$. 
Suppose that a.s.\ $E(S_k)$ occurs for each large enough $k\in\BB N$. Then for each $\el\in\BB N$, it is a.s.\ the case that for large enough $k\in\BB N$, $E(S)$ occurs for each dyadic ancestor $S$ of $S_k$ of side length at most $2^\el |S_k|$ (even those which do not belong to $\mcl D$) and each dyadic descendant $S$ of $S_k$ of side length at least $2^{-\el} |S_k|$. 
\end{lem}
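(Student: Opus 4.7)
The plan is to handle the ancestors and descendants separately, using different symmetries of the uniform dyadic system $\mcl D$, and then combining via a union bound over the finitely many (namely $O(4^\ell)$) possible relative positions. Thus it suffices to fix a single relative position $\vec j_0$ (either a trajectory of parent choices specifying an ancestor, or an offset specifying a descendant) and show, for the corresponding relative $R_k^{\vec j_0}$ of $S_k$, that a.s.\ $E(R_k^{\vec j_0})$ holds for all sufficiently large $k$.

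For the ancestor case, I would exploit a resampling trick that does not require any invariance property of $E$. Fix $\vec j_0\in\{1,2,3,4\}^\ell$ parameterizing a specific dyadic ancestor of $S_k$ of side length $2^\ell|S_k|$, let $\vec J$ be an independent uniformly distributed random vector in $\{1,2,3,4\}^\ell$, and for each $k$ define a modified dyadic system $\wt{\mcl D}_k$ obtained from $\mcl D$ by replacing the parent choices $u_{k+1},\dots,u_{k+\ell}$ with $\vec J$. Because these replacements still produce an i.i.d.\ uniform sequence of parent choices, $\wt{\mcl D}_k$ is itself a uniform dyadic system and $(\mcl H,\wt{\mcl D}_k)\eqD(\mcl H,\mcl D)$. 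Applying the hypothesis to $\wt{\mcl D}_k$, the level-$(k+\ell)$ origin-containing square of $\wt{\mcl D}_k$ is precisely the ancestor of $S_k$ determined by the trajectory $\vec J$, so $E$ holds on this square; conditioning on $\vec J=\vec j_0$ (independent of $(\mcl H,\mcl D)$) recovers the statement for the specific ancestor along $\vec j_0$.

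For the descendant case, a dyadic descendant $D$ of $S_k$ is already a square of $\mcl D$, but typically does not contain the origin, so the hypothesis does not apply directly. I would appeal instead to the dyadic re-sampling property (Lemma~\ref{lem-dyadic-resample}): sampling $z$ uniformly on a large origin-containing square $\wh S_m\supset S_k$ yields $(\mcl H-z,\mcl D-z)\eqD(\mcl H,\mcl D)$ modulo scaling. Combined with the translation- and scale-equivariance satisfied by the events $E$ in the intended applications of the lemma, the hypothesis transfers to give $E$ on the level-$j$ square of $\mcl D$ containing $z$; as $z$ ranges over $\wh S_m$ this produces every dyadic descendant of $S_k$ at level $k-j$ for $0\leq j\leq\ell$.

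The main obstacle, in both cases, is upgrading the direct output of the constructions above (which gives $\BB P[E(R_k^{\vec j_0})]\to 1$ as $k\to\infty$) to the full almost-sure statement that $E(R_k^{\vec j_0})$ holds for \emph{every} sufficiently large $k$. The plan for dealing with this is to choose the auxiliary randomness (the resampling vector $\vec J^{(k)}$ for ancestors, or the shift point $z^{(k)}$ for descendants) independently across $k$, and to invoke the fully almost-sure form of the hypothesis together with a Borel--Cantelli-type argument, exploiting the fact that the modified systems $\wt{\mcl D}_k$ differ from $\mcl D$ only at a bounded number of levels and so their stabilization thresholds for $E$ can be controlled uniformly in $k$.
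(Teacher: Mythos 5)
Your reductions to a fixed relative position and your resampling constructions are fine as far as they go, but they only deliver $\BB P[E(R_k^{\vec j_0})]\to 1$, and the step you yourself flag as ``the main obstacle'' --- upgrading to the almost-sure statement that $E$ holds on \emph{all} bounded-ratio ancestors and descendants of $S_k$ for every sufficiently large $k$ --- is exactly where the proposal breaks down. A Borel--Cantelli argument needs summability of the failure probabilities, and the hypothesis is purely qualitative: you know $\BB P[\exists j\geq k:\ E(S_j)^c]\to 0$ but have no rate, so independence of the auxiliary randomness across $k$ buys you nothing. Moreover, the claim that the stabilization thresholds of the modified systems $\wt{\mcl D}_k$ ``can be controlled uniformly in $k$'' because $\wt{\mcl D}_k$ differs from $\mcl D$ only at the levels $k+1,\dots,k+\ell$ is not correct: changing those parent choices translates every square of the system at all higher levels, so the threshold for $\wt{\mcl D}_k$ is a genuinely different random variable for each $k$; it is equal in law to the threshold for $\mcl D$, which again only yields convergence in probability, not a uniform or eventual a.s.\ bound.

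The paper closes this gap with a stopping-time argument that your proposal is missing. For ancestors, one defines $\mcl K_k$ to be the \emph{first} scale $k'\geq k$ at which some ancestor of $S_{k'}$ of side length at most $2^\el|S_{k'}|$ is bad; this is a stopping time for the filtration generated by $\mcl H$ and $\{S_j\}_{j\leq k'}$, and conditionally on the data up to time $\mcl K_k$ the next $\el_*\leq\el$ parent choices are uniform, so with conditional probability at least $4^{-\el}$ the origin-containing sequence itself passes through the bad square at level $\mcl K_k+\el_*$. Hence $\BB P[\mcl K_k<\infty]\leq 4^\el\,\BB P[E(S_j)^c \text{ for some } j\geq k]\to 0$, and since the events $\{\mcl K_k=\infty\}$ are increasing in $k$ this gives the a.s.\ eventual statement with no rate needed. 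Descendants are handled symmetrically with a reverse stopping time and the re-sampling property of Lemma~\ref{lem-dyadic-resample} (conditionally on $\mcl F_{M_m}$, the origin-containing descendant at each relative level is uniform among the $4^r$ choices). The essential idea, absent from your plan, is to compare the \emph{first} bad square to the origin-containing sequence via conditional uniformity at a stopping time, rather than to resample a fixed relative position independently at every $k$. (Your remark that the descendant direction uses translation/scale equivariance of $E$ is reasonable --- the paper's reverse-filtration step implicitly relies on the same covariance, which holds in all intended applications --- but it does not repair the missing upgrade.)
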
 
\begin{proof}
We first consider dyadic ancestors. 
For $k \in \BB N$, let $\mcl K_k$ be the smallest $k'\geq k$ for which $E(S)^c$ occurs for at least one of the dyadic ancestor $S$ of $S_{k'}$ of side length at most $2^\el |S_{k'}|$, or $\mcl K_k =\infty$ if no such $k'$ exists. We want to show that $\BB P[\mcl K_k=\infty] \rta 1$ as $k\rta\infty$. 

We observe that $\mcl K_k$ is a stopping time for the filtration generated by $\mcl H$ and $\{S_j\}_{j\leq k'}$, so if $\mcl K_k < \infty$ and we condition on $\mcl H$ and $\{S_j\}_{j\leq k'}$ then for each $r\in\BB N$ the square $S_{\mcl K_k + r}$ is sampled uniformly from the $4^r$ possibilities. On the event $\{\mcl K_k < \infty\}$, let $S_* $ be a dyadic ancestor of $S_{\mcl K_k}$ with side length at most $2^\el |S_{\mcl K_k}|$ for which $E(S_*)^c$ occurs, chosen in a manner depending only on $\mcl H$ and $\{S_j\}_{j\leq \mcl K_k}$ and let $\el_* \in [1,\el]_{\BB Z}$ be chosen so that $|S_*| = 2^{\el_*} |S_{\mcl K_k}|$. Then
\eqbn
  \BB P\left[ \mcl K_k < \infty \right] \leq \frac{  \BB P\left[ E(S_{\mcl K_k + \el_* })^c \right]  }{ \BB P\left[  S_{k+\el_*} = S_* \,|\,  \mcl K_k < \infty     \right] } \leq 4^\el \BB P\left[  E(S_{\mcl K_k + \el_* })^c \right] , 
\eqen
which tends to $0$ as $k\rta\infty$ since a.s.\ $E(S_k)$ occurs for each large enough $k$. 

Next we consider dyadic descendants. We will use the re-sampling property of Lemma~\ref{lem-dyadic-resample}.  
For $m\in\BB N$, let $M_m$ be the largest $m' \leq m$ for which $E(S)^c$ occurs for at least one dyadic descendant $S$ of $\wh S_{m'}$ of side length at least $2^{-\el} |\wt S_{m'}|$, or $M_m=-\infty$ if there is no such $m'$. We want to show that a.s.\ $\limsup_{m\rta\infty} M_m < \infty$, equivalently a.s.\ $\limsup_{m\rta\infty} K_{M_m} < \infty$. 

For $m > 0$, let $K_m \in\BB Z$ be the largest $k\in\BB Z$ for which $\wh S_m  = S_k$. It suffices to show that $\BB P[K_{M_m} > k] \rta 0$ as $k\rta\infty$, uniformly over all $m > 0$. 
Each $M_m$ is a reverse stopping time for the filtration of Definition~\ref{def-dyadic-sigma-algebra}. By Lemma~\ref{lem-dyadic-resample}, if we condition on $\mcl F_{M_m}$ then for each $r \in\BB N$, the square $S_{K_{M_m} - r}$ is sampled uniformly from the $4^r$ dyadic descendants of $S_{K_{M_m}}$ with side length $2^{-r} |S_{K_{M_m}}|$. As above, on the event $\{M_m \not=-\infty\}$ we let $S_*$ be a dyadic descendant of $S_{K_{M_m}} = \wh S_{M_m}$ with side length at least $2^{-\el} |\wh S_{M_m}|$ for which $E(S_*)^c$ occurs, chosen in a $\mcl F_m$-measurable manner and we let $\el_* \in [1,\el]_{\BB Z}$ be chosen so that $|S_*| = 2^{-\el_*} |\wh S_{M_m}|$. Then 
\eqbn
\BB P\left[  K_{M_m} > k\right] \leq \frac{ \BB P\left[ E(S_{K_{M_m} -  \el_* })^c ,\, K_{M_m} > k \right] }{  \BB P\left[ S_{K_{M_m} - \el_*} = S_* \,|\, K_{M_m} > k   \right]  } \leq 4^\el \BB P\left[ E(S_{K_{M_m} -  \el_* })^c ,\, K_{M_m} > k \right] ,
\eqen
which tends to 0 as $k\rta\infty$, uniformly in $m$, since a.s.\ $E(S_k)$ occurs for large enough $k$. Since $K_{M_m}$ is non-decreasing in $m$, this implies that a.s.\ $\limsup_{m\rta\infty} K_{M_m}  <\infty$. 
\end{proof}

In what follows, we will apply Lemma~\ref{lem-ergodic-avg} with
\eqb \label{eqn-area-diam-deg}
F(\mcl H,\mcl D) = \frac{\op{diam}(H_0)^2 }{\op{Area}(H_0) } \left(  \pi(H_0) + \pi^*(H_0)  \right) .
\eqe
The random variable $F$ has finite expectation by hypothesis. We first establish that the maximum cell size of $\mcl H$ grows sublinearly in the distance to the origin.

\begin{lem} \label{lem-max-cell-diam}
Almost surely, for each $\ep \in (0,1)$ it holds for large enough $k\in\BB N$ that
\eqb \label{eqn-max-cell-diam}
\op{diam}(H) \leq \ep |S_k| ,\quad \forall H \in \mcl H(S_k) .
\eqe 
\end{lem}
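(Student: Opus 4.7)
The plan is to apply Lemma~\ref{lem-ergodic-avg} to a scale-invariant, integrable ``diameter squared per unit area'' observable, use a truncation together with a monotonicity argument to derive a vanishing density bound for large-diameter cells, propagate the bound to dyadic ancestors of $S_k$ via Lemma~\ref{lem-more-squares}, and then argue by contradiction.

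Concretely, set
\eqbn
F(\mcl H, \mcl D) := \frac{\op{diam}(H_0)^2 \left(\pi(H_0) + \pi^*(H_0)\right)}{\op{Area}(H_0)} .
\eqen
This is scale invariant and has finite expectation by Hypothesis~\ref{item-hyp-moment} of Theorem~\ref{thm-general-clt}; moreover $\pi(H_0) + \pi^*(H_0) \geq 2\sqrt{\pi(H_0)\pi^*(H_0)} \geq 2\deg(H_0) \geq 2$, where the middle inequality is Cauchy-Schwarz and the last follows from Hypothesis~\ref{item-hyp-adjacency} (any horizontal or vertical line through $H_0$ must meet some other cell since $H_0$ is compact, and then connectedness along lines forces $H_0$ to have a neighbor). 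Applying Lemma~\ref{lem-ergodic-avg} to the truncated scale-invariant variant $F_\delta := F \cdot \mathbbm{1}\{\op{diam}(H_0) \geq \delta |S_0|\}$ (which is bounded by $F$ and satisfies $\BB E[F_\delta] \to 0$ as $\delta \to \infty$ by dominated convergence), and using monotonicity in $\delta$ together with the fact that $\ep |S_k| = \ep \cdot 2^s \cdot 2^k$ exceeds every fixed $\delta \cdot 2^s$ for large $k$, one concludes that for each $\ep > 0$, a.s.
\eqbn
\frac{1}{|S_k|^2} \sum_{\substack{H \in \mcl H(S_k) \\ \op{diam}(H) \geq \ep |S_k|}} \frac{\op{diam}(H)^2 (\pi(H) + \pi^*(H))}{\op{Area}(H)} \op{Area}(H \cap S_k) \longrightarrow 0 .
\eqen

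Next, for each fixed $\el \in \BB N$ Lemma~\ref{lem-more-squares} extends the same vanishing bound to every dyadic ancestor of $S_k$ of side length at most $2^\el |S_k|$ (including those not lying in $\mcl D$). Given a putative $H \in \mcl H(S_k)$ with $\ep |S_k| \leq \op{diam}(H) \leq 2^{\el-2} |S_k|$, an elementary grid argument singles out a dyadic ancestor $S^*$ of $S_k$ of side $2^\el |S_k|$ with Euclidean distance from $S_k$ to $\partial S^*$ at least $2^{\el-2} |S_k|$, so that $H \subset S^*$. The contribution of $H$ to the sum for $S^*$ is then $\op{diam}(H)^2 (\pi(H) + \pi^*(H)) \geq 2 \ep^2 |S_k|^2$, contradicting the vanishing of that sum divided by $|S^*|^2 = 4^\el |S_k|^2$ once $k$ is large (depending on $\ep$ and $\el$).

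The main obstacle is ruling out cells whose diameter is much larger than any fixed multiple of $|S_k|$, since Lemma~\ref{lem-more-squares} only extends the bound to ancestors of fixed relative size $2^\el$. I would address this by letting $\el$ grow slowly with $k$: the a.s.\ finite thresholds $k_0(\el)$ produced by Lemma~\ref{lem-more-squares} allow one to set $\el(k) := \max\{\el \in \BB N : k_0(\el) \leq k\}$, which tends to infinity, and the previous case argument then rules out all cells with $\op{diam}(H) \leq 2^{\el(k)-2}|S_k|$. For any residual ``huge'' cell, one instead picks a (now $k$-dependent) dyadic ancestor of $S_k$ large enough to contain it, and the same contribution-versus-vanishing-sum comparison (again using $\pi + \pi^* \geq 2$) produces a contradiction, completing the argument.
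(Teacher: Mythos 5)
Your overall strategy---averaging the scale-invariant observable $\op{diam}(H_0)^2(\pi(H_0)+\pi^*(H_0))/\op{Area}(H_0)$ via Lemma~\ref{lem-ergodic-avg}, truncating, and propagating bounds to dyadic ancestors via Lemma~\ref{lem-more-squares}---is the same as the paper's, but two steps fail as written. First, your truncation $F_\delta = F\cdot \BB 1\{\op{diam}(H_0)\ge \delta|S_0|\}$ is \emph{not} scale invariant: under $(\mcl H,\mcl D)\mapsto (C\mcl H, C\mcl D)$ the diameter scales by $C$ while $|S_0|$ stays in $[1,2]$ (it becomes $2^{s'}$ with $s'$ the fractional part of $s+\log_2 C$), so your indicator compares $\op{diam}(H_0)$ to an essentially absolute unit length. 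Lemma~\ref{lem-ergodic-avg} needs scale invariance precisely because Lemma~\ref{lem-dyadic-resample} only gives invariance in law modulo a random scaling; for an absolute-threshold functional there is no reason the spatial averages over $S_k$ converge to $\BB E[F_\delta]$ (if such convergence held for all absolute-scale functionals, the environment would essentially be honestly translation invariant, which is exactly what is not assumed). The paper's proof is engineered around this point: it truncates on the scale-invariant event $\{M_\ep > m_\ep\}$, where $M_\ep$ compares $\op{diam}(H_0)$ to the side lengths of the mass-indexed squares $\wh S_m$, and the containment $\wh S^z_{m_\ep}\subset \wh S_m$ for $z\in\wh S_m$, $m\ge m_\ep$ is what converts the event $\{\op{diam}(H_z)>\ep|\wh S_m|\}$ into the truncation event for the translated pair. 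Replacing your threshold $\delta|S_0|$ by, say, $\delta|\wh S_1^z|$ restores scale invariance but destroys your monotonicity step, so the repair genuinely requires the $M_\ep/m_\ep$ device rather than a cosmetic change.

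Second, the ``huge cell'' case at the end is not actually handled. Your $\el(k)\to\infty$ device yields the vanishing-sum bound only for ancestors of $S_k$ of relative side length at most $2^{\el(k)}$, whereas a residual cell with $\op{diam}(H)>2^{\el(k)-2}|S_k|$ by definition needs an ancestor of strictly larger relative size; for such a $k$-dependent ancestor no bound has been established, so the proposed ``contribution-versus-vanishing-sum comparison'' has nothing to compare against (nothing relates the maximal diameter of cells meeting $S_k$ to the random thresholds $k_0(\el)$ defining $\el(k)$). The paper resolves this by a different maneuver: for fixed $k$ it takes the minimal $k'\ge k$ such that every cell meeting $S_k$ has diameter below $\ep|S_{k'}|$, notes that each such cell then lies in one of the $16$ dyadic ancestors of $S_{k'}$ of side $4|S_{k'}|$ (a \emph{fixed} relative size, so Lemma~\ref{lem-more-squares} applies for all large $k'$, hence for this random $k'\ge k$ once $k$ is large), and contradicts the minimality of $k'$ unless $k'=k$. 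Some version of this ``go up to the scale at which the cell is small'' argument is needed; working only with ancestors of $S_k$ of controlled relative size cannot reach arbitrarily large cells. (A smaller bookkeeping point: when you compare a cell with $\op{diam}(H)\ge\ep|S_k|$ to the sum over an ancestor $S^*$ of side $2^{\el}|S_k|$, the indicator in the vanishing statement must be taken at threshold $\ep 2^{-\el}|S^*|$, i.e.\ you must invoke the statement with a smaller $\ep$, as the paper does with $\ep/16$.)
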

\begin{proof}
For $\ep  \in (0,1)$, let
\eqbn
M_\ep = M_\ep(\mcl H ,\mcl D) := \min\left\{  m > 0 :  \op{diam}(H_0) \leq \ep |\wh S_m| \right\}  .
\eqen
Since $F$, defined as in~\eqref{eqn-area-diam-deg}, has finite expectation there is a deterministic $m_\ep \in \BB N$ such that $\BB E[F \BB 1_{( M_\ep > m_\ep )} ] \leq \ep^3$. Note that $F\BB 1_{(M_\ep  >m_\ep )}$ is a scale invariant random variable in the sense of Lemma~\ref{lem-ergodic-avg}. By Lemma~\ref{lem-ergodic-avg} applied with $F \BB 1_{(M_\ep > m_\ep )}$ in place of $F$, we get that a.s.\ 
\eqb  \label{eqn-max-cell-truncate}
\limsup_{m \rta\infty} \frac{1}{|\wh S_m|^2} \int_{\wh S_m}  \frac{\op{diam}(H_z)^2}{\op{Area}(H_z)   } \left( \pi(H_z) + \pi^*(H_z) \right)    \BB 1_{\left( \op{diam}(H_z) > \ep |\wh S_m| \right)}  \, dz \leq \ep^3 
\eqe 
where here we use that $ \op{diam}(H_z) > \ep |\wh S_m |  $ implies $ M_\ep(\mcl H-z,\mcl D-z)  > m_\ep $ whenever $m\geq m_\ep$ and $z\in \wh S_m$. 
By breaking up the integral over $\wh S_m$ into the sum of the integrals over $H \cap \wh S_m$ for cells $H\in\mcl H(\wh S_m)$, then dropping the boundary terms, and using that $\pi(H_z) + \pi^*(H_z) \geq 1$, we see that~\eqref{eqn-max-cell-truncate} implies that
\eqb \label{eqn-max-sum-interior} 
\limsup_{m \rta\infty }  \sum_{  H\in \mcl H(\wh S_m) \setminus \mcl H(\bdy \wh S_m)}    \frac{\op{diam}(H)^2}{|\wh S_m|^2}    \BB 1_{\left(\op{diam}(H) > \ep |\wh S_m|\right)}              \leq  \ep^3  .
\eqe 
Each non-zero summand on the left side of~\eqref{eqn-max-sum-interior} is at least $\ep^2$, so it follows from~\eqref{eqn-max-sum-interior} (and the fact that each $S_k$ is one of the $\wh S_m$'s) that a.s.\ for large enough $k \in \BB N$, 
\eqb \label{eqn-max-cell-interior} 
\op{diam}(H) \leq \ep |S_{k}|  ,\quad \forall H\in \mcl H(S_{k}) \setminus \mcl H(\bdy S_{k})  .
\eqe 

It remains to deal with cells which intersect $\bdy S_k$. For this purpose, consider a fixed $k\in\BB N$ and let $k'$ be the smallest integer $\geq k$ for which each cell in $\mcl H(S_{k'})$ has diameter smaller than $\ep |S_{k}| $ (such a $k'$ exists since each cell is compact). 
We claim that a.s.\ $k' = k$ for large enough $k$, which will show~\eqref{eqn-max-cell-diam}. 

Indeed, since $\ep < 1$ each $H\in\mcl H(S_{k})$ is contained in one of the 16 dyadic ancestors of $S_{k'}$ with side length $4|S_{k'}|$. 
By~\eqref{eqn-max-cell-interior} (applied with $\ep /16$ in place of $\ep$) together with Lemma~\ref{lem-more-squares}, it is a.s.\ the case that for large enough $k \in \BB N$, each cell $H$ which is contained in any one of these 16 dyadic ancestors has diameter at most $(\ep/4) |S_{k'}|$. 
By the definition of $k'$, if $k'\not=k$ then there is some cell in $\mcl H(S_{k}) \subset \mcl H(S_{k'})$ with diameter at least $(\ep/2) |S_{k'}|$. Therefore a.s.\ $k' = k$ for large enough $k$.  
\end{proof}

\begin{lem} \label{lem-cell-sum}
There is a deterministic constant $C >0$ such that a.s.\ 
\eqb \label{eqn-cell-sum}
\limsup_{k \rta\infty} \frac{1}{|S_{k}|^2} \sum_{H\in \mcl H(S_{k})} \op{diam}(H)^2 \left(  \pi(H)+ \pi^*(H) \right)  \leq C .
\eqe 
\end{lem}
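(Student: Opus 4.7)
The plan is to apply Lemma~\ref{lem-ergodic-avg} to the scale-invariant function
$$F(\mcl H, \mcl D) := \frac{\op{diam}(H_0)^2}{\op{Area}(H_0)}\bigl(\pi(H_0) + \pi^*(H_0)\bigr),$$
which has finite expectation $C_0 := \BB E[F] < \infty$ by hypothesis~\eqref{eqn-hyp-moment}. Since $F$ only depends on the origin-containing cell, translating by $z$ gives $F(\mcl H - z, \mcl D - z) = \frac{\op{diam}(H_z)^2}{\op{Area}(H_z)}(\pi(H_z) + \pi^*(H_z))$, which is constant on each cell $H$. Partitioning $S_k$ according to the cells that meet it yields
$$\int_{S_k} F(\mcl H - z, \mcl D - z)\, dz = \sum_{H \in \mcl H(S_k)} \frac{\op{diam}(H)^2 \bigl(\pi(H) + \pi^*(H)\bigr)}{\op{Area}(H)}\, \op{Area}(H \cap S_k).$$
Lemma~\ref{lem-ergodic-avg} shows this is $\leq 2 C_0 |S_k|^2$ for all large $k$, a.s. Restricting to cells $H$ with $H \subset S_k$, where $\op{Area}(H \cap S_k) = \op{Area}(H)$ and the summand simplifies to $\op{diam}(H)^2(\pi(H) + \pi^*(H))$, this gives the desired bound for the contribution of cells fully contained in $S_k$.

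To extend the bound to cells in $\mcl H(S_k)$ that cross $\bdy S_k$, I would first apply Lemma~\ref{lem-max-cell-diam} with $\ep = 1/2$ to conclude that a.s.\ for large $k$, every cell of $\mcl H(S_k)$ has diameter at most $|S_k|/2$, so it is contained in any dyadic ancestor $T$ of $S_k$ with $|T| = 4|S_k|$ in which $S_k$ lies at Euclidean distance at least $|S_k|$ from $\bdy T$. Such an ancestor $T$ exists: one can take a parent of a parent of $S_k$ chosen so that $S_k$ occupies an ``inner'' quadrant at both levels (this $T$ need not itself lie in $\mcl D$). Next, I would invoke Lemma~\ref{lem-more-squares} on the event
$$E(S) := \Bigl\{ \tfrac{1}{|S|^2} \sum_{H \in \mcl H(S),\, H \subset S} \op{diam}(H)^2 \bigl(\pi(H) + \pi^*(H)\bigr) \leq 2 C_0 \Bigr\},$$
which by the first step holds a.s.\ for $S = S_k$ when $k$ is large. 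Since $T$ is a dyadic ancestor of $S_k$ of bounded relative size, Lemma~\ref{lem-more-squares} transfers $E(T)$ to large $k$ as well. Combining the two steps, for large $k$ every $H \in \mcl H(S_k)$ lies in $T$, so
$$\sum_{H \in \mcl H(S_k)} \op{diam}(H)^2 \bigl(\pi(H) + \pi^*(H)\bigr) \leq \sum_{H \in \mcl H(T),\, H \subset T} \op{diam}(H)^2 \bigl(\pi(H) + \pi^*(H)\bigr) \leq 2 C_0 |T|^2 = 32 C_0 |S_k|^2,$$
and the lemma holds with $C = 32 C_0$.

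The main obstacle is precisely handling the cells that straddle $\bdy S_k$. The ergodic average of $F$ over $S_k$ weights each cell $H$ by $\op{Area}(H \cap S_k)/\op{Area}(H)$, a quantity that can be arbitrarily small for boundary cells, so direct integration does not control their contribution; meanwhile, \emph{a priori} such cells could have diameter comparable to $|S_k|$ and contribute a macroscopic amount to the sum. The sublinearity from Lemma~\ref{lem-max-cell-diam} confines every boundary cell to a small enlargement of $S_k$, and Lemma~\ref{lem-more-squares} is exactly what is needed to promote the ergodic average bound from the origin-containing dyadic squares in $\mcl D$ to the nearby dyadic ancestor $T$, closing the argument.
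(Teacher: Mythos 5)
Your proof is correct and follows essentially the same route as the paper: apply Lemma~\ref{lem-ergodic-avg} to $F = \op{diam}(H_0)^2(\pi(H_0)+\pi^*(H_0))/\op{Area}(H_0)$, decompose the integral over $S_k$ into cells to control the cells contained in $S_k$, and then use Lemma~\ref{lem-max-cell-diam} (with $\ep=1/2$) together with Lemma~\ref{lem-more-squares} (with $\el=2$) to absorb the boundary-straddling cells into a dyadic ancestor of side length $4|S_k|$. The only differences are cosmetic (your explicit choice of a single ``centered'' ancestor $T$ and the resulting constant $32C_0$ versus the paper's $16C_0$).
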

\begin{proof}
By Lemma~\ref{lem-ergodic-avg} applied with $F$ as in~\eqref{eqn-area-diam-deg}, we find that there is a deterministic, finite constant $C_0 >0$ such that a.s.\
\eqbn
\lim_{k \rta\infty} \frac{1}{|S_{k}|^2} \int_{S_{k}} \frac{\op{diam}(H_z)^2}{\op{Area}(H_0)  }  \left(  \pi(H_z)+ \pi^*(H_z) \right)    \, dz   = C_0 .
\eqen  
By breaking up the integral over $S_k$ into the sum of the integrals over $H \cap S_k$ for cells $H\in\mcl H(S_k)$, then dropping the boundary terms, we get that a.s.
\eqb \label{eqn-cell-sum-interior} 
\limsup_{k\rta\infty } \frac{1}{|S_k|^2} \sum_{  H\in \mcl H(S_k) \setminus \mcl H(\bdy S_k)}    \op{diam}(H )^2   \left(  \pi(H )+ \pi^*(H ) \right)   \leq C_0 .
\eqe 
To deal with the cells which intersect $\bdy S_k$, we observe that Lemma~\ref{lem-max-cell-diam} (with $\ep = 1/2$, say) implies that a.s.\ for large enough $k\in\BB N$ each cell in $\mcl H(S_k)$ is contained in the interior of an appropriate dyadic ancestor of $S_k$ of side length $4|S_k|$. By~\eqref{eqn-cell-sum-interior} and Lemma~\ref{lem-more-squares} (applied with $\el=2$) we find that~\eqref{eqn-ergodic-avg} holds a.s.\ with $C = 16 C_0$. 
\end{proof}

\subsection{Existence of the corrector} 
\label{sec-corrector-existence}

In this subsection we will construct the discrete harmonic function $\phi_\infty$ appearing in Theorem~\ref{thm-general-tutte} as a limit of functions $\phi_m : \mcl H\rta \BB C $ for $m\in\BB N_0$ which interpolate between the function $\phi_0$ of Theorem~\ref{thm-general-tutte} (which sends each cell to its Euclidean center) at $m=0$ and the discrete harmonic function $\phi_\infty$ as $m\rta\infty$. 
Roughly speaking, $\phi_m$ will agree with the a priori embedding on the cells which intersect boundary of each square of $\mcl D$ which contains approximately $m$ cells of $\mcl H$; and will be discrete harmonic on the cells contained in the interior of each such square. 

Throughout this section we define the square $\wh S_m^z$ for $z\in\BB C$ and $m > 0$ as in~\eqref{eqn-mass-time-def} and we define the collection of squares
\eqb \label{eqn-dyadic-tiling}
\mcl S_m := \left\{ \wh S_m^z : z\in\BB C ,\: \text{$z$ is not on the boundary of a square in $\mcl D$} \right\} \subset\mcl D. 
\eqe 
If $z' $ is in the interior of $ \wh S_m^z$, then $\wh S_m^{z'} = \wh S_m^z$, so the squares in $\mcl S_m$ intersect only along their boundaries. Clearly, these squares cover all of $\BB C$. 
See Figure~\ref{fig-tiling} for an illustration of these squares.

\begin{figure}[t!]
 \begin{center}
\includegraphics[scale=.65]{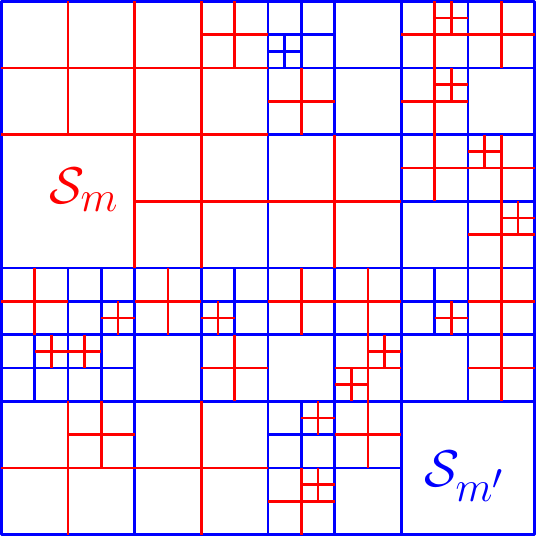}
\vspace{-0.01\textheight}
\caption{The collections of squares $\mcl S_m$ and $\mcl S_{m'}$ of $\mcl D$ for $m < m'$. The squares of $\mcl S_m$ (resp.\ $\mcl S_{m'}$) are the minimal squares with red and/or blue (resp.\ blue) boundary and each contain exactly $m$ (resp.\ $m'$) cells of $\mcl H$ if we count fractional parts of cells.  
}\label{fig-tiling}
\end{center}
\vspace{-1em}
\end{figure} 

We now use the set of squares $\mcl S_m$ to define the above-mentioned functions $\phi_m : \mcl H\rta \BB C $ for $m\in\BB N_0$.  
As in Theorem~\ref{thm-general-tutte}, let
$\phi_0(H) := \frac{1}{\op{Area}(H)} \int_H z \,dz $.
For $m\in\BB N$, let $\phi_m :   \mcl H   \rta \BB C$ be the function which agrees with $\phi_0$ on $\mcl H(\bdy \wh S)$ for each $\wh S \in \mcl S_m$ and is $\frk c$-discrete harmonic on $\mcl H(\wh S)\setminus \mcl H(\bdy \wh S)$ for each such $\wh S$. 

We observe that if $C>0$ and $z\in\BB C$ and we replace $(\mcl H,\mcl D)$ by the scaled/translated pair $(C (\mcl H-z) , C(\mcl D -z))$, then the corresponding functions $\phi_m$ are replaced by $C ( \phi_m(\cdot / C + z) -z ) $. This will be important when we apply Lemma~\ref{lem-ergodic-avg}. 

The main goal of this subsection is to establish the following proposition. 

\begin{prop} \label{prop-corrector}
There exists a $\frk c$-discrete harmonic function $\phi_\infty : \mcl H\rta\BB C$ such that 
\eqb \label{eqn-corrector-conv}
\phi_m - \phi_m(H_0) \rta \phi_\infty \quad\text{in probability as $m\rta \infty $},
\eqe 
with respect to the topology of uniform convergence on finite subsets of $\mcl H$, i.e., for each finite set $\mcl A\subset \mcl H$ (chosen in some measurable manner) and each $\ep > 0$,
\eqbn
\lim_{m\rta\infty} \BB P\left[  \max_{H\in \mcl A} |\phi_m(H) - \phi_m(H_0) - \phi_\infty(H)| > \ep \right]  = 0.
\eqen
Furthermore, $\phi_\infty$ is covariant with respect to scaling and translation in the sense that if $C>0$ and $z\in\BB C$ and we replace $(\mcl H,\mcl D)$ by the scaled/translated pair $(C (\mcl H-z) , C(\mcl D-z))$, then $\phi_\infty$ is replaced by $C ( \phi_\infty(\cdot / C + z) - \phi_\infty(H_z) ) $.
\end{prop}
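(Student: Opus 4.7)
The plan is to establish that the sequence $\{\phi_m - \phi_m(H_0)\}_{m\in\BB N}$ is Cauchy in probability with respect to uniform convergence on finite subsets of $\mcl H$, and then define $\phi_\infty$ as its limit. Once convergence is established, $\frk c$-discrete harmonicity of $\phi_\infty$ is immediate: for any fixed $H \in \mcl H$, once $m$ is large enough that $H$ lies in the interior of some $\wh S \in \mcl S_m$ (which happens eventually since $|\wh S_m|\rta\infty$ and since cell diameters are sublinear in the distance to the origin by Lemma~\ref{lem-max-cell-diam}), the function $\phi_m$ is $\frk c$-discrete harmonic at $H$, and pointwise limits of discrete harmonic functions are discrete harmonic. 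The covariance of $\phi_\infty$ under rescaling and translation is inherited from the analogous covariance of each $\phi_m - \phi_m(H_0)$ at finite $m$, which is a direct computation from the construction.

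The heart of the argument is an orthogonal decomposition of Dirichlet energies. For $m' > m$, the partition $\mcl S_{m'}$ is coarser than $\mcl S_m$, so each $\wh S' \in \mcl S_{m'}$ is a union of squares $\wh S \in \mcl S_m$. Both $\phi_m$ and $\phi_{m'}$ take the value $\phi_0$ on $\mcl H(\bdy \wh S')$, and $\phi_{m'}$ minimizes Dirichlet energy subject to these boundary values, so the discrete Dirichlet principle yields
\[
\op{Energy}(\phi_m; \mcl H(\wh S')) = \op{Energy}(\phi_{m'}; \mcl H(\wh S')) + \op{Energy}(\phi_m - \phi_{m'}; \mcl H(\wh S')).
\]
To bound the left-hand side uniformly in $m$, I would use the anticipated Lemma~\ref{lem-square-energy} (which estimates $\op{Energy}(\phi_m; \mcl H(\wh S))$ by $\sum_{H \in \mcl H(\wh S)} \op{diam}(H)^2 \pi(H)$, i.e., the energy of $\phi_0$ which $\phi_m$ is constrained to match on $\bdy\wh S$), combined with Lemma~\ref{lem-cell-sum}, giving $\op{Energy}(\phi_m; \mcl H(S_k)) \leq C |S_k|^2$ eventually with $C$ independent of $m$. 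Applying Lemma~\ref{lem-ergodic-avg} to the scale-invariant specific energy density of $\phi_m$ then yields almost-sure convergence $|S_k|^{-2}\op{Energy}(\phi_m;\mcl H(S_k)) \rta \mathcal{E}_m$ for a deterministic $\mathcal{E}_m \in [0,\infty)$. Summing the orthogonality identity over $\wh S' \in \mcl S_{m'}$ intersecting $S_k$, and controlling the boundary contributions via Lemmas~\ref{lem-max-cell-diam} and~\ref{lem-more-squares}, gives
\[
\mathcal{E}_m - \mathcal{E}_{m'} = \lim_{k\rta\infty} |S_k|^{-2}\,\op{Energy}(\phi_m - \phi_{m'}; \mcl H(S_k)) \quad \text{a.s.}
\]
The orthogonality also implies $\mathcal{E}_m$ is non-increasing in $m$; since it is bounded below by $0$, it converges to some $\mathcal{E}_\infty$, whence the specific Dirichlet energy of $\phi_m - \phi_{m'}$ tends to $0$ as $m,m'\rta\infty$. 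This will be packaged as Lemma~\ref{lem-tail-energy}.

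The main obstacle is turning this vanishing-specific-energy statement into pointwise convergence of $\phi_m(H) - \phi_m(H_0)$ itself. For this I would invoke the anticipated Lemma~\ref{lem-path-sum}: for any cell $H$ and any graph path $\gamma$ from $H_0$ to $H$ in $\mcl H$, Cauchy--Schwarz gives $|f(H) - f(H_0)|^2 \leq \bigl(\sum_{e \in \gamma} \frk c(e)^{-1}\bigr)\op{Energy}(f;\gamma)$ for any $f$. The second finite-expectation hypothesis in~\eqref{eqn-hyp-moment}, which involves $\pi^*$, is precisely what allows one to show, via another application of Lemma~\ref{lem-ergodic-avg}, that with high probability any cell at bounded Euclidean distance from $0$ is joined to $H_0$ by a path of bounded total resistance. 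Applying this bound to $f = (\phi_m - \phi_m(H_0)) - (\phi_{m'} - \phi_{m'}(H_0))$ (which vanishes at $H_0$) and combining with the tail energy statement above yields the Cauchy property in probability on any finite $\mcl A \subset \mcl H$, from which the limit $\phi_\infty$ and all its stated properties follow.
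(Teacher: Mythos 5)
Your proposal is correct and follows essentially the same route as the paper: Dirichlet energy bounds from Lemmas~\ref{lem-square-energy} and~\ref{lem-cell-sum}, orthogonality of harmonic extensions across the nested partitions $\mcl S_m$ to make the specific energy of $\phi_m-\phi_{m'}$ vanish as $m,m'\rta\infty$ (your monotone limit of $\mathcal{E}_m$ is equivalent to the paper's telescoping identity $\BB E[\op{SE}_{0,m}]=\sum_{j=1}^m\BB E[\op{SE}_{j-1,j}]$ in Lemma~\ref{lem-tail-energy}), and then a Cauchy--Schwarz conversion of small energy into pointwise Cauchyness on a fixed region, with harmonicity and covariance passed to the limit. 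The only cosmetic difference is that in the last step the paper sums $\frk c(H,H')^{-1/2}$ over all edges of the origin-containing component of $\mcl H(\wh S_\el)\setminus\mcl H(\bdy\wh S_\el)$ rather than along a single path of bounded resistance, but both normalizations are $m$-independent and tight for fixed $\el$, so your version works just as well.
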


The idea of the proof of Proposition~\ref{prop-corrector} is as follows. We first use Lemma~\ref{lem-cell-sum} to argue that $\phi_0$ (and hence each $\phi_m$) has uniformly bounded Dirichlet energy over $S_k$ when we rescale by $|S_k|^{-2}$ (Lemma~\ref{lem-square-energy}). Once this is established, we will use the orthogonality of the increments $\phi_m - \phi_{m-1}$ with respect to the Dirichlet inner product to show that for large enough $m\in\BB N$, the expected discrete ``Dirichlet energy per unit area" of $\phi_m - \phi_{m'}$, which is made precise in~\eqref{eqn-se-def}, is small uniformly over all $m'\geq m$ (Lemma~\ref{lem-tail-energy}). This will allow us to show that the functions $\phi_m - \phi_m(H_0)$ are Cauchy in probability, and thereby establish Proposition~\ref{prop-corrector}.

\begin{lem} \label{lem-square-energy}
For each $n,m \in\BB N_0$ with $n\geq m$, we have (in the notation of Definition~\ref{def-discrete-dirichlet})
\eqb \label{eqn-energy-mono} 
\op{Energy}\left( \phi_m ; \mcl H(\wh S_n) \right) \leq \op{Energy}\left( \phi_0 ; \mcl H(\wh S_n) \right) .
\eqe 
Furthermore, there is a deterministic constant $C>0$ such that a.s.\  
\eqb \label{eqn-square-energy}  
  \limsup_{k \rta\infty} \frac{1}{| S_k|^2} \op{Energy}\left( \phi_0 ; \mcl H( S_k ) \right)  \leq C .
\eqe 
\end{lem}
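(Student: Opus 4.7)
The plan is to prove the two claims separately. For the first claim (the inequality~\eqref{eqn-energy-mono}), the key observation is the Dirichlet principle combined with the tiling structure of $\mcl S_m$ inside $\wh S_n$. Specifically, since $n\geq m$, the square $\wh S_n$ decomposes as the union of the squares in $\mcl S_m$ contained in it (say $\wh S^{(1)},\dots,\wh S^{(J)}$), which meet only along their boundaries. A cell $H\in\mcl H(\wh S^{(j)})\setminus \mcl H(\bdy \wh S^{(j)})$ is contained in the interior of $\wh S^{(j)}$, so it cannot belong to $\mcl H(\wh S^{(j')})$ for any $j'\neq j$. Consequently, starting from $\phi_0$ and successively replacing its values on $\mcl H(\wh S^{(j)})\setminus \mcl H(\bdy \wh S^{(j)})$ by the discrete $\frk c$-harmonic extension of $\phi_0|_{\mcl H(\bdy \wh S^{(j)})}$ produces $\phi_m$ after $J$ steps, and the steps do not interfere with each other's boundary data.

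To see that each such replacement weakly decreases the Dirichlet energy on $\mcl H(\wh S_n)$, I split the sum over edges: edges both of whose endpoints lie in $\mcl H(\bdy \wh S^{(j)})$ (including edges straddling two tiles) are untouched by the modification, while the remaining edges in $\mcl E(\mcl H(\wh S^{(j)}))$ have at least one endpoint being modified, and the discrete Dirichlet principle applied on the finite graph $\mcl H(\wh S^{(j)})$ with boundary $\mcl H(\bdy \wh S^{(j)})$ guarantees that their total energy is minimized by the harmonic extension. Summing the bounds over $j=1,\dots,J$ yields~\eqref{eqn-energy-mono}.

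For the second claim~\eqref{eqn-square-energy}, I estimate $|\phi_0(H)-\phi_0(H')|$ for adjacent cells. Since $H\sim H'$ implies $H\cap H'\neq\emptyset$, picking any $p\in H\cap H'$ and using that $\phi_0(H)$ lies within $\op{diam}(H)$ of every point of $H$ (and similarly for $H'$), we get
\eqbn
|\phi_0(H)-\phi_0(H')|^2 \leq 2\bigl(\op{diam}(H)^2+\op{diam}(H')^2\bigr).
\eqen
Summing $\frk c(H,H')|\phi_0(H)-\phi_0(H')|^2$ over unordered edges $\{H,H'\}\in\mcl E(\mcl H(S_k))$, the symmetry between $H$ and $H'$ together with $\sum_{H':H'\sim H}\frk c(H,H')\leq \pi(H)$ gives
\eqbn
\op{Energy}\bigl(\phi_0;\mcl H(S_k)\bigr) \leq 2\sum_{H\in\mcl H(S_k)}\op{diam}(H)^2\,\pi(H).
\eqen
Dividing by $|S_k|^2$ and invoking Lemma~\ref{lem-cell-sum} produces a deterministic upper bound $C$ a.s.

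The only subtlety, which I expect to be the place requiring the most care, is the bookkeeping in the first claim: one must verify that when modifying the function tile-by-tile, all edges in $\mcl E(\mcl H(\wh S_n))$ with at least one endpoint in the interior of some $\wh S^{(j)}$ are in fact edges of $\mcl E(\mcl H(\wh S^{(j)}))$ (so the Dirichlet principle applies to them), while edges joining two boundary cells of adjacent tiles contribute identically to both energies. This follows because if $H$ lies in the interior of $\wh S^{(j)}$ and $H'\sim H$, then $H\cap H'\neq\emptyset$ forces $H'\in\mcl H(\wh S^{(j)})$; all other edges in the restriction have both endpoints in $\bigcup_j \mcl H(\bdy \wh S^{(j)})$, hence unchanged values under the modifications.
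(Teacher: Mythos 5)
Your proof is correct and follows essentially the same route as the paper: the monotonicity~\eqref{eqn-energy-mono} via the Dirichlet principle applied tile-by-tile on the squares of $\mcl S_m$ (which the paper dismisses as immediate), and the bound~\eqref{eqn-square-energy} via the estimate $|\phi_0(H)-\phi_0(H')|^2\leq 2\op{diam}(H)^2+2\op{diam}(H')^2$ for adjacent cells, regrouping the edge sum by vertices, and invoking Lemma~\ref{lem-cell-sum}. Your careful check that cells interior to one tile have all their neighbors inside that tile's cell set is exactly the bookkeeping that justifies the paper's one-line appeal to energy minimization.
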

\begin{proof}
The relation~\eqref{eqn-energy-mono} is immediate from the fact that discrete harmonic functions minimize Dirichlet energy subject to specified boundary data, so we only need to prove~\eqref{eqn-square-energy}.
For an edge $  \{ H,H' \} \in \mcl E\mcl H(S_k)$, we have $H \cap H' \not= \emptyset$ and so
\eqbn
|\phi_0(H ) - \phi_0(H')|^2 \leq  \left( \op{diam}(H  ) + \op{diam}(H') \right)^2 \leq 2 \op{diam}(H )^2 + 2\op{diam}(H')^2 .
\eqen
Breaking up the sum over edges into a sum over vertices, we get
\alb
\op{Energy}\left( \phi_0 ; \mcl H(S_k) \right) 
\leq 2 \sum_{\{H,H'\} \in \mcl E\mcl H(S_k) } \frk c(H,H') \left( \op{diam}(H)^2 + \op{diam}(H')^2 \right) 
\leq 4 \sum_{H \in \mcl H(S_k) }  \op{diam}(H )^2 \pi(H) . 
\ale
We now conclude by means of Lemma~\ref{lem-cell-sum}.
\end{proof}

For $m_1,m_2 \in \BB N_0$ with $m_1 < m_2$, let
\eqbn
\phi_{m_1,m_2} := \phi_{m_2} - \phi_{m_1} .
\eqen
We define
\eqb \label{eqn-se-def}
\op{SE}_{m_1,m_2} :=   \sum_{\substack{H\in\mcl H(\wh S_{m_2} ) \\ H\sim H_0}} \frac{ \frk c(H_0,H) | \phi_{m_1,m_2}(H)  - \phi_{m_1,m_2}(H_0)   |^2}{2\op{Area}(H_0 \cap \wh S_{m_2} )}  .
\eqe 
We can think of $\op{SE}_{m_1,m_2}$ as representing the ``Dirichlet energy per unit area" of $\phi_{m_1,m_2}$, so that $\BB E[\op{SE}_{m_1,m_2}]$ is the ``specific Dirichlet energy" of $\phi_{m_1,m_2}$. The following lemma makes this notion precise.

\begin{lem} \label{lem-se-energy}
For each fixed $m_1,m_2 \in \BB N_0$ with $m_1<m_2$, almost surely
\eqb \label{eqn-se-energy}
\lim_{n \rta\infty} \frac{1}{|\wh S_n|^2} \op{Energy}\left( \phi_{m_1,m_2} ; \mcl H( \wh S_n )  \right) = \BB E\left[ \op{SE}_{m_1,m_2} \right]  ,
\eqe 
and this expectation is bounded above by a finite constant which does not depend on $m_1,m_2$. 
\end{lem}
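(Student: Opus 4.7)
The plan is to interpret $\BB E[\op{SE}_{m_1, m_2}]$ as a spatial-average limit via Lemma~\ref{lem-ergodic-avg} and then identify this limit with the Dirichlet energy per unit area of $\phi_{m_1, m_2}$ on $\wh S_n$. First I will observe that $F := \op{SE}_{m_1, m_2}$ is a non-negative, scale-invariant function of $(\mcl H, \mcl D)$, since $|\phi_{m_1, m_2}(H) - \phi_{m_1, m_2}(H_0)|^2$ and $\op{Area}(H_0 \cap \wh S_{m_2})$ both scale as $C^2$ under $(\mcl H, \mcl D) \mapsto (C\mcl H, C\mcl D)$ while the conductances are invariant. Applying Lemma~\ref{lem-ergodic-avg} gives that $\frac{1}{|\wh S_n|^2} \int_{\wh S_n} F(\mcl H - z, \mcl D - z) \, dz$ converges a.s.\ to $\BB E[\op{SE}_{m_1, m_2}]$ as $n\rta\infty$. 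Since $\wh S_{m_2}^z \subseteq \wh S_n$ for $z \in \wh S_n$ and $n \geq m_2$, I will decompose this integral into a sum over $\wh S \in \mcl S_{m_2}$ with $\wh S \subseteq \wh S_n$ and then further by cells $K \in \mcl H(\wh S)$, using that the denominator $2 \op{Area}(H_z \cap \wh S_{m_2}^z)$ in $F$ cancels cleanly against the $z$-integration. A direct calculation yields
\eqbn
\int_{\wh S_n} F(\mcl H - z, \mcl D - z) \, dz = \sum_{\wh S \in \mcl S_{m_2},\, \wh S \subseteq \wh S_n} \op{Energy}\left(\phi_{m_1, m_2};\, \mcl H(\wh S)\right).
\eqen

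The principal obstacle will be to show that this sum differs from $\op{Energy}(\phi_{m_1, m_2}; \mcl H(\wh S_n))$ by $o(|\wh S_n|^2)$ a.s. The crucial observation is that $\phi_{m_1, m_2}$ vanishes on every cell $H$ meeting $\bdy \wh S$ for some $\wh S \in \mcl S_{m_2}$: because $m_1 \leq m_2$, the tiling $\mcl S_{m_1}$ refines $\mcl S_{m_2}$, so $\bdy \wh S \subseteq \bigcup \{\bdy \wh S' : \wh S' \in \mcl S_{m_1},\, \wh S' \subseteq \wh S\}$, and hence $H$ also meets $\bdy \wh S'$ for some such $\wh S'$, forcing $\phi_{m_1}(H) = \phi_0(H) = \phi_{m_2}(H)$. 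Writing the difference of energies as a sum over edges $\{H, H'\} \in \mcl E\mcl H(\wh S_n)$ and using connectedness of cells (so that if $H$ meets two distinct $\wh S_1, \wh S_2 \in \mcl S_{m_2}$ then $H \in \mcl H(\bdy \wh S_1) \cap \mcl H(\bdy \wh S_2)$), any edge whose endpoints span distinct squares of $\mcl S_{m_2}$ contributes zero. The only surviving contribution comes from edges for which no square of $\mcl S_{m_2}$ containing both endpoints is contained in $\wh S_n$; by the same connectedness argument, this forces both endpoints to meet $\bdy \wh S_n$.

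To bound this boundary contribution by $o(|\wh S_n|^2)$, for any $\ep > 0$ I will invoke Lemma~\ref{lem-max-cell-diam} to confine all cells of $\mcl H(\wh S_n)$ to diameter $\leq \ep |\wh S_n|$ for large $n$; then cells meeting $\bdy \wh S_n$ together with their neighbors lie in the $2\ep|\wh S_n|$-neighborhood of $\bdy \wh S_n$, which can be covered by $O(1/\ep)$ dyadic squares of side length $\asymp \ep|\wh S_n|$, obtained as descendants of $\wh S_n$ or of a dyadic parent of $\wh S_n$. Combining Lemma~\ref{lem-square-energy} with Lemma~\ref{lem-more-squares} (applied to the relevant ancestor/descendant depths), a.s.\ for large $n$ the Dirichlet energy of $\phi_0$ on each such sub-square is $O\bigl((\ep|\wh S_n|)^2\bigr)$, so the total is $O(\ep|\wh S_n|^2)$. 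The elementary pointwise bound $(a-b)^2 \leq 2a^2 + 2b^2$ together with the monotonicity~\eqref{eqn-energy-mono} gives $\op{Energy}(\phi_{m_1, m_2}; \cdot) \leq 4\, \op{Energy}(\phi_0; \cdot)$ on any subgraph, so the boundary contribution is $O(\ep|\wh S_n|^2)$, which is $o(|\wh S_n|^2)$ on letting $\ep\rta 0$. The uniform expectation bound $\BB E[\op{SE}_{m_1, m_2}] \leq 4C$ with $C$ the constant from Lemma~\ref{lem-square-energy} (independent of $m_1, m_2$) then follows from the vanishing property above, which ensures that each edge in $\mcl E\mcl H(\wh S_n)$ contributes at most once to the sum in the displayed identity: this sum is therefore $\leq \op{Energy}(\phi_{m_1, m_2}; \mcl H(\wh S_n)) \leq 4\, \op{Energy}(\phi_0; \mcl H(\wh S_n))$, and dividing by $|\wh S_n|^2$ and taking $n\rta\infty$ yields the claim.
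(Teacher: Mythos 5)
Your overall strategy is the paper's: apply Lemma~\ref{lem-ergodic-avg} to $F=\op{SE}_{m_1,m_2}$, and identify $\int_{\wh S_n}F(\mcl H-z,\mcl D-z)\,dz$ with $\sum_{\wh S\in\mcl S_{m_2},\,\wh S\subseteq\wh S_n}\op{Energy}(\phi_{m_1,m_2};\mcl H(\wh S))$, using that $\phi_{m_1,m_2}$ vanishes on cells meeting $\bdy\wh S$ for $\wh S\in\mcl S_{m_2}$ (your refinement argument for this is correct, as is the final uniform bound via $\op{Energy}(\phi_{m_1,m_2};\mcl H(\wh S_n))\le 4\op{Energy}(\phi_0;\mcl H(\wh S_n))$ and Lemma~\ref{lem-square-energy}). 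However, your treatment of the discrepancy between the square-by-square sum and $\op{Energy}(\phi_{m_1,m_2};\mcl H(\wh S_n))$ has a genuine gap. You bound an alleged boundary-layer contribution by asserting that the monotonicity~\eqref{eqn-energy-mono} gives $\op{Energy}(\phi_{m_i};\cdot)\le\op{Energy}(\phi_0;\cdot)$ ``on any subgraph,'' and in particular on covering squares of side $\asymp\ep|\wh S_n|$ near $\bdy\wh S_n$. That is not what~\eqref{eqn-energy-mono} says, and it is false in general: the inequality comes from harmonic energy minimization of $\phi_m$ within each full square of $\mcl S_m$ with prescribed boundary data, so it only controls regions that are unions of such squares (i.e.\ $\mcl H(\wh S_n)$ with $n\ge m$); on a small sub-square the energy of $\phi_m$ can well exceed that of $\phi_0$. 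Your characterization of the uncounted edges is also off: an edge of $\mcl E\mcl H(\wh S_n)$ counted zero times need not have both endpoints meeting $\bdy\wh S_n$ (the two cells may intersect each other outside $\wh S_n$ while each meets $\wh S_n$ in disjoint interior squares).

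The paper avoids all of this by proving the identity exactly, with no error term, and the same observation repairs (indeed trivializes) your boundary step. Any edge counted with multiplicity at least two has both endpoints in $\mcl H(\bdy\wh S)$ for some $\wh S\in\mcl S_{m_2}$, so its $\phi_{m_1,m_2}$-increment is zero. For an edge counted zero times: if $\phi_{m_1,m_2}(H)\neq0$ then $H$ meets no boundary of a square of $\mcl S_{m_2}$, hence (being connected) lies in the open interior of a unique $\wh S_1\in\mcl S_{m_2}$; since $H$ meets $\wh S_n$ and $n\ge m_2$, the nesting of dyadic squares forces $\wh S_1\subseteq\wh S_n$, and then $H\cap H'\neq\emptyset$ forces $H'$ to meet $\wh S_1$ as well, so the edge would be counted---a contradiction. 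Thus every over- or under-counted edge contributes zero, the sum over squares equals $\op{Energy}(\phi_{m_1,m_2};\mcl H(\wh S_n))$ exactly (in particular every cell meeting $\bdy\wh S_n$ meets $\bdy\wh S$ for some $\wh S\in\mcl S_{m_2}$ with $\wh S\subseteq\wh S_n$, so the boundary term you set out to estimate is identically zero), and no covering or cell-diameter estimate is needed. With that replacement your argument matches the paper's proof.
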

\begin{proof}
For $z\in\BB C$, let $\op{SE}_{m_1,m_2}^z$ be defined as in~\eqref{eqn-se-def} with the translated cell configuration/dyadic system $(\mcl H-z,\mcl D-z)$ in place of $(\mcl H,\mcl D)$. Equivalently,
\eqb \label{eqn-se-def-z}
\op{SE}_{m_1,m_2}^z :=   \sum_{\substack{H\in\mcl H( \wh S_{m_2}^z ) \\ H\sim H_z}} \frac{ \frk c(H_z,H) | \phi_{m_1,m_2}(H)  - \phi_{m_1,m_2}(H_z)    |^2}{2\op{Area}(H_z \cap \wh S_{m_2}^z)}  .
\eqe 
Lemma~\ref{lem-ergodic-avg} implies that a.s.\
\eqb \label{eqn-se-lim}
\lim_{n\rta\infty} \frac{1}{|\wh S_n|^2} \int_{\wh S_n} \op{SE}_{m_1,m_2}^z   \,dz = \BB E\left[ \op{SE}_{m_1,m_2} \right] .
\eqe 
We will now argue that this integral is exactly equal to $\op{Energy}\left( \phi_{m_1,m_2} ; \mcl H(\wh S_n)  \right)$ for $n\geq m_2$. 

We first consider the case $n=m_2$. By breaking up the integral over $\wh S_{m_2}$ into integrals over $H\cap \wh S_{m_2}$ for $H\in\mcl H(\wh S_{m_2})$, we find that 
\allb \label{eqn-se-sum0}
\int_{\wh S_{m_2}} \op{SE}_{m_1,m_2}^z  \,dz
&= \frac12 \sum_{H\in \mcl H(\wh S_{m_2} )  }   \sum_{\substack{H'\in \mcl H(\wh S_{m_2} ) \\ H'\sim H}} \frk c(H,H') |\phi_{m_1,m_2} (H') -  \phi_{m_1,m_2}(H)|^2  \notag \\ 
&=   \op{Energy}\left( \phi_{m_1,m_2} ; \mcl H(\wh S_{m_2} ) \right) .
\alle
For $n\geq m_2$, we can break up the integral over $\wh S_n$ as a sum of integrals over the squares of the form $\wh S_{m_2}^z$ for $z\in \wh S_n$ to get
\allb \label{eqn-se-sum}
\int_{\wh S_n} \op{SE}_{m_1,m_2}^z  \,dz 
 =   \sum_{  \wh S \in \mcl S_{m_2} : \wh S\subset \wh S_n }   \op{Energy}\left( \phi_{m_1,m_2} ; \mcl H(\wh S  ) \right) .
\alle
The squares $\wh S \in \mcl S_{m_2}$ with $\wh S\subset \wh S_n$ intersect only along their boundaries and their union is all of $\wh S_n$. Therefore, the sum on the right side of~\eqref{eqn-se-sum} is equal to the sum of $\frk c(H,H') |\phi_{m_1,m_2}(H') - \phi_{m_1,m_2}(H)|^2$ over all edges $\{H,H'\} \in \mcl E\mcl H(\wh S_n)$ except that edges are counted with multiplicity equal to the number of such squares $\wh S$ for which $\{H,H'\} \in \mcl E\mcl H(\wh S)$. Since $H\sim H'$ implies that $H\cap H'\not=\emptyset$, any edge $\{H,H'\}$ which is counted more than once must have $H,H' \in \mcl E\mcl H(\bdy \wh S)$ for some $\wh S \in \mcl S_{m_2}$. Both $\phi_{m_1}$ and $\phi_{m_2}$ coincide with $\phi_0$ on the boundary of each such square $\wh S$, so $|\phi_{m_1,m_2}(H') - \phi_{m_1,m_2}(H)|^2 = 0$ for each such edge $\{H,H'\}$. Therefore,~\eqref{eqn-se-sum} equals $\op{Energy}\left( \phi_{m_1,m_2} ; \mcl H(\wh S_n  ) \right) $. Combining this with~\eqref{eqn-se-lim} shows that~\eqref{eqn-se-energy} holds. The limiting value is uniformly bounded above by Lemma~\ref{lem-square-energy} and the triangle inequality for the Dirichlet norm.  
\end{proof}

We now argue that the specific Dirichlet energy $\BB E[\op{SE}_{m,m'}]$ has to be small when $m$ and $m'$ are large. Essentially, the reason for this is that the Dirichlet energy of $\phi_0$ gets ``used up" in the increments corresponding to indices smaller than $m$ and $m'$.

\begin{lem} \label{lem-tail-energy}
For each $\ep > 0$, there exists $m_\ep \in \BB N$ such that 
\eqb \label{eqn-tail-energy} 
\BB E\left[ \op{SE}_{m_\ep , m}  \right] \leq \ep ,\quad \forall m \geq m_\ep.
\eqe
\end{lem}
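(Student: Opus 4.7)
\noindent\emph{Proof proposal.} The plan is to establish the telescoping identity
\eqb \label{eqn-prop-telescope}
\BB E[\op{SE}_{m_1, m_2}] = \BB E[\op{SE}_{0, m_2}] - \BB E[\op{SE}_{0, m_1}], \qquad 0 \leq m_1 < m_2 .
\eqe
Once~\eqref{eqn-prop-telescope} is in hand, Lemma~\ref{lem-se-energy} tells us that $\{\BB E[\op{SE}_{0,m}]\}_{m \in \BB N}$ is non-negative (since $\op{SE}\geq 0$), non-decreasing (by~\eqref{eqn-prop-telescope} and $\op{SE}\geq 0$ applied to $(m,m')$ for $m < m'$), and uniformly bounded above. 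It therefore converges; choosing $m_\ep$ large enough that $\BB E[\op{SE}_{0,m_\ep}]$ lies within $\ep$ of its supremum gives $\BB E[\op{SE}_{m_\ep, m}] \leq \ep$ for every $m \geq m_\ep$, which is the claim.

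To prove~\eqref{eqn-prop-telescope}, I first establish an orthogonality relation on each $\wh S \in \mcl S_{m_2}$. Because $\bdy \wh S$ is contained in the union of the boundaries of the squares in $\mcl S_{m_1}$ that tile $\wh S$, both $\phi_{m_1}$ and $\phi_{m_2}$ coincide with $\phi_0$ on every cell in $\mcl H(\bdy \wh S)$, so $\phi_{m_1}-\phi_{m_2}$ vanishes on $\mcl H(\bdy \wh S)$. At each cell $H \in \mcl H(\wh S)\setminus \mcl H(\bdy \wh S)$, every $\mcl H$-neighbor $H'$ of $H$ meets $H$ at a point of the interior of $\wh S$, so $H' \in \mcl H(\wh S)$; hence $\phi_{m_2}$ is $\frk c$-discrete harmonic at $H$ even when the discrete Laplacian is computed inside the subgraph $\mcl H(\wh S)$. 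The discrete Green's identity on the finite graph $\mcl H(\wh S)$ therefore gives $\langle \phi_{m_2}, \phi_{m_1}-\phi_{m_2}\rangle_{\op{Dir}, \mcl H(\wh S)} = 0$, and expanding $\op{Energy}(\phi_{m_1}) = \op{Energy}(\phi_{m_2} + (\phi_{m_1}-\phi_{m_2}))$ yields
\eqbn
\op{Energy}(\phi_{m_1}; \mcl H(\wh S)) = \op{Energy}(\phi_{m_2}; \mcl H(\wh S)) + \op{Energy}(\phi_{m_1}-\phi_{m_2}; \mcl H(\wh S)) .
\eqen

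I next sum this identity over $\wh S \in \mcl S_{m_2}$ with $\wh S \subset \wh S_n$, for $n \geq m_2$. Exactly as in the proof of Lemma~\ref{lem-se-energy}, an edge of $\mcl E\mcl H(\wh S_n)$ that appears in more than one $\mcl E\mcl H(\wh S)$ must have both endpoints in $\mcl H(\bdy \wh S)$ for some $\wh S\in\mcl S_{m_2}$; on any such edge $\phi_{m_1}=\phi_{m_2}=\phi_0$, so the overcount contributes zero to $\op{Energy}(\phi_{m_1}-\phi_{m_2})$ and contributes the same amount to $\op{Energy}(\phi_{m_1})$ and $\op{Energy}(\phi_{m_2})$, hence cancels. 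This yields
\eqbn
\op{Energy}(\phi_{m_1}; \mcl H(\wh S_n)) - \op{Energy}(\phi_{m_2}; \mcl H(\wh S_n)) = \op{Energy}(\phi_{m_1}-\phi_{m_2}; \mcl H(\wh S_n)) .
\eqen
Applying this with the pairs $(0,m_1)$ and $(0,m_2)$ and subtracting gives
$\op{Energy}(\phi_{m_1}-\phi_{m_2}; \mcl H(\wh S_n)) = \op{Energy}(\phi_0-\phi_{m_2}; \mcl H(\wh S_n)) - \op{Energy}(\phi_0-\phi_{m_1}; \mcl H(\wh S_n))$. Dividing by $|\wh S_n|^2$, letting $n\to\infty$, and invoking Lemma~\ref{lem-se-energy} three times produces~\eqref{eqn-prop-telescope}.

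The main technical point is verifying the per-square orthogonality in the cell-configuration setting: namely, that every $\mcl H$-neighbor of an interior cell of $\wh S$ remains in $\mcl H(\wh S)$ (so that harmonicity passes to the subgraph) and that every cell of $\mcl H(\bdy \wh S)$ lies in $\mcl H(\bdy \wh S')$ for some $\wh S'\in\mcl S_{m_1}$ with $\wh S'\subset \wh S$ (so that $\phi_{m_1}=\phi_0$ there). Both facts follow quickly from the dyadic nesting of the tilings $\mcl S_{m_1} \preceq \mcl S_{m_2}$ and from the compactness and connectedness of cells, so the argument reduces to the elementary monotone-and-bounded step described above.
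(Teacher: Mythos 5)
Your proof is correct and takes essentially the same route as the paper: the key mechanism in both is Dirichlet-orthogonality from the discrete Green's identity (harmonicity inside the squares of $\mcl S_m$ plus vanishing boundary data), combined with Lemma~\ref{lem-se-energy} and its uniform bound. The paper packages this as an orthogonal decomposition into consecutive increments $\phi_{j-1,j}$ whose specific energies form a convergent series bounding $\BB E[\op{SE}_{m_\ep,m}]$ by a tail sum, while you phrase it as a two-term Pythagoras/telescoping identity making $m\mapsto \BB E[\op{SE}_{0,m}]$ non-decreasing and bounded; this is the same argument with different bookkeeping.
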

\begin{proof}
For each $k \in \BB N$, the function $\phi_{m-1,m}$ vanishes on $\mcl H\left(\bigcup_{\wh S \in \mcl S_m} \bdy \wh S  \right)$ (since both $\phi_m$ and $\phi_{m-1}$ agree with $\phi_0$ there) and is discrete harmonic on $\mcl H \setminus \mcl H\left(\bigcup_{\wh S \in \mcl S_m} \bdy \wh S  \right)$. Therefore, the discrete analog of Green's formula (summation by parts) implies that $\phi_{m-1,m}$ and $\phi_{m'-1,m'}$ are orthogonal with respect to the Dirichlet inner product for $m\not=m'$. Hence for $m\in\BB N$ and $n\geq m$, 
\eqb \label{eqn-energy-decomp}
\op{Energy}\left( \phi_{0,m} ; \mcl H(\wh S_n )  \right) = \sum_{j=1}^m \op{Energy}\left( \phi_{j-1,j} ; \mcl H(\wh S_n)  \right).
\eqe  
Dividing both sides of~\eqref{eqn-energy-decomp} by $|\wh S_n|^2$, sending $n\rta\infty$, and applying Lemma~\ref{lem-se-energy} shows that
\eqb \label{eqn-se-decomp}
\BB E\left[ \op{SE}_{0,m} \right] = \sum_{j=1}^m \BB E\left[ \op{SE}_{j-1,j} \right] .
\eqe 
By the last part of Lemma~\ref{lem-se-energy}, this last sum is uniformly bounded above, independently of $m$. Therefore, $\sum_{j=1}^\infty \BB E\left[ \op{SE}_{j-1,j} \right]
 < \infty$ so for any $\ep > 0$ we can find $m_\ep \in \BB N$ such that 
\eqb \label{eqn-tail-energy0} 
\BB E\left[ \op{SE}_{m_\ep , m}  \right]  \leq \sum_{j=m_\ep+1}^\infty \BB E\left[ \op{SE}_{j-1,j} \right] \leq \ep ,\quad \forall m \geq m_\ep .
\eqe
\end{proof}

\begin{proof}[Proof of Proposition~\ref{prop-corrector}] 
We will show that $\phi_m-\phi_m(H_0)$ is Cauchy in probability with respect to the local uniform topology on $\mcl H$ using Lemma~\ref{lem-tail-energy}. 
To this end, fix $\el \in \BB N$; we will work on $\wh S_\el$. 
Given $\ep  >0$, let $m_\ep \in \BB N$ be as in Lemma~\ref{lem-tail-energy} and define $\op{SE}_{m_\ep,m}^z$ as in~\eqref{eqn-se-def-z}. Then if $m\geq \el \vee m_\ep$, 
\allb \label{eqn-corrector-sum}
\int_{\wh S_\el} \op{SE}_{m_\ep,m}^z \,dz 
&= \frac12 \sum_{H\in \mcl H(\wh S_\el )  }  \frac{\op{Area}(H\cap \wh S_\el)}{\op{Area}(H\cap \wh S_m)}  \sum_{\substack{H'\in \mcl H(\wh S_m ) \\ H'\sim H}} \frk c(H,H') |\phi_{m_\ep, m} (H') -  \phi_{m_\ep, m}(H)|^2  \notag  \\
&\geq \frac12 \sum_{H\in \mcl H(\wh S_\el ) \setminus \mcl H(\bdy \wh S_\el) }    \sum_{\substack{H'\in \mcl H(\wh S_m)  \\ H'\sim H}} \frk c(H,H') |\phi_{m_\ep,m} (H') -  \phi_{m_\ep,  m}(H)|^2  \notag  \\
&\geq \op{Energy}\left( \phi_{m_\ep, m}  ;    \mcl H(\wh S_\el ) \setminus \mcl H(\bdy \wh S_\el)    \right)   .
\alle 

Dividing by $|\wh S_\el|^2$ in~\eqref{eqn-corrector-sum}, taking expectations, and applying Lemma~\ref{lem-dyadic-resample} and our choice of $m_\ep$ to bound the expectation of the integral over $\wh S_\el$, we get that  
\eqb \label{eqn-diff-energy}
\sup_{m\geq m_\ep} \BB E\left[ \frac{1}{|\wh S_\el|^2 } \op{Energy}\left( \phi_{m_\ep, m}  ;    \mcl H(\wh S_\el ) \setminus \mcl H(\bdy \wh S_\el)    \right) \right] 
\leq  \ep   . 
\eqe
The variation of $\phi_{m_\ep , m}$ over any single edge $\{H,H'\}$ of $\mcl H(\wh S_\el ) \setminus \mcl H(\bdy \wh S_\el)$ is bounded above by $\frk c(H,H')^{-1/2}$ times the square root of the energy of $\phi_{m_\ep,m}$ over $\mcl H(\wh S_\el ) \setminus \mcl H(\bdy \wh S_\el)$. Summing over all such edges and applying~\eqref{eqn-diff-energy} shows that for each fixed $\el\in\BB N$, 
\eqb  \label{eqn-max-component}
  \sup_{m \geq m_\ep } \BB E\left[  \frac{ \max_{H\in \rng{\mcl H}(\wh S_\el)} \left| \phi_{m_\ep , m}(H) - \phi_{m_\ep , m}(H_0) \right| }{ |\wh S_\el|  \sum_{\{H,H'\} \in \mcl E\rng{\mcl H}(\wh S_\el)  } \frk c(H,H')^{-1/2}}  \right] \leq \ep^{1/2} 
\eqe 
where $\rng{\mcl H}(\wh S_\el)$ denotes the connected component of $\mcl H(\wh S_\el ) \setminus \mcl H(\bdy \wh S_\el)  $ which contains $H_0$. 
Since $\ep > 0$ is arbitrary and $\el$ is fixed, the sequence $\{\phi_m-\phi_m(H_0)\}_{m\in\BB N}$ is Cauchy in probability with respect to the topology of uniform convergence on $\rng{\mcl H}(\wh S_\el)$.  
By the local finiteness of $\mcl H$, a.s.\ each finite subgraph of $\mcl H$ is contained in $\rng{\mcl H}(\wh S_\el)$ for large enough $\el\in\BB N$. 
Therefore, there is a function $\phi_\infty: \mcl H \rta \BB C$ such that~\eqref{eqn-corrector-conv} holds. 

The function $\phi_\infty$ is a.s.\ discrete harmonic since each $\phi_m$ is discrete harmonic on $\mcl H(\wh S_m) \setminus \mcl H(\bdy \wh S_m)$ and the pointwise limit of discrete harmonic functions is discrete harmonic. The scaling/translation property of $\phi_\infty$ follows from the analogous property for the $\phi_m$'s. 
\end{proof}

\subsection{Sublinearity of the corrector} 
\label{sec-corrector-sublinearity}

In this subsection we prove~\eqref{eqn-cell-tutte-conv} of Theorem~\ref{thm-general-tutte} for the discrete harmonic function $\phi_\infty$ of Proposition~\ref{prop-corrector} (this corresponds to Theorem~\ref{thm-general-tutte} in the special case when $\mcl H$ is the planar dual of an embedded lattice). We continue to use the notation of Section~\ref{sec-corrector-existence}. 
We will first argue that the Dirichlet energy per unit area of $\phi_\infty-\phi_m$ is small when $m$ is large (Lemma~\ref{lem-energy-lim-infty}), which is a consequence of Lemmas~\ref{lem-ergodic-avg} and~\ref{lem-tail-energy}. We will then show (essentially using H\"older's inequality) that this implies that the total variation of $\phi_\infty - \phi_m$ over most horizontal and vertical line segments of the square $S_k$ is of smaller order than $|S_k|$ when $m$ is large (but fixed) and $k$ is very large (Lemma~\ref{lem-path-sum}). It is easily seen from the definition of $\phi_m$ that for any fixed $m\in\BB N$, the maximum value of $|\phi_m - \phi_0|$ over $S_k$ is $o_k(|S_k|)$ as $k\rta\infty$ (Lemma~\ref{lem-m-0-diff}). We will then conclude by combining our bounds for $\phi_\infty-\phi_m$ and $\phi_m - \phi_0$ and using the maximum principle for discrete harmonic functions to transfer from a bound for ``most" horizontal and vertical line segments to a bound which holds for all points simultaneously (Lemma~\ref{lem-var-lim-infty}).

Extending the definition~\eqref{eqn-se-def}, for $m\in\BB N$ we set $\phi_{m,\infty} := \phi_\infty-\phi_m$ and define 
\eqb \label{eqn-se-def-infty}
\op{SE}_{m,\infty} := \sum_{\substack{H\in\mcl H  \\ H\sim H_0}} \frac{ \frk c(H_0,H) |\phi_{m ,\infty }(H)  - \phi_{m, \infty }(H_0)   |^2}{2\op{Area}(H_0   )}  .
\eqe  

\begin{lem} \label{lem-se-infty}
We have $\BB E[S_{m,\infty}] <\infty$ for each $m\in\BB N_0$ and
\eqb \label{eqn-se-infty} 
\lim_{m\rta\infty} \BB E\left[ S_{m,\infty} \right] = 0 .
\eqe 
\end{lem}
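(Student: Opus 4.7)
The plan is to apply Fatou's lemma to the sequence $\op{SE}_{m,M}$ as $M \rta \infty$ for fixed $m$, using Proposition~\ref{prop-corrector} to identify the limit of the integrand and using the orthogonal decomposition of the Dirichlet energy from Lemma~\ref{lem-tail-energy} (in particular, \eqref{eqn-energy-decomp} and \eqref{eqn-se-decomp}) to control $\BB E[\op{SE}_{m,M}]$. First, I would observe that since $H_0$ is compact and the squares $\wh S_M$ exhaust $\BB C$ as $M\rta\infty$, for each realization we have $H_0 \subset \wh S_M$ for all sufficiently large $M$; for such $M$ it follows that $\op{Area}(H_0 \cap \wh S_M) = \op{Area}(H_0)$ and every neighbor of $H_0$ meets $\wh S_M$ (so lies in $\mcl H(\wh S_M)$), which means $\op{SE}_{m,M}$ reduces to the ``full'' sum $\sum_{H\sim H_0} \frac{\frk c(H_0,H)}{2\op{Area}(H_0)} |\phi_{m,M}(H) - \phi_{m,M}(H_0)|^2$.

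Next, by Proposition~\ref{prop-corrector} and a diagonal subsequence argument over the countably many neighbors of $H_0$, one obtains a subsequence along which $\phi_{m,M}(H) - \phi_{m,M}(H_0) \rta \phi_{m,\infty}(H) - \phi_{m,\infty}(H_0)$ a.s.\ for each $H \sim H_0$. Applying Fatou's lemma termwise and then monotone convergence over the infinite sum of nonnegative terms yields $\BB E[\op{SE}_{m,\infty}] \leq \liminf_{M\rta\infty} \BB E[\op{SE}_{m,M}]$. The Dirichlet-orthogonality argument of Lemma~\ref{lem-tail-energy} adapts straightforwardly to give, for any $m \leq M$, the identity $\BB E[\op{SE}_{m,M}] = \sum_{j=m+1}^M \BB E[\op{SE}_{j-1,j}]$. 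Combining these two estimates and using the last assertion of Lemma~\ref{lem-se-energy} to see that $\sum_{j=1}^\infty \BB E[\op{SE}_{j-1,j}] < \infty$, we obtain
\[
\BB E[\op{SE}_{m,\infty}] \leq \sum_{j=m+1}^\infty \BB E[\op{SE}_{j-1,j}] .
\]
The right-hand side is finite for every $m \in \BB N_0$ (giving $\BB E[\op{SE}_{m,\infty}] < \infty$) and tends to zero as $m \rta \infty$ (as the tail of a convergent series), establishing \eqref{eqn-se-infty}.

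The main technical obstacle will be making the matching between $\op{SE}_{m,M}$ and its ``full'' form precise when we pass expectations through the liminf, since the matching threshold on $M$ is realization-dependent. I would address this by first proving the Fatou bound for the sum restricted to any fixed finite collection of neighbors of $H_0$ (for which the matching condition is eventually satisfied almost surely for all $M$ large enough), and then exhausting the set of all neighbors via monotone convergence of the nonnegative summand.
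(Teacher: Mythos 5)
Your proposal is correct and follows essentially the same route as the paper: Fatou's lemma applied to $\op{SE}_{m,m'}$ as $m'\rta\infty$ (with the limit identified via Proposition~\ref{prop-corrector}), together with the bound on $\BB E[\op{SE}_{m,m'}]$ by a tail of the convergent series $\sum_j \BB E[\op{SE}_{j-1,j}]$, which is exactly the content of Lemma~\ref{lem-tail-energy} (via Lemma~\ref{lem-se-energy}). Your extra care in matching $\op{SE}_{m,M}$ with its ``full'' form once $H_0\subset \wh S_M$ and in extracting an a.s.\ convergent subsequence is a fine (slightly more detailed) version of the paper's one-line appeal to convergence in probability.
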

\begin{proof}
It is clear from Proposition~\ref{prop-corrector} that for each fixed $m\in\BB N$, we have $  \op{SE}_{m,m'} \rta \op{SE}_{m,\infty}$ in probability as $m'\rta\infty$. By Fatou's lemma,  
\eqbn
\BB E\left[ \op{SE}_{m,\infty} \right] \leq \liminf_{m'\rta\infty} \BB E\left[\op{SE}_{m,m'}  \right] ,
\eqen
which is finite by Lemma~\ref{lem-se-energy}. 
By Lemma~\ref{lem-tail-energy}, $\lim_{m\rta\infty}\liminf_{m'\rta\infty} \BB E\left[\op{SE}_{m,m'}  \right] = 0$. 
\end{proof}

From Lemma~\ref{lem-se-infty} we can deduce the following. 

\begin{lem} \label{lem-energy-lim-infty}
Almost surely, 
\eqb \label{eqn-energy-lim-infty}
\lim_{m\rta\infty} \limsup_{n\rta\infty}   \frac{1}{|\wh S_n|^2} \op{Energy}\left( \phi_{m,\infty} ; \mcl H(\wh S_n) \right) = 0 .
\eqe 
\end{lem}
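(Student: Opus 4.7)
The plan is to combine Lemma~\ref{lem-se-infty} (which gives $\BB E[\op{SE}_{m,\infty}] \to 0$ as $m \to \infty$) with a spatial averaging argument modeled on the proof of Lemma~\ref{lem-se-energy}. For each fixed $m$, the random variable $\op{SE}_{m,\infty}$ is scale invariant, non-negative, and has finite expectation by Lemma~\ref{lem-se-infty}, so Lemma~\ref{lem-ergodic-avg} applied to the translated version $\op{SE}_{m,\infty}^z$ (defined analogously to \eqref{eqn-se-def-z}) yields a.s.
\[
\lim_{n \to \infty} \frac{1}{|\wh S_n|^2} \int_{\wh S_n} \op{SE}_{m,\infty}^z \, dz = \BB E[\op{SE}_{m,\infty}].
\]
Unfolding the integral exactly as in \eqref{eqn-se-sum0}, I obtain
\[
\int_{\wh S_n} \op{SE}_{m,\infty}^z \, dz = \sum_{H \in \mcl H(\wh S_n)} \frac{\op{Area}(H \cap \wh S_n)}{\op{Area}(H)} \cdot \frac{1}{2} \sum_{H' \sim H} \frk c(H,H') |\phi_{m,\infty}(H') - \phi_{m,\infty}(H)|^2.
\]

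The next step is to upgrade this identity to an upper bound on $\op{Energy}(\phi_{m,\infty}; \mcl H(\wh S_n))$. Unlike in Lemma~\ref{lem-se-energy}, $\phi_{m,\infty}$ does not vanish on the boundaries of any fixed family of squares, so I cannot match the Dirichlet energy to an integral over $\wh S_n$ itself. Instead I pass to a slightly enlarged region. Fix $\ell \in \BB N$ and let $S_n^+ \in \mcl D$ be chosen (either as the appropriate dyadic ancestor of $\wh S_n$ in $\mcl D$ of side length $2^\ell |\wh S_n|$, or as a union of a constant number of same-scale neighbors, to guarantee a buffer around $\wh S_n$). By Lemma~\ref{lem-max-cell-diam} applied with $\ep$ small (depending on $\ell$), combined with Lemma~\ref{lem-more-squares}, a.s.\ for all sufficiently large $n$ every cell in $\mcl H(\wh S_n)$ is contained in $S_n^+$. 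On this event each edge of $\mcl E \mcl H(\wh S_n)$ is counted with area-weight exactly $1$ in the expansion of $\int_{S_n^+} \op{SE}_{m,\infty}^z \, dz$, whence
\[
\op{Energy}(\phi_{m,\infty}; \mcl H(\wh S_n)) \le \int_{S_n^+} \op{SE}_{m,\infty}^z \, dz.
\]
Since $|S_n^+|^2 \le C_\ell |\wh S_n|^2$ with $C_\ell$ a deterministic constant, applying Lemma~\ref{lem-ergodic-avg} (extended to $S_n^+$ via Lemma~\ref{lem-more-squares}) to the right-hand side gives a.s.
\[
\limsup_{n \to \infty} \frac{1}{|\wh S_n|^2} \op{Energy}(\phi_{m,\infty}; \mcl H(\wh S_n)) \le C_\ell \, \BB E[\op{SE}_{m,\infty}],
\]
and letting $m \to \infty$ and invoking Lemma~\ref{lem-se-infty} yields \eqref{eqn-energy-lim-infty}.

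I expect the main obstacle to be the boundary-handling step of choosing $S_n^+$ correctly: because $\wh S_n$ can abut the boundary of its dyadic ancestor, simply taking an ancestor of one generation higher need not place the cells of $\mcl H(\wh S_n)$ strictly inside. The fix is either to go up enough generations so that the maximum cell diameter (which by Lemma~\ref{lem-max-cell-diam} is $o(|\wh S_n|)$) is dwarfed by the buffer on the side where $\wh S_n$ sits inside its ancestor, or to enlarge $S_n^+$ to a union of a bounded number of same-scale dyadic neighbors of $\wh S_n$ in $\mcl D$ (which exist because the dyadic system contains every dyadic square at the relevant scale). Either version produces only a constant loss $C_\ell$, which is harmless since the final step $\BB E[\op{SE}_{m,\infty}] \to 0$ absorbs any finite multiplicative factor.
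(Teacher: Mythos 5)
Your proposal is correct and follows essentially the same route as the paper: apply Lemma~\ref{lem-ergodic-avg} to $\op{SE}_{m,\infty}^z$, unfold the integral to dominate the Dirichlet energy, absorb the boundary cells by passing to enlarged squares via Lemma~\ref{lem-max-cell-diam} and Lemma~\ref{lem-more-squares}, and finish with Lemma~\ref{lem-se-infty}. The only (harmless) difference is bookkeeping: the paper bounds the interior energy $\op{Energy}(\phi_{m,\infty};\mcl H(\wh S_n)\setminus\mcl H(\bdy \wh S_n))$ by the integral over $\wh S_n$ and then covers $\mcl H(\wh S_n)$ by the four dyadic parents, whereas you bound the full energy by the integral over an enlarged ancestor $S_n^+$; both lose only a fixed constant, which is irrelevant since $\BB E[\op{SE}_{m,\infty}]\rta 0$.
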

\begin{proof}
For $z\in\BB C$, let $\op{SE}_{m,\infty}^z $ be defined as in~\eqref{eqn-se-def-infty} with $H_z$ in place of $H_0$. Equivalently, $\op{SE}_{m,\infty}^z$ is defined in the same manner as $\op{SE}_{m,\infty}$ but with $(\mcl H-z , \mcl D-z )$ in place of $(\mcl H,\mcl D)$. 
By Lemma~\ref{lem-ergodic-avg}, for each fixed $m\in\BB N$, a.s.\ 
\eqb \label{eqn-se-int-infty}
\lim_{n \rta\infty} \frac{1}{|\wh S_n|^2} \int_{\wh S_n} \op{SE}_{m,\infty}^z \, dz  = \BB E\left[ \op{SE}_{m,\infty} \right] .
\eqe 
As in~\eqref{eqn-corrector-sum}, for $m,n\in\BB N$, 
\allb
\int_{\wh S_n} \op{SE}_{m,\infty}^z \, dz 
&= \frac12 \sum_{H\in \mcl H(\wh S_n)} \frac{\op{Area}(H\cap \wh S_n)}{\op{Area}(H)} \sum_{\substack{H'\in\mcl H \\ H'\sim H}} \frk c(H,H') |\phi_{m,\infty}(H') - \phi_{m,\infty}(H)|^2 \notag\\ 
&\geq \op{Energy}\left( \phi_{m,\infty} ; \mcl H(\wh S_n) \setminus \mcl H(\bdy \wh S_n) \right) .
\alle
Combining this with~\eqref{eqn-se-int-infty} shows that a.s.\ 
\eqb \label{eqn-se-interior-infty}
\limsup_{n\rta\infty} \frac{1}{|\wh S_n|^2} \op{Energy}\left( \phi_{m,\infty} ; \mcl H(\wh S_n) \setminus \mcl H(\bdy \wh S_n) \right) \leq  \BB E\left[ \op{SE}_{m,\infty} \right]  .
\eqe 
Lemma~\ref{lem-max-cell-diam} implies that a.s.\ for large enough $n\in\BB N$, each cell in $\mcl H(\wh S_n)$ has diameter at most $(1/2) |\wh S_n|$, and hence is contained in the union of the four dyadic parents of $\wh S_n$. 
Furthermore,~\eqref{eqn-se-interior-infty} combined with Lemma~\ref{lem-more-squares} implies that a.s.\ for large enough $n\in\BB N$, the relation~\eqref{eqn-se-interior-infty} holds with $\wh S_n$ replaced by any of the four dyadic parents of $\wh S_n$. Consequently, a.s.\ 
\eqb \label{eqn-se-interior-infty'}
\limsup_{n\rta\infty}\frac{1}{|\wh S_n|^2}  \op{Energy}\left( \phi_{m,\infty} ; \mcl H(\wh S_n)  \right) \leq 4\BB E\left[ \op{SE}_{m,\infty} \right]   
\eqe 
which together with Lemma~\ref{lem-se-infty} implies~\eqref{eqn-energy-lim-infty}.
\end{proof}

Lemma~\ref{lem-energy-lim-infty} gives us an upper bound for the discrete Dirichlet energy of $\phi_\infty-\phi_m$ when $m$ is large. The following lemma allows us to transfer this to a pointwise bound.

\begin{lem} \label{lem-path-sum}
There is a deterministic constant $C   >0$ such that the following is true a.s. For $k\in\BB N$ and $r \in [0,1]$, let $P_k(r)$ be the horizontal line segment connecting the left and right boundaries of the square $S_k$ whose distance from the lower boundary of $S_k$ is $r |S_k|$. 
Then for each large enough $k\in\BB N$ and each function $f : \mcl H(S_k ) \rta \BB C$, 
\eqb \label{eqn-path-sum}
 \int_0^1 \sum_{\{H,H'\} \in \mcl E\mcl H( P_k(r) )} |f(H ) - f(H')| \, dr  \leq C  \op{Energy}\left( f ;\mcl H(S_k) \right)^{1/2} .
\eqe 
The same is true if we replace horizontal line segments by vertical line segments. 
\end{lem}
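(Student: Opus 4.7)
The plan is a two-step Cauchy--Schwarz argument that reduces the claim to Lemma~\ref{lem-cell-sum}. First, by Fubini,
\eqbn
\int_0^1 \sum_{\{H,H'\} \in \mcl E\mcl H(P_k(r))} |f(H)-f(H')|\, dr = \sum_{\{H,H'\} \in \mcl E\mcl H(S_k)} \ell_{H,H'}\, |f(H)-f(H')|,
\eqen
where $\ell_{H,H'}$ denotes the Lebesgue measure of the set of $r \in [0,1]$ for which $P_k(r)$ intersects both $H$ and $H'$. Projecting each cell onto the vertical axis yields an interval of length at most $\op{diam}(H)$, so that $\ell_{H,H'} \leq \min(\op{diam}(H),\op{diam}(H'))/|S_k| \leq \sqrt{\op{diam}(H)\op{diam}(H')}/|S_k|$.

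Next, applying Cauchy--Schwarz to the edge sum with weights $\sqrt{\frk c(H,H')}|f(H)-f(H')|$ and $\ell_{H,H'}/\sqrt{\frk c(H,H')}$ gives
\eqbn
\sum_{\{H,H'\}} \ell_{H,H'}\,|f(H)-f(H')| \leq \op{Energy}(f; \mcl H(S_k))^{1/2} \left( \sum_{\{H,H'\}\in\mcl E\mcl H(S_k)} \frac{\ell_{H,H'}^2}{\frk c(H,H')} \right)^{1/2}.
\eqen
Using $\ell_{H,H'}^2 \leq \op{diam}(H)\op{diam}(H')/|S_k|^2$ together with AM--GM $\op{diam}(H)\op{diam}(H') \leq (\op{diam}(H)^2+\op{diam}(H')^2)/2$, and regrouping the resulting edge sum by vertex using the definition of $\pi^*$ in~\eqref{eqn-stationary-measure-cell}, the right-hand factor is at most
\eqbn
\frac{1}{2|S_k|^2} \sum_{H\in\mcl H(S_k)} \op{diam}(H)^2\,\pi^*(H),
\eqen
which by Lemma~\ref{lem-cell-sum} is a.s.\ bounded by a deterministic constant for all sufficiently large $k$. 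Combining these estimates yields the desired inequality for a suitable deterministic $C$, and the vertical case is identical with the two axes swapped.

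I do not anticipate any genuine obstacle. The only slightly non-obvious point is to invoke the geometric-mean bound $\min(a,b)\leq\sqrt{ab}$ rather than the arithmetic-mean bound $\min(a,b)\leq(a+b)/2$ at the projection step; this symmetric split of the two diameter factors between the endpoints of each edge is what makes the resulting edge sum exactly match the form already controlled by Lemma~\ref{lem-cell-sum}.
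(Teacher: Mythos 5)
Your proof is correct and follows essentially the same route as the paper: interchange the integral and the edge sum, bound the overlap length $\ell_{H,H'}$ by a diameter divided by $|S_k|$, apply Cauchy--Schwarz against the conductances, regroup the resulting edge sum by vertex via $\pi^*$, and invoke Lemma~\ref{lem-cell-sum}. The only (cosmetic) difference is that you split the diameter factor symmetrically using $\min(a,b)\leq\sqrt{ab}$ and AM--GM, whereas the paper simply uses the one-sided bound $L_k(H,H')\leq \frac{2}{|S_k|}\op{diam}(H)$; both yield the same conclusion.
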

\begin{proof} 
By Lemma~\ref{lem-cell-sum}, there is a deterministic constant $C_0 > 0$ such that a.s.\ for sufficiently large $k\in\BB N$, 
\eqb \label{eqn-use-cell-sum}
 \frac{1}{|S_k |^2} \sum_{H\in \mcl H(S_k  )} \op{diam}(H)^2 \pi^*(H) \leq C_0  ,
\eqe  
where here we recall the definition of $\pi^*$ from~\eqref{eqn-stationary-measure}. 
Assume now that~\eqref{eqn-use-cell-sum} holds. For $k\in\BB N$ and an edge $\{H,H'\} \in \mcl E\mcl H(  S_k    )$, let $L_k(H,H')$ be the Lebesgue measure of the set of $r\in [0,1]$ for which $H$ and $H'$ both belong to $\mcl H(  P_k(r)  )$. Then, by interchanging integration and summation, we get
\eqb \label{eqn-path-int0}
\int_{0}^{1} \sum_{\{H,H'\} \in \mcl E\mcl H( P_k(r) )} |f(H ) - f(H')| \, dr 
= \sum_{\{H,H'\} \in \mcl E\mcl H(S_k)  } L_k(H,H') |f(H) - f(H')| .
\eqe 
The quantity $L_k(H,H')$ is bounded above by the Lebesgue measure of the set of $r$ for which $H \in \mcl H(P_k(r))$, so
\eqb \label{eqn-path-diam}
L_k(H , H') \leq \frac{2}{|S_k|} \op{diam}\left( H  \right)  .
\eqe 
By~\eqref{eqn-path-int0},~\eqref{eqn-path-diam}, and the Cauchy-Schwarz inequality, we see that if~\eqref{eqn-use-cell-sum} holds then
\allb
&\int_0^1 \sum_{\{H,H'\} \in \mcl E\mcl H(P_k(r) )} |f(H ) - f(H')| \, dr 
 \leq \frac{2}{|S_k|} \sum_{\{H,H'\} \in \mcl H(S_k) } \op{diam}(H)   |f(H) - f(H')| \notag \\
&\qquad \qquad \leq 2 \left( \frac{1}{|S_k|^2} \sum_{\{H,H'\} \in \mcl H(S_k) } \op{diam}(H)^2 \frk c(H,H')^{-1} \right)^{1/2} 
\left(  \sum_{\{H,H'\} \in \mcl H(S_k) }  \frk c(H,H') |f(H)-f(H')|^2 \right)^{1/2} \notag \\
&\qquad \qquad \leq 2 \left( \frac{1}{|S_k|^2} \sum_{H\in \mcl H(S_k  ) } \op{diam}(H)^2 \pi^*(H) \right)^{1/2}   \op{Energy}\left( f ; \mcl H(S_k) \right)^{1/2} \notag \\
&\qquad \qquad \leq 2 C_0^{1/2} \op{Energy}\left( f ; \mcl H(S_k)  \right)^{1/2}  .
\alle
This concludes the proof with $C =  2 C_0^{1/2}$. Obviously, the case of vertical line segments follows from the same argument. 
\end{proof}

The preceding lemmas allow us to approximate $\phi_\infty$ by $\phi_m$ for a large value of $m\in\BB N$. 
To complement this, we have the following bound for $\phi_m-\phi_0$ for a fixed value of $m$.

\begin{lem} \label{lem-m-0-diff}
For each fixed $m\in\BB N$, a.s.\
\eqb \label{eqn-m-0-diff}
\limsup_{k \rta\infty}  \frac{1}{|S_k|}  \max_{H \in \mcl H(S_k)} \left| \phi_m(H) - \phi_0(H)  \right|      = 0 .
\eqe 
\end{lem}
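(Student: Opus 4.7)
The plan is to use a maximum principle argument on each square $\wh S \in \mcl S_m$: since $\phi_m$ agrees with $\phi_0$ on $\mcl H(\bdy \wh S)$ and is discrete harmonic on the interior cells, the pointwise deviation $|\phi_m - \phi_0|$ is controlled by the oscillation of $\phi_0$ over $\mcl H(\wh S)$, which in turn is controlled by the diameter of $\wh S$ plus the maximal cell diameter in $\mcl H(\wh S)$. For fixed $m$, both quantities should be $o(|S_k|)$ as $k \rta \infty$.

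More precisely, I split into cases. If $H \in \mcl H(\bdy \wh S)$ for some $\wh S \in \mcl S_m$, then $\phi_m(H) = \phi_0(H)$ and there is nothing to prove. Otherwise $H$ lies in the interior of a unique $\wh S \in \mcl S_m$, and $\phi_m$ is $\frk c$-discrete harmonic at $H$ with boundary values $\phi_0$ on $\mcl H(\bdy \wh S)$. The induced subgraph on $\mcl H(\wh S)$ is a.s.\ connected (using the connectedness along lines hypothesis applied to horizontal or vertical segments inside $\wh S$ from $H$ to $\bdy \wh S$), so the Dirichlet problem has a unique solution and $\phi_m(H)$ is a convex combination of $\phi_0(H')$ for $H' \in \mcl H(\bdy \wh S)$. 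Combined with the observation that $\phi_0(H'') \in H''$ and $H'' \cap \wh S \not= \emptyset$ imply $\phi_0(H'')$ lies within distance $\op{diam}(H'')$ of $\wh S$, the maximum principle yields
\eqbn
|\phi_m(H) - \phi_0(H)| \leq \max_{H' \in \mcl H(\bdy \wh S)} |\phi_0(H') - \phi_0(H)| \leq 2\sqrt 2 |\wh S|  + 2 D_{\wh S},
\eqen
where $D_{\wh S} := \max_{H'' \in \mcl H(\wh S)} \op{diam}(H'')$.

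It remains to show that for fixed $m$ and any $\ep > 0$, a.s.\ for all sufficiently large $k$, every $\wh S \in \mcl S_m$ with $\wh S \cap S_k \not= \emptyset$ satisfies $|\wh S| + D_{\wh S} \leq C_m \ep |S_k|$ for a constant $C_m$ depending only on $m$. First, since $\wh S$ and $S_k$ are both dyadic squares in $\mcl D$, they are nested. If $S_k \subset \wh S$, then $\wh S \in \mcl S_m$ forces the cell count of $S_k$ to be at most $m$; but Lemma~\ref{lem-max-cell-diam} implies that the number of cells contained in $S_k$ tends to infinity with $k$, so this cannot happen for large $k$, hence $\wh S \subset S_k$. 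Then $\mcl H(\wh S) \subset \mcl H(S_k)$, so $D_{\wh S} \leq \ep |S_k|$ directly from Lemma~\ref{lem-max-cell-diam}. For $|\wh S|$, I use
\eqbn
|\wh S|^2 = \sum_{H \in \mcl H(\wh S)} \op{Area}(H \cap \wh S) \leq \max_{H \in \mcl H(\wh S)} \op{Area}(H) \cdot \sum_{H \in \mcl H(\wh S)} \frac{\op{Area}(H \cap \wh S)}{\op{Area}(H)} \leq m \cdot \frac{\pi}{4} D_{\wh S}^2 ,
\eqen
so $|\wh S| \leq (\pi m)^{1/2} \ep |S_k| / 2$. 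Combining with the oscillation bound and letting $\ep \rta 0$ gives~\eqref{eqn-m-0-diff}.

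The main (minor) obstacle is the last displayed chain of inequalities: the crucial point is that the definition~\eqref{eqn-mass-time-def} of $\mcl S_m$ via the area-weighted cell count, together with Lemma~\ref{lem-max-cell-diam}, lets us exchange the constraint ``at most $m$ cells in $\wh S$'' for a bound $|\wh S| \lesssim \sqrt m \cdot (\text{max cell diameter in } S_k)$. Everything else is an essentially routine application of the maximum principle and the sublinearity of the maximal cell diameter established in Section~\ref{sec-ergodic-avg}.
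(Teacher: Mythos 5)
Your proof is correct and follows essentially the same route as the paper: control $|\phi_m-\phi_0|$ on each $\wh S\in\mcl S_m$ by the maximum principle plus the oscillation of $\phi_0$, then use Lemma~\ref{lem-max-cell-diam} together with the constraint in~\eqref{eqn-mass-time-def} to show that every relevant square of $\mcl S_m$ has side length $o_k(|S_k|)$ (your area-counting bound $|\wh S|\preceq \sqrt m\, D_{\wh S}$ replaces the paper's path-counting bound $|\wh S|\preceq m\,D_{\wh S}$, an immaterial variation). The only blemish is the claim that $\phi_0(H'')\in H''$, which can fail for non-convex cells; but since $\phi_0(H'')$ always lies within $\op{diam}(H'')$ of $H''$ (as the paper notes), your displayed oscillation bound survives with the same constants.
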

\begin{proof}
By Lemma~\ref{lem-max-cell-diam}, a.s.\ 
\eqb \label{eqn-max-cell-diam-diff}
\limsup_{k\rta\infty} \frac{1}{|S_k|} \max_{H\in \mcl H(S_k)} \op{diam}(H) = 0 .
\eqe 
By~\eqref{eqn-mass-time-def}, the interior of each $\wh S\in \mcl S_m$ contains at most $m$ cells of $\mcl H$. Consequently, the center point of $\wh S$ can be joined to $\bdy \wh S$ by a Euclidean path which intersects at most $m+1$ cells of $\mcl H(\wh S)$. Since $\wh S$ is a square, it follows that the side length of $\wh S$ is at most $m+1$ times the maximal diameter of the cells which intersect $\wh S$, so we can multiply the estimate~\eqref{eqn-max-cell-diam-diff} by $m+1$ to get that a.s.\ 
\eqb \label{eqn-max-square-diam}
\limsup_{k\rta\infty} \frac{1}{|S_k|} \max_{ \wh S \in \mcl S_m : \wh S \subset S_k } \op{diam}(\wh S) = 0 .
\eqe 
By definition, $\phi_0$ and $\phi_m$ agree on $\mcl H(\bdy \wh S)$ for each $\wh S \in \mcl S_m$. Moreover, $\phi_m$ is discrete harmonic on $\mcl H(\wh S) \setminus \mcl H(\bdy \wh S)$ for each such square $\wh S$. By combining~\eqref{eqn-max-cell-diam-diff} and~\eqref{eqn-max-square-diam} with the maximum principle for $\phi_m|_{\mcl H(\wh S)}$ and the fact that $|\phi_0(H) - z| \leq \op{diam}(H)$ for each $z\in H\in \mcl H$, we obtain~\eqref{eqn-m-0-diff}.
\end{proof}

\begin{lem}  \label{lem-var-lim-infty}
Almost surely, 
\eqb \label{eqn-var-lim-infty}
 \limsup_{k \rta\infty}  \frac{1}{|S_k|} \max_{H \in \mcl H(S_k)} \left|  \phi_\infty(H)  - \phi_0(H)   \right|      = 0 .
\eqe 
\end{lem}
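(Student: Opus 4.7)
The plan is to decompose
\[
\phi_\infty(H) - \phi_0(H) \;=\; \phi_{m,\infty}(H) + (\phi_m(H)-\phi_0(H))
\]
for a suitable large fixed $m\in\BB N$ and bound the two summands separately on $\mcl H(S_k)$ as $k\to\infty$. Lemma~\ref{lem-m-0-diff} immediately handles $\phi_m-\phi_0$ for any fixed $m$. Also, since $\phi_\infty(H_0)=0$ by construction of the corrector, the constant $\phi_{m,\infty}(H_0)=-\phi_m(H_0)$ is $o(|S_k|)$: its modulus is bounded by the diameter of the square in $\mcl S_m$ containing the origin, which for fixed $m$ is a finite random quantity. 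The core of the argument therefore reduces to bounding $\max_{H\in\mcl H(S_k)} |\phi_{m,\infty}(H) - \phi_{m,\infty}(H_0)|/|S_k|$ in the iterated limit $\lim_{m\to\infty}\limsup_{k\to\infty}$.

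To control this, I will combine Lemma~\ref{lem-energy-lim-infty} with the path-sum bound of Lemma~\ref{lem-path-sum}. Given $\epsilon>0$, the plan is to choose $m$ large enough (depending on $\omega$) so that a.s.\ $\limsup_k \op{Energy}(\phi_{m,\infty};\mcl H(S_k))/|S_k|^2 \leq \epsilon^4$. Lemma~\ref{lem-path-sum} then yields $\int_0^1 V^h(r)\,dr \leq C\epsilon^2|S_k|$ for all large $k$, where $V^h(r)$ denotes the total variation of $\phi_{m,\infty}$ along the horizontal slice $P_k(r)$, and an analogous bound holds for the vertical slices $Q_k(s)$. By Chebyshev the set of ``bad'' horizontal coordinates $\{r:V^h(r)>\epsilon|S_k|\}$ has Lebesgue measure at most $C\epsilon$, and similarly in the vertical direction. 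Next, given $H\in\mcl H(S_k)$ and any $r$ with $H$ meeting $P_k(r)$ and any $s$ with $H_0$ meeting $Q_k(s)$, the connectedness-along-lines hypothesis of Theorem~\ref{thm-general-clt} produces an edge path in $\mcl H$ from $H$ through the cell containing $P_k(r)\cap Q_k(s)$ to $H_0$ whose $\phi_{m,\infty}$-variation is at most $V^h(r)+V^v(s)$; when both coordinates can be chosen good this is $\leq 2\epsilon|S_k|$, and sending $\epsilon\to 0$ along a countable sequence will then complete the argument.

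The hard part will be producing good $r,s$ uniformly over every $H\in\mcl H(S_k)$: Lemma~\ref{lem-max-cell-diam} forces $\op{diam}(H)=o(|S_k|)$, so the set of admissible horizontal coordinates through $H$ (and of admissible vertical coordinates through $H_0$) has measure $o(1)$, potentially smaller than the bad-set measure $C\epsilon$. To handle these exceptional cells I plan to route through an auxiliary anchor cell $H_*$ lying at the intersection of a good horizontal and a good vertical slice---abundant since the product of the good sets occupies a fraction at least $(1-C\epsilon)^2$ of $S_k$---and to control the connecting leg from $H_*$ back to $H_0$ via the discrete maximum principle for $\phi_{m,\infty}$ on the square of $\mcl S_m$ containing the origin, on whose interior $\phi_{m,\infty}$ is $\frk c$-discrete harmonic (since $\phi_\infty$ is globally discrete harmonic while $\phi_m$ is discrete harmonic on the interior of each square in $\mcl S_m$). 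No additional ergodic or energy inputs beyond those already established should be required; the remaining work is the bookkeeping that this anchor-plus-max-principle rerouting absorbs the exceptional set uniformly in $k$.
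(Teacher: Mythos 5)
Your overall strategy---the energy bound of Lemma~\ref{lem-energy-lim-infty}, the path-sum/Chebyshev argument of Lemma~\ref{lem-path-sum} to produce a dense grid of ``good'' horizontal and vertical segments, and Lemma~\ref{lem-m-0-diff} to absorb $\phi_m-\phi_0$---is the same as the paper's, and your first two paragraphs are sound. The genuine gap is exactly the step you defer as bookkeeping: cells $H\in\mcl H(S_k)$ that are not hit by any good line. Your proposed repair does not work, for two reasons. First, it addresses the wrong leg: routing through an anchor cell $H_*$ at a good crossing and controlling the leg ``from $H_*$ back to $H_0$'' leaves untouched the leg from the exceptional cell $H$ to the good grid, which is the whole difficulty (the anchoring at $H_0$ is trivial anyway, since $\phi_{m,\infty}(H_0)=-\phi_m(H_0)$ is a $k$-independent finite random quantity, as you yourself note). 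Second, the maximum principle for $\phi_{m,\infty}$ is not usable here: on the square of $\mcl S_m$ containing the origin its boundary values are exactly $\phi_\infty-\phi_0$ on $\mcl H(\bdy \wh S_m)$ (since $\phi_m=\phi_0$ there), i.e.\ the very quantity being bounded, so that bound is circular; and for an exceptional cell $H$ elsewhere in $S_k$, any region bounded by good segments enclosing $H$ has diameter of order $\ep|S_k|$ and hence, for fixed $m$ and large $k$, contains many boundaries of squares of $\mcl S_m$ (these have diameter $o_k(|S_k|)$), at whose cells $\phi_{m,\infty}$ is \emph{not} discrete harmonic, so no maximum principle for $\phi_{m,\infty}$ applies on such a region.

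The paper closes this gap by a different final step: it first converts the grid estimate from $\phi_{m_\ep,\infty}$ to $\phi_{0,\infty}=\phi_\infty-\phi_0$ using Lemma~\ref{lem-m-0-diff}, then encloses each $H$ at distance at least $\ep|S_k|$ from $\bdy S_k$ in a rectangle of diameter at most $2\ep|S_k|$ whose boundary consists of pieces of good segments, and applies the maximum principle to $\phi_\infty$ itself, which is discrete harmonic at \emph{every} cell; the oscillation of $\phi_0$ over such a rectangle is controlled purely geometrically (rectangle diameter plus the $o_k(|S_k|)$ cell diameters from Lemma~\ref{lem-max-cell-diam}). Cells within distance $\ep|S_k|$ of $\bdy S_k$ are then handled with Lemmas~\ref{lem-more-squares} and~\ref{lem-max-cell-diam}. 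If you replace your anchor-cell step by this enclosing-rectangle maximum-principle argument for $\phi_\infty$ (not $\phi_{m,\infty}$), your proof goes through.
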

\begin{proof}
For $k\in\BB N$, define the horizontal line segments $P_k(r) \subset S_k$ for $r\in[0,1]$ as in Lemma~\ref{lem-path-sum}. 
By Lemmas~\ref{lem-energy-lim-infty} and~\ref{lem-path-sum}, a.s.\ 
\eqb \label{eqn-var-lim-line}
\lim_{m\rta\infty} \limsup_{k\rta\infty} \frac{1}{|S_k|} \int_0^1 \sum_{\{H,H'\} \in \mcl E\mcl H( P_k(r) )} |\phi_{m,\infty}(H ) - \phi_{m,\infty}(H')| \, dr  = 0.
\eqe 
Given $\ep > 0$, choose a (random) $m_\ep \in \BB N$ sufficiently large that for $m\geq m_\ep$, the limsup in~\eqref{eqn-var-lim-line} is at most $\ep^3/100$. 
By Markov's inequality, a.s.\ for $m\geq m_\ep$ it holds for large enough $k\in\BB N$, the Lebesgue measure of the set of $r \in [0,1]$ for which the corresponding sum in~\eqref{eqn-var-lim-line} is at most $|S_k| \ep^2/100$ is at least $1-\ep$. Therefore, for large enough $k\in\BB N$ we can find a finite collection $\mcl P_k$ of at most $2/\ep$ horizontal line segments joining the left and right boundaries of $S_k$ such that each point of $S_k$ lies within Euclidean distance $\ep |S_k|$ of one of these line segments and
\eqb \label{eqn-segment-var}
\frac{1}{|S_k|} \sum_{P \in \mcl P_k} \sum_{\{H,H'\} \in \mcl E\mcl H( P )} |\phi_{m_\ep ,\infty}(H ) - \phi_{m_\ep ,\infty}(H')| \leq \frac{\ep}{2} .
\eqe 
Similarly, after possibly increasing $m_\ep$ it is a.s.\ the case that for large enough $k\in\BB N$, we can find an $\ep|S_k|$-dense collection $\mcl P_k'$ of at most $2/\ep$ vertical line segments joining the top and bottom boundaries of $S_k$ such that~\eqref{eqn-segment-var} holds with $\mcl P_k'$ in place of $\mcl P_k$. 

By Lemma~\ref{lem-m-0-diff}, it is a.s.\ the case that for large enough $k\in\BB N$, we have $|S_k|^{-1} \max_{H\in \mcl H(S_k)} |\phi_{m_\ep}(S) - \phi_0(S)| \leq \ep/4$. Combining this with~\eqref{eqn-segment-var} and its analog for $\mcl P_k'$, we see that a.s.\ for large enough $k\in\BB N$, 
\eqb \label{eqn-segment-var0}
\frac{1}{|S_k|} \sum_{P \in \mcl P_k} \sum_{\{H,H'\} \in \mcl E\mcl H( P )} |\phi_{0 ,\infty}(H ) - \phi_{0 ,\infty}(H')| \leq \ep
\eqe
and the same holds with $\mcl P_k'$ in place of $\mcl P_k$. Each $P \in \mcl P_k \cup \mcl P_k'$ is a horizontal or vertical line segment, so assumption~\ref{item-hyp-adjacency} implies that $\mcl H(P)$ (viewed as a graph) is connected for each $P\in\mcl P_k\cup \mcl P_k'$, i.e., any two cells in $\mcl H(P)$ can be joined by a path of edges in $\mcl E\mcl H(P)$. Hence $\sum_{\{H,H'\} \in \mcl E\mcl H( P )} |\phi_{0 ,\infty}(H ) - \phi_{0 ,\infty}(H')|$ is an upper bound for the maximum of $|\phi_{0,\infty}(H_1) - \phi_{0,\infty}(H_2)|$ over all $H_1,H_2 \in \mcl H(P)$. By this and~\eqref{eqn-segment-var0}, we get that a.s.\ for each large enough $k\in\BB N$, 
\eqb\label{eqn-segment-var1} 
\frac{1}{|S_k|} \max\left\{ |\phi_{0,\infty}(H_1) - \phi_{0,\infty}(H_2)| : H_1, H_2 \in \mcl H\left( \bigcup_{P\in\mcl P_k\cup \mcl P_k'} P \right) \right\} \leq 2\ep.
\eqe   

We now want to extend from~\eqref{eqn-segment-var1} to a bound which holds for all $H_1,H_2 \in \mcl H(S_k)$. Recall that $\phi_{0,\infty} = \phi_\infty - \phi_0$. 
Since the collections $\mcl P_k$ and $\mcl P_k'$ are $\ep|S_k|$-dense, a.s.\ for large enough $k\in\BB N$ each $H\in\mcl H(S_k)$ which is not contained in the Euclidean $\ep |S_k|$-neighborhood of $\bdy S_k$ intersects a rectangle of diameter at most $2\ep |S_k|$ whose boundary is formed by segments of four elements of $\mcl P_k \cup \mcl P_k'$. Since $\phi_\infty$ is discrete harmonic, it attains its maximum and minimum values on this rectangle on the boundary of the rectangle. Furthermore, by Lemma~\ref{lem-max-cell-diam} and the definition of $\phi_0$, the maximum and minimum values of $\phi_0$ on each such rectangle differ from the maximum and minimum values of $\phi_0$ on the boundary of the rectangle by at most a quantity which is of order $o_k(|S_k|)$ a.s.\ as $k\rta\infty$, uniformly over all of the rectangles. Combining these facts with~\eqref{eqn-segment-var1} shows that a.s.\ for each large enough $k\in\BB N$, $|\phi_{0,\infty}(H_1) - \phi_{0,\infty}(H_2)| \leq 4\ep |S_k|$ for each $H_1,H_2\in\mcl H(S_k)$ which are not contained in the Euclidean $\ep |S_k|$-neighborhood of $\bdy S_k$.

We will now deal with the cells $H\in\mcl H(S_k)$ which are close to $\bdy S_k$. 
Let $S_k'$ be a dyadic ancestor of $S_k$ of side length $4|S_k|$ which contains $S_k$ in its interior. 
By the preceding paragraph and Lemma~\ref{lem-more-squares}, it is a.s.\ the case for each large enough $k\in\BB N$, $|\phi_{0,\infty}(H_1) - \phi_{0,\infty}(H_2)| \leq 16 \ep |S_k|$ for each $H_1,H_2 \in\mcl H(S_k')$ which are not contained in the Euclidean $\ep |S_k'|$-neighborhood of $\bdy S_k'$. 
Since $S_k\subset S_k'$ and $S_k$ lies at Euclidean distance at least $\ep|S_k'|$ from $\bdy S_k'$, we get that a.s.\ for each each large enough $k\in\BB N$, $|\phi_{0,\infty}(H_1) - \phi_{0,\infty}(H_2)| \leq 16 \ep |S_k|$ for each $H\in\mcl H(S_k)$. 

We have 
\eqbn
| \phi_{0,\infty}(H_0)| =  |\phi_0(H_0)|  = \frac{1}{\op{Area}(H_0)} \left| \int_{H_0} z \,dz \right| \leq \op{diam}(H_0). 
\eqen 
From this and the preceding paragraph (applied with one of $H_1$ or $H_2$ equal to $H_0$), we get that a.s.\ for each each large enough $k\in\BB N$, $|\phi_\infty(H) - \phi_0(H)| \leq 17\ep |S_k|$ for each $H_1,H_2\in\mcl H(S_k)$ which are not contained in the Euclidean $\ep |S_k|$-neighborhood of $\bdy S_k$.
Since $\ep > 0$ is arbitrary, this implies~\eqref{eqn-var-lim-infty}.
\end{proof}

\begin{proof}[Proof of~\eqref{eqn-cell-tutte-conv} of Theorem~\ref{thm-general-clt}]
Lemmas~\ref{lem-more-squares} and~\ref{lem-var-lim-infty} together imply that for each $\ep> 0$, it a.s.\ holds for large enough $k\in\BB N$ that $\left|  \phi_\infty(H)  - \phi_0(H)   \right|  \leq \ep |S_k|$ for each cell $H$ which intersects one of the four dyadic parents of $S_k$. Since $0\in S_k$, the union of these four dyadic parents contains $B_{|S_k|}(0)$. Since $|S_k| \rta \infty$ as $k\rta\infty$, this implies~\eqref{eqn-general-tutte}.  
\end{proof}

Although it is not needed in the proof of~\eqref{eqn-cell-tutte-conv}, we record here the following lemma which will be used in Section~\ref{sec-martingale-clt}.

\begin{lem} \label{lem-infty-energy}
The function $\phi_\infty$ has finite specific Dirichlet energy, i.e., 
\eqbn
\BB E\left[ \sum_{\substack{H\in\mcl H  \\ H\sim H_0}} \frac{ \frk c( H_0 , H) |\phi_{ \infty }(H)     |^2}{ \op{Area}(H_0   )} \right] < \infty .
\eqen
\end{lem}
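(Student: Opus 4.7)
The plan is to reduce the lemma to estimates we already have by decomposing $\phi_\infty$ against $\phi_0$. Note that $\phi_\infty(H_0) = 0$ by construction (Proposition~\ref{prop-corrector}), so the displayed expression is exactly $2\,\BB E[\op{SE}(\phi_\infty)]$, where $\op{SE}(f) := \sum_{H\sim H_0} \frac{\frk c(H_0,H)|f(H)-f(H_0)|^2}{2\op{Area}(H_0)}$. Writing $\phi_\infty = \phi_{0,\infty} + \phi_0$ and applying the elementary inequality $|a+b|^2 \leq 2|a|^2+2|b|^2$ to the increments $f(H)-f(H_0)$ yields
\eqbn
\op{SE}(\phi_\infty) \;\leq\; 2\,\op{SE}(\phi_{0,\infty}) + 2\,\op{SE}(\phi_0) \;=\; 2\,\op{SE}_{0,\infty} + 2\,\op{SE}(\phi_0).
\eqen
Thus it suffices to show each of $\BB E[\op{SE}_{0,\infty}]$ and $\BB E[\op{SE}(\phi_0)]$ is finite.

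The first bound is immediate from Lemma~\ref{lem-se-infty} applied at $m=0$. For the second, I would use that $H\sim H_0$ implies $H\cap H_0 \neq \emptyset$, so $|\phi_0(H)-\phi_0(H_0)| \leq \op{diam}(H) + \op{diam}(H_0)$, giving
\eqbn
\op{SE}(\phi_0) \;\leq\; \sum_{H\sim H_0} \frac{\frk c(H_0,H)\,\op{diam}(H_0)^2}{\op{Area}(H_0)} \;+\; \sum_{H\sim H_0} \frac{\frk c(H_0,H)\,\op{diam}(H)^2}{\op{Area}(H_0)}.
\eqen
The first summand equals $\op{diam}(H_0)^2\pi(H_0)/\op{Area}(H_0)$, whose expectation is finite by the finite expectation hypothesis~\eqref{eqn-hyp-moment}.

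The main point (the only slightly nontrivial step) is to estimate the second summand, which involves diameters of neighboring cells rather than of $H_0$ itself. For this I would appeal to the mass transport principle (condition~\ref{item-mass-transport} of Definition~\ref{def-translation-invariance}, applied to $\mcl H$) with the transport rule
\eqbn
F(\mcl H, w, z) := \frac{\BB 1_{H_w \sim H_z}\;\frk c(H_w, H_z)\, \op{diam}(H_z)^2}{\op{Area}(H_w)\,\op{Area}(H_z)}.
\eqen
A direct check confirms $F(C(\mcl H-z_0),C(w-z_0),C(z-z_0)) = C^{-2} F(\mcl H,w,z)$, so $F$ is covariant in the sense of~\eqref{eqn-mass-transport-commutation}. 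Integrating $F(\mcl H,0,z)$ over $z\in\BB C$ gives exactly the second summand above, while integrating $F(\mcl H,w,0)$ over $w\in\BB C$ produces $\op{diam}(H_0)^2\pi(H_0)/\op{Area}(H_0)$. Hence by~\eqref{eqn-mass-transport} the expectation of the second summand equals $\BB E[\op{diam}(H_0)^2\pi(H_0)/\op{Area}(H_0)]$, again finite by~\eqref{eqn-hyp-moment}. Combining these bounds yields $\BB E[\op{SE}(\phi_\infty)] < \infty$, completing the proof.
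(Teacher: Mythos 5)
Your argument is correct. The reduction is the same one the paper makes: since $\phi_\infty(H_0)=0$, split $\phi_\infty=(\phi_\infty-\phi_0)+\phi_0$, handle the first increment with Lemma~\ref{lem-se-infty} (i.e.\ $\BB E[\op{SE}_{0,\infty}]<\infty$), and reduce to showing that the specific energy of $\phi_0$ itself has finite expectation. Where you diverge is in how you bound that last quantity. The paper stays inside its dyadic-system/ergodic framework: it applies Lemma~\ref{lem-ergodic-avg} to write $\BB E[\op{SE}_0]$ as the a.s.\ limit of $|S_k|^{-2}$ times a spatial average over $S_k$, identifies that average (after handling boundary cells via Lemmas~\ref{lem-max-cell-diam} and~\ref{lem-more-squares}) with the Dirichlet energy of $\phi_0$ over a slightly enlarged square, and then invokes the a.s.\ energy bound of Lemma~\ref{lem-square-energy}. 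You instead use the pointwise bound $|\phi_0(H)-\phi_0(H_0)|\leq \op{diam}(H)+\op{diam}(H_0)$ and a single application of the mass transport principle (condition~\ref{item-mass-transport}, available for $\mcl H$ by Lemma~\ref{lem-dyadic-resample}/Appendix~\ref{sec-equivalence}) to convert the neighbor-diameter term $\sum_{H\sim H_0}\frk c(H_0,H)\op{diam}(H)^2/\op{Area}(H_0)$ into $\op{diam}(H_0)^2\pi(H_0)/\op{Area}(H_0)$; your transport rule is indeed covariant with the required $C^{-2}$ factor and both marginal integrals are computed correctly. The trade-off: your route is more self-contained and only uses translation invariance modulo scaling plus hypothesis~\eqref{eqn-hyp-moment}, with no appeal to ergodicity or to the a.s.\ limits already built in Section~\ref{sec-ergodic-avg} (it even yields an exact identity for the transported term rather than an a.s.\ bound with an unspecified constant); the paper's route recycles lemmas it has already proved and needs anyway, so it is shorter given that infrastructure. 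Note also that the diameter bound you use is essentially the same one hidden inside the proof of Lemma~\ref{lem-square-energy}, so the two arguments ultimately rest on the same geometric estimate, packaged through mass transport versus spatial averaging.
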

\begin{proof}
By Lemma~\ref{lem-se-infty} we have $\BB E[\op{SE}_{0,\infty}] < \infty$ so we only need to show that  
\eqb \label{eqn-se0}
\BB E[\op{SE}_0] < \infty \quad\text{where} \quad \op{SE}_0 := \sum_{\substack{H\in\mcl H  \\ H\sim H_0}} \frac{  \frk c( H_0 , H) |\phi_{0}(H)  - \phi_{ 0 }(H_0)   |^2}{ \op{Area}(H_0   )} .
\eqe 
To prove~\eqref{eqn-se0}, we use Lemma~\ref{lem-ergodic-avg} to get that a.s.\
\eqb \label{eqn-se0-int}
\BB E[\op{SE}_0] = \lim_{k\rta\infty} \frac{1}{|S_k|^2} \int_{S_k} \sum_{\substack{H\in\mcl H  \\ H\sim H_z}} \frac{  \frk c( H_z,  H) |\phi_{0}(H)  - \phi_{ 0 }(H_z)   |^2}{ \op{Area}(H_z   )} \,dz .
\eqe 
By breaking up the integral into a sum over the regions $H \cap S_k$ and applying Lemma~\ref{lem-max-cell-diam}, we find that the integral on the right side of~\eqref{eqn-se0-int} is a.s.\ bounded above by twice the Dirichlet energy of $\phi_0$ over the union of the four dyadic parents of $S_k$ for large enough $k\in\BB N$. The bound~\eqref{eqn-se0} therefore follows from~\eqref{eqn-se0-int} combined with Lemmas~\ref{lem-more-squares} and~\ref{lem-square-energy}. 
\end{proof}

\subsection{Random walk on $\mcl H$ is recurrent}
\label{sec-recurrence}

In this subsection we will establish the following, which is part of Theorem~\ref{thm-general-clt} and is a generalization of Theorem~\ref{thm-recurrent}. 

\begin{prop} \label{prop-cell-recurrent}
In the setting of Theorem~\ref{thm-general-clt}, the simple random walk on $\mcl H$ is a.s.\ recurrent. 
\end{prop}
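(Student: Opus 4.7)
My plan is to use the effective-resistance criterion for recurrence of a reversible Markov chain. Since the walk on $\mcl H$ is reversible, it is recurrent if and only if the effective resistance from $H_0$ to infinity in the weighted graph $\mcl H$ (with edge conductances $\frk c$) is infinite. By monotone convergence it suffices to show that, almost surely,
\eqbn
R_{\op{eff}}\bigl(H_0 \leftrightarrow \mcl H \setminus \mcl H(S_{k+J})\bigr) \xrta{J\to\infty} \infty
\eqen
for some (equivalently every) sufficiently large $k$. By the Dirichlet/Thomson variational principle, this effective resistance equals $\bigl(\inf_f \op{Energy}(f;\mcl H)\bigr)^{-1}$, where the infimum is over $f : \mcl H \to \BB R$ with $f(H_0) = 1$ and $f \equiv 0$ on $\mcl H \setminus \mcl H(S_{k+J})$. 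Hence it is enough to exhibit a test function $f = f_J$ satisfying these boundary conditions with $\op{Energy}(f_J ; \mcl H) \leq C/J$ for a deterministic constant $C$.

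The test function will spread the unit drop logarithmically across the $J$ dyadic shells surrounding the origin. Explicitly, choose a Lipschitz $g_J : \BB C \to [0,1]$ with $g_J \equiv 1$ on $S_k$, $g_J \equiv 0$ off a fixed slight enlargement of $S_{k+J}$, and $|\nabla g_J(z)| \leq C_0 /(J\, d(z))$ on the annular region, where $d(z) := |S_k| \vee \op{dist}(z, S_k)$; this is obtained by a logarithmic interpolation in the dyadic radial coordinate. Define $f_J(H) := g_J(\phi_0(H))$. Lemma~\ref{lem-max-cell-diam} (with $\ep<1$) guarantees that for $k$ large enough, $H_0 \subset S_k$, so $f_J(H_0)=1$, and similarly $f_J$ vanishes on cells outside $S_{k+J}$. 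For adjacent $H \sim H'$, Lipschitz continuity of $g_J$ and the fact that $\phi_0(H), \phi_0(H')$ lie within $\op{diam}(H) + \op{diam}(H')$ of one another yield
\eqbn
|f_J(H) - f_J(H')|^2 \; \leq\; \frac{C_0^{\,2}}{J^2}\cdot \frac{\bigl(\op{diam}(H) + \op{diam}(H')\bigr)^2}{d(\phi_0(H))^2}.
\eqen

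The energy bound then follows by decomposing the sum over edges by the dyadic shell $A_\ell := \mcl H(S_{k+\ell}) \setminus \mcl H(S_{k+\ell-1})$ containing the cell of smaller index. For $H \in A_\ell$ one has $d(\phi_0(H)) \asymp |S_{k+\ell}| = 2^\ell |S_k|$, and by Lemma~\ref{lem-cell-sum},
\eqbn
\sum_{H \in \mcl H(S_{k+\ell})} \op{diam}(H)^2\,\pi(H) \; \leq\; C\,|S_{k+\ell}|^2
\eqen
almost surely for every $\ell$ once $k$ is large. Writing $\op{Energy}(f_J;\mcl H) \leq \sum_H \pi(H)\,\max_{H'\sim H} |f_J(H) - f_J(H')|^2$ and applying the two preceding displays shell by shell gives a contribution $\lesssim C/J^2$ from each of the $J$ shells, so $\op{Energy}(f_J;\mcl H) \leq C'/J$ almost surely. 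Inverting gives $R_{\op{eff}} \geq J/C' \to \infty$, which proves recurrence.

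The step I expect to require the most care is the geometric relationship between $d(\phi_0(H))$ and the size of the dyadic shell containing $H$. Because the origin is placed at a generic (not centred) location inside each $S_{k+\ell}$, a cell in $A_\ell$ could in principle sit very close to the origin if $\bdy S_{k+\ell-1}$ passes near $0$; replacing $|z|$ with $d(z)=|S_k|\vee\op{dist}(z,S_k)$ in the definition of $g_J$ is designed precisely to sidestep this, since then $d(\phi_0(H)) \gtrsim |S_{k+\ell}|$ for $H \in A_\ell$ (using that $A_\ell$ is disjoint from $S_{k+\ell-1}$ and cells have diameter $o(|S_k|)$ by Lemma~\ref{lem-max-cell-diam}). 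A secondary technical point is handling cells that straddle $\bdy S_{k+J}$; this is absorbed by taking the slightly enlarged support of $g_J$ and invoking Lemma~\ref{lem-more-squares} to upgrade the a.s.\ bound from the $S_{k+\ell}$'s to their dyadic ancestors of comparable size.
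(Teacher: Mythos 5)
Your overall strategy is exactly the paper's: the effective-resistance/Dirichlet criterion for recurrence, a test function with a logarithmic radial profile spread over roughly $J$ dyadic scales, and Lemma~\ref{lem-cell-sum} (together with Lemmas~\ref{lem-max-cell-diam} and~\ref{lem-more-squares}) to bound the energy contribution of each scale by $O(1/J^2)$. However, there is a genuine gap in the step you yourself flagged as delicate. Your shell decomposition uses the dyadic shells $A_\ell = \mcl H(S_{k+\ell})\setminus \mcl H(S_{k+\ell-1})$, and you claim $d(\phi_0(H)) \gtrsim |S_{k+\ell}|$ for $H\in A_\ell$, uniformly in $\ell$, on the grounds that $A_\ell$ is disjoint from $S_{k+\ell-1}\supset S_k$. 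But disjointness from $S_{k+\ell-1}$ only gives $\op{dist}(\phi_0(H),S_k) \geq \op{dist}(S_k,\bdy S_{k+\ell-1})$, and the origin-containing squares are not concentric: every dyadic child shares part of its boundary with its parent, so $\op{dist}(S_k,\bdy S_{k+1})=0$ always, and $\op{dist}(S_k,\bdy S_{k+\ell-1})$ is $\ll 2^{\ell}|S_k|$ with probability of order the relative closeness (and is $0$ with probability comparable to $2^{-\ell}$) for larger $\ell$. On such scales the only available bound is $d\geq |S_k|$, and then the shell-$\ell$ contribution from Lemma~\ref{lem-cell-sum} is of order $4^{\ell}/J^2$ rather than $1/J^2$; since for a typical $k$ some scale $\ell\leq J$ has relative centrality of order $1/J$, the resulting bound on $\op{Energy}(f_J;\mcl H)$ is $O(1)$ rather than $O(1/J)$, and recurrence does not follow.

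The repair is precisely what the paper does in Section~\ref{sec-recurrence} (Lemma~\ref{lem-log-energy}): decompose the edge sum not by the dyadic shells of $\mcl D$ but by genuinely concentric annuli — level sets of your $d(z)$, equivalently the balls $B_k^{(j)}$ of radius $2^j|S_k|$ centered at the center of $S_k$ — so that on the $j$-th annulus one really has $d \asymp 2^j|S_k|$. Each such annulus is contained in the union of \emph{four} dyadic ancestors of $S_k$ of side length $2^{j+1}|S_k|$, so Lemma~\ref{lem-more-squares} combined with Lemmas~\ref{lem-max-cell-diam} and~\ref{lem-cell-sum} gives $\sum_{H}\op{diam}(H)^2\pi(H) \preceq 4^j|S_k|^2$ there with a constant independent of $j$, and summing over the $\asymp J$ annuli gives $C/J$. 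Two smaller points: (i) your bound $\op{Energy}(f)\leq \sum_H\pi(H)\max_{H'\sim H}|f(H)-f(H')|^2$ leaves the neighbor's diameter uncontrolled; split $(\op{diam}(H)+\op{diam}(H'))^2 \leq 2\op{diam}(H)^2+2\op{diam}(H')^2$ and sum over edges as in Lemma~\ref{lem-square-energy}; (ii) since Lemma~\ref{lem-more-squares} is stated for a fixed number of generations $\el$, the safe quantifier order is the paper's — fix the number of scales $J$ first, then take $k=k_J$ large and random — rather than fixing one $k$ and letting $J\to\infty$.
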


 The proof of Proposition~\ref{prop-cell-recurrent} uses only the results of Section~\ref{sec-ergodic-avg}. 
By a standard criterion, it suffices to produce a sequence of finite vertex sets $\{V_r\}_{r\in\BB N} \subset \mcl H$ containing $H_0$, whose union is all of $\mcl H$, with the property that the effective resistance from 0 to $\mcl H\setminus V_r$ tends to $\infty$ as $r\rta\infty$. By Dirichlet's principle (see, e.g.,~\cite[Exercise 2.13]{lyons-peres}), to prove this it suffices to find functions $\mcl H \rta [0,1]$ which are identically equal to 1 on $\mcl H\setminus V_r$ and vanish at $H_0$ whose discrete Dirichlet energy tends to 0 as $r\rta\infty$. 

We will construct the sets $V_r$ and the corresponding functions using the dyadic system $\mcl D$. 
For $k  \in \BB Z$ and $r\in\BB N_0$, let $B_k^{(r)}$ be the Euclidean ball of side length $2^r | S_k|$ with the same center at $S_k$.
For $r \geq 2$, let $\frk g_k^{(r)} : B_k^{(r)} \setminus B_k^{(1)} \rta [0,1]$ be the continuous function which equals 0 on $\bdy B_k^{(1)}$, equals 1 on $\bdy B_k^{(r)}$, and is discrete harmonic on the rest of $B_k^{(r)} \setminus B_k^{(1)}$. That is,
\eqb \label{eqn-cont-log-function}
\frk g_k^{(r)}(z) = \frac{ \log |z - v_k| - \log (2 |S_k|)}{ \log( 2^{r-1} )}  ,\quad\forall z \in B_k^{(r)} \setminus B_k^{(1)},
\eqe 
where $v_k$ is the center of $S_k$. 
Also let $\frk f_k^{(r)} : \mcl H \rta [0,1]$ be the function which vanishes on $\mcl H(B_k^{(1)})$, equals 1 on $\mcl H\setminus \mcl H( B_k^{(1)})$, and for other cells $H\in\mcl H(B_k^{(r)} \setminus B_k^{(1)}) \setminus \mcl H(\bdy ( B_k^{(r)} \setminus B_k^{(1)}   ) ) $ satisfies $\frk f_k^{(r)}(H) = \frk g_k^{(r)}(z)$ for some (arbitrary) choice of $z\in H \cap (B_k^{(r)} \setminus B_k^{(1)} )$. By the discussion just above (applied with $V_r = \mcl H(B_{k_r}^{(r)})$ for a large random $k_r$), to prove Proposition~\ref{prop-cell-recurrent}, it suffices to establish the following.

\begin{lem} \label{lem-log-energy} 
There is a deterministic constant $C>0$ such that for each $r\in \BB N$, it a.s.\ holds for large enough $k\in\BB N$ that
\eqb \label{eqn-log-energy}
\op{Energy}\left( \frk f_k^{(r)} ; \mcl H   \right) \leq \frac{C}{r} .
\eqe 
\end{lem}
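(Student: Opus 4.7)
\textbf{Proof plan for Lemma~\ref{lem-log-energy}.} The idea is to compare the discrete Dirichlet energy of $\frk f_k^{(r)}$ to the continuous Dirichlet energy of the radial function $\frk g_k^{(r)}$, which is $\asymp 1/r$ (as can be computed directly: $\int_{B_k^{(r)}\setminus B_k^{(1)}} |\nabla \frk g_k^{(r)}|^2 = \frac{2\pi}{(r-1)\log 2}$). The only edges $\{H,H'\} \in \mcl E\mcl H$ whose contribution to the Dirichlet energy of $\frk f_k^{(r)}$ is nonzero are those for which at least one of $H,H'$ lies in $\mcl H(B_k^{(r)}\setminus B_k^{(1)})$. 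For such an edge, since $H\cap H'\neq\emptyset$, any two chosen points $z\in H$ and $z'\in H'$ can be joined by a path in $H\cup H'$ of Euclidean length at most $\diam(H)+\diam(H')$, so by the mean value theorem
\alb
\left|\frk f_k^{(r)}(H) - \frk f_k^{(r)}(H')\right|^2 \leq \bigl(\diam(H)+\diam(H')\bigr)^2 \cdot \sup_{w \in H \cup H'} \bigl|\nabla \frk g_k^{(r)}(w)\bigr|^2,
\ale
with $\frk g_k^{(r)}$ extended by $0$ on $B_k^{(1)}$ and by $1$ outside $B_k^{(r)}$ (the mean value estimate still applies with only a loss of a multiplicative constant because $\frk g_k^{(r)}$ is Lipschitz up to each boundary with constant $O(1/(|S_k|\,r))$ on the inside, and similarly on the outside).

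Next, partition the cells contributing to the energy into groups according to their distance from the center $v_k$ of $S_k$: for $j = 0,1,\dots,r-1$ let $A_j := B_k^{(j+1)}\setminus B_k^{(j)}$. On $A_j$ the gradient bound~\eqref{eqn-cont-log-function} gives $|\nabla \frk g_k^{(r)}|^2 \leq C \cdot 2^{-2j}|S_k|^{-2} r^{-2}$ for some absolute constant $C$. By Lemma~\ref{lem-max-cell-diam}, for large enough $k$, every cell meeting $A_j$ has diameter much smaller than $2^j|S_k|$, so each edge contributing to the energy can be assigned to a unique ``thickened'' annulus $\wt A_j := B_k^{(j+2)}\setminus B_k^{(j-1)}$, with each edge counted in $O(1)$ such annuli. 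Combining the pointwise gradient bound with $(\diam(H) + \diam(H'))^2 \leq 2\diam(H)^2 + 2\diam(H')^2$ and summing over edges in $\wt A_j$ yields
\alb
\sum_{\{H,H'\}\in \mcl E\mcl H(\wt A_j)} \frk c(H,H') \bigl|\frk f_k^{(r)}(H)-\frk f_k^{(r)}(H')\bigr|^2 \leq \frac{C'}{2^{2j}|S_k|^2 r^2} \sum_{H \in \mcl H(\wt A_j)} \diam(H)^2 \pi(H).
\ale

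To bound the sum on the right, observe that $\wt A_j$ is contained in the union of a bounded number of dyadic ancestors of $S_k$ of side length $\asymp 2^j |S_k|$. By applying Lemma~\ref{lem-cell-sum} combined with Lemma~\ref{lem-more-squares} (applied with the event $E(S)$ that $\sum_{H\in\mcl H(S)}\diam(H)^2\pi(H) \leq C|S|^2$), we get that a.s., for large enough $k$ and all $j = 0,\dots,r-1$ simultaneously, $\sum_{H \in \mcl H(\wt A_j)} \diam(H)^2 \pi(H) \leq C''(2^j|S_k|)^2$. Plugging this into the previous display shows that the contribution from $\wt A_j$ is $O(1/r^2)$, and summing over the $r$ annuli yields a total energy of $O(1/r)$, as required. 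The main subtlety I anticipate is treating the boundary regions $\{H\in \mcl H(B_k^{(1)}): H\sim H'\text{ for some }H'\text{ outside } B_k^{(1)}\}$ and its outer analogue: here we use that by Lemma~\ref{lem-max-cell-diam} the cells involved are much smaller than $|S_k|$ and $2^r|S_k|$ respectively, so the jump of $\frk f_k^{(r)}$ across such edges remains controlled by the same gradient bound.
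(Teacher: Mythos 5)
Your proposal is correct and follows essentially the same route as the paper's proof: the paper also decomposes $B_k^{(r)}\setminus B_k^{(1)}$ into the dyadic annuli $B_k^{(j)}\setminus B_k^{(j-1)}$, bounds the jump of $\frk f_k^{(r)}$ across each edge via the mean value theorem with the gradient estimate $|\nabla\frk g_k^{(r)}(z)|\preceq (r|z-v_k|)^{-1}$, and controls $\sum_H \diam(H)^2\pi(H)$ on each annulus using Lemma~\ref{lem-cell-sum} together with Lemmas~\ref{lem-more-squares} and~\ref{lem-max-cell-diam}, yielding $O(1/r^2)$ per annulus and $O(1/r)$ in total. Your bookkeeping with thickened annuli and the Lipschitz extension of $\frk g_k^{(r)}$ is a cosmetic variant of the paper's use of a minimal-length path inside $B_k^{(r)}\setminus B_k^{(1)}$.
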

\begin{proof}
The proof is an extension of that of Lemma~\ref{lem-square-energy} with a few extra complications.
Each of the balls $B_k^{(r)}$ above is contained in the union of four dyadic ancestors of $S_k$ of side length $2^{r+1} |S_k|$. 
If therefore follows from Lemma~\ref{lem-more-squares} combined with Lemmas~\ref{lem-max-cell-diam} and~\ref{lem-cell-sum}, respectively, that there exists a deterministic constant $C_0 >0$ (independent of $r$) such that a.s.\ for large enough $k\in\BB N$,
\eqb \label{eqn-recurrent-max-cell}
\op{diam}(H) \leq \frac{1}{100}  |S_k | ,\quad \forall H\in \mcl H(B_k^{(r)}) 
\eqe
and
\eqb \label{eqn-recurrent-cell-sum}
\frac{1}{  2^{2j} |S_k|^2} \sum_{H\in \mcl H(B_k^{(j)})} \op{diam}(H)^2 \pi(H)  \leq C_0 , \quad \forall j \in [1,r]_{\BB Z} .
\eqe
Henceforth assume that $k$ is chosen sufficiently large that~\eqref{eqn-recurrent-max-cell} and~\eqref{eqn-recurrent-cell-sum} hold. 
 
We first observe that, by direct calculation and since $\log(2^{r-1} )\asymp r$, the function of~\eqref{eqn-cont-log-function} satisfies
\eqb \label{eqn-log-gradient}
|\nabla \frk g_k^{(r)}(z) | \preceq \frac{1}{r |z|    }   
\eqe 
with a universal implicit constant. 
Now consider $j\in [1,r]_{\BB Z}$ and an edge 
$\{H,H'\} \in \mcl E\mcl H\left( B_k^{(j)} \setminus B_k^{(j-1)}   \right)$. 
By the mean value theorem, for appropriate points $z\in H$ and $z'\in H'$, 
\eqb \label{eqn-recurrent-mean-value}
\left| \frk f_k^{(r)}(H) - \frk f_k^{(r)}(H') \right| 
\leq \int_0^1  \left| \nabla \frk g_k^{(r)}\left( P_{z,z'}(t) \right) \cdot P_{z,z'}(t) \right|  \, dt    
\eqe  
where $P_{z,z'}(t)$ denotes a Euclidean path from $z$ to $z'$ in $B_k^{(r)} \setminus B_k^{(1)}$ of minimal Euclidean length. 

Since $P_{z,z'}(t)$ has minimal Euclidean length, it follows that its total length is 
\eqbn
\int_0^1 |P_{z,z'}(t)| \, dt \leq 2\left( \op{diam}(H) + \op{diam}(H') \right) .
\eqen
Furthermore, by~\eqref{eqn-recurrent-max-cell} we have $|P_{z,z'}(t)| \asymp 2^j |S_k|$ for $t\in [0,1]$, so by~\eqref{eqn-log-gradient}  $|\nabla \frk g_k^{(r)}(P_{z,z'}(t))| \preceq r^{-1} 2^{-j} |S_k|^{-1}$. Plugging these estimates into~\eqref{eqn-recurrent-mean-value} gives  
\eqb \label{eqn-one-edge-gradient}
\left| \frk f_k^{(r)}(H) - \frk f_k^{(r)}(H') \right|  
\preceq  \frac{  \op{diam}(H) + \op{diam}(H')  }{  r  2^{ j} |S_k|   }  ,\quad \forall \{H,H'\} \in \mcl E\mcl H\left(  B_k^{(j)}\setminus B_k^{(j-1)} \right) .
\eqe 
Squaring both sides of~\eqref{eqn-one-edge-gradient}, multiplying by $\frk c(H,H')$, and summing over all such edges $\{H,H'\}$, then applying~\eqref{eqn-recurrent-cell-sum} gives
\eqbn
\op{Energy}\left( \frk f_k^{(r)} ;  \mcl H\left( B_k^{(j)} \setminus B_k^{(j-1)}  \right)   \right)
\leq \frac{1}{r^2 2^{2j} |S_k|^2} \sum_{H\in \mcl H(B_k^{(j)})} \op{diam}(H)^2 \pi(H)
\preceq \frac{1}{r^2} 
\eqen
with a deterministic, $r$-independent implicit constant. Summing this over all $j \in [1, r]_{\BB Z}$ now yields~\eqref{eqn-log-energy} since $\frk f_k^{(r)}$ is constant on each of the two connected components of $\mcl H\setminus \mcl H(B_k^{(r)}\setminus B_k^{(1)})$. 
\end{proof}

\section{Convergence of random walk to Brownian motion}
\label{sec-general-clt}

Continue to assume that $\mcl H$ is a cell configuration satisfying the hypotheses of Theorem~\ref{thm-general-clt} and $\mcl D$ is an independent uniform dyadic system. In this section we will show that simple random walk on $\mcl H$ converges to Brownian motion, thereby completing the proof of Theorem~\ref{thm-general-clt}. 
To do this, we will show that the image of simple random walk under $\phi_\infty$ converges to Brownian motion (Proposition~\ref{prop-tutte-rw-conv}), then apply Theorem~\ref{thm-general-tutte}. The proof of this convergence will use the martingale central limit theorem (see Theorem~\ref{thm-martingale-clt} for a precise statement of the version we use). In order to prove the necessary quadratic variation convergence conditions, we will need analogs of some of the ergodic theory results in Section~\ref{sec-ergodic-avg} when we average along the path of the walk instead of over a large square. Such results are established in Section~\ref{sec-walk-ergodic}. We prove Theorem~\ref{thm-general-clt} in Section~\ref{sec-martingale-clt}. Sections~\ref{sec-clt-uniform} and~\ref{sec-variant-proof}, respectively, are devoted to the proof that the rate of convergence in Theorem~\ref{thm-general-clt} is uniform on compact subsets of $\BB C$ (Theorem~\ref{thm-general-clt-uniform}) and the proofs of the other variants of our main results.

\subsection{Ergodic averages for simple random walk}
\label{sec-walk-ergodic}

Let $Y$ be a two-sided discrete time simple random walk on the graph $\mcl H$, started at $Y_0   =H_0$. In this subsection we will prove an exact resampling property for $Y$ analogous to Lemma~\ref{lem-dyadic-resample} (Lemma~\ref{lem-walk-translate}) and a corresponding convergence statement for ergodic averages analogous to Lemma~\ref{lem-ergodic-avg} (Lemma~\ref{lem-walk-ergodic}). 

For this purpose we first need to choose a particular continuous-time parameterization $X$ of $Y$ with the property that $X$ spends $\op{Area}(H)/\pi(H)$ units of time at the cell $H$ whenever this cell is hit (recall that $\pi(H)$ is as in~\eqref{eqn-stationary-measure}). In order to avoid making 0 a special time we first sample a uniform random variable $\indshift_X$ from $[0,\op{Area}(H_0)/\pi(H_0)]$ and declare that $n_t=0$ and $X_t = H_0$ for $t\in [-\indshift_X , \op{Area}(H_0)/\pi(H_0) - \indshift_X]$. For $t\geq  \op{Area}(H_0)/\pi(H_0) - \indshift_X  $, we set
\eqb \label{eqn-walk-time-def}
X_t = Y_{n_t}  \quad \op{where} \quad n_t := \min\left\{ n\in\BB N  : \sum_{j=0}^n \frac{\op{Area}(Y_j)}{\pi(Y_j)} \geq t + \indshift_X  \right\}   
\eqe 
and for $t\leq -\indshift_X$, we define $n_t$ and $X_t$ analogously but with $Y$ run backward, rather than forward, and the sum going from $-n$ to $-1$ instead of $0$ to $n$. 

For $j\in\BB N $, let $\tau_j$ be the $j$th smallest time after 0 at which $X$ jumps to a new vertex and for $j\in\BB N_0$, let $\tau_{-j}$ be the $(j+1)$st largest time before 0 at which $X$ jumps to a new vertex (so $\tau_0 = -\indshift$). Note that $X_t = Y_{n_t} = X_{\tau_{n_t}}$.

It will be convenient to view $X_t$ as a process taking values in $\BB R^2$ rather than in $\mcl H$. Conditional on $(\mcl H , X)$, sample for each $j \in\BB Z \setminus\{0\}$ a point $\wh X_{\tau_j}$ uniformly from Lebesgue measure on the cell $X_{\tau_j}$ (independently for different cells and different values of $j$). Also set $\wh X_0 = 0$. Define
\eqb \label{eqn-walk-embed-def}
\wh X_t := \wh X_{\tau_j}  ,\quad \forall t\in (\tau_{j-1},\tau_j] \quad \text{so that} \quad H_{\wh X_t} = X_t  
\eqe
where in the second formula we recall that $H_z$ for $z\in\BB C$ denotes the cell containing $z$. 

We will now state and prove a translation invariance property for the walk $\wh X$ analogous to Lemma~\ref{lem-dyadic-resample}. To this end, we let $\wt{\mcl D}$ be a one-dimensional uniform dyadic system (Remark~\ref{remark-1d-dyadic-system}), independent from $(\mcl H,\mcl D , \wh X)$, and we let $\{I_k\}_{k\in\BB Z}$ be the sequence of intervals in $\wt{\mcl D}$ which contain the origin. For $a > 0$, we let $\wh I_a$ be the largest interval in $\wt{\mcl D}$ containing 0 with the property that
\eqb \label{eqn-I-def}
\sum_{j \in \BB Z} | [\tau_{j-1},\tau_j] \cap \wh I_a | \leq a .
\eqe
The interval $\wh I_a$ is the one-dimensional analog of the square $\wh S_m^0$ from~\eqref{eqn-mass-time-def}.

\begin{lem} \label{lem-walk-translate}
The law of $(\mcl H , \mcl D , \wh X , \wt{\mcl D} )$ satisfies the following translation invariance property. Let $a \geq 0$ and conditional on everything else, let $t$ be sampled uniformly from $\wh I_a$. Then the 4-tuple
\eqb \label{eqn-walk-translate} 
\left( \mcl H - \wh X_{t}  , \mcl D - \wh X_{t} , \wh X_{\cdot  +t} - \wh X_{t}  , \wt{\mcl D} - t  \right)
\eqe
agrees in law with $(\mcl H , \mcl D , \wh X , \wt{\mcl D} )$ modulo a global re-scaling, i.e., there is a random $C>0$ such that
\eqb \label{eqn-walk-translate'} 
\left( C(\mcl H - \wh X_{t} )  , C (\mcl D - \wh X_{t} ) ,  C(\wh X_{\cdot / C^2  +t} - \wh X_{t} )  , C^2(\wt{\mcl D} - t )  \right)
\eqD (\mcl H,\mcl D , \wh X , \wt{\mcl D} ).
\eqe  
\end{lem}
\begin{proof}
We will prove the lemma by considering random walk on $\mcl H(\wh S_m)$ with reflected boundary conditions and sending $m\rta\infty$. 
Let $X^m$, $\wh X^m$, and $\tau_j^m$ be defined in the same manner as $\wh X$ and $\tau_j$ above except that we start with a simple random walk $Y^m$ on $\mcl H(\wh S_m)$ (with reflected boundary conditions) started from $H_0$ instead of a simple random walk on $\mcl H$ started from $H_0$; we replace $\op{Area}(Y_j)/\pi(Y_j)$ by $\op{Area}(Y_j^m\cap \wh S_m)/\pi^{\mcl H(\wh S_m)}(Y_j^m)$ in~\eqref{eqn-walk-time-def}, where $\pi^{\mcl H(\wh S_m)}(H)$ is the sum of the conductances of the edges adjacent to $H$ in $\mcl H(\wh S_m)$; and we sample the points $\wh X_{\tau_j^m}^m$ from $X_{\tau_j^m}^m \cap \wh S_m$ instead of from $X_{\tau_j^m}^m$. 
It is easily seen that, conditional on $(\mcl H,\mcl D)$, the stationary distribution for $X^m$ is the one which assigns probability proportional to $\op{Area}(H \cap \wh S_m) $ to each cell $H\in \mcl H(\wh S_m)$. Equivalently, the stationary distribution of $\wh X^m$ is uniform on $\wh S_m$. 

As $t\rta \infty$ or $t\rta-\infty$, the conditional distribution of $\wh X^m_t$ given $(\mcl H , \mcl D)$ converges in the total variation sense to the stationary distribution.
Consequently, if we let $t_T$ be sampled uniformly from Lebesgue measure on $[-T,T]$ (independently from everything else), then as $T\rta\infty$ the conditional distribution of $\wh X_{t_T}^m$ given $(\mcl H,\mcl D)$ converges in the total variation sense to the uniform measure on $\wh S_m$. Hence, as $T\rta\infty$ the joint law of
\eqbn
\left( \mcl H , \mcl D , \wh X^m_{t_T} , \wh X^m_{\cdot + t_T}   , \right) 
\eqen
converges in the total variation sense to the law of the 4-tuple consisting of $\mcl H,\mcl D$, a point $z$ sampled uniformly from Lebesgue measure on $\wh S_m$, and the walk $\wh X^{m,z}$ which is defined in the same manner as $\wh X^m$ but with $\wh X^m_0=z$. 
 
By Lemma~\ref{lem-dyadic-resample}, there is a random $C>0$ such that with $z$ as above,
\eqb  \label{eqn-walk-resample}
\left( C( \mcl H - z)  , C(\mcl D - z) ,  C(\wh X^{m,z}_{\cdot/C^2  } - z) \right) 
\eqD (\mcl H,\mcl D , \wh X^m).
\eqe 
Note that to get the scaling of the time parametrization of $\wh X^{m,z}$, we use that replacing $\mcl H$ by $C\mcl H$ for $C>0$ scales areas of cells by $C^2$, which in turn scales the time parameterization of $\wh X^{m,z}$ by $1/C^2$. 

From~\eqref{eqn-walk-resample} and the above total variation convergence, we infer that there are random variables $C_T > 0$ such that
\eqbn
\left( C_T ( \mcl H -  \wh X^m_{t_T} )    , C_T ( \mcl D -  \wh X^m_{t_T} )  ,  C_T( \wh X^m_{\cdot / C_T^2 + t_T}  - \wh X^m_{t_T})  \right) 
\rta (\mcl H,\mcl D , \wh X^m) 
\eqen
in the total variation sense. By the one-dimensional analog of Lemma~\ref{lem-uniform-dyadic}, for each $T>0$ the dyadic system $C_T^2(\wt{\mcl D} - t_T)$ is a uniform dyadic system independent from $(\mcl H , \mcl D , \wh X^m , t_T)$. Therefore, we have the total variation convergence
\eqb \label{eqn-total-var-4tuple}
\left( C_T ( \mcl H -  \wh X^m_{t_T} )    , C_T ( \mcl D -  \wh X^m_{t_T} )  ,  C_T( \wh X^m_{\cdot / C_T^2 + t_T}  - \wh X^m_{t_T})  , C_T^2(\wt{\mcl D} - t_T) \right) 
\rta (\mcl H,\mcl D , \wh X^m , \wt{\mcl D}) .
\eqe 

The next several paragraphs of the proofs are similar to the proof that Condition~\ref{item-averaging} $\Rightarrow$ Condition~\ref{item-dyadic} in Appendix~\ref{sec-equivalence}.
Let $\wh I_a^{t_T}$ be the largest interval of $\wt{\mcl D}$ containing $t_T$ for which $\sum_{j \in \BB Z} | [\tau_{j-1},\tau_j] \cap \wh I_a^{t_T} | \leq a$ (i.e., the analog of $\wh I_a$ with $t_T$ in place of 0). We claim that 
\eqb \label{eqn-walk-translate-claim}
\lim_{T\rta\infty} \BB P\left[  \wh I_a^{t_T} \subset [-T,T] \right]= 1.  
\eqe 
Indeed, from~\eqref{eqn-total-var-4tuple} the law of the random interval $C_T^2 (\wh I_a^{t_T} - t_T)$ converges in the total variation sense to the law of $\wh I_a$, so $C_T^2 |\wh I_a^{t_T}|$ is a tight random variable. On the other hand, $C_T (\wh{\mcl S}_m - \wh X^m_{t_T}) \rta \wh S_m$ in the total variation sense, and $\wh X^m_{t_T} \subset \wh S_m$. From this, we get that $C_T$ and $C_T^{-1}$ are tight random variables. Therefore, $|\wh I_a^{t_T}|$ is a tight random variable. Since $t_T$ is sampled uniformly from $[-T,T]$, the probability that $|t_T| \geq T - r$ goes to zero as $T\rta \infty$ for any fixed $r >0$. From this and the tightness of $|\wh I_a^{t_T}|$, we obtain~\eqref{eqn-walk-translate-claim}.

On the event $\{ \wh I_a^{t_T} \subset [-T,T]\}$, the conditional law of $t_T$ given $(\mcl H , \mcl D , \wt{\mcl D} , \wh X^m ,  \wh I_a^{t_T})$ is uniform on $\wh I_a^{t_T}$. 
From~\eqref{eqn-walk-translate-claim}, we get that as $T\rta\infty$, the total variation distance between the conditional law of $t_T$ given $(\mcl H , \mcl D , \wt{\mcl D} , \wh X^m ,  \wh I_a^{t_T})$ and the uniform measure on $\wh I_a^{t_T}$ tends to zero as $T\rta \infty$.
By this and~\eqref{eqn-total-var-4tuple}, if we condition on $(\mcl H , \mcl D , \wt{\mcl D} , \wh X^m , t_T)$ and sample $t_T'$ uniformly from $\wh I_a^{t_T} $, then the total variation convergence~\eqref{eqn-total-var-4tuple} holds with $t_T'$ in place of $t_T$ and possibly different scaling factors $C_T' >0$ in place of $C_T$. 

On the other hand, the interval $C_T^2 (\wh I_a^{t_T  } - t_T )$ is determined by the left 4-tuple in~\eqref{eqn-total-var-4tuple} in the same manner that $\wh I_a$ is determined by the right 4-tuple in~\eqref{eqn-total-var-4tuple}. Consequently, we have the total variation convergence
\eqb \label{eqn-total-var-5tuple}
\left( C_T ( \mcl H -  \wh X^m_{t_T} )    , C_T ( \mcl D -  \wh X^m_{t_T} )  ,  C_T( \wh X^m_{\cdot / C_T^2 + t_T}  - \wh X^m_{t_T})  , C_T^2(\wt{\mcl D} - t_T) , C_T^2(t_T'-t_T) \right) 
\rta (\mcl H,\mcl D , \wh X^m , \wt{\mcl D} , t)  
\eqe 
where $t$ is sampled uniformly from $\wh I_a$. In particular, we have the total variation convergence
\allb \label{eqn-total-var-shifted}
&\left( C_T ( \mcl H -  \wh X^m_{t_T'} )    , C_T ( \mcl D -  \wh X^m_{t_T'} )  ,  C_T( \wh X^m_{\cdot / C_T^2 + t_T'}  - \wh X^m_{t_T'})  , C_T^2(\wt{\mcl D} - t_T')  \right) \notag\\
&\qquad \rta \left( \mcl H - \wh X_{t}^m , \mcl D - \wh X_{t}^m , \wh X_{\cdot  +t}^m - \wh X_{t}^m  , \wt{\mcl D} - t  \right).
\alle
From this and the analog of~\eqref{eqn-total-var-4tuple} with $t_T'$ in place of $t_T$, we obtain~\eqref{eqn-walk-translate'} with $\wh X^m$ instead of $\wh X$. We now conclude by sending $m\rta\infty$. 
\end{proof}

Next we will prove an analog of Lemma~\ref{lem-ergodic-avg} for averages along segments of the walk $\wh X$.

\begin{lem} \label{lem-walk-ergodic}
Let $F = F(\mcl H,\mcl D , \wh X , \wt{\mcl D}    )$ be a real-valued measurable function of $(\mcl H , \mcl D , \wh X ,\wt{\mcl D}   )$ which is invariant under global re-scaling in the sense that
\eqb \label{eqn-walk-scale}
F(C\mcl H, C\mcl D , C \wh X_{\cdot/C^2} , C^2 \wt{\mcl D}    ) = F(\mcl H, \mcl D , \wh X , \wt{\mcl D}  )
\eqe
for each $C>0$. For $t \in \BB R$, let $F_{t}$ denote $F$ applied to the shifted 4-tuple of~\eqref{eqn-walk-translate}. If either $\BB E[|F|] < \infty$ or $F\geq 0$ a.s., then a.s.\ 
\eqb \label{eqn-walk-ergodic}
\lim_{a \rta\infty} \frac{1}{|\wh I_a|} \int_{\wh I_a}  F_t \,dt = \BB E[F] .
\eqe 
\end{lem}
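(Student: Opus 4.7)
The argument closely parallels that of Lemma~\ref{lem-ergodic-avg}, with the spatial dyadic system $\mcl D$ replaced by the temporal one $\wt{\mcl D}$ and the squares $\wh S_m$ replaced by the intervals $\wh I_a$. For each $a > 0$, I will let $\mcl F_a'$ denote the $\sigma$-algebra generated by the measurable functions $G$ of $(\mcl H,\mcl D,\wh X,\wt{\mcl D})$ that are scale invariant in the sense of~\eqref{eqn-walk-scale} and satisfy $G_t = G$ for every $t \in \wh I_a$. Because $a \mapsto \wh I_a$ is monotone non-decreasing, $\{\mcl F_a'\}_{a>0}$ is a decreasing family of $\sigma$-algebras, playing the role of the $\mcl F_m$ from Definition~\ref{def-dyadic-sigma-algebra}.

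By Lemma~\ref{lem-walk-translate}, conditional on $\mcl F_a'$ the 4-tuple obtained by applying a uniform-in-$\wh I_a$ shift has the same conditional law as the original, so that for integrable $F$
\[
\BB E[F \mid \mcl F_a'] \;=\; \frac{1}{|\wh I_a|} \int_{\wh I_a} F_t \, dt,
\]
which is the direct analog of the identity used at the start of the proof of Lemma~\ref{lem-ergodic-avg}. The backward martingale convergence theorem then gives a.s.\ (and $L^1$) convergence of the right-hand side to $\BB E[F \mid \mcl F_\infty']$, where $\mcl F_\infty' := \bigcap_{a>0} \mcl F_a'$.

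The heart of the proof is to show that $\mcl F_\infty'$ is a.s.\ trivial, so that $\BB E[F \mid \mcl F_\infty'] = \BB E[F]$. Because the intervals $[\tau_{j-1},\tau_j]$ partition $\BB R$, we have $\sum_j |[\tau_{j-1},\tau_j] \cap \wh I_a| = |\wh I_a|$, and the defining condition~\eqref{eqn-I-def} simply becomes $|\wh I_a| \leq a$. Hence $|\wh I_a| \rta \infty$ and $\bigcup_{a>0} \wh I_a = \BB R$ a.s., so any $\mcl F_\infty'$-measurable $F$ satisfies $F_t = F$ for every $t \in \BB R$ a.s., as well as being scale invariant. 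By Proposition~\ref{prop-cell-recurrent} the walk is recurrent, so $\{\wh X_t : t \in \BB R\}$ meets every cell of $\mcl H$. Combining this with the time-shift and scale invariance of $F$ (this is the ergodicity of the ``environment seen from the walker'' in our setting), one concludes that $F$ is determined by $(\mcl H,\mcl D)$ modulo spatial translation and rescaling, at which point Lemma~\ref{lem-tail-trivial} forces $F$ to be a.s.\ constant. Finally, the case $F\geq 0$ with $\BB E[F] = \infty$ is handled by the same truncation argument as at the end of the proof of Lemma~\ref{lem-ergodic-avg}: apply the integrable case to $F\wedge C$ and send $C\rta\infty$.

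The main obstacle is this tail-triviality step. Unlike the setting of Lemma~\ref{lem-tail-trivial}, where the invariant function is reduced to one depending only on $\mcl D$ and then pinned down using the uniform random choice of dyadic ancestors, here $F$ a priori depends on the additional randomness of $\wh X$ and $\wt{\mcl D}$, and the shift $T_t$ changes all four components simultaneously. The recurrence of the walk (together with the uniform sampling of $\wh X_{\tau_j}$ within the visited cell) is exactly what lets us propagate the shift invariance to a genuine spatial translation invariance of the pair $(\mcl H,\mcl D)$ and thereby reduce to Lemma~\ref{lem-tail-trivial}.
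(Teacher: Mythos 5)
Your setup matches the paper's argument: the decreasing family of $\sigma$-algebras generated by functions invariant under shifts within $\wh I_a$ and global rescaling, the identity $\BB E[F\mid \mcl F_a'] = |\wh I_a|^{-1}\int_{\wh I_a} F_t\,dt$ from Lemma~\ref{lem-walk-translate}, backward martingale convergence, and the truncation argument for the case $F\geq 0$, $\BB E[F]=\infty$ are all exactly as in the paper (your observation that the condition in~\eqref{eqn-I-def} reduces to $|\wh I_a|\leq a$ is also correct and harmless).

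However, the tail-triviality step --- which you correctly identify as the heart of the matter --- is asserted rather than proved, and the assertion as stated does not follow from what you cite. Recurrence giving that $\{\wh X_t\}$ ``meets every cell'' does not by itself imply that a time-shift- and scale-invariant functional of $(\mcl H,\mcl D,\wh X,\wt{\mcl D})$ is a.s.\ determined by $(\mcl H,\mcl D)$: that statement is precisely the ergodicity of the environment seen from the walker, which is what needs an argument. The paper's mechanism is in three stages, none of which appears in your sketch. First, conditionally on $(\mcl H,\mcl D,\wt{\mcl D})$, recurrence (Proposition~\ref{prop-cell-recurrent}) decomposes the walk into i.i.d.\ excursions from $H_0$, the backward-martingale limit $A$ is measurable with respect to the tail $\sigma$-algebra of this i.i.d.\ sequence, and the Kolmogorov zero--one law shows $A$ is a.s.\ determined by $(\mcl H,\mcl D,\wt{\mcl D})$; this is how the dependence on the walk's own randomness (including the uniform points $\wh X_{\tau_j}$) is removed. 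Second, recurrence plus the Markov property show that, for fixed $\wt{\mcl D}$, the value of $A$ is unchanged if $(\mcl H,\mcl D)$ is replaced by $(\mcl H-z,\mcl D-z)$, and only then does Lemma~\ref{lem-tail-trivial} apply, reducing $A$ to a function of $\wt{\mcl D}$ alone. Third, the invariance of $A$ under $\wt{\mcl D}\mapsto C(\wt{\mcl D}-t)$ together with the uniform random construction of the one-dimensional dyadic system (the same argument as at the end of Lemma~\ref{lem-tail-trivial}) forces $A$ to be constant. Your proposal instead jumps directly to ``$F$ is determined by $(\mcl H,\mcl D)$ modulo translation and rescaling,'' which also silently discards the dependence on $\wt{\mcl D}$ without an argument. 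To complete the proof you need to supply these three steps (or an equivalent substitute for the zero--one law on excursions and for the elimination of the $\wt{\mcl D}$-dependence).
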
 
\begin{proof}
We start out as in the proof of Lemma~\ref{lem-ergodic-avg}. 
Throughout the proof we assume that $\BB E[|F|] < \infty$ (the case $F\geq 0$ and $\BB E[F] = \infty$ is treated exactly as in Lemma~\ref{lem-ergodic-avg}). 
By analogy with Definition~\ref{def-dyadic-sigma-algebra}, for $a \geq 0$ let $\wt{\mcl F}_a$ be the $\sigma$-algebra generated by the measurable functions of $(\mcl H, \mcl D , \wh X , \wt{\mcl D}  )$ whose values are unchanged if we replace $(\mcl H, \mcl D , \wh X , \wt{\mcl D}   )$ by the 4-tuple in~\eqref{eqn-walk-translate} for any $t\in \wh I_a$ and then apply a global re-scaling to the function as in~\eqref{eqn-walk-scale}. By Lemma~\ref{lem-walk-translate},
\eqbn
 \frac{1}{|\wh I_a|} \int_{\wh I_a}  F_t \,dt = \BB E\left[ F \,|\, \wt{\mcl F}_a\right] .
\eqen
By the backward martingale convergence theorem, the limit in~\eqref{eqn-walk-ergodic} exists a.s.\ and in $L^1$. Call the limiting random variable $A = A (\mcl H,\mcl D , \wh X , \wt{\mcl D}    )$. We need to show that $A$ is equal to a deterministic constant a.s. 

The random variable $A$ is $\bigcap_{a>0} \wt{\mcl F}_a$-measurable, so by the definition of $\wt{\mcl F}_a$ and since $\bigcup_{a>0} \wh I_a = \BB R$, a.s.\  
\eqb \label{eqn-A-scale}
A(C(\mcl H - \wt X_t) , C( \mcl D - \wt X_t) , C (\wh X_{\cdot/C^2 + t} - \wh X_t) , C^2( \wt{\mcl D}  - t)   ) = A(\mcl H, \mcl D , \wh X , \wt{\mcl D}  )
\eqe 
for all $C>0$ and all $t\in\BB R$. 

Recall from Proposition~\ref{prop-cell-recurrent} that a.s.\ the random walk on $\mcl H$ is recurrent. 
If we condition on $(\mcl H,\mcl D,\wt{\mcl D})$, then the excursions of the walk $X$ away from $X_0$ are i.i.d.\ random variables. Under this conditional law, $A$ belongs to the tail $\sigma$-algebra for these random variables and the independent uniform random points of the cells of $\mcl H$ which we used to define $\wh X_t$. By the Kolmogorov zero-one law, $A$ is a.s.\ determined by $(\mcl H,\mcl D,\wt{\mcl D})$. By a slight abuse of notation, we now write $A = A(\mcl H,\mcl D,\wt{\mcl D})$. 
 
We will now argue that $A$ is a.s.\ determined by $\wt{\mcl D}$. To this end, let
\eqbn
\wt A(\mcl H ,\mcl D) :=  \int A(\mcl H , \mcl D , \wt{\mcl D}) d\wt{\frk m}(\wt{\mcl D})  
\eqen
where $\wt{\frk m}$ is the law of $\wt{\mcl D}$. 
If $C>0$ and $t\in\BB R$ are random and independent from $\wt{\mcl D}$, but allowed to depend on $(\mcl H,\mcl D)$, then since $\wt{\mcl D}$ is independent from $(\mcl H,\mcl D)$ and by the one-dimensional analog of Lemma~\ref{lem-uniform-dyadic}, $C^2 (\wt{\mcl D} - t)$ is a uniform dyadic system independent from $(\mcl H,\mcl D)$. 
By this and~\eqref{eqn-A-scale},  
\allb \label{eqn-A-cond}
\wt A(C(\mcl H - \wh X_t) , C(\mcl D- \wh X_t) )
&=  \int A(C(\mcl H- \wh X_t) , C(\mcl D- \wh X_t) ,  \wt{\mcl D} ) \, d\wt{\frk m}(\wt{\mcl D}) \notag\\
&=   \int  A(C(\mcl H - \wh X_t) , C(\mcl D-\wh X_t) ,  C^2(\wt{\mcl D} - t) ) \, d\wt{\frk m}(\wt{\mcl D})       \notag\\
&=   \wt A(\mcl H,\mcl D) .
\alle

Now let $R>0$ be random and independent from $\wt{\mcl D}$, but allowed to depend on $(\mcl H,\mcl D)$. 
Since the random walk on $\mcl H$ is recurrent and $\mcl H$ is connected, it is a.s.\ the case that for large enough $t > 0$ (depending on $(\mcl H,\mcl D , R)$), the Lebesgue measure on $B_R(0)$ is absolutely continuous with respect to the conditional law of $\wh X_t$ given $(\mcl H,\mcl D)$. 
From this and~\eqref{eqn-A-cond}, we get that if $C>0$ is random and independent from $\wt{\mcl D}$, but allowed to depend on $(\mcl H,\mcl D)$, then for Lebesgue-a.e.\ $z\in\BB C$, 
\eqb \label{eqn-A-translate}
\wt A(C(\mcl H - z) , C(\mcl D- z) ) = \wt A( \mcl H,\mcl D ) .
\eqe 

If $w$ is a point in the origin-containing cell $H_0$, then the definition of $\wh X$ implies that we can couple the process $\wh X_t$ with the analogous process $\wh X_t'$ defined with $( \mcl H - w  ,  \mcl D-w )$ in place of $(\mcl H,\mcl D)$ in such a way that the following is true. The associated discrete time walks $Y$ and $Y'$ satisfy $Y_j' = Y_j' - w$ for all $j \in \BB Z$. Moreover, we have $\wh X_{t+s} =  \wh X_{t+s'} - w $ for all $t\geq 0$, where $s$ and $s'$ are the first times that $\wh X$ and $\wh X'$, respectively jump away from the origin-containing cell; and an analogous relation holds for negative times.   
Since $A$ is the limit of $\frac{1}{|\wh I_a|} \int_{\wh I_a}  F_t \,dt$ as $t\rta \infty$, we infer that the value of $A$, and hence also the value of $\wt A$, is unaffected if we replace $(\mcl H,\mcl D)$ by $( \mcl H - w  ,  \mcl D-w )$. 
From this and the fact that the boundaries of the cells of $\mcl H$ have zero Lebesgue measure, we see that the fact that~\eqref{eqn-A-translate} holds a.s.\ for Lebesgue-a.e.\ $z$ implies that it in fact holds a.s.\ for every choice of $z$.  

By~\eqref{eqn-A-translate} and Lemma~\ref{lem-tail-trivial}, $\wt A$ is a.s.\ equal to a deterministic constant, so $A$ is a.s.\ determined by $\wt{\mcl D}$. By~\eqref{eqn-A-scale}, the value of $A$ is unaffected if we replace $\wt{\mcl D}$ by $C(\wt{\mcl D}-t)$ for any $C>0$ and $t\in \BB R$. The same argument as at the end of Lemma~\ref{lem-tail-trivial} now shows that $A$ is equal to a deterministic constant a.s. 
\end{proof}

To end this subsection, we record analogs of some of the lemmas from Section~\ref{sec-ergodic-avg} in the setting of Lemma~\ref{lem-walk-ergodic}. 
We start with a lemma which allows us to prove statements for intervals of $\wt{\mcl D}$ which do not contain the origin, analogous to Lemma~\ref{lem-more-squares}.

\begin{lem} \label{lem-more-intervals}
Let $E(S) = E(S,\mcl H)$ be an event depending on an interval $I\subset \BB R$ and the 4-tuple $(\mcl H,\mcl D,\wh X ,\wt{\mcl D} )$ of Lemma~\ref{lem-walk-ergodic}. 
Suppose that a.s.\ $E(I_k)$ occurs for each large enough $k\in\BB N$. Then for each $\el\in\BB N$, it is a.s.\ the case that for large enough $k\in\BB N$, $E(I)$ occurs for each dyadic ancestor $I$ of $I_k$ of length at most $2^\el |I_k|$ and each dyadic descendant $I$ of $I_k$ of length at least $2^{-\el} |I_k|$. 
\end{lem}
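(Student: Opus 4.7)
The proof will follow the structure of Lemma~\ref{lem-more-squares} essentially verbatim, but with one-dimensional intervals replacing two-dimensional squares. The two crucial adaptations are: (a) each interval in a 1D dyadic system has exactly $2$ dyadic parents and $2$ dyadic children (rather than $4$ of each), so factors of $4^\el$ become $2^\el$; and (b) the resampling property used for dyadic descendants in Lemma~\ref{lem-more-squares} (namely Lemma~\ref{lem-dyadic-resample}) is replaced by Lemma~\ref{lem-walk-translate}, which plays the analogous role for the four-tuple $(\mcl H , \mcl D , \wh X , \wt{\mcl D})$ modulo global re-scaling.

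For the dyadic ancestor half of the statement, the plan is to define, for each $k\in\BB N$, the $\BB N\cup\{\infty\}$-valued stopping time $\mcl K_k$ to be the smallest $k' \geq k$ such that $E(I)^c$ occurs for at least one dyadic ancestor $I$ of $I_{k'}$ of length at most $2^\el |I_{k'}|$, and to show $\BB P[\mcl K_k = \infty] \rta 1$ as $k\rta\infty$. The key input is that, by construction of the uniform dyadic system $\wt{\mcl D}$, conditional on $(\mcl H , \mcl D , \wh X , \{I_j\}_{j\leq k'})$ the interval $I_{k'+r}$ is sampled uniformly from the $2^r$ dyadic ancestors of $I_{k'}$ of length $2^r |I_{k'}|$. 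Choosing a distinguished bad ancestor $I_*$ on $\{\mcl K_k < \infty\}$ in a manner measurable with respect to the stopped filtration and letting $\el_* \in [1,\el]_{\BB Z}$ be such that $|I_*| = 2^{\el_*} |I_{\mcl K_k}|$, we obtain
\[
\BB P[\mcl K_k < \infty] \leq \frac{\BB P[E(I_{\mcl K_k + \el_*})^c]}{\BB P[I_{k+\el_*} = I_* \mid \mcl K_k < \infty]} \leq 2^\el \BB P[E(I_{\mcl K_k + \el_*})^c],
\]
which tends to $0$ by hypothesis.

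For the dyadic descendant half, the plan is to introduce, for each $a > 0$, the decreasing $\sigma$-algebra $\wt{\mcl F}_a$ from the proof of Lemma~\ref{lem-walk-ergodic} (which encodes the four-tuple modulo translation within $\wh I_a$ and global rescaling) and the reverse index $K_a$ defined so that $\wh I_a = I_{K_a}$. Define $M_a$ to be the largest $a' \leq a$ (or $-\infty$) for which $E(I)^c$ fails at some dyadic descendant of $\wh I_{a'}$ of length at least $2^{-\el}|\wh I_{a'}|$; then we need to show $\BB P[K_{M_a} > k] \rta 0$ uniformly in $a$ as $k\rta\infty$. Lemma~\ref{lem-walk-translate}, applied in the $\wt{\mcl F}_{M_a}$-conditional form, implies that $I_{K_{M_a}-r}$ is sampled uniformly from the $2^r$ dyadic descendants of $I_{K_{M_a}}$ of length $2^{-r}|I_{K_{M_a}}|$, which yields the analogous bound
\[
\BB P[K_{M_a} > k] \leq 2^\el \, \BB P[E(I_{K_{M_a}-\el_*})^c ,\, K_{M_a} > k] \rta 0.
\]
Since $a \mapsto K_{M_a}$ is non-decreasing, we conclude that $\limsup_{a\rta\infty} K_{M_a} < \infty$ a.s., which gives the statement.

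The main obstacle is checking that Lemma~\ref{lem-walk-translate} can be legitimately invoked in the $\wt{\mcl F}_{M_a}$-conditional form needed above, i.e.\ that it gives the claimed uniform-over-$2^r$-descendants law after conditioning on $\wt{\mcl F}_{M_a}$. This is a stopping time argument: one exhausts $\wt{\mcl F}_{M_a}$ by $\wt{\mcl F}_a$-measurable approximations on the event $\{M_a = a_0\}$ for each dyadic $a_0$, applies Lemma~\ref{lem-walk-translate} at each fixed level, and then passes to the limit. All other ingredients---that $E(I)$ can depend on the full four-tuple, that the events on both sides of the rescaling agree because the re-scaling is built into the definition of $\wt{\mcl F}_a$, and that $I_*$ can be chosen in a measurable fashion---are identical to the 2D case and require no new ideas.
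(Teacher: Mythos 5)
Your proposal is correct and matches the paper's approach: the paper's proof of this lemma is literally the one-line remark that it follows from the same argument as Lemma~\ref{lem-more-squares}, and you have spelled out exactly the right adaptations (the factor $2^\el$ in place of $4^\el$ for the one-dimensional system, and Lemma~\ref{lem-walk-translate} with the $\sigma$-algebras $\wt{\mcl F}_a$ playing the role of Lemma~\ref{lem-dyadic-resample} and $\mcl F_m$ in the descendant half).
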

\begin{proof}
This follows from exactly the same argument use to prove Lemma~\ref{lem-more-squares}. 
\end{proof}

Similarly to Lemma~\ref{lem-max-cell-diam}, we also find that the maximum size of the intervals $[\tau_{j-1},\tau_j]$ (on which $X$ is constant) which intersect $I_k$ is a.s.\ $o_k(|I_k|)$. 

\begin{lem} \label{lem-max-interval-diam}
Almost surely, for each $\ep \in (0,1)$ it holds for large enough $k\in\BB N$ that
\eqbn
\tau_j- \tau_{j-1} \leq \ep |I_k| ,\quad \forall j \in \BB N \quad \text{with} \quad [\tau_{j-1} , \tau_j] \cap I_k\not=\emptyset .
\eqen
\end{lem}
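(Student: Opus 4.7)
Plan: I will mirror the proof of Lemma~\ref{lem-max-cell-diam}, substituting the time-average ergodic theorem Lemma~\ref{lem-walk-ergodic} for its spatial counterpart Lemma~\ref{lem-ergodic-avg}, and the dyadic intervals $\wh I_a$ for the dyadic squares $\wh S_m$. For $t \in \BB R$, let $\Delta(t)$ denote the length of the unique constant interval $[\tau_{j-1},\tau_j]$ of $X$ containing $t$, and let $\wh I_a^t$ denote the interval in $\wt{\mcl D}$ playing the role of $\wh I_a$ for the translated 4-tuple of~\eqref{eqn-walk-translate}; in particular $\wh I_a^0 = \wh I_a$.

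First I would prove the interior version: a.s., for every $\ep \in (0,1)$ and all large enough $k$, every $j$ with $[\tau_{j-1},\tau_j]\subset I_k$ satisfies $\Delta_j \leq \ep|I_k|$. Set the scale-invariant random variable
\[
M_\ep := \min\{ a > 0 : \Delta(0) \leq \ep|\wh I_a| \},
\]
which is a.s.\ finite since $|\wh I_a| \rta \infty$ as $a \rta \infty$ while $\Delta(0) < \infty$ a.s. Fix a deterministic $a_\ep$ so that $F := \BB 1_{(M_\ep > a_\ep)}$ has $\BB E[F] \leq \ep^3$. Applying Lemma~\ref{lem-walk-ergodic} to the scale-invariant, bounded function $F$ yields a.s.\
\[
\lim_{a\to\infty} \frac{1}{|\wh I_a|} \int_{\wh I_a} F_t \, dt = \BB E[F] \leq \ep^3.
\]
The key point---the one-dimensional analog of the identity $\wh S_m^z = \wh S_m$ for $z \in \wh S_m$ used in Lemma~\ref{lem-max-cell-diam}---is that $\wh I_a^t = \wh I_a$ whenever $t \in \wh I_a$, which follows from the standard fact that two dyadic intervals that share an interior point are comparable. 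This yields the implication: $t\in \wh I_a$, $a \geq a_\ep$, and $\Delta(t) > \ep |\wh I_a|$ together imply $F_t = 1$. Hence the Lebesgue measure of $\{t \in \wh I_a : \Delta(t) > \ep|\wh I_a|\}$ is at most $2\ep^3 |\wh I_a|$ for a.s.\ all sufficiently large $a$. Decomposing this set into pieces $[\tau_{j-1},\tau_j]\cap \wh I_a$ and discarding the at most two boundary pieces, the total length of constant intervals of $X$ contained in $\wh I_a$ with $\Delta_j > \ep|\wh I_a|$ is at most $2\ep^3|\wh I_a|$; since each such interval contributes more than $\ep|\wh I_a|$, there are at most $2\ep^2$ of them, which is strictly less than one for $\ep$ small. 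So no such interval exists, proving the interior bound.

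To pass from ``contained in $I_k$'' to ``intersects $I_k$'', I would combine the interior bound with Lemma~\ref{lem-more-intervals} and argue as in the final paragraphs of the proof of Lemma~\ref{lem-max-cell-diam}: the interior bound with $\ep/4$ applied to each dyadic ancestor of $I_k$ of bounded length ratio shows that every interval $[\tau_{j-1},\tau_j]$ contained in such an ancestor has $\Delta_j \leq \ep |I_k|$, and a short contradiction argument (any constant interval meeting $I_k$ but escaping such an ancestor has length at least $|I_k|$, which the interior bound on a slightly larger ancestor precludes) completes the proof.

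The main obstacle is verifying the one-dimensional analog $\wh I_a^t = \wh I_a$ for $t\in \wh I_a$ and the associated scale-invariance of $M_\ep$ under the joint rescaling $(\mcl H,\mcl D,\wh X,\wt{\mcl D}) \mapsto (C\mcl H, C\mcl D, C\wh X_{\cdot/C^2}, C^2\wt{\mcl D})$ from Lemma~\ref{lem-walk-ergodic}. Once the correct bookkeeping of how lengths of $\wt{\mcl D}$-intervals and the time-parameterization of $X$ transform under this rescaling is set down, the implication chain going from Lemma~\ref{lem-walk-ergodic} to the interior bound mirrors the two-dimensional argument essentially word for word.
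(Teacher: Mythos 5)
Your proof is correct and is exactly the argument the paper intends: the paper's own proof just sets $\wt{\mcl K}_\ep=\min\{k\in\BB N_0:\tau_1-\tau_0\le \ep|I_k|\}$ and says to repeat the proof of Lemma~\ref{lem-max-cell-diam} with Lemma~\ref{lem-walk-ergodic} in place of Lemma~\ref{lem-ergodic-avg}, which is precisely what you carry out (interior bound via the indicator of $\{M_\ep>a_\ep\}$ and the nesting identity $\wh I_a^t=\wh I_a$ for $t\in\wh I_a$, then the boundary step via Lemma~\ref{lem-more-intervals}). The only difference is cosmetic: in one dimension the Lebesgue measure of a constant interval equals its length, so your plain indicator suffices where the two-dimensional proof needed the weighted functional~\eqref{eqn-area-diam-deg}.
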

\begin{proof}
We set
\eqbn
\wt{\mcl K}_\ep = \min\left\{k\in\BB N_0 : \tau_1 - \tau_0 \leq \ep |I_k| \right\} 
\eqen
then argue exactly as in the proof of Lemma~\ref{lem-max-cell-diam}, with Lemma~\ref{lem-walk-ergodic} used in place of Lemma~\ref{lem-ergodic-avg}. 
\end{proof}

\subsection{Proof of Theorem~\ref{thm-general-clt}}
\label{sec-martingale-clt}

Continue to assume we are in the setting of Section~\ref{sec-walk-ergodic}, so that $\{Y_j\}_{j\in\BB Z}$ is a two-sided simply random walk on $\mcl H$ started from $Y_0 = 0$ and $X_t = Y_{n_t}$ is obtained by re-parameterizing $Y$ so that it spends $\op{Area}(H)/\pi(H)$ units of time at each cell $H$ that it hits. In this subsection we will prove the following proposition, then use it in conjunction with~\eqref{eqn-cell-tutte-conv} (which was established in Section~\ref{sec-corrector-sublinearity}) to conclude the proof of Theorem~\ref{thm-general-clt}.
 
\begin{prop} \label{prop-tutte-rw-conv} 
There is a deterministic covariance matrix $\Sigma$ such that a.s.\ as $\ep\rta0$, the conditional law given $(\mcl H,\mcl D)$ of $t\mapsto \ep \phi_\infty(X_{t/\ep^2})$ converges weakly to the law of Brownian motion with covariance matrix $\Sigma$ started from 0, with respect to the local uniform topology.
\end{prop}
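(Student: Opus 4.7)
The plan is to apply the martingale functional central limit theorem, conditional on $(\mcl H, \mcl D)$, to the $\BB C$-valued process $M^\ep_t := \ep\, \phi_\infty(X_{t/\ep^2})$. Since $\phi_\infty$ is $\frk c$-discrete harmonic by Proposition~\ref{prop-corrector}, the discrete-time sequence $\phi_\infty(Y_j)$ is a martingale under the law of the walk conditional on $\mcl H$, and so is its continuous-time re-parameterization $\phi_\infty(X_\cdot)$. Writing $\phi_\infty = \phi_\infty^{(1)} + i\,\phi_\infty^{(2)}$ and setting
\eqbn
\sigma_{ab}(H) := \frac{1}{\op{Area}(H)} \sum_{H' \sim H} \frk c(H,H') \left( \phi_\infty^{(a)}(H') - \phi_\infty^{(a)}(H) \right)\left( \phi_\infty^{(b)}(H') - \phi_\infty^{(b)}(H) \right) ,
\eqen
the predictable cross-variation satisfies $\langle \phi_\infty^{(a)}(X_\cdot), \phi_\infty^{(b)}(X_\cdot) \rangle_t = \int_0^t \sigma_{ab}(X_s)\,ds$, because the $\op{Area}(H)/\pi(H)$ holding time at each cell exactly cancels the $\pi(H)$ in the jump-rate normalization. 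Each $\sigma_{ab}$ is a scale-invariant measurable functional of $(\mcl H, \mcl D)$ (scalings of $\mcl H$ by $C$ scale $\op{Area}$ by $C^2$ and $\phi_\infty$ differences by $C$), and Lemma~\ref{lem-infty-energy} together with the Cauchy--Schwarz inequality for $a \not= b$ gives $\BB E[|\sigma_{ab}(H_0)|] < \infty$.

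The core step is then to prove $\ep^2 \langle M^\ep_{(a)}, M^\ep_{(b)} \rangle_t \rta t\, \Sigma_{ab}$ a.s.\ as $\ep \rta 0$, where $\Sigma_{ab} := \BB E[\sigma_{ab}(H_0)]$. This will follow from Lemma~\ref{lem-walk-ergodic} applied with $F$ equal to $\sigma_{ab}$ of the origin-containing cell, which yields
\eqbn
\lim_{a \rta \infty} \frac{1}{|\wh I_a|} \int_{\wh I_a} \sigma_{ab}(X_t)\,dt = \Sigma_{ab} \quad \text{a.s.}
\eqen
Routine adjustments---using non-negativity of the diagonal terms, polarization for the off-diagonal ones, and Lemma~\ref{lem-more-intervals} to pass from the random dyadic intervals $\wh I_a$ to deterministic windows $[0,T]$---upgrade this to $T^{-1}\int_0^T \sigma_{ab}(X_t)\,dt \rta \Sigma_{ab}$ a.s.\ as $T \rta \infty$, and after rescaling time by $\ep^{-2}$ this delivers the desired quadratic variation convergence.

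The remaining ingredient is a Lindeberg-type condition, namely $\sup_{t \in [0,T]} |M^\ep_t - M^\ep_{t-}| \rta 0$ a.s.\ for each fixed $T > 0$. For any edge $\{H, H'\} \in \mcl E\mcl H(B_r(0))$, the triangle inequality combined with~\eqref{eqn-cell-tutte-conv} and Lemma~\ref{lem-max-cell-diam} yields
\eqbn
|\phi_\infty(H') - \phi_\infty(H)| \leq 2 \max_{H'' \in \mcl H(B_r(0))} |\phi_\infty(H'') - \phi_0(H'')| + \op{diam}(H) + \op{diam}(H') = o(r) .
\eqen
To confine $\wh X_{\cdot/\ep^2}$ to a ball of radius $O(1/\ep)$ on $[0,T]$, we use that Doob's maximal inequality applied to $M^\ep$, together with the quadratic variation bound just proved, gives $\BB E[\sup_{t \leq T} |M^\ep_t|^2 \,|\, \mcl H, \mcl D] = O(T)$; hence $\sup_{t \leq T/\ep^2} |\phi_\infty(X_t)|$ is $O(1/\ep)$ with high probability, and combining with~\eqref{eqn-cell-tutte-conv} controls $|\wh X_t|$ on the same time window. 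Applying the jump bound with $r \asymp R/\ep$ then shows that jumps of $M^\ep$ on $[0,T]$ are $o(1)$ with probability tending to one as $R \rta \infty$, uniformly in $\ep$; a standard stopping-time truncation at the first exit of $\wh X_{\cdot/\ep^2}$ from $B_R(0)$ promotes this to the quenched a.s.\ Lindeberg condition. With quadratic variation convergence and the Lindeberg condition in hand, the martingale functional CLT implies that $M^\ep$ converges weakly (conditional on $(\mcl H,\mcl D)$) to Brownian motion with covariance matrix $\Sigma = (\Sigma_{ab})$ in the local uniform topology. I expect the confinement step to be the main technical obstacle, since the quenched formulation forces the a priori range control, the sublinearity of the corrector, and the quadratic variation convergence to be interleaved in a careful stopping-time scheme to avoid circularity.
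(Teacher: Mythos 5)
Your overall strategy -- apply a martingale functional CLT conditionally on the environment, identify the limiting covariance via the walk-ergodic averages of Lemma~\ref{lem-walk-ergodic}, and pass from the random dyadic intervals $\wh I_a$ to deterministic windows $[0,T]$ using Lemma~\ref{lem-more-intervals} -- is exactly the paper's strategy for the quadratic variation part (Lemmas~\ref{lem-quad-var-finite} and~\ref{lem-quad-var-lim}); working with the predictable bracket $\int_0^t\sigma_{ab}(X_s)\,ds$ and polarization instead of the realized bracket and Cram\'er--Wold along three directions is a cosmetic difference, and your integrability check via Lemma~\ref{lem-infty-energy} matches the paper's. Where you genuinely diverge is the Lindeberg/jump condition. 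The paper never confines the walk: it applies the same ergodic lemma to the \emph{truncated} squared increments $\Delta_j^{\delta T^{1/2}}$ and their conditional expectations $\wt\Delta_j^{\delta T^{1/2}}$ (Lemma~\ref{lem-jump-sum}), using that the truncation threshold $\delta|\wh I_a|^{1/2}\rta\infty$ together with Lemma~\ref{lem-infty-energy} makes the expected truncated specific energy arbitrarily small; this verifies the compensator condition in Theorem~\ref{thm-martingale-clt} directly, with no use of corrector sublinearity. Your route instead uses~\eqref{eqn-cell-tutte-conv} plus confinement of $\wh X_{\cdot/\ep^2}$ in $B_{O(1/\ep)}(0)$. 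This can be made to work (and is legitimate, since sublinearity is proved before this proposition), but as written one intermediate claim is not justified: the bound $\BB E\bigl[\sup_{t\leq T}|M^\ep_t|^2 \,\big|\, \mcl H,\mcl D\bigr]=O(T)$ does not follow from the a.s.\ pathwise convergence $T^{-1}\int_0^T\sigma_{aa}(X_t)\,dt\rta\Sigma_{aa}$, which controls the realized time average along the trajectory but not the quenched expectation of the bracket. To repair it you must first stop the martingale when the accumulated bracket exceeds (say) $2\Sigma_{aa}T/\ep^2$ -- an event whose complement has quenched probability tending to $1$ by the ergodic limit -- apply Doob to the stopped process, transfer to confinement of $\wh X$ via~\eqref{eqn-cell-tutte-conv} and Lemma~\ref{lem-max-cell-diam}, and only then conclude that all (actual and potential) jumps of the stopped process are $o(1)$, so that the conditional Lindeberg/compensator condition holds for the stopped martingales and transfers back. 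You gesture at exactly this interleaving, so the gap is fixable, but the paper's truncated-energy argument buys a cleaner proof: it needs no a priori range control, no sublinearity of the corrector, and no stopping-time bootstrapping.
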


Since $\phi_\infty$ is discrete harmonic and by the optional stopping theorem, the process $t\mapsto \phi_\infty(X_t)$ is a local martingale. We can therefore prove Proposition~\ref{prop-tutte-rw-conv} using the martingale functional central limit theorem. In particular, we will use the following theorem, which is proven in~\cite{rebolledo-martingale-clt} (see~\cite[Theorem 5.1(a)]{helland-martingale-clt} for a statement in English).

\begin{thm}[\cite{rebolledo-martingale-clt}] \label{thm-martingale-clt}
Let $\{( M_n , \mcl F_n) \}_{n\in\BB N}$ be continuous-time real-valued local martingales, not necessarily continuous. For $t\geq 0$, let $\Delta M_n(t) := M_n(t) - M_n(t^-)$ be the jump of $M_n$ at time $t$ and for $\delta > 0$, let
\eqbn
\sigma^\delta[M_n](t) := \sum_{s\leq t}  \Delta M_n(s)^2 \BB 1_{(|\Delta M_n(s)|  \geq \delta )} .
\eqen 
Also let $\wt\sigma^\delta[M_n](t)$ be the compensator in the Doob-Meyer decomposition of $\sigma^\delta[M_n]$, so that $\sigma^\delta[M_n] - \wt\sigma^\delta[M_n]$ is a local martingale.
Suppose that we have the following convergence in probability for each $t\geq 0$:
\eqbn
\lim_{n\rta\infty} \la M_n \ra_t = t \quad\op{and} \quad \lim_{n\rta\infty} \wt\sigma^\delta[M_n](t) = 0\quad \forall \delta > 0 ,
\eqen
where $\la M_n \ra$ denotes the quadratic variation. Then $M_n$ converges in law to standard linear Brownian motion in the local uniform topology.
\end{thm}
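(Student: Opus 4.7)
The plan is to prove the theorem by the classical tightness-plus-characterization strategy for semimartingale convergence. First I would establish tightness of $\{M_n\}$ in the Skorokhod topology on $D[0,\infty)$; then show every weak subsequential limit is standard Brownian motion via L\'evy's characterization; and finally upgrade Skorokhod convergence to local uniform convergence using continuity of the limit.

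For tightness, I would apply Aldous' criterion: for stopping times $\sigma_n \leq \tau_n \leq \sigma_n + h$ bounded by a fixed $T$, optional stopping applied to the local martingale $[M_n] - \la M_n \ra$ (after a localization that I would suppress) gives, say,
\eqbn
\BB E\left[ (M_n(\tau_n) - M_n(\sigma_n))^2 \wedge 1\right] \leq \BB E\left[(\la M_n\ra_{\sigma_n + h} - \la M_n\ra_{\sigma_n})\wedge 1\right] + o(1).
\eqen
The assumption $\la M_n\ra_t \to t$ in probability then forces the right side to go to $0$ as $h \to 0$, uniformly in $n$; together with $M_n(0)=0$ this yields tightness.

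For the characterization of subsequential limits, the crucial tool is Lenglart's domination inequality: since $\sigma^\delta[M_n]$ is an adapted nondecreasing pure-jump process with predictable compensator $\wt\sigma^\delta[M_n]$, the hypothesis $\wt\sigma^\delta[M_n](t) \to 0$ in probability gives $\sigma^\delta[M_n](t) \to 0$ in probability for every $\delta>0$. Because a single jump of size $\geq \delta$ contributes at least $\delta^2$ to $\sigma^\delta$, this in turn implies $\sup_{s\leq t}|\Delta M_n(s)| \to 0$ in probability, so any subsequential weak limit $M$ has a.s.\ continuous paths. Writing $[M_n]_t = \sigma^\delta[M_n](t) + \sum_{s\leq t}|\Delta M_n(s)|^2\BB 1_{|\Delta M_n(s)|<\delta}$ and using $\la M_n\ra_t \to t$ together with the fact that $[M_n]-\la M_n\ra$ is a local martingale, one identifies $\la M\ra_t = t$ for the limit. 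Then $M$ is a continuous local martingale with quadratic variation $t$, hence standard Brownian motion by L\'evy's theorem. Since every subsequential limit coincides in law, the full sequence $M_n$ converges weakly in the Skorokhod topology to $B$, and continuity of $B$ upgrades this to convergence in the local uniform topology.

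The main technical obstacle is the passage from control of the predictable compensator $\wt\sigma^\delta[M_n]$ to control of $\sigma^\delta[M_n]$ itself, which requires a careful application of Lenglart's inequality together with a suitable localization (since $M_n$ is only assumed to be a local martingale). Once this bridge is in place and one can assert that large jumps vanish in probability, the remaining steps---tightness via Aldous, continuity of limits, identification of $\la M\ra_t = t$, and L\'evy's characterization---fit into the standard template for functional central limit theorems for semimartingales.
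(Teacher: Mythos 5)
The paper does not actually prove Theorem~\ref{thm-martingale-clt}: it is imported as a black box from \cite{rebolledo-martingale-clt} (with \cite{helland-martingale-clt} cited for an English statement), so there is no in-paper argument to compare yours against. Your sketch is essentially the standard proof of Rebolledo's theorem, i.e.\ the route of the cited references: Lenglart domination to convert the hypothesis on the compensator $\wt\sigma^\delta[M_n]$ into $\sigma^\delta[M_n](t)\to 0$ in probability, hence vanishing of jumps of size $\geq \delta$ and continuity of any subsequential limit; tightness via Aldous' criterion driven by $\la M_n\ra_t\to t$; identification of limits by L\'evy's characterization; and the upgrade from Skorokhod to local uniform convergence because the limit is continuous. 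As an outline this is correct and is the right template.

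Two steps are thinner than they should be. First, the displayed Aldous bound does not follow from optional stopping alone once the truncation $\wedge 1$ sits inside the expectations and $M_n$ is only a local martingale; the clean fix is to invoke Lenglart's inequality again, dominating $\left(M_n(\cdot\vee\sigma_n)-M_n(\sigma_n)\right)^2$ by the corresponding increment of $\la M_n\ra$, and to note that pointwise convergence $\la M_n\ra_t\to t$ upgrades to uniform convergence in probability on compacts because the processes $\la M_n\ra$ are nondecreasing and the limit $t\mapsto t$ is continuous. Second, L\'evy's theorem requires knowing that the limit is a continuous local martingale with quadratic variation $t$; transferring the martingale property of $M_n$ and of $M_n^2-\la M_n\ra_\cdot$ to the limit needs a uniform integrability argument, and this is precisely where the jump hypothesis is used a second time (after truncating the large jumps controlled by $\sigma^\delta[M_n]$, the relevant stopped processes become uniformly integrable). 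Making that dependence explicit, rather than folding it into ``one identifies $\la M\ra_t=t$'', would close the only real gaps in an otherwise faithful reconstruction of the cited proof.
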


We will apply Theorem~\ref{thm-martingale-clt} to the one-dimensional projections $\BB v\cdot (\ep \phi_\infty(X_{\cdot/\ep^2}) ) $ for unit vectors $\BB v\in\BB R^2$, then use the Cram\'er-Wold theorem. We first check the condition concerning the convergence of quadratic variations. 
For a unit vector $\BB v \in \BB R^2$, let
\eqb \label{eqn-quad-var-def}
V^{\BB v}_t 
:= \la \BB v\cdot \phi_\infty(X )  , \BB v \cdot \phi_\infty(X) \ra_{t} 
= \sum_{j=1}^{n_t} \left( \BB v\cdot (\phi_\infty(Y_j) - \phi_\infty(Y_{j-1}) ) \right)^2 .
\eqe
Also set
\eqb \label{eqn-quad-var-constant}
c_{\BB v} 
:= \BB E\left[ \frac{\pi(H_0)}{\op{Area}(H_0)}   \left( \BB v\cdot \phi_\infty(Y_1)  \right)^2  \right]  .
\eqe

\begin{lem} \label{lem-quad-var-finite}
For each unit vector $\BB v$, one has $0 < c_{\BB v } < \infty$.
\end{lem}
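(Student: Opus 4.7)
The plan is to reformulate $c_{\BB v}$ by conditioning on $\mcl H$. Since $Y_1 = H$ with conditional probability $\frk c(H_0,H)/\pi(H_0)$ for each neighbor $H$ of $H_0$, and since the construction in Proposition~\ref{prop-corrector} forces $\phi_\infty(H_0) = 0$ (the limit of $\phi_m - \phi_m(H_0)$), one has
\eqbn
c_{\BB v} = \BB E\left[ \frac{1}{\op{Area}(H_0)} \sum_{\substack{H\in\mcl H\\ H\sim H_0}} \frk c(H_0,H)\left( \BB v\cdot\bigl(\phi_\infty(H)-\phi_\infty(H_0)\bigr)\right)^2 \right].
\eqen
Using $|\BB v\cdot w|^2\leq|w|^2$, this is bounded above by $\BB E[\op{SE}_{0,\infty}] + \BB E[\op{SE}_0]$-type quantities already controlled, and more directly by the expected specific Dirichlet energy of $\phi_\infty$ at $H_0$, which is finite by Lemma~\ref{lem-infty-energy}. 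This yields $c_{\BB v}<\infty$.

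For the lower bound I would argue by contradiction: suppose $c_{\BB v}=0$. The integrand above is non-negative, so it vanishes almost surely; thus a.s.\ $\BB v\cdot\phi_\infty$ is constant on the neighbors of $H_0$ (equal to $\BB v\cdot\phi_\infty(H_0)=0$). To promote this to a statement about a generic cell, I would invoke the dyadic resampling property (Lemma~\ref{lem-dyadic-resample}) together with the translation/scaling covariance of $\phi_\infty$ from Proposition~\ref{prop-corrector}: for $z$ uniform on $\wh S_m$, the corrector for $\mcl H-z$ at its origin-containing cell is $\phi_\infty^{\mcl H}(\cdot)-\phi_\infty^{\mcl H}(H_z)$, so the analogous local specific energy \emph{at $H_z$} has the same law as the one at $H_0$, and hence also vanishes a.s. Fubini and $\bigcup_m \wh S_m = \BB C$ then give that a.s.\ for Lebesgue-a.e.\ $z\in\BB C$, $\BB v\cdot\phi_\infty$ is constant on the star of $H_z$. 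Since a.e.\ $z$ lies in the interior of a unique cell, this holds for \emph{every} cell of $\mcl H$. Using the connectedness-along-lines hypothesis~\ref{item-hyp-adjacency} and an L-shaped path (a horizontal segment to a common corner, then a vertical one) to join any two cells, the graph $\mcl H$ is a.s.\ connected, so $\BB v\cdot\phi_\infty\equiv 0$ a.s. This contradicts the sublinearity statement~\eqref{eqn-cell-tutte-conv}: taking cells $H_n$ with $\phi_0(H_n)\approx n\BB v$ and applying~\eqref{eqn-cell-tutte-conv} on $B_{2n}(0)$ yields $\BB v\cdot\phi_\infty(H_n)=n+o(n)\to\infty$.

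The main obstacle is the extension from ``a.s.\ the local specific energy at $H_0$ vanishes'' to ``a.s.\ the local specific energy at every cell vanishes.'' One must combine the invariance modulo scaling of the \emph{distribution} of $(\mcl H,\mcl D)$ with the deterministic covariance of $\phi_\infty$ under translating/rescaling the environment, so that a probabilistic ``vanishing at the origin'' statement converts into a pointwise statement about the single realization of $\phi_\infty$ at all cells. Once this is in place, the remaining steps (connectedness and sublinearity-gives-unboundedness) are mechanical.
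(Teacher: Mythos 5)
Your proposal is correct, and the finiteness half is exactly the paper's argument (condition on $(\mcl H,\mcl D)$ and invoke Lemma~\ref{lem-infty-energy}). For positivity you take a genuinely different, though closely related, route: the paper propagates the vanishing of $\BB v\cdot(\phi_\infty(Y_1)-\phi_\infty(Y_0))$ along the walk using the walk's shift-invariance (Lemma~\ref{lem-walk-translate}), concluding that a.s.\ every increment of $\phi_\infty$ along the walk is orthogonal to $\BB v$, and then uses connectedness of $\mcl H$ to force the range of $\phi_\infty$ into a line, contradicting the corrector's proximity to $\phi_0$; you instead propagate purely spatially, noting $c_{\BB v}=\BB E[F]$ for the nonnegative scale-invariant local energy $F$ at $H_0$, so $c_{\BB v}=0$ forces $F=0$ a.s., and then using Lemma~\ref{lem-dyadic-resample} together with the translation/scaling covariance of $\phi_\infty$ from Proposition~\ref{prop-corrector} (plus Fubini over the uniformly sampled $z$ and the fact that each cell has positive area) to get vanishing of the local energy at \emph{every} cell, before concluding as the paper does via connectedness (your L-shaped-path use of hypothesis~\ref{item-hyp-adjacency} is the justification the paper leaves implicit) and the sublinearity statement~\eqref{eqn-cell-tutte-conv}, which is indeed available at this stage and is the contradiction the paper intends when it cites Proposition~\ref{prop-corrector}. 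What each buys: the paper's walk-based argument is shorter and sidesteps the ``random translation'' issue entirely, since Lemma~\ref{lem-walk-translate} is already phrased for the shifted environment seen from the walker; your argument avoids any input from Section~3 (no walk stationarity needed), at the cost of having to apply the covariance in Proposition~\ref{prop-corrector}, stated for deterministic $C,z$, at a randomly sampled $z$ --- this requires the standard Fubini identification, but it is not a gap, as the paper itself relies on the same identification when it defines $\op{SE}_{m,\infty}^z$ and applies Lemma~\ref{lem-ergodic-avg} to functionals of $\phi_\infty$ (e.g.\ in Lemmas~\ref{lem-se-energy}, \ref{lem-energy-lim-infty}, and~\ref{lem-infty-energy}).
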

\begin{proof}
One obtains $c_{\BB v}  <\infty$ by first taking a conditional expectation given $(\mcl H,\mcl D)$ in~\eqref{eqn-quad-var-constant} then applying Lemma~\ref{lem-infty-energy}. 
If $c_{\BB v}$ were equal to zero for some $\BB v$, then one would have $\BB v\cdot \phi_\infty(Y_1) = 0$ a.s. This together with Lemma~\ref{lem-walk-translate} would then show that a.s.\ $\BB v\cdot (\phi_\infty(Y_j) - \phi_\infty(Y_{j-1}) ) = 0$ for every $j$. Since $\mcl H$ is connected and $Y$ is recurrent, this means that a.s.\ the range of $\phi_\infty$ is contained in a straight line orthogonal to $\BB v$, which is impossible by Proposition~\ref{prop-corrector}. 
\end{proof}

\begin{lem} \label{lem-quad-var-lim}
For each fixed unit vector $\BB v\in\BB R^2$, a.s.\
\eqb \label{eqn-quad-var-lim}
\lim_{T \rta\infty} \frac{1}{T}  V^{\BB v}_T = c_{\BB v} .
\eqe
\end{lem}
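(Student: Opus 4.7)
The plan is to realize the quadratic variation $V^{\BB v}_T$ as a time-integral of a scale-invariant non-negative functional $F$ of the shifted environment, and then appeal to the ergodic average Lemma~\ref{lem-walk-ergodic}.

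Concretely, with $\phi_\infty$ normalized so that $\phi_\infty(H_0)=0$ (Proposition~\ref{prop-corrector}), define
\[
F(\mcl H, \mcl D, \wh X, \wt{\mcl D}) := \frac{\pi(H_0)}{\op{Area}(H_0)}\bigl(\BB v \cdot \phi_\infty(Y_1)\bigr)^2.
\]
Under the rescaling $(\mcl H, \mcl D, \wh X, \wt{\mcl D})\mapsto (C\mcl H, C\mcl D, C\wh X_{\cdot/C^2}, C^2\wt{\mcl D})$, the quantity $\pi(H_0)$ is unchanged while both $\op{Area}(H_0)$ and $|\phi_\infty(Y_1)|^2$ scale as $C^2$ (using the scaling covariance of $\phi_\infty$ from Proposition~\ref{prop-corrector}); hence $F$ satisfies the scale-invariance hypothesis of Lemma~\ref{lem-walk-ergodic}. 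The definition of $c_{\BB v}$ in~\eqref{eqn-quad-var-constant} gives $\BB E[F] = c_{\BB v}$, and $c_{\BB v}<\infty$ by Lemma~\ref{lem-quad-var-finite}.

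The shifted functional $F_t$ is piecewise constant: on the inter-jump interval $[\tau_{j-1}, \tau_j)$ of $X$ it equals $\frac{\pi(Y_{j-1})}{\op{Area}(Y_{j-1})}(\BB v\cdot \Delta_j)^2$, where $\Delta_j = \phi_\infty(Y_j)-\phi_\infty(Y_{j-1})$. Since this interval has length $\op{Area}(Y_{j-1})/\pi(Y_{j-1})$, the contribution to $\int_{\tau_{j-1}}^{\tau_j} F_t\,dt$ is exactly $(\BB v\cdot \Delta_j)^2$. Summing over the inter-jump intervals contained in $[0,T]$ yields $\int_0^T F_t\,dt = V^{\BB v}_T + R(T)$, where $R(T)$ is a boundary error from the two (partial) inter-jump intervals straddling $0$ and $T$. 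Lemma~\ref{lem-max-interval-diam} bounds the lengths of these intervals by $o(T)$, and the sublinearity of $\phi_\infty$ proved in~\eqref{eqn-cell-tutte-conv} bounds the size of the corresponding squared jumps; together these give $R(T)=o(T)$ almost surely.

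Applying Lemma~\ref{lem-walk-ergodic} to the non-negative scale-invariant $F$ yields $|\wh I_a|^{-1}\int_{\wh I_a} F_t\,dt \to c_{\BB v}$ almost surely as $a\to\infty$. The remaining, and principal, task is converting this two-sided ergodic average (over an interval $\wh I_a$ straddling $0$) to the one-sided limit $T^{-1}\int_0^T F_t\,dt \to c_{\BB v}$ as $T\to\infty$ through the positive reals. The approach is to observe that the environment-from-the-particle process $t\mapsto (\mcl H-\wh X_t, \mcl D-\wh X_t, \wh X_{\cdot+t}-\wh X_t, \wt{\mcl D}-t)$ is (modulo scaling) a stationary Markov process in $t\in\BB R$ whose shift-invariant $\sigma$-algebra is trivial: stationarity in a genuine classical sense is obtained by upgrading Lemma~\ref{lem-walk-translate} via $a\to\infty$, and triviality by the same argument as in Lemma~\ref{lem-walk-ergodic} (using the recurrence from Proposition~\ref{prop-cell-recurrent}). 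Continuous-time Birkhoff then yields $T^{-1}\int_0^T F_t\,dt \to \BB E[F]=c_{\BB v}$ a.s., which combined with $R(T)=o(T)$ gives~\eqref{eqn-quad-var-lim}. The main obstacle is this final passage: Lemma~\ref{lem-walk-translate} gives invariance only under averaging over $\wh I_a$, not under a single deterministic time shift, so care is required to obtain a form of stationarity suitable for a one-sided Birkhoff argument.
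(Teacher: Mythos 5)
Your first half matches the paper's argument: the functional $F$ you define is exactly the paper's $F_t$ evaluated at $t=0$, its scale invariance and the identity $\BB E[F]=c_{\BB v}$ are correct, and applying Lemma~\ref{lem-walk-ergodic} gives the a.s.\ convergence of the averages over the random two-sided intervals $\wh I_a$. The gap is in the step you yourself flag as the main obstacle: you propose to obtain genuine stationarity of the environment-seen-from-the-particle process under a \emph{deterministic} time shift by ``upgrading Lemma~\ref{lem-walk-translate} via $a\to\infty$'' and then to invoke the continuous-time Birkhoff theorem. This upgrade is not available. The proof of Lemma~\ref{lem-walk-translate} explicitly notes that the invariance does \emph{not} hold for a fixed deterministic $t$: the law of $(\mcl H,\mcl D,\wh X,\wt{\mcl D})$ is only invariant modulo a global rescaling, and rescaling space by $C$ rescales the time parameterization of $\wh X$ by $C^{-2}$, so a fixed time shift is incompatible with the scaling; the randomness of the uniformly sampled $t\in\wh I_a$ is precisely what makes the statement scale-consistent, and sending $a\to\infty$ does not remove the obstruction. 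Consequently there is no classical measure-preserving flow to which Birkhoff's theorem applies --- indeed, if such stationarity held, most of the dyadic-system machinery of the paper would be unnecessary.

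The paper closes this gap by a covering argument rather than by stationarity. Having the a.s.\ convergence of $|I_k|^{-1}\int_{I_k}F_t\,dt$ over the origin-containing intervals $I_k$ of the one-dimensional dyadic system $\wt{\mcl D}$, it uses Lemma~\ref{lem-more-intervals} to propagate the estimate (with error $2^{-2\el}$) simultaneously to all dyadic subintervals of length $2^{-\el}|I_k|$ of the dyadic parents of $I_k$, together with Lemma~\ref{lem-max-interval-diam} to control the inter-jump intervals straddling the endpoints. Then, for a deterministic $T$ with $T\asymp |I_{k_T}|$, it sandwiches $[0,T]$ between unions $\ul{\mcl I}_T\subset[0,T]\subset\ol{\mcl I}_T$ of at most $2^{\el+1}$ such small intervals; since $F\geq 0$, this gives two-sided bounds on $V_T^{\BB v}$ of the form $(c_{\BB v}\pm O_\el(2^{-\el}))T$, and sending $\el\to\infty$ yields~\eqref{eqn-quad-var-lim}. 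If you replace your stationarity/Birkhoff step with this sandwiching argument (which uses only lemmas already at your disposal), your proof goes through; the rest of your write-up, including the treatment of the boundary error $R(T)$, is essentially correct.
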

\begin{proof}
For $t \in \BB R$, let 
\eqb \label{eqn-bm-var-inc}
F_t := \frac{\pi(X_t)}{\op{Area}(X_t)}  \left( \BB v\cdot (\phi_\infty(Y_{n_t+1}) - \phi_\infty(Y_{n_t }) ) \right)^2    .
\eqe 
Then $F_0$ is scale invariant in the sense of Lemma~\ref{lem-walk-ergodic}, so we can apply that lemma to find that a.s.\ 
\eqb \label{eqn-dyadic-interval-var}
\lim_{k\rta\infty} \frac{1}{|I_k|} \int_{I_k}  F_t       \, dt = c_{\BB v} ,
\eqe 
where here we recall that the $I_k$'s are the intervals containing 0 in the one-dimensional dyadic system $\wt{\mcl D}$. 
Furthermore, by Lemma~\ref{lem-max-interval-diam} it is a.s.\ the case that the maximal length of the intervals $[\tau_{j-1}  ,\tau_j]$ on which $ X$ is constant which intersect $I_k$ is $o(|I_k|)$. 

To obtain~\eqref{eqn-quad-var-lim}, we need to transfer from the random interval $I_k$ to an interval of the form $[0,T]$ for fixed $T > 0$. 
We will do this by approximating $[0,T]$ by a union of small intervals $I$ for which~\eqref{eqn-dyadic-interval-var} holds with $I$ in place of $I_k$. 
To this end, fix $\el\in\BB N$ (which we will eventually send to $\infty$). 
By~\eqref{eqn-dyadic-interval-var} and two applications of Lemma~\ref{lem-more-intervals}, it is a.s.\ the case that for large enough $k\in\BB N$, 
\eqb \label{eqn-one-interval-var}
\left| \frac{1}{|I|} \int_I      F_t      \,dt  - c_{\BB v} \right| \leq 2^{-2\el} 
\quad \op{and} \quad 
\tau_{n_t} - \tau_{n_t-1} \leq 2^{-2\el} |I_k| ,\quad \forall t\in I 
\eqe 
for each interval $I$ which is a dyadic descendant of either of the two dyadic parents of $I_k$ and which has side length $2^{-\el} |I_k|$. Note that the union of these intervals contains $[-|I_k| , |I_k|]$. 

Give $T >0$, let $k_T \in \BB N$ be chosen so that $T\in [|I_{k_T}|/4 , |I_{k_T}|/2]$. 
Let $\ul{\mcl I}_T$ (resp.\ $\ol{\mcl I}_T $) be the union of those intervals $I$ of the form described just after~\eqref{eqn-one-interval-var} which are contained in $[0,T]$ (resp.\ which either intersect $[0,T]$ or share an endpoint with such an interval which intersects $[0,T]$). 
Note that the intervals $I$ contained in $\ol{\mcl I}_T$ intersect only along their endpoints, $\ul{\mcl I}_T\subset [0,T] \subset \ol{\mcl I}_T$, and $\ol{\mcl I}_T \setminus \ul{\mcl I}_T$ is a union of at most four such intervals $I$.  

The second inequality in~\eqref{eqn-one-interval-var} implies that a.s.\ for each large enough $T>0$, the intervals $[\tau_0,\tau_1]$ and $[\tau_{n_T } ,\tau_{n_T+1}]$ which contain the endpoints of $[0,T]$ are contained in $\ol{\mcl I}_T$. 
Recalling that $\tau_j -\tau_{j-1} = \op{Area}(Y_j)/\pi(Y_j)$ and the definition~\ref{eqn-quad-var-def} of $V_T^{\BB v}$, we can break integrals up into sums over intervals of the form $[\tau_{j-1},\tau_j]$ to find that 
\eqb \label{eqn-quad-var-int}
  \int_{\ul{\mcl I}_T} F_t \,dt 
  \leq 
  V_T^{\BB v}
  \leq   
  \int_{\ol{\mcl I}_T} F_t \,dt  .
\eqe  

By summing the first estimate in~\eqref{eqn-one-interval-var} over those intervals $I$ which are contained in each of $\ul{\mcl I}_T$ and $\ol{\mcl I}_T$ (there are at most $2^{\el + 1}$ such intervals), we find that a.s.\ for large enough $T > 0$, 
\eqb \label{eqn-quad-int-compare}
\frac{1}{|\ul{\mcl I}_T|} \int_I  F_t \,dt \geq c_{\BB v} - O_\el(2^{-\el}) 
\quad \op{and} \quad
\frac{1}{|\ol{\mcl I}_T|} \int_I F_t \,dt \leq c_{\BB v} + O_\el(2^{-\el})  ,
\eqe
with the rate of the $O_\el(2^{-\el})$ depending only on $c_{0,0}$. Since the lengths of $\ul{\mcl I}_T$, $[0,T]$, and $\ol{\mcl I}_T$ each differ by at most $2^{-\el + 2}|I_k| \leq 2^{-\el  +4} T$, we can plug~\eqref{eqn-quad-int-compare} into~\eqref{eqn-quad-var-int} and send $\el\rta\infty$ to obtain~\eqref{eqn-quad-var-lim}. 
\end{proof}

We next need to check the condition on $\wt\sigma^\delta[M_n]$ from Theorem~\ref{thm-martingale-clt}. 
This will be accomplished using the following lemma.

\begin{lem}  \label{lem-jump-sum}
Fix a unit vector $\BB v\in\BB R^2$. 
For $j\in\BB Z$ and $r > 0$, let 
\eqb \label{eqn-one-jump}
\Delta_j^r := \left( \BB v\cdot \left( \phi_\infty(Y_j) - \phi_\infty(Y_{j-1})   \right)  \right)^2 \BB 1_{\left( \left|\BB v\cdot \left( \phi_\infty(Y_j) - \phi_\infty(Y_{j-1})   \right)  \right| \geq  r \right)}
\eqe
and
\allb \label{eqn-mean-jump}
\wt\Delta_j^r 
&:= \BB E\left[ \Delta_j^r \,|\, \mcl H,\mcl D , Y |_{(-\infty, j-1]} \right] \notag\\
&= \frac{1}{  \pi(Y_{j-1})  }  \sum_{\substack{H\in\mcl H \\ H\sim Y_{j-1}}} \frk c(Y_{j-1},H)  \left(\BB v\cdot ( \phi_\infty(H) - \phi_\infty(Y_{j-1})  ) \right)^2 \BB 1_{\left( \left| \BB v\cdot (\phi_\infty(H) - \phi_\infty(Y_{j-1} ))\right| \geq  r \right)}  .
\alle
Almost surely, for each $\delta > 0$,
\eqb \label{eqn-jump-sum}
\lim_{T\rta\infty} \frac{1}{T}  \sum_{j=1}^{n_T} \Delta_j^{\delta T^{1/2} } = \lim_{T\rta\infty} \sum_{j=1}^{n_T} \wt \Delta_j^{\delta T^{1/2} }  = 0.
\eqe
\end{lem}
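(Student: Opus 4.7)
The plan is to apply the ergodic-theoretic Lemma~\ref{lem-walk-ergodic} to suitable scale-invariant truncations of the integrands appearing in Lemmas~\ref{lem-quad-var-lim} and~\ref{lem-infty-energy}, then use the dwell-time bound of Lemma~\ref{lem-max-interval-diam} to convert a cell-dependent threshold into the global threshold $\delta T^{1/2}$. The two limits in~\eqref{eqn-jump-sum} are handled by parallel arguments, so I first sketch the case of $\Delta_j^{\delta T^{1/2}}$.

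\textbf{Scale-invariant truncation.} Write $A_j := \Delta_j^0 = (\BB v \cdot (\phi_\infty(Y_j) - \phi_\infty(Y_{j-1})))^2$ and recall from the proof of Lemma~\ref{lem-quad-var-lim} the scale-invariant density $F_t := \frac{\pi(X_t)}{\op{Area}(X_t)} A_{n_t+1}$, which is constant on each dwell interval $[\tau_{j-1}, \tau_j]$ and integrates over that interval to exactly $A_j$, with $\BB E[F] = c_{\BB v} < \infty$ by Lemma~\ref{lem-quad-var-finite}. For $R > 0$ let $F_t^{(R)} := F_t \BB 1_{F_t \geq R}$, which remains scale-invariant; by dominated convergence $\BB E[F^{(R)}] \to 0$ as $R \to \infty$. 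Integrating over a dwell interval gives
\[
\int_{\tau_{j-1}}^{\tau_j} F_t^{(R)} \, dt \;=\; A_j\, \BB 1_{A_j \geq R(\tau_j - \tau_{j-1})}.
\]

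\textbf{Ergodic averaging plus dwell-time control.} Applying Lemma~\ref{lem-walk-ergodic} to $F^{(R)}$ and then transferring from the dyadic interval $\wh I_a$ to $[0,T]$ by the same approximation procedure used in the proof of Lemma~\ref{lem-quad-var-lim} (pair $T$ with $k_T$ so that $T \in [|I_{k_T}|/4,|I_{k_T}|/2]$, cover $[0,T]$ by dyadic subintervals of $I_{k_T}$ of length $2^{-\ell}|I_{k_T}|$, and invoke Lemma~\ref{lem-more-intervals}) yields
\[
\lim_{T \to \infty} \frac{1}{T} \sum_{j=1}^{n_T} A_j\, \BB 1_{A_j \geq R(\tau_j - \tau_{j-1})} \;=\; \BB E[F^{(R)}] \quad \text{a.s.}
\]
By Lemma~\ref{lem-max-interval-diam} combined with Lemma~\ref{lem-more-intervals}, for any $\epsilon > 0$ it is a.s.\ the case that every dwell interval $[\tau_{j-1}, \tau_j]$ intersecting $[0,T]$ satisfies $\tau_j - \tau_{j-1} \leq \epsilon T$ for $T$ large enough. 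Choosing $\epsilon = \delta^2/R$, the condition $A_j \geq \delta^2 T$ forces $A_j \geq R(\tau_j - \tau_{j-1})$, so
\[
\frac{1}{T} \sum_{j=1}^{n_T} \Delta_j^{\delta T^{1/2}} \;\leq\; \frac{1}{T} \sum_{j=1}^{n_T} A_j\, \BB 1_{A_j \geq R(\tau_j - \tau_{j-1})}.
\]
Taking $\limsup$ as $T \to \infty$ and then $R \to \infty$ gives the first limit in~\eqref{eqn-jump-sum}; boundary contributions at $t = 0$ and $t = T$ contribute only $O(1)$ to the sum and hence vanish after dividing by $T$.

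\textbf{Compensator.} For the second limit I use the scale-invariant integrand
\[
\wt F_t := \frac{1}{\op{Area}(X_t)} \sum_{H \sim X_t} \frk c(X_t, H) \big(\BB v \cdot (\phi_\infty(H) - \phi_\infty(X_t))\big)^2,
\]
which integrates over $[\tau_{j-1}, \tau_j]$ to $\wt\Delta_j^0$ and has $\BB E[\wt F] = c_{\BB v} < \infty$ by Lemma~\ref{lem-infty-energy}. The key point is that the indicator inside $\wt\Delta_j^r$ acts separately on each neighbor of $Y_{j-1}$, so I truncate $\wt F$ per neighbor, inserting the indicator $\BB 1_{\frac{\pi(X_t)}{\op{Area}(X_t)} (\BB v \cdot (\phi_\infty(H) - \phi_\infty(X_t)))^2 \geq R}$ inside the sum. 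The resulting $\wt F^{(R)}$ is scale-invariant, dominated by $\wt F$, and its integral over $[\tau_{j-1}, \tau_j]$ equals $\wt\Delta_j^{(R(\tau_j - \tau_{j-1}))^{1/2}}$. Running the identical three-step argument (ergodic theorem, transfer to $[0,T]$, dwell-time bound with $\epsilon = \delta^2/R$) yields $\limsup_T T^{-1} \sum_{j \leq n_T} \wt\Delta_j^{\delta T^{1/2}} \leq \BB E[\wt F^{(R)}]$, and $R \to \infty$ finishes the proof.

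\textbf{Main obstacle.} The only subtle point is promoting Lemma~\ref{lem-max-interval-diam} (stated for the dwell interval containing the origin) to a uniform bound on all dwell intervals meeting $[0,T]$; this is routine via Lemma~\ref{lem-more-intervals} applied to the event $E(I) = \{\tau_j - \tau_{j-1} \leq \epsilon |I| \text{ for all } [\tau_{j-1}, \tau_j] \subset I\}$, but is the place where one must be careful to get the quantitative chain $\epsilon \leq \delta^2/R$ to close up.
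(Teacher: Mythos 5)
Your proof is correct, and its overall architecture coincides with the paper's: both arguments apply Lemma~\ref{lem-walk-ergodic} to a scale-invariant truncation of the energy density along the walk (finite in expectation by Lemma~\ref{lem-infty-energy}), transfer from the dyadic intervals $\wh I_a$ to $[0,T]$ via Lemma~\ref{lem-more-intervals} exactly as in the proof of Lemma~\ref{lem-quad-var-lim}, and invoke Lemma~\ref{lem-max-interval-diam}. The one genuine difference is the truncation device. The paper builds the threshold $\delta |\wh I_a|^{1/2}$ directly into the functional, working with $\wt F^a = \frac{\pi(H_0)}{\op{Area}(H_0)} \wt\Delta_1^{\delta |\wh I_a|^{1/2}}$ and choosing $a_*$ so large that $\BB E[\wt F^{a_*}] \leq \alpha$, then comparing thresholds via $|\wh I_{a_*}^t| \leq |\wh I_a|$; there Lemma~\ref{lem-max-interval-diam} enters only to handle the dwell intervals containing $0$ and $T$. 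You instead cut the scale-invariant density at a fixed level $R$ --- equivalently, you compare the squared increment to $R$ times the dwell length --- and then use sublinearity of the dwell times (Lemma~\ref{lem-max-interval-diam} upgraded by Lemma~\ref{lem-more-intervals}) with $\ep = \delta^2/R$ to dominate the $\delta T^{1/2}$-thresholded sum by the $R$-truncated one. The two bookkeeping schemes are interchangeable here: yours makes the dwell-time bound central and a bit more quantitative, while the paper's avoids tracking dwell lengths inside the indicator. Two small remarks: your claim that the boundary terms are $O(1)$ is not literally right for the increment straddling $T$ (it is not bounded uniformly in $T$), but this is immaterial because the sandwich by $\ol{\mcl I}_T$ from the proof of Lemma~\ref{lem-quad-var-lim}, which you invoke, absorbs that term once the dwell-length bound places its interval inside $\ol{\mcl I}_T$; and, just like the paper's own proof, your argument yields the second limit in~\eqref{eqn-jump-sum} with a $1/T$ normalization, which is exactly the form used in the proof of Proposition~\ref{prop-tutte-rw-conv}.
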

\begin{proof}
We will prove that the second limit in~\eqref{eqn-jump-sum} is a.s.\ equal to 0, then comment on the adaptations needed to treat the first limit. 
For $a \geq 0$, let $\wh I_a \in\wt{\mcl D}$ be as in~\eqref{eqn-I-def} and let
\allb \label{eqn-big-jump-rv}
\wt F^a 
 := \frac{\pi(H_0)}{\op{Area}(H_0)} \wt\Delta_1^{\delta |\wh I_a|^{1/2}}  
 =\frac{1}{\op{Area}(H_0)}  \sum_{\substack{H\in\mcl H \\ H\sim H_0} } \frk c(H_0,H) \left(\BB v\cdot   \phi_\infty(H) \right)^2 \BB 1_{\left( \left|\BB v\cdot  \phi_\infty(H)  \right| \geq \delta |\wh I_a|^{1/2}       \right)}  ,
\alle
where here we set $|\wh I_0|  =0$, so that the indicator is identically equal to 1 in this case. By Lemma~\ref{lem-infty-energy}, we have $\BB E[\wt F^0]  < \infty$. Since a.s.\ $|\wh I_a| \rta \infty$ as $a\rta\infty$, for each $\alpha >0$ there is a deterministic $a_* = a_*(\delta ,\alpha)  > 0$ such that $\BB E[\wt F^{a_*}] \leq \alpha$. 
The random variable $\wt F^{a_*}$ is a scale-invariant functional of $(\mcl H,\mcl D, \wh X,\wt{\mcl D})$ in the sense of Lemma~\ref{lem-walk-ergodic} (note that spatially scaling by $C$ scales the time parameterization of $X$ by $1/C^2$, and hence the length of $\wh I_a$ by $C^2$). 
Therefore, Lemma~\ref{lem-walk-ergodic} applied to this random variable implies that a.s.\
\eqbn
\limsup_{a \rta\infty} \frac{1}{|\wh I_a|} \int_{\wh I_a} 
 \sum_{\substack{H\in\mcl H \\ H\sim X_t} } \frac{\frk c(X_t,H)}{\op{Area}(X_t)}   \left(\BB v\cdot ( \phi_\infty(H) - \phi_\infty(X_t) ) \right)^2   
\BB 1_{\left(    \left|\BB v\cdot ( \phi_\infty(H) - \phi_\infty(X_t) ) \right| \geq \delta |\wh I_a|^{1/2}     \right)}    
  \,dt 
\leq \alpha .
\eqen
Breaking up the integral into a sum over intervals of the form $[\tau_{j-1},\tau_j] $ which are contained in $\wh I_a$ and noting that each such interval has length $\op{Area}(Y_{j-1})/\pi(Y_{j-1})$, we get that a.s.\ 
\eqbn
\limsup_{a \rta\infty} \frac{1}{|\wh I_a|} \sum_{j : [\tau_{j-1},\tau_j] \subset \wh I_a}
\wt\Delta_j^{\delta |\wh I_a|^{1/2}} 
\leq \alpha .
\eqen
By Lemma~\ref{lem-more-intervals}, the same is true with $\wh I_a$ replaced by the union of the dyadic parents of $\wh I_a$, which in turn contains $[0,|\wh I_a|]$. 
Combining this with Lemma~\ref{lem-max-interval-diam} (to deal with the intervals containing 0 and $T$) and sending $\alpha\rta 0$ shows that the second limit in~\eqref{eqn-jump-sum} is 0. 

To treat the first limit in~\eqref{eqn-jump-sum}, we apply the same argument as above with the random variable $\wt F^a$ from~\eqref{eqn-big-jump-rv} replaced by $F^a := \frac{\pi(H_0)}{\op{Area}(H_0)} \Delta_1^{\delta |\wh I_a|^{1/2}} $. 
We note that $\BB E[F^0]$ can be seen to be finite by first taking a conditional expectation given $(\mcl H,\mcl D)$, then applying Lemma~\ref{lem-infty-energy}.
\end{proof}

\begin{proof}[Proof of Proposition~\ref{prop-tutte-rw-conv}]
Fix a unit vector $\BB v\in \BB R^2$. We will check the hypotheses of Theorem~\ref{thm-martingale-clt} under the conditional law given $(\mcl H, \mcl D)$ for the local martingales
\eqbn
M^{\BB v}_\ep(t) := \ep\left(  \BB v\cdot \phi_\infty(X_{t/\ep^2} ) \right) .
\eqen
Note that $M_\ep^{\BB v}$ is a martingale under the conditional law given $(\mcl H,\mcl D)$ since $\phi_\infty$ is discrete harmonic. 

Lemma~\ref{lem-quad-var-lim} shows that for each $t\geq 0$, a.s.\ 
\eqbn
\lim_{\ep\rta 0} \la M_\ep^{\BB v} , M_\ep^{\BB v} \ra_t 
= \lim_{\ep\rta 0} \ep^2 V^{\BB v}_{t/\ep^2} 
= c_{\BB v} t. 
\eqen
Furthermore, in the notation of Theorem~\ref{thm-martingale-clt} and Lemma~\ref{lem-jump-sum} we have
\eqbn
\sigma^\delta[M_\ep^{\BB v}](t) 
= \ep^2 \sum_{j=1}^{n_{t/\ep^2} } \Delta_j^{\delta/\ep} ,
\eqen
so by the discrete Doob decomposition and the optional stopping theorem, the compensator is given by
\eqbn
\wt\sigma^\delta[M_\ep^{\BB v}](t) 
= \ep^2 \sum_{j=1}^{n_{t/\ep^2}}  \wt\Delta_j^{\delta/\ep}   .
\eqen
Therefore, Lemma~\ref{lem-jump-sum} shows that a.s.\ $\lim_{\ep\rta 0 } \wt\sigma^\delta[M_\ep^{\BB v}](t)  = 0$ for each $t\geq 0$. 

Hence, we can apply Theorem~\ref{thm-martingale-clt} to find that for each fixed unit vector $\BB v \in \BB R^2$, a.s.\ the conditional distribution given $(\mcl H,\mcl D)$ of $  M^{\BB v}_\ep$ converges in law as $\ep\rta 0$ to standard linear Brownian motion with quadratic variation $c_{\BB v} \,dt$. Almost surely, this holds for Lebesgue-a.e.\ choice of $\BB v$ simultaneously. By the Cram\'er-Wold theorem and~\eqref{eqn-quad-var-constant}, we obtain the proposition statement for an appropriate choice of covariance matrix $\Sigma$. 
This covariance matrix is positive definite since each $c_{\BB v}$ is positive (Lemma~\ref{lem-quad-var-finite}). 
\end{proof}

\begin{proof}[Proof of Theorem~\ref{thm-general-clt}]
Combine Proposition~\ref{prop-tutte-rw-conv} with Proposition~\ref{prop-cell-recurrent} and~\eqref{eqn-cell-tutte-conv} (which has already been proven).
\end{proof}

\subsection{Convergence is uniform in the starting point}
\label{sec-clt-uniform}

In this subsection we prove a straightforward extension of Theorem~\ref{thm-general-clt} which shows that in fact random walk on $\ep \mcl H$ converges to Brownian motion modulo time parameterization \emph{uniformly} over all choices of starting point in a compact subset of $\BB C$. This result is needed in~\cite{gms-tutte}.

\begin{thm} \label{thm-general-clt-uniform}
For $z\in\BB C$, let $Y^z$ denote the simple random walk on $\mcl H$ started from $H_z$. For $j\in\BB N_0$, let $\wh Y_j^z$ be an arbitrarily chosen point of the cell $Y_j^z$ and extend $\wh Y^z$ from $\BB N_0$ to $[0,\infty)$ by piecewise linear interpolation. 
Let $\Sigma$ be the covariance matrix from Theorem~\ref{thm-general-clt}. 
For each fixed compact set $A \subset \BB C$, it is a.s.\ the case that as $\ep \rta 0$, the maximum over all $z\in A$ of the Prokhorov distance between the conditional law of $\ep \wh Y^{z/\ep}$ given $\mcl H$ and the law of Brownian motion started from $z$ with covariance matrix $\Sigma$, with respect to the topology on curves modulo time parameterization (recall~\eqref{eqn-cmp-metric-loc}), tends to 0. 
\end{thm}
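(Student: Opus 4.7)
The approach is to run the martingale CLT argument from the proof of Theorem~\ref{thm-general-clt} for each walk $Y^{z/\ep}$ started at $H_{z/\ep}$, and verify that every step can be made uniform in $z \in A$. There are two main things to check: (i) the corrector $\phi_\infty$ identifies the correct starting point, uniformly in $z$, and (ii) the quadratic variation and jump negligibility conditions of Theorem~\ref{thm-martingale-clt} hold uniformly in $z$.

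For (i), let $R := 1 + \sup_{z \in A} |z|$. Applying the sublinearity estimate~\eqref{eqn-cell-tutte-conv} at scale $R/\ep$,
\eqbn
\sup_{H \in \mcl H(B_{R/\ep}(0))} |\phi_\infty(H) - \phi_0(H)| = o_\ep(\ep^{-1}) \quad \text{a.s.}
\eqen
Combined with Lemma~\ref{lem-max-cell-diam}, which gives $\op{diam}(H_{z/\ep}) = o_\ep(\ep^{-1})$ uniformly in $z \in A$, and the bound $|\phi_0(H_{z/\ep}) - z/\ep| \leq \op{diam}(H_{z/\ep})$, this yields $\ep \phi_\infty(H_{z/\ep}) \to z$ a.s.\ uniformly in $z \in A$. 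In particular, BM from $\ep \phi_\infty(H_{z/\ep})$ and BM from $z$ are within $o_\ep(1)$ in the CMP topology.

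For (ii), I would use a finite covering argument. Fix $\delta > 0$ and cover $A$ by balls $B_\delta(z_1), \ldots, B_\delta(z_N)$. Since $\phi_\infty$ is discrete harmonic on all of $\mcl H$, for each $z_k$ the process $t \mapsto \ep \phi_\infty(Y^{z_k/\ep}_{\lfloor t/\ep^2 \rfloor})$ is a local martingale, and the argument of Proposition~\ref{prop-tutte-rw-conv} applies verbatim to yield its convergence (quenched, a.s.) to BM with covariance $\Sigma$ from $\ep \phi_\infty(H_{z_k/\ep})$. The key point is that the ergodic inputs of Lemmas~\ref{lem-walk-ergodic},~\ref{lem-quad-var-lim}, and~\ref{lem-jump-sum} depend only on the environment $(\mcl H, \mcl D)$ and the dynamics of the walk in that environment -- not on the deterministic starting cell, once one combines them with the strong Markov property and the recurrence statement of Proposition~\ref{prop-cell-recurrent}. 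For general $z \in A$, pick $z_k$ with $|z - z_k| \leq \delta$; by recurrence, the walk $Y^{z/\ep}$ a.s.\ visits $H_{z_k/\ep}$ in finite time, and by the strong Markov property, after this visit the walk has the same law as $Y^{z_k/\ep}$ from the corresponding time. Applied to the scaled walk, this realizes $\ep\wh Y^{z/\ep}$ as the concatenation of an initial segment of spatial diameter $O(\delta) + o_\ep(1)$ with a tail that converges (in the quenched sense) to BM from $z_k$ with covariance $\Sigma$; sending $\ep \to 0$ then $\delta \to 0$ gives uniform convergence in the CMP topology.

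The main obstacle is controlling the spatial diameter of the initial segment (before the two walks couple) uniformly in $z \in A$. The CMP topology is forgiving here, because the initial segment contributes $O(\delta)$ to the CMP distance once its spatial diameter is $O(\delta)$. To bound this diameter, I would combine the martingale property of $\phi_\infty(Y^{z/\ep})$ with Doob's maximal inequality and the finite specific Dirichlet energy of $\phi_\infty$ from Lemma~\ref{lem-infty-energy}, controlling excursions of $\ep \phi_\infty(Y^{z/\ep})$ away from its starting point before hitting $H_{z_k/\ep}$. The sublinearity of the corrector then transfers this bound from $\phi_\infty$-coordinates to actual spatial coordinates, completing the proof.
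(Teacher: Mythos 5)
There is a genuine gap, and it sits exactly at the step you dismiss with ``applies verbatim.'' The martingale-CLT argument of Proposition~\ref{prop-tutte-rw-conv} is not available for the walk started from $H_{z_k/\ep}$ with $z_k\neq 0$ fixed: its inputs, Lemmas~\ref{lem-walk-translate} and~\ref{lem-walk-ergodic} (hence Lemmas~\ref{lem-quad-var-lim} and~\ref{lem-jump-sum}), rely on the walk being started from the \emph{origin-containing} cell, which is a Lebesgue-typical cell precisely because of the dyadic resampling property of Lemma~\ref{lem-dyadic-resample}. The law of $\mcl H$ is only translation invariant modulo scaling, so there is no stationarity along a walk started from the deterministic cells $H_{z_k/\ep}$, which recede to infinity as $\ep\rta 0$; and the fix you suggest (recurrence plus the strong Markov property) does not restore it, since in the scaled picture the walk from $z_k$ must travel a macroscopic distance $|z_k|$ before it can couple with the walk from the origin cell. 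In other words, your covering argument assumes the quenched invariance principle at arbitrary deterministic starting points, which is essentially the content of the theorem being proved. A second, quantitative problem is your coupling of $Y^{z/\ep}$ with $Y^{z_k/\ep}$ through the hitting of the single cell $H_{z_k/\ep}$: for a recurrent planar walk, hitting one specific cell at distance $\delta/\ep$ typically requires excursions of diameter comparable to, or much larger than, the starting separation (logarithmic hitting estimates), so the pre-coupling segment is \emph{not} of spatial diameter $O(\delta)+o_\ep(1)$, and Doob's inequality together with Lemma~\ref{lem-infty-energy} cannot make it so, because the quadratic variation accumulated up to that hitting time is not small. The CMP topology does not absorb an order-one initial excursion.

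The paper's proof is built to avoid both issues. It never re-runs the CLT from a deterministic distant point; instead it uses Theorem~\ref{thm-general-clt} only for walks started from points sampled uniformly from a square $\wh S_m$ (where Lemma~\ref{lem-dyadic-resample} makes the starting cell typical), and it replaces exact hitting-based coupling by Lemma~\ref{lem-disconnect-coupling}: via Wilson's algorithm, the \emph{exit distributions} from a square of walks started at two nearby points agree in total variation up to the probability that one walk fails to disconnect the other point from the square's boundary---an event whose probability is close to $1$ at small scales by the Brownian approximation for the sampled walks. Lemma~\ref{lem-uniform-hm} then gives uniform convergence of discrete harmonic measure on the dyadic parent square, and the full-path statement is reassembled by iterating these exit distributions across a fine grid of subsquares of $S_k$ via the strong Markov property, with the mesh size absorbed by the CMP metric. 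If you want to salvage your outline, you would need to replace your hitting-time coupling by a disconnection-type coupling of exit laws and restrict the CLT input to resampled (typical) starting points, at which point you have essentially reconstructed the paper's argument.
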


An important consequence of Theorem~\ref{thm-general-clt-uniform} is the convergence of discrete harmonic functions to their continuous counterparts. In the statement of the following corollary and in the rest of this subsection, when we refer to ``Euclidean harmonic" functions we mean functions which are harmonic with respect to the Brownian motion with covariance matrix $\Sigma$, rather than the standard two-dimensional Brownian motion. 

\begin{cor} \label{cor-harmonic-conv}
Let $D\subset \BB C$ be an open domain bounded by a Jordan curve, let $\frk f : \ol D\rta \BB R$ be Euclidean harmonic on $D$ and continuous on $\ol D$, and for $\ep > 0$ let $\frk f^\ep  :\mcl H(\ep^{-1}\ol D) \rta \BB R$ be discrete harmonic on $\mcl H(\ep^{-1}\ol D) \setminus \mcl H(\ep^{-1}\bdy D)$ with boundary data chosen so that for $H\in\mcl H(\ep^{-1}\bdy D)$, $\frk f^\ep(H)  = \frk f(z)$ for some (arbitrary) $z\in H\cap\ol D$. View $\frk f^\ep$ as a function on $\ep^{-1} \ol D$ which is constant on each cell. Then a.s.\ $\ep \frk f^\ep(\ep^{-1} \cdot) \rta \frk f$ uniformly as $\ep\rta 0$. 
\end{cor}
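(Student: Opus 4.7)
The plan is to represent both $\frk f$ and $\frk f^\ep$ probabilistically as expected boundary values with respect to an exit distribution, and then use Theorem~\ref{thm-general-clt-uniform} to transfer convergence of exit distributions of the walk to those of Brownian motion. First, apply the optional stopping theorem to the $\frk c$-martingale associated with the walk $Y^{\ep^{-1} z}$ on $\mcl H(\ep^{-1}\ol D)$ absorbed on $\mcl H(\ep^{-1}\bdy D)$: this expresses $\frk f^\ep(H_{\ep^{-1}z})$ as the expectation (given $\mcl H$) of the boundary value $\frk f$ at the marked point in the boundary cell first hit by the walk. By analogous classical theory, since $\frk f$ is Euclidean (i.e., $\Sigma$-) harmonic on $D$ and continuous on $\ol D$, $\frk f(z) = \BB E[\frk f(B_\sigma^z)]$, where $B^z$ is $\Sigma$-Brownian motion started from $z$ and $\sigma$ is its first exit time from $D$.

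Next, invoke Theorem~\ref{thm-general-clt-uniform} on a compact set containing a neighborhood of $\ol D$ to conclude that, uniformly in the starting point, the rescaled walk $\ep\wh Y^{\ep^{-1} z}$ converges in law (modulo time parameterization) to $B^z$. Because $\bdy D$ is a Jordan curve and $B^z$ a.s.\ exits $D$ cleanly, the exit location from $D$ is a functional of the path that is both reparameterization-invariant and a.s.\ continuous with respect to the uniform topology on trajectories; hence the exit-position convergence follows from Theorem~\ref{thm-general-clt-uniform} together with the portmanteau theorem. Lemma~\ref{lem-max-cell-diam} (or its analog from the uniform statement) ensures that the first boundary cell hit by $Y^{\ep^{-1}z}$, viewed after rescaling by $\ep$, has diameter $o(1)$, so the rescaled marked point $\ep z_\tau^\ep$ and the rescaled walk's actual exit position across $\ep^{-1}\bdy D$ have the same limit. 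Combining this with uniform continuity of $\frk f$ on the compact set $\ol D$ upgrades convergence of exit distributions to convergence of expectations, yielding $\frk f^\ep(H_{\ep^{-1}z}) \to \frk f(z)$ for each fixed $z \in D$.

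Finally, upgrade the pointwise convergence to uniform convergence on $\ol D$ by combining: (a) uniformity in the starting point from Theorem~\ref{thm-general-clt-uniform}; (b) the continuous dependence of the Brownian exit distribution on the starting point, which holds up to $\bdy D$ because $\bdy D$ is a Jordan curve (so every boundary point is $\Sigma$-regular); and (c) uniform continuity of $\frk f$ on $\ol D$. For $z$ near $\bdy D$, both $\frk f(z)$ and $\frk f^\ep(H_{\ep^{-1}z})$ approximate the boundary data at the nearest boundary point, using continuity of $\frk f$ and the fact that both the walk and Brownian motion started near $\bdy D$ exit close to their starting point with high probability. The main obstacle is exactly this uniform control near $\bdy D$: one must rule out pathological behavior where the walk, started in a cell that straddles $\bdy D$, takes a long excursion before exit. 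This is handled by coupling with the Brownian approximation to obtain an equicontinuous-in-$\ep$ modulus for the discrete harmonic measure, using that $\Sigma$-Brownian motion started near $\bdy D$ has exit distribution concentrated near itself (quantitatively, via a Beurling-type estimate for $\Sigma$-BM on a Jordan domain) and transferring this to $\frk f^\ep$ through the uniform coupling supplied by Theorem~\ref{thm-general-clt-uniform}.
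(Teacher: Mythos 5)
Your argument is correct and is essentially the derivation the paper intends: the corollary is stated as a direct consequence of Theorem~\ref{thm-general-clt-uniform}, and your route---representing $\frk f^\ep$ and $\frk f$ via exit distributions (optional stopping for the walk, $\Sigma$-harmonicity for Brownian motion), transferring exit laws uniformly in the starting point via Theorem~\ref{thm-general-clt-uniform}, controlling cell diameters with Lemma~\ref{lem-max-cell-diam}, and handling points near $\bdy D$ through boundary regularity of the Jordan domain---is exactly the standard proof the authors leave implicit.
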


Theorem~\ref{thm-general-clt-uniform} will be extracted from Theorem~\ref{thm-general-clt} by using the re-sampling property of Lemma~\ref{lem-dyadic-resample} to get uniform convergence of random walk started from a large (but finite) number of points to Brownian motion, then using continuity considerations to transfer to uniform convergence at all points simultaneously. The following lemma is needed to compare the random walks started from two nearby points.

\begin{lem} \label{lem-disconnect-coupling}
Let $G$ be a connected graph and let $A\subset \mcl V(G)$ be a set such that the simple random walk started from any vertex of $G$ a.s.\ hits $A$ in finite time.
For $x\in \mcl V(G)$, let $X^x$ be the simple random walk started from $x$ and let $\tau^x$ be the first time $X^x$ hits $A$. 
For $x,y\in \mcl V(G) \setminus A$,
\eqb \label{eqn-disconnect-coupling}
\BB d_{\op{TV}}\left(X^x_{\tau^x} , X^y_{\tau^y} \right) \leq  1 - \BB P\left[ \text{$X^x$ disconnects $y$ from $A$ before time $\tau^x$} \right] ,
\eqe 
where $\BB d_{\op{TV}}$ denotes the total variation distance.
\end{lem}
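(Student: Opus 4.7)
The plan is to build an explicit coupling of the two walks so that they exit at the same vertex whenever the disconnection event occurs. Write $E$ for the event that $X^x$ disconnects $y$ from $A$ before time $\tau^x$, so that on $E$ every path in $G$ from $y$ to $A$ must cross the trace $X^x([0, \tau^x])$; the right-hand side of~\eqref{eqn-disconnect-coupling} is then $1 - \BB P[E]$.

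I would construct the coupling in two stages. First sample $X^x$ run from $x$ until it hits $A$ at time $\tau^x$. Independently sample a simple random walk $\xi$ from $y$, and run it until it first hits $X^x([0, \tau^x]) \cup A$ at some time $T$; let $v := \xi_T$. On $E$, $v$ almost surely lies on the trace of $X^x$ and not in $A$, so one can define $\sigma := \inf\{t : X^x_t = v\}$ and set
\[
X^y_t := \xi_t \text{ for } t \leq T, \qquad X^y_{T+k} := X^x_{\sigma + k} \text{ for } k \geq 1 .
\]
On $E^c$ we simply take $X^y = \xi$, continued (using extra independent randomness if necessary) as an independent walk until it reaches $A$. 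By construction, on $E$ the walk $X^y$ eventually coincides with the tail of $X^x$ and reaches $A$ exactly at $X^x_{\tau^x}$, so $X^y_{\tau^y} = X^x_{\tau^x}$ on $E$.

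The main step is to check that this $X^y$ is marginally distributed as a simple random walk from $y$ stopped at $A$. For each fixed vertex $w$, the first-visit time $\sigma_w := \inf\{t : X^x_t = w\}$ is a stopping time in the filtration of $X^x$, so the strong Markov property gives that $X^x_{\sigma_w + \cdot}|_{[0,\tau^x - \sigma_w]}$ is a simple random walk from $w$ run until $A$, independent of $X^x|_{[0, \sigma_w]}$. Together with the independence of $X^x$ and $\xi$, this implies that on $E$, conditionally on $(X^x|_{[0,\sigma]}, \xi|_{[0,T]}, v)$ the spliced tail $X^x_{\sigma + \cdot}|_{[0, \tau^x - \sigma]}$ has exactly the law that the continuation of $\xi$ after $T$ would have had by the strong Markov property for $\xi$ at $T$—namely a simple random walk from $v$ stopped at $A$. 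Hence the splicing does not change the overall law of $X^y$.

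Combining these observations, $\BB P\bigl[X^x_{\tau^x} = X^y_{\tau^y}\bigr] \geq \BB P[E]$, which upon taking complements yields~\eqref{eqn-disconnect-coupling}. The only delicate point is the distributional verification in the preceding paragraph: one must apply the strong Markov property to $X^x$ separately for each fixed value of $w$ and then integrate out against the (independent) law of $\xi$, so that the apparent dependence of $\sigma$ on $\xi$ through $v$ causes no difficulty. Everything else is bookkeeping.
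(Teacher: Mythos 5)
There is a genuine gap, and it sits exactly at the step you flag as ``the only delicate point.'' Your construction conditions on more than you acknowledge. To carry out the splice you must condition not only on $X^x|_{[0,\sigma]}$ and $\xi|_{[0,T]}$, but also on the events that single out this configuration: namely $E$ (the \emph{full} trace $X^x([0,\tau^x])$ disconnects $y$ from $A$) and the event that $\xi$ first hits $X^x([0,\tau^x])\cup A$ at time $T$ at the vertex $v$. Both of these events depend on the portion of $X^x$ \emph{after} time $\sigma$: the second one forces the post-$\sigma$ segment of $X^x$ to avoid the vertices $\xi_0,\dots,\xi_{T-1}$ (otherwise $\xi$ would have been stopped earlier), and $E$ further constrains its geometry. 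Consequently, given this information the tail $X^x_{\sigma+\cdot}$ is a walk from $v$ \emph{conditioned} to avoid $\{\xi_0,\dots,\xi_{T-1}\}$ (and to help realize the disconnection), not an unconditioned simple random walk from $v$. The strong Markov property applied to $X^x$ at $\sigma_w$ for each fixed $w$, followed by integrating over the law of $\xi$, only handles the measurability of the random splice point; it does nothing about the fact that the selection events $\{v=w\}$ and $E$ are not measurable with respect to $X^x|_{[0,\sigma_w]}$ and $\xi$. So the claim that ``the splicing does not change the overall law of $X^y$'' is unjustified and in general false: for instance, on $E$ your spliced process never revisits $\xi_0,\dots,\xi_{T-1}$ after time $T$, whereas a genuine simple random walk would do so with positive probability. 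Hence the inequality $\mathbb{P}[X^x_{\tau^x}=X^y_{\tau^y}]\geq \mathbb{P}[E]$ is not established by this coupling.

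This is precisely the difficulty the paper's proof is built to avoid: there the coupling is produced by Wilson's algorithm. One first generates the loop erasure of $X^y|_{[0,\tau^y]}$, then runs $X^x$ until it hits that loop erasure or $A$; the uniform-spanning-tree interpretation (and the symmetry of Wilson's algorithm under exchanging the order of $x$ and $y$) is what guarantees that the resulting branch from $x$ has the law of $\op{LE}(X^x|_{[0,\tau^x]})$, whose endpoint on $A$ is exactly $X^x_{\tau^x}$. In other words, the legitimate replacement for your splice is the identity in law of UST branches, not the strong Markov property of the first walk along its own trace. To repair your argument you would need some input of this kind (Wilson's algorithm, or another exact identity for hitting distributions); the direct splice onto the first walk's path at the first meeting point does not have the correct marginal law.
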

\begin{proof} 
The lemma is a consequence of Wilson's algorithm. 
Let $T$ be the uniform spanning tree of $G$ with all of the vertices of $A$ wired to a single point. For $x\in \mcl V(G)$, let $L^x$ be the unique path in $T$ from $x$ to $A$. 
For a path $P$ in $G$, write $\op{LE}(P) $ for its chronological loop erasure. 
By Wilson's algorithm~\cite{wilson-algorithm}, we can generate the union $L^x\cup L^y$ in the following manner. 
\begin{enumerate}
\item Run $X^y$ until time $\tau^y$ and generate the loop erasure $ \op{LE}(X^y|_{[0,\tau^y]})$.
\item Conditional on $X^y|_{[0,\tau^y]}$, run $X^x$ until the first time $\wt\tau^x$ that it hits either $\op{LE}(X^y|_{[0,\tau^y]})$ or $A$. 
\item Set $L^x \cup L^y = \op{LE}(X^y|_{[0,\tau^y]}) \cup \op{LE}(X^x|_{[0,\wt\tau^x]})$. 
\end{enumerate}
Note that $L^y =  \op{LE}(X^y|_{[0,\tau^y]})$ in the above procedure. 
Applying the above procedure with the roles of $x$ and $y$ interchanged shows that $L^x \eqD \op{LE}(X^x|_{[0,\tau^x]})$.
When we construct $L^x\cup L^y$ as above, the points where $L^x$ and $L^y$ hit $A$ coincide provided $X^x$ hits $ \op{LE}(X^y|_{[0,\tau^y]})$ before $A$, which happens in particular if $X^x$ disconnects $y$ from $A$ before time $\tau^x$. We thus obtain a coupling of $\op{LE}(X^x|_{[0,\tau^x]})$ and $\op{LE}(X^y|_{[0,\tau^y]})$ such that the probability that  these two loop erasures hit $A$ at the same point is at least $ \BB P\left[ \text{$X^x$ disconnects $y$ from $A$ before time $\tau^x$} \right]$. We now obtain~\eqref{eqn-disconnect-coupling} by observing that $X^x_{\tau^x}$ is the same as the point where $\op{LE}(X^x|_{[0,\tau^x]})$ first hits $A$, and similarly with $y$ in place of $x$.
\end{proof}

Recall the dyadic system $\mcl D$ from Section~\ref{sec-dyadic-system} and the sequence $\{S_k\}_{k\in\BB Z}$ of squares containing the origin in this system. We also use the following notation. 

\begin{notation}
For a square $S$, write $S^{\op{P}}$ for the union of the four dyadic parents of $S$ (equivalently, the square with the same center as $S$ with side length $3|S|$). 
\end{notation}

\begin{lem} \label{lem-uniform-hm}
Almost surely, the supremum over all $z\in S_k$ of the Prokhorov distance between the conditional law given $(\mcl H,\mcl D)$ of the point where the linearly interpolated walk $\wh Y^z$ exits $S_k^{\op{P}}$ and the Euclidean harmonic measure on $\bdy S_k^{\op{P}}$ as viewed from $z$, with respect to $|S_k|^{-1}$ times the Euclidean metric on $\bdy S_k^{\op{P}}$, tends to 0 as $k\rta\infty$.
\end{lem}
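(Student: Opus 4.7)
The approach combines Theorem~\ref{thm-general-clt} (invariance principle starting at the origin), the dyadic resampling property of Lemma~\ref{lem-dyadic-resample}, and the coupling estimate of Lemma~\ref{lem-disconnect-coupling}. Write $\mu_k^z$ for the conditional law given $(\mcl H,\mcl D)$ of the exit point of $\wh Y^z$ from $S_k^{\op{P}}$, and $\nu_k^z$ for the Euclidean harmonic measure on $\bdy S_k^{\op{P}}$ viewed from $z$ (i.e.\ the exit distribution of BM with covariance $\Sigma$ started at $z$). The goal is to show $\sup_{z\in S_k}\BB d^{\op{Prok}}(\mu_k^z,\nu_k^z)\rta 0$ a.s., where the Prokhorov distance is with respect to $|S_k|^{-1}$ times Euclidean distance on $\bdy S_k^{\op{P}}$.

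\emph{Step 1 (Lebesgue-a.e.\ starting point).} Theorem~\ref{thm-general-clt} gives that a.s.\ the rescaled walk $\ep \wh Y^0_{\cdot/\ep^2}$ converges in the local uniform topology to Brownian motion with covariance $\Sigma$, and since the exit point from $S_k^{\op{P}}$ is a.s.\ a continuous functional of the BM trajectory, this yields $\BB d^{\op{Prok}}(\mu_k^0,\nu_k^0)\rta 0$. To extend this to other starting points, observe that for any deterministic $z_0\in\BB C$ the pair $(\mcl H - z_0,\mcl D - z_0)$ agrees in law with $(\mcl H,\mcl D)$ modulo scaling: indeed $\mcl D - z_0 \eqD \mcl D$ by Lemma~\ref{lem-uniform-dyadic}, $\mcl H - z_0 \eqD \mcl H$ modulo scaling by translation invariance modulo scaling of $\mcl H$, and the two are independent. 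Applying Theorem~\ref{thm-general-clt} to the translated/rescaled pair then gives, for each deterministic $z_0$, that the walk $\wh Y^{z_0}$ a.s.\ converges (after appropriate rescaling) to BM started from $z_0$, so $\BB d^{\op{Prok}}(\mu_k^{z_0},\nu_k^{z_0})\rta 0$. Fubini then produces, for a.s.\ $(\mcl H,\mcl D)$, a Lebesgue-conull set $G=G(\mcl H,\mcl D)\subset \BB C$ such that $\BB d^{\op{Prok}}(\mu_k^z,\nu_k^z)\rta 0$ for every $z\in G$.

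\emph{Step 2 (uniform transfer via coupling).} Fix $\delta>0$ small. Since $G$ is Lebesgue-conull, for $k$ large every $z\in S_k$ admits $z'\in G\cap S_k$ with $|z-z'|\leq \delta |S_k|$. By Lemma~\ref{lem-disconnect-coupling} applied to the walks stopped on first exit from $S_k^{\op{P}}$,
\eqbn
d_{\op{TV}}(\mu_k^z,\mu_k^{z'}) \leq 1 - \BB P\left[\wh Y^{z'}\text{ disconnects }z\text{ from }\bdy S_k^{\op{P}}\text{ before exit}\right] .
\eqen
Since $d(z,\bdy S_k^{\op{P}})\geq |S_k|$, the ratio $|z-z'|/d(z,\bdy S_k^{\op{P}})$ is at most $\delta$, and the standard 2D Brownian winding estimate (via the conformal map sending $z$ to the origin of a disk, followed by passing to the logarithm, which turns ``disconnection'' into ``vertical displacement of more than $2\pi$'') shows that BM from $z'$ disconnects $z$ from $\bdy S_k^{\op{P}}$ before exit with probability at least $1-C/\log(1/\delta)$. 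Combined with the invariance principle at $z'\in G$, this yields $d_{\op{TV}}(\mu_k^z,\mu_k^{z'}) \leq C/\log(1/\delta)+o_k(1)$. Using the triangle inequality
\eqbn
\BB d^{\op{Prok}}(\mu_k^z,\nu_k^z)\leq d_{\op{TV}}(\mu_k^z,\mu_k^{z'})+\BB d^{\op{Prok}}(\mu_k^{z'},\nu_k^{z'})+\BB d^{\op{Prok}}(\nu_k^{z'},\nu_k^z)
\eqen
together with the H\"older continuity of $z\mapsto \nu_k^z$ at scale $|S_k|$ (standard for BM harmonic measure on a square), the supremum over $z\in S_k$ is $O(1/\log(1/\delta))+o_k(1)$; sending $k\rta\infty$ and then $\delta\rta 0$ yields the lemma.

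\noindent\textbf{Main obstacle.} The technical heart is the disconnection estimate at the level of the walk in Step 2: the event that the walk trace disconnects $z$ from the boundary depends on the topology of the trace and is not a continuous functional of the path under local uniform convergence, so it does not transfer directly via the invariance principle. I plan to handle this by bounding the disconnection probability from below by a continuous proxy event---for example, that the walk successively crosses each of four predetermined annular sectors around $z$ between radii $2|z-z'|$ and $|S_k|/2$---whose BM probability can be estimated via conformal invariance, and whose convergence from walk to BM follows directly from Theorem~\ref{thm-general-clt} since the proxy event is determined by crossings of fixed smooth topological rectangles.
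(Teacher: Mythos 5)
Your overall architecture—establish the invariance principle at well-chosen base points, transfer to nearby starting points via the Wilson-algorithm coupling of Lemma~\ref{lem-disconnect-coupling} and a disconnection estimate, then use continuity of Euclidean harmonic measure in the base point—is the same as the paper's. But there are two problems. First, Step 1 rests on the claim that $\mcl H - z_0 \eqD \mcl H$ modulo scaling for each \emph{deterministic} $z_0$, "by translation invariance modulo scaling of $\mcl H$." That is false: every formulation in Definition~\ref{def-translation-invariance} (and Lemma~\ref{lem-dyadic-resample}) only provides invariance when the translation point is sampled uniformly from an appropriate random region (a block, or the square $\wh S_m$), and for the fractal environments the paper is aimed at (e.g.\ mated-CRT maps) the law genuinely changes under deterministic translations. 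The conclusion you want from Step 1—that a.s.\ the quenched invariance principle holds from Lebesgue-a.e.\ starting point—can be salvaged by sampling $z$ uniformly from $\wh S_m$, applying Lemma~\ref{lem-dyadic-resample} together with Theorem~\ref{thm-general-clt} to the recentered pair, and then using Fubini and a union over $m$; but as written the justification is wrong.

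The more serious gap is the uniformity asserted at the end of Step 2. For each $k$ you need comparison points $z'\in G$ forming a $\delta|S_k|$-net of $S_k$ \emph{for which the bound $\BB d^{\op{Prok}}(\mu_k^{z'},\nu_k^{z'})$ is already small at that same $k$}. Convergence on the fixed conull set $G$ is only pointwise in $z'$, the required net changes with $k$ (about $\delta^{-2}$ points per scale), and nothing controls the rate uniformly over these $k$-dependent choices; so the "$o_k(1)$" in your display is not uniform in $z'=z'(z,k)$ and the supremum bound over $z\in S_k$ does not follow. This is exactly the difficulty the paper's proof is built to avoid: it fixes $N=N(\delta)$ and \emph{re-samples} the $N$ comparison points uniformly from $\wh S_m$ at each scale $m$, so that by Lemma~\ref{lem-dyadic-resample} the recentered environments at these points have the original law modulo scaling; the invariance principle, the $\delta|\wh S_m|$-denseness of the points, and the walk-level disconnection event then hold jointly for all $N$ points, a.s.\ for all sufficiently large $m$, and Lemma~\ref{lem-disconnect-coupling} is applied exactly as you intend. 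To repair your argument you would need either this re-sampling device or an ergodic-averaging statement in the spirit of Lemma~\ref{lem-ergodic-avg}, showing that the fraction of the area of $S_k$ on which the exit law is $\eta$-far from harmonic measure tends to $0$, which makes good points automatically $\delta|S_k|$-dense at every large scale. Your proxy-event fix for the non-continuity of the disconnection functional is fine in spirit (the paper handles the same point implicitly when arranging its condition on the walk disconnecting a small ball), and the precise rate in your winding estimate is immaterial; the missing piece is the scale-uniform control of the comparison points.
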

\begin{proof}
Fix $N\in\BB N$ and for $m > 0$, let $z_1^m ,\dots,z_N^m$ be sampled uniformly and independently from Lebesgue measure on the square $\wh S_m$ of~\eqref{eqn-mass-time-def}. 
By Theorem~\ref{thm-general-clt} and Lemma~\ref{lem-dyadic-resample}, if we take the walks $\wh Y^{z_j^m}$ for $j\in [1,N]_{\BB Z}$ to be conditionally independent given $(\mcl H, \mcl D)$, then a.s.\ the joint conditional law given $(\mcl H, \mcl D)$ of $t\mapsto |\wh S_m|^{-1} ( \wh Y^{z_j^m} - z_j^m)$ for $j =1,\dots,N$ converges to the joint law of $N$ independent Brownian motions with covariance matrix $\Sigma$ started from 0 with respect to the local topology on curves modulo time parameterization. 
  
Since a Brownian motion started from 0 has probability tending to 1 as $\delta\rta 0$ to disconnect $B_\delta(0)$ from $\infty$ before leaving $B_1(0)$,  
if we are given $\delta > 0$, we can find $N = N(\delta) \in \BB N$ such that a.s., for each large enough $m > 0$ the following is true.
\begin{enumerate}
\item For each $j\in [1,N]_{\BB Z}$, the Prokhorov distance between the conditional law given $(\mcl H, \mcl D)$ of the point where $\wh Y^{z_j^m}$ exits $\wh S_m^{\op{P}}$ and the Euclidean harmonic measure on $\bdy \wh S_m^{\op{P}}$ as viewed from $z_j^m$, with respect to $|\wh S_m|^{-1}$ times the Euclidean distance, is at most $\delta  $. \label{item-uniform-hm}
\item With conditional probability at least $1-\delta$ given $(\mcl H , \mcl D)$, we have $\wh S_m \subset  \bigcup_{j=1}^N B_{\delta |\wh S_m|} (z_j^m)$. \label{item-uniform-cover}
\item For each $j \in [1,N]_{\BB Z}$, it holds with conditional probability at least $1-\delta$ given $(\mcl H,\mcl D)$ that $ Y^{z_j^m}$ disconnects $\mcl H( B_{\delta |\wh S_m|}(z_j^m)) $ from $\infty$ before hitting $\mcl H( \bdy \wh S_m^{\op{P}} )$.  \label{item-uniform-dc}
\end{enumerate}

By Lemma~\ref{lem-disconnect-coupling}, the condition~\ref{item-uniform-dc} implies that the total variation distance between the discrete harmonic measure on $\mcl H(\bdy \wh S_m^{\op{P}})$ as viewed from $H_{z_j^m}$ and the harmonic measure on $\mcl H(\bdy \wh S_m^{\op{P}})$ as viewed from any $z \in  B_{\delta |\wh S_m|}(z_j^m)$ is at most $\delta$. 
Combined with condition~\ref{item-uniform-hm} and the continuity of Euclidean harmonic measure with respect to the base point, this implies that for any $j\in [1,N]_{\BB Z}$ and any $z\in B_{\delta |\wh S_m|}(z_j^m)$, the Prokhorov distance between the conditional law given $(\mcl H, \mcl D)$ of the point where $\wh Y^z$ exits $\wh S_m^{\op{P}}$ and the Euclidean harmonic measure on $\bdy \wh S_m^{\op{P}}$ as viewed from $z$ is at most $o_\delta(1) $, with the $o_\delta(1)$ deterministic and independent of $k$. 
Finally, combining this with condition~\ref{item-uniform-cover} and sending $\delta\rta 0$ yields the statement of the lemma.
\end{proof}

\begin{proof}[Proof of Theorem~\ref{thm-general-clt-uniform}]
Fix $\el \in \BB N$ (which we will eventually send to $\infty$).
By Lemma~\ref{lem-uniform-hm} and two applications of Lemma~\ref{lem-more-squares}, the following holds a.s. 
Let $\mcl D_{k,\el}$ be the set of squares which are dyadic descendants of at least one of the four dyadic parents of $S_k$ and which have side length at least $ 2^{-\el} |S_k|$
Then for $S\in \mcl D_{k,\el}$, the maximum over all $z\in S$ of the Prokhorov distance between the conditional law given $(\mcl H,\mcl D)$ of the point where the linearly interpolated walk $\wh Y^z$ exits $S^{\op{P}}$ and the Euclidean harmonic measure on $\bdy S^{\op{P}}$ as viewed from $z$ with respect to $|S_k|^{-1}$ times the Euclidean distance tends to 0 as $k\rta\infty$.

For $z\in S_k^{\op{P}}$, let $\tau_0^z := z$ and inductively let $\tau_n^z$ for $n\in\BB N$ be the first time that $\wh Y^z$ exits $S^{\op{P}}$, for $S$ the square of $\mcl D$ with side length $2^{-\el} |S_k|$ which contains $\wh Y^z_{\tau_{n-1}^z}$; or $\tau_n^z = \tau_{n-1}^z$ if $\wh Y^z$ is not in $S_k^{\op{P}}$. Similarly define $\sigma^z_n$ for $n\in\BB N_0$ with Brownian motion $\mcl B^z$ started $z$ in place of $\wh Y^z$. 
The preceding paragraph and the strong Markov property shows that a.s.\ for each fixed $N\in\BB N$, the maximum over all $z\in S_k^{\op{P}}$ and all $n\in [0,N]_{\BB Z}$ of the Prokhorov distance between the conditional laws given $(\mcl H,\mcl D)$ of $  \wh Y^z_{\tau_n^z}$ and $\mcl B^z_{\sigma_n^z}$ (with respect to $|S_k|^{-1}$ times the Euclidean metric) tends to zero as $k \rta\infty$. We have $|\wh Y^z_t - \wh Y^z_{\tau_{n-1}^z}| \leq 2^{-\el +2}|S_k|$ for $t\in [\tau_{n-1}^z,\tau_n^z]$ and similarly for $\mcl B^z$. By Brownian scaling, the law of the largest $n$ for which $\sigma_{n-1}^z\not=\sigma_n^z$ can be bounded uniformly independently of $z$ and $k$.  

It follows that a.s.\ it holds uniformly over all $z\in S_k^{\op{P}}$ that the Prokhorov distance between the conditional law given $(\mcl H,\mcl D)$ of $  \wh Y^z$, stopped upon exiting $S_k^{\op{P}}$ and $\mcl B^z$, stopped upon exiting $S_k^{\op{P}}$, each scaled by $|S_k|^{-1}$, is $o_\el(1) + o_k(1)$, with the rate of the $o_\el(1)$ deterministic and independent of $k$.  

We now conclude by sending $\el\rta \infty$ and noting that $B_r(0) \subset S_k^{\op{P}}$ for $r\leq |S_k|$. 
\end{proof}

\subsection{Proof of Theorems~\ref{thm-general-clt0}, Theorem~\ref{thm-graph-clt}, and Theorem~\ref{thm-dual-clt}}
\label{sec-variant-proof}

We will now prove the other variants of our main theorem.

\begin{proof}[Proof of Theorems~\ref{thm-general-clt0}, Theorem~\ref{thm-general-tutte}, and Theorem~\ref{thm-recurrent}]
The combination of these theorems is a special case of Theorem~\ref{thm-general-clt} since the adjacency graph of faces of an embedded lattice is a cell configuration satisfying the connectivity along lines hypothesis~\ref{item-hyp-adjacency}. 
\end{proof}

\begin{proof}[Proof of Theorem~\ref{thm-graph-clt}]
Let $\mcl M$ be an embedded lattice satisfying the hypotheses of Theorem~\ref{thm-graph-clt}. 
We will construct a cell configuration $\rng{\mcl H}$ which satisfies the hypotheses of Theorem~\ref{thm-general-clt} and whose cells correspond to the vertices of $\mcl M$. 

We first divide each face $H\in \mcl F\mcl M$ into cells corresponding to the vertices in $\bdy H$. One way of doing this is as follows. For each such face $H$, let $w_H$ be sampled uniformly from Lebesgue measure on $H$ and let $x_1,\dots,x_n$ be the vertices lying on $\bdy H$, in counterclockwise cyclic order. Let $\psi : H \rta \BB D$ be a conformal map sending $w_H$ to 0. For $j\in [1,n]_{\BB Z}$, let $ P_j$ be a radial line segment in $\BB D$ from 0 to the midpoint of the counterclockwise arc of $\bdy\BB D$ from $\psi(x_j)$ to $\psi(x_{j-1})$ (where here we set $x_0= x_n$). We then let $\rng H_{x_j,H} \subset H$ be the closure of the image under $\psi^{-1}$ of the slice of $\BB D$ lying between $P_{j-1}$ and $P_j$ (where here we set $P_0 = P_n$). 

For $x\in \mcl V\mcl M$, we define the cell $\rng H_x := \bigcup_{H \in \mcl F\mcl M : x\in\bdy H} \rng H_{x,H}$. We then set $\rng{\mcl H} := \{\rng H_x : x\in \mcl V\mcl M\}$ and declare that $\rng H_x\sim  \rng H_y$ if and only if $x$ and $y$ are joined by an edge in $\mcl M$, which (by the above definition of the $\rng H_{x,H}$'s) is the case if any only if $\rng H_x$ and $\rng H_y$ intersect along a non-trivial boundary arc. We also set $\frk c(\rng H_x, \rng H_y) := \frk c(x,y)$ and $\pi(\rng H_x) := \pi(x)$.   

We now check the hypotheses of Theorem~\ref{thm-general-clt} for the cell configuration $\rng{\mcl H}$. The translation invariance modulo scaling of $\rng{\mcl H}$ follows from the translation invariance modulo scaling hypothesis for $\mcl M$ and the fact that we defined the cells in each face in a manner depending only on the face (to avoid continuity issues, we use one of the definitions which doesn't involve limits, e.g., the one given in Lemma~\ref{lem-dyadic-resample}). Ergodicity modulo scaling for $\rng{\mcl H}$ is then immediate from ergodicity modulo scaling for $\mcl M$.
To check hypothesis~\ref{item-hyp-moment} for $\rng{\mcl H}$, we observe that the translation invariance modulo scaling of $\mcl M$ shows that if we condition on $\mcl M$ and sample $z$ uniformly from the face $H_0$, then $\mcl M-z\eqD \mcl M$. Consequently, we can average over $H_0$ to find that if $\rng H_0$ is the cell of $\rng{\mcl H}$ containing $0$, then
\alb
\BB E\left[ \frac{\op{diam}(\rng H_0)^2 \pi(\rng H_0)}{\op{Area}(\rng H_0)} \right] 
&= \BB E\left[ \sum_{x\in \mcl V\mcl M\cap \bdy H_0} \frac{\op{diam}( \rng H_x )^2 \pi(x) \op{Area}(\rng H_x\cap h_0) }{ \op{Area}(\rng H_x) \op{Area}(H_0)} \right]\\
&\leq \BB E\left[ \sum_{x \in \mcl V\mcl M \cap \bdy H_0}  \frac{  \op{Outrad}(x)^2 \pi(x) }{ \op{Area}(H_0) }   \right] ,
\ale
which is finite by our finite expectation hypothesis for $\mcl M$. Similarly with $\pi^*$ in place of $\pi$. 
Hypothesis~\ref{item-hyp-adjacency} for $\mcl H$ follows since one can check from the above construction that if $H_x\cap H_y \not=\emptyset$ but $H_x\not\sim H_y$, then $H_x\cap H_y$ is a finite collection of points of the form $w_H$ defined above. The theorem statement now follows from Theorem~\ref{thm-general-clt}.
\end{proof}

The hardest part of the proof of Theorem~\ref{thm-dual-clt} is the following analog of Lemma~\ref{lem-max-cell-diam}. 

\begin{lem} \label{lem-dual-max-diam}
In the setting of Theorem~\ref{thm-dual-clt}, a.s.\ 
\eqb \label{eqn-dual-max-diam}
\lim_{r\rta\infty} \frac{1}{r} \max_{e \in  \mcl E\mcl M  : e\cap \cap B_r(0) \not=\emptyset } \op{diam}(e)
= \lim_{r\rta\infty} \frac{1}{r} \max_{e^* \in \mcl E\mcl M^* : e^*\cap B_r(0) \not=\emptyset } \op{diam}(e^*)
=  0. 
\eqe 
\end{lem}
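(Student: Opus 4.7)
The plan is to extend the ergodic-truncation strategy of Lemma~\ref{lem-max-cell-diam} to the dual pair $(\mcl M,\mcl M^*)$, using the identity $\frk c(e)\frk c^*(e^*) = 1$ to transfer bounds between the two sides whenever one of them is weak.

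First I would apply Lemma~\ref{lem-ergodic-avg} to the scale-invariant random variables
\[
F := \sum_{e\in E} \frac{\op{diam}(e)^2 \frk c(e)}{\op{Area}(H_0^*)} , \qquad F^* := \sum_{e^*\in E^*} \frac{\op{diam}(e^*)^2 \frk c^*(e^*)}{\op{Area}(H_0)} ,
\]
each of which has finite expectation by~\eqref{eqn-dual-hyp-moment}. Repeating the calculation in the proof of Lemma~\ref{lem-cell-sum} (integrating over $S_k$, breaking up by cells, and using Lemma~\ref{lem-more-squares} to control boundary contributions) yields a deterministic $C>0$ for which a.s., for all sufficiently large $k \in \BB N$,
\[
\sum_{e \in \mcl E\mcl M(S_k)} \op{diam}(e)^2 \frk c(e) \leq C|S_k|^2 \quad\text{and}\quad \sum_{e^* \in \mcl E\mcl M^*(S_k)} \op{diam}(e^*)^2 \frk c^*(e^*) \leq C|S_k|^2 .
\]

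For primal edges $e$ with $\frk c(e) \geq 1$, one has $\op{diam}(e)^2 \leq \op{diam}(e)^2 \frk c(e)$, so the exact truncation argument of Lemma~\ref{lem-max-cell-diam}---applied to $F \BB 1_{M_\ep > m_\ep}$, where $M_\ep$ is the smallest $m$ such that every $e \in E$ with $\frk c(e) \geq 1$ satisfies $\op{diam}(e) \leq \ep|\wh S_m|$---shows that a.s., for each $\ep > 0$, every $e \in \mcl E\mcl M(S_k)$ with $\frk c(e) \geq 1$ has $\op{diam}(e) \leq \ep|S_k|$ for all large $k$. The symmetric argument with $F^*$ bounds $\op{diam}(e^*) \leq \ep|S_k|$ for all dual edges with $\frk c^*(e^*) \geq 1$. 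By duality, this also gives $\op{diam}(e^*) \leq \ep|S_k|$ for all primal edges with $\frk c(e) \leq 1$, and symmetrically on the dual side.

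The remaining obstacle is to bound $\op{diam}(e)$ for primal edges with $\frk c(e) < 1$ (and $\op{diam}(e^*)$ for dual edges with $\frk c^*(e^*) < 1$), since in these cases the preceding only controls the crossing edge. Here the plan is to pass through face diameters via the geometric inequality
\[
\op{diam}(H) \leq 2 \max_{p \in \bdy H} |v_H^* - p| \leq 2 \max_{e \in \mcl E\mcl M \cap \bdy H}\bigl(\op{diam}(e) + \op{diam}(e^*)\bigr) ,
\]
which holds for every primal face $H$ since $v_H^* \in H$ and for each $p \in \bdy H$ the dual edge $e_p^*$ crosses the primal edge $e_p \ni p$ with endpoint $v_H^*$; this reduces the problem to bounding primal face diameters (and, symmetrically, dual face diameters for dual edges). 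The hard part will be closing the loop: for each edge on $\bdy H$ the previous step controls only one of $\op{diam}(e), \op{diam}(e^*)$, so the right-hand maximum is not directly sublinear. I plan to run a joint bootstrap for primal and dual faces in parallel: any face of diameter $\geq \ep'|S_k|$ must contain a ``problematic'' boundary edge---either a long primal edge with $\frk c(e) < 1$ or a long dual edge with $\frk c^*(e^*) < 1$---and I would introduce an auxiliary scale-invariant quantity recording the existence of such faces, then combine a second ergodic-truncation argument with the counting from the previous paragraph to derive a contradiction with the sum bound. The most delicate point will be defining this indicator so that Lemma~\ref{lem-ergodic-avg} applies while extracting enough lower bound on the Dirichlet-energy contribution of a single problematic edge to outweigh the $\frk c < 1$ penalty, which I expect will require an AM-GM estimate using $\frk c(e)\frk c^*(e^*) = 1$ together with Lemma~\ref{lem-more-squares} to handle the boundary squares.
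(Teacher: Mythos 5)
Your opening steps are sound: the analog of Lemma~\ref{lem-cell-sum} for the sums $\sum \op{diam}(e)^2\frk c(e)$ and $\sum\op{diam}(e^*)^2\frk c^*(e^*)$ over $S_k$ follows from Lemma~\ref{lem-ergodic-avg}, and the truncation argument of Lemma~\ref{lem-max-cell-diam} does control edges with conductance at least $1$ (equivalently, for every primal/dual pair at least one of $\op{diam}(e),\op{diam}(e^*)$ is $o(|S_k|)$). But the case you leave to a ``joint bootstrap'' is precisely the whole content of the lemma, and the route you sketch cannot close it. There is no local, energy-based obstruction to a macroscopic edge of small conductance: if $e$ is a primal edge with $\op{diam}(e)\asymp |S_k|$ and $\frk c(e)$ tiny, while its dual edge $e^*$ is very short, then the pair contributes $\op{diam}(e)^2\frk c(e)$ to one energy sum and $\op{diam}(e^*)^2/\frk c(e)$ to the other, and both can be made arbitrarily small simultaneously (take $\op{diam}(e^*)\ll\frk c(e)^{1/2}$). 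So no truncation of a scale-invariant functional built from the hypotheses~\eqref{eqn-dual-hyp-moment}, and no AM--GM manipulation using $\frk c(e)\frk c^*(e^*)=1$, can ``extract a lower bound on the Dirichlet-energy contribution of a single problematic edge'': there is none to extract. Your displayed geometric inequality also runs the wrong way for this purpose, since the boundary maximum it produces contains exactly the uncontrolled quantity $\op{diam}(e)$ for small-conductance boundary edges; the circularity you acknowledge is not one that energy estimates alone can break.

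What is missing is a topological mechanism exploiting planarity and the fact that $e$ and $e^*$ cross. The paper combines the two hypotheses multiplicatively via the mass transport principle and Cauchy--Schwarz (Lemma~\ref{lem-primal-dual-prod}) to get $\BB E\bigl[\sum_{e : R_e\ni 0}\op{diam}(e)\op{diam}(e^*)/\op{Area}(R_e)\bigr]<\infty$, where $R_e$ is the rectangle whose width and height are those of $e^*$ and $e$; it then builds a bi-infinite simple path of dual edges whose primal edges meet $\BB R$, and shows --- using an ergodic theorem for averages along the line (Lemma~\ref{lem-ergodic-avg-line}, where the tail $\sigma$-algebra need not be trivial, so one conditions on it), the separating squares of Lemma~\ref{lem-rectangle-sep}, and the excursion estimate Lemma~\ref{lem-max-line-diam} --- that for large $k$ this path yields a left--right crossing of $S_k$ by dual edges of diameter $o_k(|S_k|)$ staying near $S_k\cap\BB R$ (Lemma~\ref{lem-dual-path}). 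Translating to many uniformly random horizontal and vertical lines produces a grid of such crossings whose complementary components are small; since no two dual edges cross, every dual edge well inside $S_k$ is trapped in one component and hence short, and the primal edges are handled by the symmetric construction with the roles of $\mcl M$ and $\mcl M^*$ exchanged. Your plan never produces such crossings and never invokes planarity, and without some input of this kind the hypotheses give no control whatsoever on the diameter of a small-conductance edge, so the proposal has a genuine gap at its main step.
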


We note that Lemma~\ref{lem-dual-max-diam} implies that also the maximum diameter of the primal or dual faces which intersect $B_r(0)$ tends to zero since any two points on the boundary of such a face are contained in a connected union of two primal edges and two dual edges.

Lemma~\ref{lem-dual-max-diam} will be proven in Appendix~\ref{sec-dual-max-diam}. The proof is much harder than that of Lemma~\ref{lem-max-cell-diam} since we need to rule out the possibility of large edges with small conductances. In the setting of Theorem~\ref{thm-general-clt}, we have a bound for $\BB E[\op{diam}(H_0)/\op{Area}(H_0)]$ since $\pi(H_0) + \pi^*(H_0) \leq 1$, but we have no such bound in the setting of Theorem~\ref{thm-dual-clt}. Once Lemma~\ref{lem-dual-max-diam} is established, Theorem~\ref{thm-dual-clt} follows from essentially the same argument used to prove Theorem~\ref{thm-general-clt}. We now give a brief account of the (entirely cosmetic) adaptations necessary.  
 
\begin{proof}[Proof of Theorem~\ref{thm-dual-clt}] 
All of the proofs in Sections~\ref{sec-dyadic-system} and~\ref{sec-ergodic-avg} carry over verbatim except that we replace the quantities appearing in~\eqref{eqn-hyp-moment} with the quantities appearing in~\eqref{eqn-dual-hyp-moment} and we apply Lemma~\ref{lem-dual-max-diam} instead of Lemma~\ref{lem-max-cell-diam}. The arguments of Section~\ref{sec-corrector} through~\ref{sec-recurrence} are exactly the same except that in the proof of Lemma~\ref{lem-path-sum} we sum over edges of $\mcl M$ whose corresponding dual edges cross the line segment $P_k(r)$ instead of over cells of $\mcl H$ which intersect this line segment. The set of all such edges is necessarily connected since if a dual edge crosses $P_k(r)$, then the faces on either side of the edge must intersect $P_k(r)$. The argument of Section~\ref{sec-general-clt} is identical, with the area of the face of $\mcl M^*$ containing a vertex used in place of the area of the cell in the parameterization of the walk $X$.
\end{proof}

\section{Applications to planar maps and Liouville quantum gravity}
\label{sec-lqg-application}

Our results are motivated by applications to the convergence of embedded random planar maps toward Liouville quantum gravity, which are explored in more depth in the papers~\cite{gms-tutte,gms-poisson-voronoi}. Here we provide a brief overview of such applications. 

There are various ways of embedding a planar map $M$ into $\BB C$ (i.e., drawing the vertices and edges) which can be thought of as canonical, in some sense. Particularly relevant for us is the \emph{Tutte embedding} (a.k.a.\ \emph{harmonic} or \emph{barycentric} embedding) where the position of each vertex is the average of the positions of its neighbors, equivalently the embedding function is discrete harmonic. In the setting of Theorem~\ref{thm-general-tutte}, the function $\phi_\infty$ is a version of the Tutte embedding of the dual map of $\mcl M$. Other examples of embeddings include circle packing, Riemann uniformization, and various types of square tilings. 

It is a long-standing conjecture that for a large class of random planar maps --- including uniform maps (or triangulations, quadrangulations, etc.) and maps sampled with probability proportional to the partition functions of various statistical mechanics models --- the image of the map under any one of the above embeddings should converge to \emph{$\gamma$-Liouville quantum gravity ($\gamma$-LQG)} with parameter $\gamma \in (0,2]$ depending on the universality class of the random planar map. 
More precisely, the counting measure on vertices of the map, appropriately re-scaled, should converge to a version of the \emph{$\gamma$-LQG measure}, a random fractal measure on the plane constructed e.g.\ in~\cite{shef-kpz} (see also~\cite{kahane,rhodes-vargas-review}). 
The idea that LQG should describe (in some sense) the scaling limit of random planar maps dates back at least to Polyakov in the 1980's~\cite{polyakov-qg1,polyakov-qg2}. Various precise conjectures along these lines are stated, e.g., in~\cite{shef-kpz,shef-zipper,wedges,hrv-disk,curien-glimpse}.
We will not review the definition of LQG here. See the above references for more detail.

For each $\gamma \in (0,2)$, there is a certain special LQG surface called the \emph{0-quantum cone} whose corresponding $\gamma$-LQG measure $\mu$ satisfies an exact continuum analog of translation invariance modulo scaling~\cite[Proposition 4.13(ii)]{wedges} (see also~\cite[Proposition 3.1]{gms-tutte}). Roughly speaking, a 0-quantum cone is an infinite volume $\gamma$-LQG surface centered at a ``Lebesgue typical'' point.
One can use the 0-quantum cone to construct embedded lattices or cell configurations which approximate LQG and are translation invariant modulo scaling in the sense of Definition~\ref{def-translation-invariance}. For example, we could sample a dyadic system $\mcl D$ independent from $\mu$ and look at the embedded lattice whose faces are the maximal dyadic squares of $\mcl D$ with $\mu$-mass smaller than 1. 
Other LQG surfaces are not exactly translation invariant modulo scaling, but locally look like the 0-quantum cone in the sense of local absolute continuity. 
 
It is easy to construct embeddings of random planar maps which converge to LQG if the embedding is not required to be defined in an instrinsic way with respect to the map. As a trivial example, if we wanted to embed a map with $n$ vertices we could sample $n$ i.i.d.\ points from the $\gamma$-LQG measure, send one vertex to each of these points, then draw in the edges arbitrarily. More useful examples arise from the theory developed in~\cite{wedges}, as we discuss just below. 

The results of the present paper tell us that if we can find \emph{some} a priori embedding of a random planar map or its dual into $\BB C$ under which (a) the embedded map converges to $\gamma$-LQG and (b) the hypotheses of one of our main theorems are satisfied, then the Tutte embedding will be close to the a priori embedding at large scales, and hence the map will also converge to $\gamma$-LQG under the Tutte embedding. Moreover, one also gets convergence of random walk on the map to Brownian motion under both the a priori embedding and the Tutte embedding.  

The above approach is used in~\cite{gms-tutte} to show that the Tutte embedding of the \emph{mated-CRT maps}, a family of random planar maps constructed by gluing together a pair of correlated continuum random trees (CRT's), converges to $\gamma$-LQG, with $\gamma \in (0,2)$ determined by the correlation parameter. This constitutes the first rigorous proof that embedded random planar maps converge to LQG. 
 
The needed a priori embedding comes from the main result of~\cite{wedges}, which gives a correspondence between pairs of correlated CRT's and LQG surfaces decorated by space-filling Schramm-Loewner evolution (SLE$_\kappa$) curves for $\kappa =16/\gamma^2$. In particular, if $\mu$ is an appropriate version of the $\gamma$-LQG measure and $\eta$ is a whole-plane space-filling SLE$_\kappa$ sampled independently from $\mu$ and then parameterized so that it traverses one unit of $\mu$-mass in one unit of time, then~\cite[Theorem 1.9]{wedges} implies that the mated-CRT map is isomorphic to the adjacency graph of unit $\mu$-mass ``cells" $\eta([x-1,x])$ for $x\in\BB Z$. Theorem~\ref{thm-general-clt} is used to show that the random walk on this adjacency graph of cells converges to Brownian motion modulo time parameterization, and hence that the a priori embedding is close to the Tutte embedding. (It is expected---but not yet proven---that the random walk on this adjacency graph of cells converges uniformly to \emph{Liouville Brownian motion}, the ``quantum" time parameterization of Brownian motion constructed in~\cite{grv-lbm,berestycki-lbm}.) We also note that the paper~\cite{gms-harmonic} proves more quantitative versions of many of the estimates in this paper (e.g., for the Dirichlet energy and modulus of continuity of discrete harmonic functions) in the special case of the cell configuration associated with a mated-CRT map. 

Another application of the above approach appears in \cite{gms-poisson-voronoi}, which considers a random planar map whose vertices correspond to the cells of a Poisson-Voronoi tessellation of the Brownian map (or some other Brownian surface, like the Brownian disk), with two vertices considered to be adjacent if the corresponding cells intersect. As in~\cite{gms-tutte}, it is proven that the Tutte embeddings of these maps converge to $\sqrt{8/3}$-Liouville quantum gravity in the appropriate sense when the intensity of the Poisson point process tends to $\infty$. In this case, the a priori embedding comes from the results of~\cite{lqg-tbm1,lqg-tbm2,lqg-tbm3}, which show that there is a way of embedding of a Brownian surface into the plane under which it is equivalent to $\sqrt{8/3}$-Liouville quantum gravity (but do not give a direct description of this embedding in terms of the Brownian surface). We also show that ``Brownian motion on a Brownian surface'' (an object that was previously constructed in a somewhat indirect way) can obtained as the scaling limit of simple random walk on these cells. In~\cite{afs-metric-ball}, these results are extended to the case when the Brownian map is replaced by a $\gamma$-LQG surface for general $\gamma \in (0,2)$, viewed as a metric measure space. 

As we discuss in more detail in~\cite{gms-tutte,gms-poisson-voronoi}, there are many additional potential applications of our results in the theory of random planar maps and LQG along these same lines.

\appendix

\section{Specific Dirichlet energy}
\label{sec-specific}

Let $\mcl M$ be an embedded lattice (Definition~\ref{def-embedded-map}) which is translation invariant modulo scaling (Definition~\ref{def-translation-invariance}). 
Recall that the \emph{specific Dirichlet energy} associated with $\mcl M$ is the ``expected amount of discrete Dirichlet energy per unit area" for the function $\mcl V(\mcl M) \rta\BB C$ which takes each vertex of $\mcl M$ to its position in $\BB C$. 

As mentioned in the introduction, there are various ways to make this definition precise, depending how one chooses the origin-containing region and how each edge's energy is ``localized''.  In this appendix, we formalize this family of definitions (which includes the definition presented as  \eqref{eqn-dual-hyp-moment} within the statement of Theorem~\ref{thm-dual-clt}). We then use mass transport to briefly explain why the different definitions are all equivalent.

To make the notion of specific Dirichlet energy precise, the Dirichlet energy $\frk c(e) |x-y|^2$ associated to each edge $e = \{x,y\} \in \mcl E(\mcl M)$ needs to be spread out in space in some way. That is, we assume we have a way to associate to each edge a measure $\nu_{\mcl M}^e$ on the plane of total mass $\frk c(e) |x-y|^2$, where the map from $(\mcl M, e)$ to $\nu_{\mcl M}^e$ commutes with dilation and translation in the obvious way: $\nu_{C(\mcl M-z)}^{C(e-z)}(\cdot) = C^2 \nu_{\mcl M}^e(C^{-1} \cdot)$. We set $\nu = \nu_{\mcl M} = \sum_{e\in\mcl E \mcl M} \nu_{\mcl M}^e$. 

One concrete way to do this is to let $\nu_{\mcl M}^e$ spread half its mass uniformly over each of the two faces incident to $e$. Then the density of $\nu$ on a face $H$ is given by $\frac12 \sum \frk c(e) \diam(e)^2/\area(H)$, where the sum is over all edges on the boundary of $H$. Another example is to let $\nu_{\mcl M}^e$ be a sum of two point masses (at the two endpoints of $e$), or a single point mass at some specified internal point on $e$. 

\begin{defn} \label{def-specific}
If the law of $\mcl M$ is translation invariant modulo scaling, then the {\em specific Dirichlet energy} of $\mcl M$ is defined as $\Bbb E[\nu (B)/\area(B)]$ where $\nu$ is as described above and $B$ is the origin-containing block in any block partition $\mcl B(\mcl M)$ of the sort defined in Condition~\ref{item-block} of Definition~\ref{def-translation-invariance} such that a.s.\ the $\nu$ measure of the boundary of any block is zero.\footnote{This definition is independent of the choice of block partition $\mcl B$ and the energy location rule $\nu$, as can easily be see, e.g., from the mass transport principle (condition~\ref{item-mass-transport} of Definition~\ref{def-translation-invariance}).} Equivalent definitions of the specific Dirichlet energy (obtained as special cases) are as follows.
\begin{enumerate}
\item The expected density of $\nu$ at $0$ (for any choice of $\nu$ that is a.s.\ absolutely continuous with respect to Lebesgue measure).
\item With $H_0$ the origin-containing face,
\eqbn
\Bbb E \left[  \sum_{e = \{x,y\} \in \mcl E\mcl M \cap \bdy H_0} \frk c(e) \frac{ \diam(e)^2}{\area(H_0)} \right]  . 
\eqen  
\end{enumerate}
\end{defn}

Now we will sketch an argument for the fact that Definition~\ref{def-specific} does not depend on the specific way the measures $\nu$ and the blocks $B$ are defined. We first observe that the definition is not changed if we replace each $\nu_{\mcl M}^e$ with a measure that spreads its mass {\em uniformly} over each block $B$ to which it assigns measure, but otherwise assigns the same measure to each block $B$. Such a change would not impact the value of $\nu(B)$ for the origin-containing block $B$ referenced in Definition~\ref{def-specific}. We next observe that if we had a different measure $\tilde \nu$ with this property (for a different block partition), we could define a mass transport function $G_e$ such that (for a given edge $e$) any amount of mass associated to $\nu_{\mcl M}^e$ is spread evenly according to the measure $\tilde \nu_{\mcl M}^e$.  In other words, $\int_A \int_B G_e(a,b) \, da \, db = \nu_{\mcl M}^e(A) \tilde \nu_{\mcl M}^e(B)$.  Summing these $G_e$ over all edges $e$, one obtains a mass transport function $F$ on the whole plane.  The difference in the density of $\tilde \nu$ and $\nu$ at zero is given by the net amount of flow in the transport function and since (by the assumption in Definition~\ref{def-translation-invariance}) this has expectation zero, the two definitions of specific free energy must agree.

\begin{remark}
In Definition~\ref{def-specific}, the Dirichlet energy associated with an edge $e = \{x,y\}$ is taken to be $\frk c(e) |x-y|^2$. 
However, in some of the proofs involving specific Dirichlet energy (and related quantities) we need bounds for $\frk c(e) \op{diam}(e)^2$ instead, which is larger than $\frk c(e) |x-y|^2$ unless the edges are straight lines.
Hence the finiteness conditions in those theorem statements involve bounds for diameters rather than just distance between vertices.
\end{remark}

\section{Equivalence of translation invariance modulo scaling definitions}
\label{sec-equivalence}

In this appendix we prove Lemma~\ref{lem-dyadic-resample}, which asserts the equivalence of the conditions of Definition~\ref{def-translation-invariance} plus the extra condition of Lemma~\ref{lem-dyadic-resample} in the setting of a cell configuration (analogous equivalence results for embedded lattices or pairs of primal/dual embedded lattices follow from identical arguments). We will prove that each of the conditions of Definition~\ref{def-translation-invariance} is equivalent to the dyadic system resampling condition (condition~\ref{item-dyadic} from Lemma~\ref{lem-dyadic-resample}).
\medskip

\noindent \textbf{Condition~\ref{item-inf-volume} $\Rightarrow$ Condition~\ref{item-dyadic}}. 
Fix $m > 0$. Let $\mcl H_n$, $z_n$, and $C_n$ be as in Condition~\ref{item-inf-volume} with cell configurations in place of lattices, so that each $\mcl H_n$ is a cell configuration with the property that the union of its cells is a disk or square (instead of all of $\BB C$) and $C_n(\mcl H_n -z_n) \rta \mcl H$ in law.  
For $n\in\BB N$, let $\mcl D_n$ be a uniform dyadic system independent from $(\mcl H_n, z_n, C_n)$ and let $\wh S_{m,n}^{z_n}$ be defined as in~\eqref{eqn-mass-time-def} with $\mcl H_n$ in place of $\mcl H$ and $z = z_n$, or $\wh S_{m,n}^{z_n} = \BB C$ if no square satisfying the condition of~\eqref{eqn-mass-time-def} exists (which can happen since $\mcl H_n$ is finite). 
By Lemma~\ref{lem-uniform-dyadic}, $C_n(\mcl D - z_n)$ is a uniform dyadic system independent from $C_n(\mcl H_n - z_n)$. 
The square $C_n(\wh S_{m,n}^{z_n} -z_n)$ is determined by $(C_n (\mcl H_n-z_n) , C_n(\mcl D_n-z_n))$ in the same continuous manner that $\wh S_m$ is determined by $(\mcl H,\mcl D)$. Thus
\eqb \label{eqn-dyadic-law-conv'}
\left( C_n (\mcl H_n-z_n) , C_n(\mcl D_n-z_n) ,  C_n(\wh S_{m,n}^{z_n} -z_n) \right) \rta \left( \mcl H,\mcl D , \wh S_m  \right)
\eqe 
in law, with respect to the topology on cell configurations on the first coordinate and any choice of topology on the other two coordinates. It follows from~\eqref{eqn-dyadic-law-conv'} that with probability tending to 1 as $n\rta\infty$, the square $\wh S_{m,n}^{z_n}$ is contained in the union of the cells of $\mcl H_n$. Since $z_n$ is a uniform sample from this union, it follows that the total variation distance between the conditional law of $z_n$ given $(\mcl H_n ,\mcl D_n , \wh S_{m,n}^{z_n})$ and the uniform measure on $\wh S_{m,n}^{z_n}$ a.s.\ converges to 0 as $n \rta \infty$. Combining this with~\eqref{eqn-dyadic-law-conv'} shows that Condition~\ref{item-dyadic} holds.  \qed
\medskip

\noindent\textbf{Condition~\ref{item-dyadic} $\Rightarrow$ Condition~\ref{item-inf-volume}}. 
For $ m > 0$, define a cell configuration on $\wh S_m$ by
\eqbn 
\mcl H_m := \left\{ H\cap \wh S_m : H\in \mcl H , H\subset \wh S_m \right\} \cup \left\{ L_m \right\} \quad \text{for} \quad L_m :=  \ol{\wh S_m\setminus \bigcup_{H \in \mcl H : H\subset \wh S_m} H }
\eqen
Then $\mcl H_m$ is a collection of cells whose union is the square $\wh S_m$. We endow $\mcl H_m \cap \mcl H$ with the adjacency relation and the conductances it inherits from $\mcl H$ and we declare that $H \sim L_m$ for each $H\in\mcl H_m\cap \mcl H$ with $H\cap L_m\not=\emptyset$. 

For $m\in\BB N$, let $z_m$ be sampled uniformly from Lebesgue measure on $\wh S_m$. 
By Condition~\ref{item-dyadic}, for each $m>0$ there exists $C_m > 0$ such that $C_m (\mcl H - z_m) \eqD \mcl H$. 
We claim that $C_m(\mcl H_m-z_m) \rta \mcl H$ in law. 
Indeed, if $\el > 0$ and $\wh S_\el^{z_m} \subset \wh S_m \setminus L_m$, then $\mcl H(\wh S_\el^{z_m})= \mcl H_m(\wh S_\el^{z_m})$, i.e., the analogs of $\mcl H(\wh S_\el)$ for the scaled/translated cell configurations $C_m (\mcl H - z_m)$ and $C_m(\mcl H_m - z_m)$ are identical. 
Since the union of $\wh S_\el$ over all $\el > 0$ is all of $\BB C$ and  $C_m(\mcl H-z_m) \eqD \mcl H$, it suffices to show that for each fixed $\el > 0$, one has $\lim_{m\rta\infty} \BB P\left[ \wh S_\el^{z_m} \subset \wh S_m \setminus L_m \right] = 1$. By the definition of $L_m$, this will follow from
\eqb \label{eqn-iv-lim-square}
\lim_{m\rta\infty} \BB P\left[ \bigcup_{H\in \mcl H(\wh S_\el^{z_m})} H \subset \wh S_m   \right]  = 1 .
\eqe
To prove~\eqref{eqn-iv-lim-square}, we observe that Condition~\ref{item-dyadic} implies that 
\eqbn
\frac{1}{|\wh S_m|} \op{diam}\left( \bigcup_{H\in \mcl H(\wh S_\el^{z_m})} H \right)   \eqD \frac{1}{|\wh S_m|} \op{diam}\left( \bigcup_{H\in \mcl H(\wh S_\el )} H \right) ,
\eqen 
which tends to zero in probability as $m\rta\infty$ ($\el$ fixed). Since $z_m$ is sampled uniformly from $\wh S_m$ (so is unlikely to be close to $\bdy \wh S_m$), this implies~\eqref{eqn-iv-lim-square}.  \qed

\medskip

\noindent\textbf{Condition~\ref{item-averaging} $\Rightarrow$ Condition~\ref{item-dyadic}}. 
The proof is similar to the analogous implication started from Condition~\ref{item-inf-volume}.  
Fix $m > 0$. Let $z_n$ be sampled uniformly from $U_n$, as in condition~\ref{item-averaging}, and let $C_n>0$ be chosen so that $C_n(\mcl H-z_n) \rta \mcl H$ in law.  
By Lemma~\ref{lem-uniform-dyadic}, $C_n(\mcl D - z_n)$ is a uniform dyadic system independent from $C_n(\mcl H - z_n)$. Using the notation~\eqref{eqn-mass-time-def}, the square $C_n(\wh S_m^{z_n}-z_n)$ is determined by $(C_n (\mcl H-z_n) , C_n(\mcl D-z_n))$ in the same continuous manner that $\wh S_m$ is determined by $(\mcl H,\mcl D)$. Thus
\eqb \label{eqn-dyadic-law-conv}
\left( C_n (\mcl H-z_n) , C_n(\mcl D-z_n) , C_n(\wh S_m^{z_n} - z_n) \right) \rta \left( \mcl H,\mcl D , \wh S_m  \right)
\eqe 
in law, with respect to the topology on cell configurations~\eqref{eqn-cell-metric} on the first coordinate, any choice of topology on the second coordinate, and the Hausdorff distance on the last coordinate.  

We will now argue that 
\eqb \label{eqn-square-prob-lim}
\lim_{n\rta\infty} \BB P[ \wh S_m^{z_n} \subset U_n] = 1 .
\eqe
Note that this is not obvious since we do not assume that the maximum diameter of the cells of $\mcl H$ which intersect $U_n$ is $o(\op{diam}(U_n))$, so it is a priori possible that $\wh S_m^{z_n}$ could be comparable in size to $U_n$ (although we will show that this is not the case). 
Since $\mcl H$ is locally finite and the number of cells which intersect $U_n$ tends to $\infty$ as $n\rta\infty$, the convergence $C_n(\mcl H-z_n) \rta \mcl H$ implies that $C_n \op{diam}(U_n) \rta \infty$ in probability (for otherwise there would be a subsequence of $n$'s along which $C_n(\mcl H-z_n)$ has uniformly positive probability to contain an arbitrarily large number of cells which intersect some fixed compact set). Since $\wh S_m$ has finite side length, it follows from~\eqref{eqn-dyadic-law-conv} that the laws of the random variables $C_n |\wh S_m^{z_n}|$ are tight. Combining the preceding two sentences shows that $|\wh S_m^{z_n}| / \op{diam}(U_n) \rta 0$ in probability, so since $z_n$ is sampled uniformly from $U_n$ (so is unlikely to be close to $\bdy U_n$) we get~\eqref{eqn-square-prob-lim}. 

By~\eqref{eqn-square-prob-lim} and since $z_n$ is sampled uniformly from $U_n$, a.s.\ the total variation distance between the conditional law of $z_n$ given $(\mcl H,\mcl D , \wh S_m^{z_n})$ and the uniform measure on $\wh S_m^{z_n}$ tends to zero as $n\rta\infty$. Combining this with~\eqref{eqn-dyadic-law-conv} shows that Condition~\ref{item-dyadic} holds.  \qed
\medskip

\noindent\textbf{Condition~\ref{item-dyadic} $\Rightarrow$ Condition~\ref{item-averaging}}.
Sample a uniform dyadic system $\mcl D$ independent from $\mcl H$ and take $U_n = \wh S_n$ (as in~\eqref{eqn-mass-time-def}) for $n\in\BB N$.  \qed
\medskip

\noindent\textbf{Condition~\ref{item-block} $\Rightarrow$ Condition~\ref{item-dyadic}}.
Let $\mcl D$ be a dyadic system independent from $\mcl H$ and apply Condition~\ref{item-block} to the block decomposition $\{\wh S_m^z : z\in\BB C\}$, with $\wh S_m^z$ as in~\eqref{eqn-mass-time-def}.  \qed
\medskip

\noindent\textbf{Condition~\ref{item-dyadic} $\Rightarrow$ Condition~\ref{item-block}}.
Given a block decomposition $\mcl B$ as in Condition~\ref{item-block}, let $B_z$ for $z\in\BB C$ be the (a.s.\ unique) block of $\mcl B$ containing the $z$. 
Also let $\mcl D$ be a uniform dyadic system independent from $(\mcl H,\mcl B)$. 
By Condition~\ref{item-dyadic} (and the scaling condition on block decompositions), if $m>0$ and we sample $z_m$ uniformly from $\wh S_m$, then $(\mcl H -z_m , \mcl B-z_m , \mcl D-z_m)$ agrees in law with $(\mcl H, \mcl B , \mcl D)$ modulo spatial scaling. Since the block $B_0$ is a compact set and the union of the $\wh S_m$'s is all of $\BB C$, this shows that
\eqb \label{eqn-block-bdy-intersect}
\BB P\left[B_{z_m} \subset  \wh S_m   \right] = \BB P\left[ B_0 \subset \wh S_m  \right]   \rta 1  \quad \text{as} \quad m\rta \infty .
\eqe 
If we condition on $(\mcl H,\mcl D,\mcl B)$ and sample $w_m$ uniformly from Lebesgue measure on $B_{z_m} \cap \wh S_m$, then the conditional law of $w_m$ given $(\mcl H , \mcl B,\mcl D)$ is uniform on $\wh S_m $, so by Condition~\ref{item-dyadic} $\mcl H - w_m$ agrees in law with $\mcl H$ modulo scaling. 
On the other hand,~\eqref{eqn-block-bdy-intersect} implies that the total variation distance between the law of $(\mcl H , w_m)$ and the law of $(\mcl H , w)$ tends to zero as $m\rta\infty$. Therefore, $\mcl H-w$ agrees in law with $\mcl H$ modulo scaling.  \qed
\medskip

\noindent\textbf{Condition~\ref{item-mass-transport} $\Rightarrow$ Condition~\ref{item-dyadic}}. To prove Condition~\ref{item-dyadic}, it suffices to show that for each non-negative measurable function $G(\mcl H , w)$ of a cell configuration and a point in $\BB C$ which is invariant under translation and scaling in the sense that $G(C(\mcl H-z)) , C(w-z)) = G(\mcl H , w)$ for each $C>0$ and $z\in\BB C$, we have
\eqb \label{eqn-mass-transport-show}
\BB E\left[ \frac{1}{|\wh S_m|^2}\int_{\wh S_m} G(\mcl H , z ) \,dz \right] = \BB E\left[ G(\mcl H ,0) \right] ,\quad\forall m  >  0 . 
\eqe
To see why this is sufficient, suppose that~\eqref{eqn-mass-transport-show} holds and we are given an arbitrary function $\wh G$ from the space of cell configurations to $[0,\infty)$. Define $G(\mcl H , w) := \wh G\left( (\mcl H-w) / \op{diam}(H_w) \right)$, where $H_w$ is a cell of $\mcl H$ which contains $w$ (chosen by some deterministic convention if there is more than one). If $z_m$ is sampled uniformly from $\wh S_m$, then~\eqref{eqn-mass-transport-show} applied with this choice of $G$ shows that $\BB E[\wh G(\mcl H/ \op{diam}(H_0) )] = \BB E[\wh G( (\mcl H - z_m) / \op{diam}(H_{z_m} ) )]$. Thus $\mcl H \eqD  \mcl H-z_m $ modulo scaling. 

To prove~\eqref{eqn-mass-transport-show}, we fix $m\geq 0$ and define a function on cell configurations with two marked points by 
\eqbn
F(\mcl H, w_0,w_1) := \BB E\left[  |\wh S_m^{w_0}|^{-2} G(\mcl H , w_1 ) \BB 1_{(w_1 \in \wh S_m^{w_0})}  \,|\, \mcl H\right] .
\eqen
Note that $\{w_1 \in \wh S_m^{w_0}\} = \{\wh S_m^{w_1} =  \wh S_m^{w_0}\}$. 
Then $F$ is covariant with respect to translation and scaling in the sense of~\eqref{eqn-mass-transport-commutation}, so we can apply Condition~\ref{item-mass-transport} to $F$ to find that~\eqref{eqn-mass-transport-show} holds. \qed
\medskip

\noindent\textbf{Condition~\ref{item-dyadic} $\Rightarrow$ Condition~\ref{item-mass-transport}}. 
For $m\in\BB N$, let $z_m$ be sampled uniformly from Lebesgue measure on $\wh S_m$, as in Condition~\ref{item-dyadic}. 
This condition tells us that there is a random $C_m > 0$ such that $(C_m(\mcl H - z_m) , C_m( \mcl D -z_m))  \eqD (\mcl H , \mcl D)$.
If we condition on $(C_m(\mcl H - z_m) , C_m( \mcl D -z_m)) $, then the conditional law of the point $- C_m z_m$ (which corresponds to the origin) is uniform on $C_m(\wh S_m -z_m)$. Thus $ ( C_m(\mcl H-z_m) ,   C_m( \wh S_m - z_m)  ,  - C_m z_m ) \eqD (\mcl H ,   \wh S_m , z_m)$.
Hence, if $F(\mcl H ,w_1,w_2)$ is any non-negative measurable function of a cell configuration and a pair of points, then
\allb \label{eqn-mass-transport-square}
\BB E\left[   \int_{\wh S_m} F\left( \mcl H , 0 , z \right) \,dz \right] 
&= \BB E\left[   |\wh S_m|^2 F\left( \mcl H , 0 , z_m \right)  \right] \notag\\
&= \BB E\left[  C_m^2 |\wh S_m|^2  F\left(  C_m(\mcl H-z_m) ,  0 ,   - C_m z_m    \right)  \right]  \notag \\ 
&= \BB E\left[  \int_{C_m(\wh S_m - z_m)}   F\left(  C_m(\mcl H-z_m) ,  0 , z  \right)  \,dz \right] \notag\\
&= \BB E\left[  C_m^2 \int_{\wh S_m }   F\left(  C_m(\mcl H-z_m) , 0 ,   C_m(u-z_m)   \right)  \,du \right] ,
\alle
where in the last line we have made the change of variables $z = C_m(u-z_m)$, $dz = C_m^{  2} \,du$. 
If $F$ satisfies~\eqref{eqn-mass-transport-commutation}, then 
$C_m^2 F\left(  C_m(\mcl H-z_m) , 0 ,   C_m(u-z_m)   \right) =   F\left(  \mcl H  ,   z_m , u \right)  $
so~\eqref{eqn-mass-transport-square} gives
\allb  \label{eqn-mass-transport-int}
\BB E\left[   \int_{\wh S_m} F\left( \mcl H , 0 , z \right) \right] 
=  \BB E\left[  \int_{\wh S_m }   F\left( \mcl H , z_m , u   \right)  \,du \right] 
= \BB E\left[  \frac{1}{|\wh S_m|^2} \int_{\wh S_m  \times \wh S_m}  F\left(  \mcl H , v , u   \right)  \,du \, dv \right]   ,  
\alle
here using that $z_m$ is uniform  on $\wh S_m$ conditional on $(\mcl H,\mcl D)$. 
The same argument shows that the right side of~\eqref{eqn-mass-transport-int} is also equal to $\BB E\left[  \int_{\wh S_m} F\left( \mcl H , z , 0 \right) \,dz \right]$, so we get
\eqbn
\BB E\left[ \int_{\wh S_m} F\left( \mcl H , 0 , z \right) \,dz \right] = \BB E\left[   \int_{\wh S_m} F\left( \mcl H , z,0 \right) \,dz \right] ,\quad \forall m >0.
\eqen
Sending $m\rta\infty$ now shows that~\eqref{eqn-mass-transport} holds. \qed
\medskip

\section{No macroscopic edges in Theorem~\ref{thm-dual-clt}}
\label{sec-dual-max-diam}

In this appendix we prove Lemma~\ref{lem-dual-max-diam}, which says that the hypotheses of Theorem~\ref{thm-dual-clt} imply that the maximal diameter of the edges of the embedded lattice $\mcl M$ and of its dual $\mcl M^*$ which intersect $B_r(0)$ is a.s.\ of order $o_r(r)$. The proof of this assertion is much more difficult than the proof of the analogous assertion in the settings of the other theorems (see Lemma~\ref{lem-max-cell-diam}) since we only have bounds for edge diameters weighted by conductances, so one needs to rule out the possibility of macroscopic edges with small conductances. 

We emphasize that nothing in this appendix is needed for the proofs of our other main results (Theorems~\ref{thm-general-clt0}, \ref{thm-graph-clt}, and~\ref{thm-general-clt}) or in the companion paper~\cite{gms-tutte}. 

Throughout this appendix we identify the real axis with $\BB R$. We also fix a uniform dyadic system $\mcl D$ independent from $(\mcl M ,\mcl M^*)$ (Section~\ref{sec-dyadic-system}) and let $\{S_k\}_{k\in\BB Z}$ be its sequence of origin-containing squares.

The basic idea of the proof is to study the edges of $\mcl M^*$ whose corresponding primal edges intersect $\mcl M$. As we will explain just below, simple geometric considerations imply that this set of dual edges contains a bi-infinite simple path $\{\frk e^*(j)\}_{j\in\BB Z}$. The main step of the proof is to show that this path does not deviate too far from $\BB R$, in the sense that for each $\ep > 0$, it holds for large enough $k\in\BB N$ that $\{\frk e^*(j)\}_{j\in\BB Z}$ admits a sub-path which joins the left and right boundaries of $S_k$ and stays in the $\ep |S_k|$-neighborhood of $S_k\cap\BB R$ (Lemma~\ref{lem-dual-path}). This will be accomplished in two main steps: we first show that a.s.\ for large enough $k$, each point of $S_k\cap \BB R$ is contained in a small (side length of order $o_k(|S_k|)$) square of $\mcl D$ which intersects one of the $\frk e^*(j)$'s and satisfies an additional technical property (Lemma~\ref{lem-max-good-square}). We will then argue that the excursions of the path $\{\frk e^*(j)\}_{j\in\BB Z}$ between the times when it hits these squares cannot be macroscopic (Lemma~\ref{lem-max-line-diam}). Both of these statements are proven using variants of the argument used to prove Lemma~\ref{lem-max-cell-diam}. 

Once we know that the maximum deviation of the path $\{\frk e^*(j)\}_{j\in\BB Z}$ from $\BB R$ is macroscopic, we will use translation invariance modulo scaling to construct a ``grid" of paths in $\mcl M^*$ which approximate horizontal and vertical lines in $S_k$ such that each of the complementary connected components of the union of these paths has diameter $o_k(|S_k|)$. Since no dual edge can cross any of the paths, this will conclude the proof of Lemma~\ref{lem-dual-max-diam}.

Since we will mostly be working with $\BB R$ rather than $\BB C$, we will first set up an analog of Lemma~\ref{lem-ergodic-avg} for averages along $\BB R$. 
 
\begin{defn} \label{def-line-sigma-algebra}
Recall the squares $\wh S_m = \wh S_m^0$ for $m  >0$ from~\eqref{eqn-mass-time-def} (defined with faces of $\mcl M$ in place of cells of $\mcl H$).  
For $m\in\BB N$, let $\mcl F_m^{\BB R}$ be the $\sigma$-algebra generated by the measurable functions of $F = F(\mcl H,\mcl D)$ which satisfy 
\eqb \label{eqn-line-sigma-algebra}
F\left(C(\mcl H-z) , C(\mcl D-z) \right) = F(  \mcl H,   \mcl D) ,\quad \forall z\in \wh S_m \cap \BB R  ,\quad \forall C >0 .
\eqe
Also set
\eqb \label{eqn-line-sigma-algebra-infty}
\mcl F_\infty^{\BB R} := \bigcap_{m > 0} \mcl F_m^{\BB R} .
\eqe 
\end{defn}

Unlike in the setting of Section~\ref{sec-ergodic-avg}, we do \emph{not} know that the tail $\sigma$-algebra $\mcl F_\infty^{\BB R}$ is trivial since functions in $\mcl F^{\BB R}_\infty$ are only invariant under translation along the line $\BB R$.

\begin{lem} \label{lem-ergodic-avg-line}
Let $F = F(\mcl M,\mcl M^* ,\mcl D)$ be a measurable function on the space of embedded lattice/dual lattice/dyadic system pairs which is scale invariant (i.e., $F(C\mcl M, C\mcl M^* , C\mcl D) = F(\mcl M,\mcl M^* ,\mcl D)$ for each $C>0$). 
If either $\BB E[|F|] < \infty$ or $F\geq 0$ a.s., then a.s.\ 
\eqb \label{eqn-ergodic-avg-line}
\lim_{k \rta\infty} \frac{1}{|S_{k}| } \int_{S_{k} \cap \BB R} F(\mcl M-x ,  \mcl M^* - x ,  \mcl D-x) \, dx  = \BB E\left[ F \,|\, \mcl F_\infty^{\BB R} \right]  .
\eqe  
\end{lem}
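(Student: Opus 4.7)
The plan is to follow the proof of Lemma~\ref{lem-ergodic-avg} essentially line by line, with two substitutions: (i) a one-dimensional re-sampling statement along $\BB R$ in place of the two-dimensional one over $\wh S_m$, and (ii) identification of the limit as a conditional expectation $\BB E[F \,|\, \mcl F_\infty^{\BB R}]$ instead of a deterministic constant, since we no longer have a tail-triviality result analogous to Lemma~\ref{lem-tail-trivial} (the events in $\mcl F_\infty^{\BB R}$ are only required to be invariant under horizontal translations of the configuration, not under arbitrary translations, so the proof of Lemma~\ref{lem-tail-trivial} fails).

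The key intermediate step I would establish is the re-sampling identity
\begin{equation*}
\BB E\!\left[F \,|\, \mcl F_m^{\BB R}\right] = \frac{1}{|\wh S_m|} \int_{\wh S_m \cap \BB R} F\!\left(\mcl M - x , \mcl M^* - x ,  \mcl D - x\right) dx , \quad m > 0 ,
\end{equation*}
for every scale-invariant measurable $F$ with $\BB E[|F|] < \infty$. Since a function invariant under all translations $z \in \wh S_m$ is in particular invariant under translations $z \in \wh S_m \cap \BB R$, we have $\mcl F_m \subset \mcl F_m^{\BB R}$; the extra information in $\mcl F_m^{\BB R}$ beyond $\mcl F_m$ is precisely the imaginary part of the origin relative to $\wh S_m$. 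By Lemma~\ref{lem-dyadic-resample} (applied to the primal/dual pair $(\mcl M, \mcl M^*)$ together with $\mcl D$, using the extension of that lemma to primal/dual pairs noted at the beginning of Appendix~\ref{sec-equivalence}), conditional on $\mcl F_m$ the position of the origin in $\wh S_m$ is uniform modulo scaling. Conditioning further on its imaginary part therefore makes the real part uniform on the horizontal segment $\wh S_m \cap \BB R$, which has length $|\wh S_m|$; the displayed identity follows at once.

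Once the re-sampling identity is in hand, the family $\{\mcl F_m^{\BB R}\}_{m > 0}$ is decreasing in $m$ and each $S_k$ coincides with $\wh S_m$ for some $m \in \BB N$, so the backward martingale convergence theorem (applied to the backward martingale $\BB E[F \,|\, \mcl F_m^{\BB R}]$) yields~\eqref{eqn-ergodic-avg-line} in the integrable case, both a.s.\ and in $L^1$. The case $F \geq 0$ with possibly $\BB E[F] = \infty$ is then handled by applying the integrable case to the truncations $F \wedge C$ and letting $C \rta \infty$ with monotone convergence, exactly as in the last step of the proof of Lemma~\ref{lem-ergodic-avg}. I do not expect any real obstacle: the only mildly delicate point is the comparison between $\mcl F_m$ and $\mcl F_m^{\BB R}$ in the re-sampling step, which is the one place where the one-dimensional nature of the averaging along $\BB R$ enters the argument.
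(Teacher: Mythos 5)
Your proposal is correct and follows essentially the same route as the paper: the paper also deduces a one-dimensional re-sampling statement (that translating by a uniform point of $\wh S_m\cap\BB R$ preserves the law modulo scaling) from the analog of Lemma~\ref{lem-dyadic-resample}, and then invokes the backward martingale convergence theorem exactly as in Lemma~\ref{lem-ergodic-avg}, with the limit identified as $\BB E[F\,|\,\mcl F_\infty^{\BB R}]$ rather than a constant. Your conditioning-on-the-imaginary-part argument simply makes explicit the step the paper leaves implicit, so there is nothing to fix.
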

\begin{proof} 
The analog of~\ref{lem-dyadic-resample} in our setting implies that if $x$ is sampled uniformly from $\wh S_m \cap \BB R$, then $(\mcl M-x,\mcl M^*-x)$ agrees in law with $ (\mcl M,\mcl M^*)$ 
modulo scaling. The lemma therefore follows from the backward martingale convergence theorem via exactly the same proof as Lemma~\ref{lem-ergodic-avg}.
\end{proof}

\begin{figure}[ht!]
 \begin{center} 
\includegraphics[scale=.75]{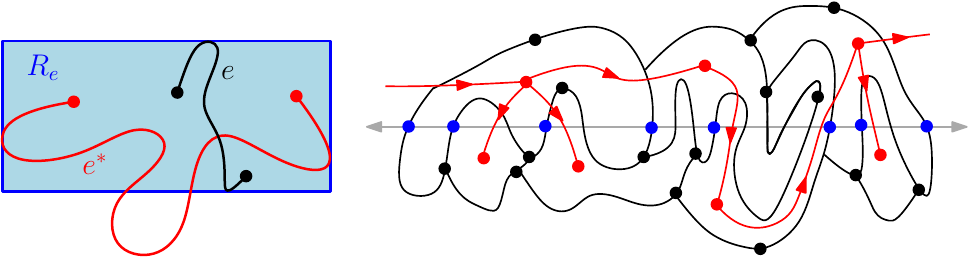}  
\vspace{-0.01\textheight}
\caption{\textbf{Left:} 
A primal edge/dual edge pair $(e,e^*)$ and the corresponding rectangle $R_e$, defined as in~\eqref{eqn-rectangle-def}.
\textbf{Right:} 
Illustration of the construction of the path $\{\frk e^*(j)\}_{j\in\BB Z}$. The faces $H$ of $\mcl M$ which intersect the real axis are shown in black and the corresponding first intersection points $\wt{\frk x}(H)$ are shown in blue. The edges $\wt{\frk e}^*(i)$ for $i\in\BB Z$ are shown in red. These edges form a tree. We take $\{\frk e^*(j)\}_{j\in\BB Z}$ to be the lowest bi-infinite path in this tree.
}\label{fig-dual-diam-path}
\end{center}
\vspace{-1em}
\end{figure} 

We will now prove a re-formulation of our finite expectation hypothesis~\eqref{eqn-dual-hyp-moment} which will be more convenient for our purposes.
For $e\in \mcl E\mcl M$, define the rectangle
\eqb \label{eqn-rectangle-def}
R_e :=      \left[ \min_{z\in e^*} \re z , \max_{z\in e^*} \re z \right]  \times    \left[ \min_{z\in e} \im z , \max_{z\in e} \im z \right] .
\eqe 
That is, the width (resp.\ height) of $R_e$ is the same as the width (resp.\ height) of $e^*$ (resp.\ $e$). See Figure~\ref{fig-dual-diam-path}, left. Note that $e$ and $e^*$ are not necessarily contained in $R_e$, but that $R_e$ intersects $\BB R$ if and only if $e$ intersects $\BB R$. 
For $z\in\BB C$, let 
\eqb \label{eqn-edge-set-def}
\mcl E_z :=  \left\{ e \in \mcl E\mcl M : R_e \ni z \right\} .
\eqe

\begin{lem} \label{lem-primal-dual-prod}
In the notation introduced just above, we have
\eqbn
\BB E\left[ \sum_{e \in \mcl E_0} \frac{\op{diam}(e) \op{diam}(e^*)}{ \op{Area}(R_e)} \right] < \infty .
\eqen
In particular, $\BB E[\# \mcl E_0] < \infty$.
\end{lem}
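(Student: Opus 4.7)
The plan is to reduce the claimed finiteness to the two halves of the finite specific Dirichlet energy hypothesis~\eqref{eqn-dual-hyp-moment} via an AM--GM step followed by a mass transport argument. The key algebraic input is the duality relation $\frk c(e)\frk c(e^*)=1$ built into the definition of a dual primal/dual pair.

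First, using $\frk c(e)\frk c(e^*)=1$ one has, for every primal edge $e$, the identity and AM--GM bound
\eqbn
\frac{\op{diam}(e)\op{diam}(e^*)}{\op{Area}(R_e)}=\sqrt{\frac{\op{diam}(e)^2\frk c(e)}{\op{Area}(R_e)}\cdot\frac{\op{diam}(e^*)^2\frk c(e^*)}{\op{Area}(R_e)}}\leq \tfrac12\frac{\op{diam}(e)^2\frk c(e)}{\op{Area}(R_e)}+\tfrac12\frac{\op{diam}(e^*)^2\frk c(e^*)}{\op{Area}(R_e)}.
\eqen
Summing over $e\in\mcl E_0$ and taking expectations, it would suffice to prove that each of $\BB E[\sum_{e\in\mcl E_0}\op{diam}(e)^2\frk c(e)/\op{Area}(R_e)]$ and $\BB E[\sum_{e\in\mcl E_0}\op{diam}(e^*)^2\frk c(e^*)/\op{Area}(R_e)]$ is finite. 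The two bounds are symmetric under swapping $\mcl M\leftrightarrow\mcl M^*$, so I would focus on the first.

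For the first expectation I would invoke the mass transport principle (condition~\ref{item-mass-transport} of Definition~\ref{def-translation-invariance}) applied to the rule
\eqbn
F(\mcl M,w_0,w_1):=\sum_{e\in\mcl E\mcl M}\BB 1_{w_1\in R_e}\left(\frac{\BB 1_{w_0\in H^*_{1,e}}}{\op{Area}(H^*_{1,e})}+\frac{\BB 1_{w_0\in H^*_{2,e}}}{\op{Area}(H^*_{2,e})}\right)\frac{\op{diam}(e)^2\frk c(e)}{2\op{Area}(R_e)},
\eqen
where $H^*_{1,e}$ and $H^*_{2,e}$ are the two dual faces of $\mcl M^*$ containing the primal endpoints of $e$. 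Since $\op{diam}(e)^2$, $\op{Area}(H^*_{i,e})$ and $\op{Area}(R_e)$ all scale as $C^2$ under the map $\mcl M\mapsto C(\mcl M-z)$ while the conductances are unchanged, $F$ scales as $C^{-2}$, which is exactly the covariance~\eqref{eqn-mass-transport-commutation}. A direct computation shows $\int_{\BB C}F(\mcl M,w,0)\,dw=\sum_{e\in\mcl E_0}\op{diam}(e)^2\frk c(e)/\op{Area}(R_e)$ (the factor $\op{Area}(H^*_{i,e})^{-1}$ in each term of $F$ cancels against the Lebesgue integral over $H^*_{i,e}$), while $\int_{\BB C}F(\mcl M,0,w)\,dw$ is at most $\op{Area}(H_0^*)^{-1}\sum_{e\in E}\op{diam}(e)^2\frk c(e)$, since $\BB 1_{0\in H^*_{i,e}}=1$ forces $H^*_{i,e}=H_0^*$, and hence the corresponding endpoint of $e$ to equal the primal vertex $x_0\in H_0^*$, i.e., $e\in E$. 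The identity~\eqref{eqn-mass-transport} together with the first half of~\eqref{eqn-dual-hyp-moment} then delivers the required bound; the symmetric transport rule---source on the primal faces adjacent to $e$ instead of dual faces at its endpoints---handles the second expectation via the second half of~\eqref{eqn-dual-hyp-moment}.

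The only step needing real care is the bookkeeping in the mass transport: in particular, verifying the $C^{-2}$ scaling law and tracking how the source restriction $w_0=0$ forces $e$ to lie in $E$. Once this is done, the final assertion $\BB E[\#\mcl E_0]<\infty$ is immediate, because $R_e$ has width at most $\op{diam}(e^*)$ and height at most $\op{diam}(e)$, so $\op{Area}(R_e)\leq\op{diam}(e)\op{diam}(e^*)$ and every summand in the displayed expectation is at least one.
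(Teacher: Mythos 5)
Your proposal is correct and follows essentially the same route as the paper: the paper also reduces the claim to the two expectations $\BB E[\sum_{e\in\mcl E_0}\op{diam}(e^*)^2\frk c^*(e^*)/\op{Area}(R_e)]$ and $\BB E[\sum_{e\in\mcl E_0}\op{diam}(e)^2\frk c(e)/\op{Area}(R_e)]$ via mass transport rules that spread the source mass of each edge over its two adjacent primal (resp.\ dual) faces, matching exactly your two transport rules, and then combines them using $\frk c(e)\frk c^*(e^*)=1$ (via Cauchy--Schwarz where you use pointwise AM--GM, an immaterial difference). The only cosmetic caveat is that your $F$ should formally be a function of the pair $(\mcl M,\mcl M^*,w_0,w_1)$ and the mass transport principle applied in its version for the ergodic-modulo-scaling pair, as in the paper's own argument.
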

\begin{proof}
We will use the mass transport principle (Condition~\ref{item-mass-transport} of Definition~\ref{def-translation-invariance} with $(\mcl M,\mcl M^*)$ in place of $\mcl M$) and the Cauchy-Schwarz inequality to deduce the lemma from the finite expectation hypothesis~\eqref{eqn-dual-hyp-moment}. 
For $w_0,w_1\in\BB C$, define 
\eqbn
F(\mcl M,\mcl M^*, w_0,w_1)  := 
\sum_{e\in \mcl E\mcl M\cap \bdy H_{w_0}} \frac{ \op{diam}(e^*)^2 \frk c^*(e^*)   }{\op{Area}(H_{w_0}) \op{Area}(R_e) } \BB 1_{\left( w_1 \in R_e    \right)}  .  
\eqen
Then $ F$ is covariant with respect to scaling and translation in the sense of~\eqref{eqn-mass-transport-commutation}. 
We compute
\eqbn
\int_{\BB C} F(\mcl M,\mcl M^*, 0 , z)   \,dz
 = \sum_{e\in \mcl E\mcl M\cap \bdy H_{0}} \frac{ \op{diam}(e^*)^2 \frk c^*(e^*)   }{\op{Area}(H_{0}) }     
\eqen
and 
\alb
\int_{\BB C} F(\mcl M,\mcl M^*, z , 0)   \,dz
=  2 \sum_{e\in \mcl E_0}    \frac{ \op{diam}(e^*)^2 \frk c^*(e^*)   }{  \op{Area}(R_e) }  ,
\ale
where here we use that each edge $e\in\mcl E_0$ is contained in exactly two faces $H\in\mcl F\mcl M$. 
By the mass transport principle and the hypothesis~\eqref{eqn-dual-hyp-moment}, it therefore follows that
\eqb \label{eqn-primal-dual-prod1}
2 \BB E\left[   \sum_{e\in \mcl E_0}    \frac{ \op{diam}(e^*)^2 \frk c^*(e^*)   }{  \op{Area}(R_e) } \right]  = \BB E\left[  \sum_{e\in \mcl E\mcl M\cap \bdy H_0}   \frac{\op{diam}(e^*)^2 \frk c^*(e^*)}{\op{Area}(H_{0})  }    \right] < \infty .
\eqe 
Similarly,  
\eqb \label{eqn-primal-dual-prod2}
2\BB E\left[   \sum_{e\in \mcl E_0}    \frac{ \op{diam}(e )^2 \frk c (e )   }{  \op{Area}(R_e) } \right]  = \BB E\left[  \sum_{e\in \mcl E\mcl M^*\cap \bdy H_0^*}   \frac{\op{diam}(e )^2 \frk c(e )}{\op{Area}(H_{0}^*)  }    \right] < \infty .
\eqe  
Since $\frk c(e) = \frk c^*(e^*)^{-1}$, we can now apply the Cauchy-Schwarz inequality to get
\alb
\BB E\left[ \sum_{e \in \mcl E_0} \frac{\op{diam}(e) \op{diam}(e^*)}{ \op{Area}(R_e)} \right]
 \leq   \BB E\left[   \sum_{e\in \mcl E_0}    \frac{ \op{diam}(e^*)^2 \frk c^*(e^*)   }{  \op{Area}(R_e) } \right]^{1/2} \BB E\left[   \sum_{e\in \mcl E_0}    \frac{ \op{diam}(e )^2 \frk c (e )   }{  \op{Area}(R_e) } \right]^{1/2} < \infty .
\ale 
This implies that $\BB E[\# \mcl E_0] < \infty$ since $\op{Area}(R_e) \leq \op{diam}(e) \op{diam}(e^*)$.
\end{proof}

Let us now define the path we will consider. See Figure~\ref{fig-dual-diam-path}, right, for an illustration. 
 For each face $H$ of $\mcl M$ with $H\cap \BB R\not=\emptyset$, let $\wt{\frk x}(H)$ be the smallest $x\in \BB R$ with $x\in H$. We can enumerate $\{H\in \mcl F\mcl M   :H\cap \BB R\not=\emptyset\} = \{H(i) \}_{i\in\BB Z}$ in such a way that $\wt{\frk x}(H(i-1)) \leq \wt{\frk x}(H(i ))$ for each $i \in\BB Z$ and $H(0) = H_0$ is the origin-containing face.
Let $\wt{\frk e}(i)$ be the (a.s.\ unique, by translation invariance modulo scaling and since $\mcl V\mcl M$ is countable) edge containing $\wt{\frk x}(i)$ and let $\wt{\frk e}^*(i)$ be the corresponding dual edge.

Since $\wt{\frk e}(i)$ lies on the boundary of $H(i)$ and some face $H(i')$ with $i' \leq i-1$, the dual edge $\wt{\frk e}^*(i)$ joins the dual vertex corresponding to $H(i)$ and the dual vertex corresponding to $H(i')$. If we orient $\wt{\frk e}^*(i)$ toward the dual vertex corresponding to $H(i)$, then each of the edges $\wt{\frk e}^*(i)$ for $i\in\BB Z$ is oriented from a dual vertex with a lower index to a dual vertex with a higher index. Consequently, the collection of edges $\{\wt{\frk e}^*(i) : i\in\BB Z\}$ has no cycles, so is a tree. By planarity, this tree admits a unique bi-infinite simple oriented path which lies below every other such bi-infinite simple oriented path. (This path can be found via a depth-first search where we always start from the rightmost unexplored edge emanating from the current vertex.) 

Let $\{\frk e^*(j)\}_{j\in\BB Z}$ be this bi-infinite path. For $j\in\BB Z$ let $\frk e(j)$ be the primal edge which crosses $\frk e^*(j)$ and let $\frk x(j)$ be the leftmost $x\in\BB R$ with $x \in \frk e(j)$. We assume that the enumeration is chosen so that $0\in [\frk x(0), \frk x(1)]$. 

We emphasize that the above definition of the path $\{\frk e^*(j)\}_{j\in\BB Z}$ is determined by $(\mcl M,\mcl M^*)$ in a translation and scale invariant manner, modulo index shifts, i.e., replacing $(\mcl M,\mcl M^*)$ by $(C(\mcl M-x), C(\mcl M^*-x))$ for $C>0$ and  $x\in\BB R$ gives us the path $\{C(\frk e(j)-x) \}_{j\in\BB Z}$, modulo an index shift.

\begin{figure}[ht!]
 \begin{center} 
\includegraphics[scale=.75]{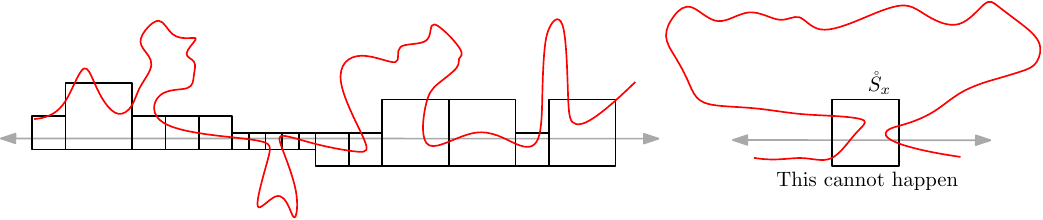} \hspace{15pt} 
\vspace{-0.01\textheight}
\caption{\textbf{Left:} The path $\{\frk e^*(j)\}_{j\in\BB Z}$ (red) and the squares $\rng S_x \in \mcl D$ for $x\in\BB R$ which it intersects (black). The set $\mcl A$ consists of excursions of the path away from the union of the squares $\rng S_x$. \textbf{Right:} The squares $\rng S_x$ for $x\in\BB R$ are chosen in such a way that $\{\frk e^*(j)\}_{j\in\BB Z}$ cannot cross between the vertical lines containing the left and right boundaries of $\rng S_x$ without hitting $\rng S_x$. This condition is needed to rule out pathological behavior of the sort shown in the figure.
}\label{fig-dual-diam-setup}
\end{center}
\vspace{-1em}
\end{figure}

We now introduce certain special squares of $\mcl D$ which we will use to ``pin down" the path $\{\frk e^*(j)\}_{j\in\BB Z}$ to $\BB R$. 

\begin{lem} \label{lem-rectangle-sep}
Almost surely, for each $x \in \BB R$ there exists a square $S$ of $\mcl D$ containing $x$ with the following property: Every path of edges $e^* \in \mcl E\mcl M^*$ whose corresponding dual edges $e$ intersect $\BB R$ and which crosses the vertical line through the left boundary of $S$ and the vertical line through the right boundary of $S$ must contain a dual edge which intersects $S$. 
\end{lem}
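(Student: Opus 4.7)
The plan is to combine the finite-expectation bound of Lemma~\ref{lem-primal-dual-prod} with translation invariance of $(\mcl M,\mcl M^*)$ along $\BB R$. The key geometric observation is that a dual edge $e^* \in \mcl E\mcl M^*$ with $e \cap \BB R \neq \emptyset$ crosses the vertical line $\{\re z = a\}$ if and only if $a$ lies in the horizontal extent of the rectangle $R_e$; equivalently, $e \in \mcl E_a$ in this case. In particular, if $e^*$ crosses both vertical lines bounding a square $S \ni x$ with $x \in \BB R$, then $e \in \mcl E_x$ and the horizontal extent of $R_e$ is at least $|S|$.

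First, I would establish that for each fixed $x \in \BB R$, the set $\mcl E_x$ is almost surely finite. This follows immediately from the bound $\BB E[\#\mcl E_0] < \infty$ of Lemma~\ref{lem-primal-dual-prod} together with the fact that, by the analog of Lemma~\ref{lem-dyadic-resample} for one-dimensional dyadic systems on $\BB R$, the law of $(\mcl M,\mcl M^*)$ is translation invariant modulo scaling along $\BB R$. By a mass transport argument along $\BB R$ together with the countability of $\mcl D$, this upgrades to the statement that almost surely $\#\mcl E_x < \infty$ for every $x \in \BB R$.

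Fix such an $x \in \BB R$, and consider the bi-infinite sequence of dyadic squares of $\mcl D$ containing $x$, indexed by their side length. For a square $S$ in this sequence of sufficiently large side length, $|S|$ exceeds the maximum horizontal extent of $R_e$ as $e$ ranges over the finite set $\mcl E_x$; consequently no single dual edge with primal crossing $\BB R$ can cross both vertical lines bounding $S$ while missing $S$. To rule out multi-edge paths achieving this, I would argue as follows: a connected union $C$ of dual edges crossing both vertical lines bounding $S$ while avoiding $S$ must contain a continuous arc in $C$ whose image lies in $\BB C \setminus S$ and meets both vertical lines. Since $S$ is a closed square, topologically this arc either exits the strip between the two vertical lines (in which case it can be truncated to an arc lying entirely inside the strip) or detours entirely above or below $S$ within the strip. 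Each edge in this detour has primal crossing $\BB R$, so a detour reaching height $h$ above $\BB R$ forces the corresponding primal edges to have diameter at least $h$. Using the stronger bound $\BB E\left[\sum_{e \in \mcl E_0} \op{diam}(e)\op{diam}(e^*)/\op{Area}(R_e)\right] < \infty$ from Lemma~\ref{lem-primal-dual-prod}, the total vertical reach above $\BB R$ of dual edges with primal crossing $\BB R$ and with $R_e$ containing $x$ is finite; taking $S$ large enough that its vertical extent above and below $\BB R$ exceeds this reach rules out any such detour.

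The main obstacle will be the rigorous topological analysis of the multi-edge case. In particular, one must verify carefully that the connected union of dual edges, which a priori may enter and exit the strip between the two vertical lines multiple times before completing the crossing, always contains a sub-arc entirely within the strip that joins the two vertical lines while avoiding $S$. Once this reduction is in place, the quantitative control provided by Lemma~\ref{lem-primal-dual-prod} combined with the a.s.\ finiteness of $\mcl E_x$ produces the required square.
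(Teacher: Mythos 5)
Your reduction to the finiteness of $\mcl E_x$ and to an arc in the strip avoiding $S$ is in the spirit of what is needed, but the step that is supposed to dispose of the multi-edge detour contains a genuine error. You claim that ``a detour reaching height $h$ above $\BB R$ forces the corresponding primal edges to have diameter at least $h$.'' This is false: the height reached by a dual edge $e^*$ gives no lower bound on $\op{diam}(e)$. The primal edge $e$ is only required to meet $\BB R$, and the single crossing point $e\cap e^*$ can lie near the real axis while $e^*$ wanders arbitrarily far above $S$; so the detour could consist of dual edges whose primal edges are all tiny. Moreover, the edges making up the detour need not belong to $\mcl E_x$ at all (their rectangles $R_e$ need not contain $x$), so neither the a.s.\ finiteness of $\mcl E_x$ nor the bound of Lemma~\ref{lem-primal-dual-prod} applies to them; that bound controls $\op{diam}(e)\op{diam}(e^*)/\op{Area}(R_e)$ in expectation for $e\in\mcl E_0$ only and is in any case not a pointwise bound along an arbitrary path. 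As written, the contradiction you aim for (detour height versus ``total vertical reach'') is not established, and this is precisely the main case, not a technicality to be deferred.

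The gap can be closed much more simply, and this is what the paper does, with no detour analysis at all. Since the path crosses both vertical lines bounding $S$ and $x\in S$, connectedness forces it to meet the vertical line through $x$; the dual edge $e^*$ achieving this has $x$ in its horizontal extent, and since its primal $e$ meets $\BB R$, this says exactly that $e\in\mcl E_x$. Now choose $S\in\mcl D$ containing $x$ so large that $\bigcup_{e\in\mcl E_x}R_e\subset S$ (possible a.s.\ since $\mcl E_x$ is finite and edges are compact). The crossing point $p\in e\cap e^*$ has $\re p$ in the horizontal extent of $e^*$ and $\im p$ in the vertical extent of $e$, i.e.\ $p\in R_e\subset S$, so $e^*$ meets $S$, which is the desired conclusion. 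Your own final choice of $S$ (tall and wide enough to dominate the finitely many $e^*$ with $e\in\mcl E_x$) would also work, but only after rerouting the argument through the observation that the detour must cross the vertical line through $x$ at a point of such an $e^*$ lying outside $S$; the primal-diameter claim, and the appeal to the expectation bound for ``vertical reach,'' should be dropped.
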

\begin{proof}
Lemma~\ref{lem-primal-dual-prod} shows that a.s.\ $\mcl E_0$ is a finite set, and this together with translation invariance modulo scaling (e.g., using the condition given in Lemma~\ref{lem-dyadic-resample}) shows that a.s.\ $\#\mcl E_x < \infty$ for each $x \in\BB R$. 
Therefore, a.s.\ for each $x \in\BB R$ there exists a square $S$ of $\mcl D$ containing $x$ such that $\bigcup_{e\in \mcl E_x} R_e \subset S$. 
We claim that the condition in the statement of the lemma holds for this choice of $S$. 
Indeed, the width of each rectangle $R_e$ is the same as the width of $e^*$ and the height of $R_e$ is the same as the height of $e$, so if $e\in\mcl E(\BB R)$ such that $e^*$ intersects the vertical line through $x$, then $x\in R_e$ and hence $R_e\subset S$. Therefore every path of edges $e^*$ for $e\in\mcl E(\BB R)$ which intersects this vertical line must pass through $S$.  
\end{proof}

For $x\in\BB R$, let $\rng S_x$ be the smallest square containing $x$ which satisfies the condition of Lemma~\ref{lem-rectangle-sep} (chosen by some arbitrary deterministic convention in the case when the square is not unique). Note that $\rng S_x$ necessarily intersects one of the edges $\frk e^*(j)$ for $j\in\BB Z$. In fact, every sub-path of $\{\frk e^*(j)\}_{j\in\BB Z}$ which crosses between the vertical lines containing the left and right boundaries of $\rng S_x$ must include an edge which intersects $\rng S_x$. 
We also note that $\rng S_x = \rng S_y$ whenever $y\in \rng S_x\cap \BB R$ and $y$ is not one of the two endpoints of $\rng S_x \cap \BB R$. We write
\eqb \label{eqn-rngS-def}
\rng{\mcl S} := \left\{ \rng S_x : x\in \BB R\right\} .
\eqe
We will now show that none of the squares $\rng S_x$ are macroscopic.

\begin{lem} \label{lem-max-good-square}
Almost surely,
\eqb \label{eqn-max-good-square}
\lim_{k\rta\infty} |S_k|^{-1} \max_{x\in S_k\cap \BB R} |\rng S_x| = 0 .
\eqe 
\end{lem}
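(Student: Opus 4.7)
The plan is to mimic the proof of Lemma~\ref{lem-max-cell-diam}, but using the one-dimensional ergodic theorem Lemma~\ref{lem-ergodic-avg-line} in place of its two-dimensional analogue and using the finite expectation bound from Lemma~\ref{lem-primal-dual-prod} as the input. The first step is to reduce the claim to a statement about
\[
Q_x := \max_{e \in \mcl E_x} \op{diam}(R_e).
\]
Since every $R_e$ for $e \in \mcl E_x$ contains $x$ and has diameter at most $Q_x$, one has $\bigcup_{e \in \mcl E_x} R_e \subset \ol{B_{Q_x}(x)}$, so the minimal dyadic square $\rng S_x$ containing $x$ and this union has side length $|\rng S_x| \leq C Q_x$ for a universal constant $C$. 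Hence it suffices to show that a.s.\ $\max_{x \in S_k \cap \BB R} Q_x = o(|S_k|)$.

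For this I would set $F := \sum_{e \in \mcl E_0} \op{diam}(e)\op{diam}(e^*)/\op{Area}(R_e)$, which is scale invariant and has finite expectation by Lemma~\ref{lem-primal-dual-prod} (in particular each summand is $\geq 1$). Define $M_\ep := \min\{m \geq 0 : Q_0 \leq \ep |\wh S_m|\}$; since $\#\mcl E_0 < \infty$ a.s.\ (Lemma~\ref{lem-primal-dual-prod}) and each individual diameter is finite, $M_\ep < \infty$ a.s. For each fixed $\ep > 0$ choose a deterministic $m_\ep$ making $\BB E[F \BB 1_{(M_\ep > m_\ep)}] \leq \ep^3$. Applying Lemma~\ref{lem-ergodic-avg-line} to the scale-invariant random variable $F \BB 1_{(M_\ep > m_\ep)}$ yields, a.s.,
\[
\lim_{k\to\infty} \frac{1}{|S_k|} \int_{S_k \cap \BB R} F_x \BB 1_{(Q_x > \ep |\wh S_{m_\ep}^x|)} \, dx \;=\; \BB E\bigl[F \BB 1_{(M_\ep > m_\ep)} \,\big|\, \mcl F_\infty^{\BB R}\bigr],
\]
and the unconditional expectation of the right-hand side is at most $\ep^3$. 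Exactly as in Lemma~\ref{lem-max-cell-diam}, for $x$ in the interior of $S_k$ with $k$ large enough one has $|\wh S_{m_\ep}^x| \leq |S_k|$, so the event $\{Q_x > \ep |S_k|\}$ triggers the indicator and forces $F_x \geq 1$. A Markov inequality applied countably over $\ep \in \BB Q_{>0}$ then gives an a.s.\ bound on the Lebesgue measure of $\{x \in S_k \cap \BB R : Q_x > \ep |S_k|\}$ of order smaller than any fixed multiple of $|S_k|$.

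The main obstacle is converting this measure bound into the pointwise statement that the bad set is empty. For each bad $x$ there is a witnessing edge $e \in \mcl E_x$ with $\op{diam}(R_e) > \ep |S_k|$, and the set of $x$'s made bad by this $e$ is the horizontal segment $R_e \cap \BB R$ of length $\op{width}(e^*)$. When $\op{width}(e^*) \geq \ep |S_k|/\sqrt 2$ this segment is macroscopic, so its total length contradicts the measure bound for large $k$; thus we may rule out such ``wide-dual'' witnesses. The remaining ``tall-primal'' case $\op{height}(e) \geq \ep |S_k|/\sqrt 2$ must be handled separately: here $e$ is a long primal edge crossing $\BB R$, and one invokes the 2D ergodic theorem (Lemma~\ref{lem-ergodic-avg}) applied to the truncated scale-invariant function $\sum_{e \in \mcl E_0} (\op{diam}(e)^2 \frk c(e)/\op{Area}(R_e)) \BB 1_{(\op{diam}(e) > \ep |\wh S_m|)}$, whose expectation is finite (and tends to $0$ with the truncation) by combining the hypothesis~\eqref{eqn-dual-hyp-moment} with the mass-transport step used to prove Lemma~\ref{lem-primal-dual-prod}. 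Following the scheme of Lemma~\ref{lem-max-cell-diam}, the resulting 2D area bound rules out long primal edges with $R_e$ contained in $S_k$; boundary edges are then absorbed by replacing $S_k$ with its four dyadic parents via Lemma~\ref{lem-more-squares}. Combining the two cases yields~\eqref{eqn-max-good-square}, and the ``tall primal'' case is the hard part of the argument because the finite-expectation hypothesis only controls diameters weighted by conductance.
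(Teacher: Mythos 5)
There are two genuine gaps, and the first appears already in your opening reduction. The inequality $|\rng S_x|\leq C\,Q_x$ is not true for any universal constant $C$: the square $\rng S_x$ is defined as the \emph{smallest} square of $\mcl D$ containing $x$ with the separation property of Lemma~\ref{lem-rectangle-sep}, and containment of $\bigcup_{e\in\mcl E_x}R_e$ is only a sufficient condition for that property. The smallest square of $\mcl D$ containing both $x$ and $\ol{B_{Q_x}(x)}$ can have side length arbitrarily much larger than $Q_x$ whenever $x$ lies close to the common boundary of many nested squares of $\mcl D$, and such points occur in every interval $S_k\cap\BB R$; there is no deterministic comparison between $|\rng S_x|$ and $Q_x$ in either direction. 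So proving $\max_{x\in S_k\cap\BB R}Q_x=o(|S_k|)$ would not by itself yield the lemma. The second and more serious gap is your ``tall-primal'' case: the truncated two-dimensional ergodic average with summands $\op{diam}(e)^2\frk c(e)\op{Area}(R_e)^{-1}\BB 1_{(\op{diam}(e)>\ep|\wh S_m|)}$ cannot rule out macroscopic primal edges, because the contribution of a bad edge is only bounded below by $\frk c(e)\op{diam}(e)/\op{diam}(e^*)$, which is arbitrarily small when $\frk c(e)$ is small (or $\op{diam}(e^*)$ is large), so the smallness of the average gives no contradiction. This is precisely the difficulty created by hypothesis~\eqref{eqn-dual-hyp-moment}: the absence of macroscopic primal and dual edges is the content of Lemma~\ref{lem-dual-max-diam} itself, which the appendix only obtains \emph{after} Lemmas~\ref{lem-max-good-square}--\ref{lem-dual-path}, so an argument of this type at this stage is essentially circular.

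The paper's proof of Lemma~\ref{lem-max-good-square} avoids both problems and uses no moment input at all. It applies Lemma~\ref{lem-ergodic-avg-line} to the plain indicator $\BB 1_{(|\rng S_0|>\ep|\wh S_{m_\ep}|)}$, with $m_\ep$ chosen \emph{random and $\mcl F_\infty^{\BB R}$-measurable} so that the conditional probability given $\mcl F_\infty^{\BB R}$ is at most $\ep^2$ (this also sidesteps the issue that the ergodic limit is only the conditional expectation given $\mcl F_\infty^{\BB R}$; your deterministic $m_\ep$ plus Markov handles each fixed $\ep$ only up to an exceptional event and would need an extra Borel--Cantelli step). The key observation you are missing is that the bad set spreads itself without any weighting: for Lebesgue-a.e.\ $y\in\rng S_x\cap\BB R$ one has $\rng S_y=\rng S_x$, and a square's chord along $\BB R$ has length equal to its full side length, so a single bad square with $\rng S_x\cap\BB R\subset\wh S_m$ and $|\rng S_x|>\ep|\wh S_m|$ forces the normalized integral of the indicator to be at least $\ep$, contradicting the bound $\ep^2$ for large $m$; squares meeting the endpoints of $S_k\cap\BB R$ are then absorbed via Lemma~\ref{lem-more-squares} exactly as at the end of Lemma~\ref{lem-max-cell-diam}. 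Because squares cannot be ``long and skinny,'' no weighting by diameters or conductances and no case analysis on witnessing edges is needed; the dichotomy you set up (wide dual versus tall primal) is an artifact of replacing $\rng S_x$ by the rectangles $R_e$, and it is what forces you into the unworkable case.
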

\begin{proof}
The proof is similar to that of Lemma~\ref{lem-max-cell-diam} but since we are working with squares (so our sets cannot be ``long and skinny") no moment bound is required.
The square $\rng S_0$ a.s.\ has finite side length, so a.s.\ we can find a random $\mcl F_\infty^{\BB R}$-measurable $m_\ep > 0$ such that $\BB P\left[ |\rng S_0| > \ep |\wh S_{m_\ep} | \,|\, \mcl F_\infty^{\BB R} \right] \leq \ep^2$. 
Applying Lemma~\ref{lem-ergodic-avg-line} to the function $F = \BB 1_{(|\rng S_0| > \ep |\wh S_{m_\ep}|)}$ shows that a.s.\ 
\eqb \label{eqn-max-good-square-area}
\limsup_{m \rta\infty} \frac{1}{|\wh S_m| } \int_{\wh S_m \cap \BB R} \BB 1_{( |\rng S_x| > \ep |\wh S_{m } | )} \,dx \leq \ep^2 , 
\eqe 
where here we use that if $m\geq m_\ep$, then $\wh S_{m_\ep}^x \subset \wh S_m$ for each $x\in \wh S_m$ and so $\{|\rng S_x| > \ep |\wh S_{m } | \} \Rightarrow \{|\rng S_x| > \ep |\wh S_{m_\ep}^x| \} $. 
For Lebesgue-a.e.\ $x,y \in \wh S_m\cap \BB R$, we have $y \in \rng S_x \Rightarrow \rng S_{x} = \rng S_y$.
Hence if there is an $x\in \wh S_m\cap \BB R$ with $\rng S_x \cap \BB R \subset \wh S_m$ and $|\rng S_x| > \ep |\wh S_m|$, then the integral in~\eqref{eqn-max-good-square-area} is at least $\ep$. From this and the fact that a.s.\ each $S_k$ is one of the $\wh S_m$'s, we get that a.s.\ for large enough $k\in\BB N$,
\eqb \label{eqn-max-good-square-interior}
\max\left\{ |\rng S_x| : x\in  S_k \cap \BB R , \, \rng S_x \cap\BB R \subset  S_k \right\} \leq \ep |S_k| .
\eqe

Using~\eqref{eqn-max-good-square-interior} and Lemma~\ref{lem-more-squares}, one obtains~\eqref{eqn-max-good-square} via the same argument used at the end of the proof of Lemma~\ref{lem-max-cell-diam} to deal with the squares which intersect the endpoints of the interval $S_k\cap\BB R$. 
\end{proof}

Lemma~\ref{lem-max-good-square} ensures that our path $\{\frk e^*(j)\}_{j\in\BB Z}$ gets close to each point of $S_k\cap \BB R$, 
but we still need to rule out the possibility of large excursions between the times when it hits the $\rng S_x$'s. 
 
Recalling the set of squares $\rng{\mcl S}$ from~\eqref{eqn-rngS-def}, let $\{J_n\}_{n\in\BB Z}$ be the ordered bi-infinite sequence of times $j\in\BB Z$ for which $\frk e^*(j) \in \rng S$ for some $\rng S\in \rng{\mcl S}$, enumerated so that $0 \in [ J_0  ,  J_1 ]$. Let
\eqb \label{eqn-path-excursion-def}
\mcl A := \left\{ \bigcup_{j=J_{n-1}+1}^{J_n} \frk e^*(j)  : n\in\BB Z\right\} 
\eqe 
be the set of excursions of $j\mapsto \frk e^*(j)$ away from the union of the squares in $\rng{\mcl S}$. For $e \in \mcl E\mcl M$ with $e^* \in \{\frk e^*(j)\}_{j\in\BB Z}$, let $A(e) \in \mcl A$ be the unique such excursion with $e^* \in A(e)$. 

\begin{lem} \label{lem-max-line-diam}
Almost surely,
\eqb \label{eqn-max-line-diam}
\lim_{k\rta\infty} |S_k|^{-1} \max\left\{ \op{diam}\left( A  \right) : A \in \mcl A ,\, A \cap S_k\not=\emptyset   \right\}  =0 .
\eqe 
\end{lem}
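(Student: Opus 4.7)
The plan is to parallel the proof of Lemma~\ref{lem-max-good-square}, applying the one-dimensional ergodic theorem (Lemma~\ref{lem-ergodic-avg-line}) to a scale-covariant excursion functional. For each $x\in\BB R$ not equal to any $\frk x(j)$, let $j(x)$ be the unique integer with $\frk x(j(x))\leq x<\frk x(j(x)+1)$, and set $A^x\in\mcl A$ to be the excursion containing $\frk e^*(j(x))$. Then $A^x$ is locally constant on $\BB R\setminus\{\frk x(j):j\in\BB Z\}$ and transforms covariantly under scaling and real-axis translation of $(\mcl M,\mcl M^*)$. The bi-infinite path $\{\frk e^*(j)\}_{j\in\BB Z}$ has primal-edge intersections $\frk x(j)$ with $\BB R$ tending to $\pm\infty$, so by Lemma~\ref{lem-rectangle-sep} the path must hit every $\rng S\in\rng{\mcl S}$, making the sequence $\{J_n\}_{n\in\BB Z}$ bi-infinite and each excursion finite; in particular $\op{diam}(A^0)<\infty$ a.s.

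Following the template of Lemma~\ref{lem-max-good-square}, I would pick an $\mcl F_\infty^{\BB R}$-measurable $m_\ep>0$ such that $\BB P[\op{diam}(A^0)>\ep|\wh S_{m_\ep}|\,|\,\mcl F_\infty^{\BB R}]\leq \ep^2$ and apply Lemma~\ref{lem-ergodic-avg-line} to the scale-invariant indicator $F=\BB 1_{(\op{diam}(A^0)>\ep|\wh S_{m_\ep}|)}$, obtaining
\eqbn
\limsup_{m\rta\infty}\frac{1}{|\wh S_m|}\int_{\wh S_m\cap\BB R}\BB 1_{(\op{diam}(A^x)>\ep|\wh S_m|)}\,dx\leq \ep^2,
\eqen
with boundary effects near $\bdy(S_k\cap\BB R)$ handled using Lemma~\ref{lem-more-squares} in the standard way. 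The remaining task is to upgrade this integral bound to a pointwise bound on $\op{diam}(A)$ for every excursion intersecting $S_k$.

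The \emph{horizontal} extent of each excursion $A$ is easy to control: by Lemma~\ref{lem-rectangle-sep}, no interior edge of $A$ can cross both vertical boundaries of any pinning square, so the two pinning squares at the endpoints of $A$ must be horizontally adjacent in $\rng{\mcl S}$ (any strictly intermediate pinning square would be forced to intersect $A$, contradicting the definition of excursion). Thus the horizontal extent of $A$ is at most the sum of their side lengths, which is $o(|S_k|)$ by Lemma~\ref{lem-max-good-square}.

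The main obstacle is the \emph{vertical} extent, since a priori an excursion could be tall and thin with $A^x=A$ on only a short interval of $\BB R$, making no contribution to the integral above. I would handle this by running a second ergodic argument to bound the sum of dual-edge diameters along each excursion: each primal edge of the path satisfies $\frk e(j)\in\mcl E_{\frk x(j)}$, and for $j$ in a single excursion the $\frk x(j)$ all lie in the short horizontal interval identified above. Using Lemma~\ref{lem-primal-dual-prod} together with a Cauchy--Schwarz application of the hypothesis~\eqref{eqn-dual-hyp-moment} on $\sum_{e\in \bdy H_0^*}\op{diam}(e^*)^2\frk c(e^*)/\op{Area}(H_0)$, and then applying Lemma~\ref{lem-ergodic-avg-line} to the resulting scale-invariant $\BB R$-average of $\sum_{e\in\mcl E_0}\op{diam}(e^*)$, I would bound $\sum_j\op{diam}(\frk e^*(j))$ over $j$'s in any single excursion intersecting $S_k$ by $o(|S_k|)$, which controls the vertical extent. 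Combining the horizontal and vertical bounds then yields~\eqref{eqn-max-line-diam}.
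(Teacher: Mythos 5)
Your overall strategy (a one-dimensional ergodic average along $\BB R$ with a random truncation level $m_\ep$, plus Lemma~\ref{lem-primal-dual-prod}) is the right starting point, but the step in which you upgrade the integral bound to a bound on a single excursion has a genuine gap. In your second ergodic argument you propose to average $\sum_{e\in\mcl E_0}\op{diam}(e^*)$; this quantity is not scale invariant (it has units of length), and once you normalize it so that Lemma~\ref{lem-ergodic-avg-line} applies, the natural choice is exactly $\sum_{e\in\mcl E_0}\op{diam}(e)\op{diam}(e^*)/\op{Area}(R_e)$, whose expectation is a finite \emph{constant}, not something small. The resulting conclusion is only that the total dual-edge length of all edges whose rectangles meet $S_k\cap\BB R$ is $O(|S_k|)$, which is perfectly consistent with all of that length sitting on one excursion of diameter comparable to $|S_k|$. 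Moreover, Lemma~\ref{lem-ergodic-avg-line} controls the average over the whole of $S_k\cap\BB R$, not the integral over an arbitrary short subinterval, so the localization ``the $\frk x(j)$ of one excursion lie in an interval of length $o(|S_k|)$'' cannot be exploited without a maximal-type inequality that is not available here. In other words, the two facts you establish---small Lebesgue measure of base points of big excursions (your step with the plain indicator) and an order-$|S_k|$ bound on total dual length---do not combine to exclude a single macroscopic excursion. The paper's proof closes exactly this gap by putting the excursion-size indicator \emph{inside} the weighted functional: with $D_m(x)=\sum_{e\in\mcl E_x}\frac{\op{diam}(e)\op{diam}(e^*)}{\op{Area}(R_e)}\BB 1_{(\op{diam}(A(e))\geq\ep|\wh S_m|)}$ and $m_\ep$ chosen so that $\BB E[D_{m_\ep}(0)\,|\,\mcl F_\infty^{\BB R}]\leq\ep^2$, Lemma~\ref{lem-ergodic-avg-line} gives $\int_{S_k\cap\BB R}D_{ }(x)\,dx\leq\ep^2|S_k|$ eventually; since each edge with $R_e\cap\BB R\subset S_k$ contributes at least $\op{diam}(e^*)$ to this integral (the width of $R_e$ equals the width of $e^*$ and its height is at most $\op{diam}(e)$), an excursion $A\subset S_k$ with $\op{diam}(A)>\ep|S_k|$ would force the integral to be at least $\sum_{e:\,e^*\subset A}\op{diam}(e^*)\geq\op{diam}(A)\geq\ep|S_k|$, a contradiction; excursions meeting $\bdy S_k$ are then handled via Lemma~\ref{lem-more-squares} as in Lemma~\ref{lem-max-cell-diam}. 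This single functional simultaneously controls horizontal and vertical extent, so no separate treatment of the two directions is needed.

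Your horizontal-extent argument also has a flaw of its own: the last edge $\frk e^*(J_n)$ of an excursion is \emph{allowed} to intersect pinning squares, so the Lemma~\ref{lem-rectangle-sep} argument only constrains the interior edges, and nothing forces the pinning squares at the two ends of the excursion to be adjacent; a priori that final dual edge could itself be horizontally (and vertically) macroscopic, which is precisely what the lemma is trying to rule out, so this route is circular. At best you get that the excursion minus its last edge has horizontal extent $o(|S_k|)$, which is not enough to carry out your step 5 even if the issues in the previous paragraph were resolved.
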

\begin{proof}
The proof is similar to that of Lemma~\ref{lem-max-cell-diam}. 
By Lemma~\ref{lem-primal-dual-prod}, a.s.\ 
\eqb
\BB E\left[   \sum_{e \in \mcl E_0}  \frac{\op{diam}(e ) \op{diam}(e^*)}{ \op{Area}(R_{e } )}   \,|\, \mcl F_\infty^{\BB R} \right] < \infty  
\eqe
and a.s.\ $\max_{e\in \mcl E_0} \op{diam}(A(e)) < \infty$. 
So, for each $\ep > 0$, we can a.s.\ find a random $m_\ep > 0$ such that $\BB E[ D_{m_\ep}(0) \,|\, \mcl F_\infty^{\BB R}]  \leq \ep^2$, where
\eqbn
D_m(x) = D_m(x,\ep) := \sum_{e \in \mcl E_x}  \frac{\op{diam}(e ) \op{diam}(e^*)}{ \op{Area}(R_{e } )} \BB 1_{\left( \op{diam}(A(e)) \geq \ep |\wh S_{m }|  \right)} .
\eqen
Applying Lemma~\ref{lem-ergodic-avg-line} with $F = D_{m_\ep}(0)$ shows that a.s.\ 
\eqb \label{eqn-line-diam-int}
\limsup_{m \rta\infty} \frac{1}{|\wh S_m| } \int_{\wh S_m \cap \BB R}  D_m(x)     \, dx \leq \ep^2 , 
\eqe 
where here we use that if $m\geq m_\ep$, then $\wh S_{m_\ep}^x \subset \wh S_m$ for each $x \in \wh S_m$ and so for $e\in \mcl E_x$, $\{ \op{diam}(A(e)) > \ep |\wh S_{m } | \} \Rightarrow \{ \op{diam}(A(e)) > \ep |\wh S_{m_\ep}^x| \} $. 

By breaking up the integral~\eqref{eqn-line-diam-int} as a sum of integrals over $R_e \cap \BB R$ for $e\in \mcl E\mcl M$ with $R_e\cap \wh S_m\cap  \BB R \not=\emptyset$ and recalling that each $S_k$ is one of the $\wh S_m$'s, we get that a.s.\  
\eqb \label{eqn-line-diam-sum}
\limsup_{k\rta\infty} \frac{1}{|S_k| } \sum_{ \substack{ e\in \mcl E\mcl M \\ R_e\cap S_k \cap  \BB R \not=\emptyset}     } \frac{\op{diam}(e) \op{Len}(R_e\cap S_k\cap \BB R)  \op{diam}(e^*)}{\op{Area}(R_e)} \BB 1_{\left(  \op{diam}(A(e))  > \ep |S_k| \right)}  \leq \ep^2 , 
\eqe  
where here $\op{Len}$ denotes one-dimensional Lebesgue measure.
If $R_e\cap  \BB R \subset S_k$, then $\op{Len}( R_e\cap S_k\cap \BB R)$ is the width of $R_e$. Since the height of $R_e$ is at most $\op{diam}(e)$, in this case the corresponding summand in~\eqref{eqn-line-diam-sum} is at least $\op{diam}(e^*) \BB 1_{\left(  \op{diam}(A(e))  > \ep |S_k| \right)}$.
If $A\in \mcl A$ with $A\subset S_k$, then $R_e\cap \BB R\subset S_k$ for each $e\in\mcl E\mcl M$ with $A(e) = A$ (this is because the width of $R_e$ is the same as the width of $e^*$). If also $\op{diam}(A) > \ep |S_k|$, then the preceding discussion shows that the sum~\eqref{eqn-line-diam-sum} is at least 
\eqbn
\sum_{  e\in\mcl E\mcl M : e\subset A    } \op{diam}(e^*) \geq \op{diam}(A) \geq \ep |S_k| .
\eqen
Consequently,~\eqref{eqn-line-diam-sum} implies that a.s.\ for large enough $k\in\BB N$,  
\eqb \label{eqn-line-diam-sum-interior}
\op{diam}(A) \leq \ep |S_k| ,\quad \forall A\in\mcl A \quad \text{with} \quad A\subset S_k .
\eqe 
We now conclude the proof by applying Lemma~\ref{lem-more-squares} as at the end of the proof of Lemma~\ref{lem-max-cell-diam}. 
\end{proof}

\begin{lem} \label{lem-dual-path} 
Almost surely, for each fixed $\ep \in (0,1)$ and each large enough $k\in\BB N$ there is a path of edges $e^* \in \mcl E\mcl M^*$ with $\op{diam}(e^*) \leq \ep |S_k|$ from the left boundary of $S_k $ to the right boundary of $S_k $ which is contained in the $\ep|S_k|$-neighborhood of $S_k  \cap \BB R$.
\end{lem}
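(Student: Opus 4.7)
The plan is to extract the required crossing path directly from the bi-infinite simple path $\{\mathfrak{e}^*(j)\}_{j\in\BB Z}$ constructed before Lemma~\ref{lem-max-good-square}, using Lemmas~\ref{lem-max-good-square} and~\ref{lem-max-line-diam} to ensure that every edge of the extracted sub-path is small and stays close to $\BB R$.

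Fix $\ep\in(0,1)$ and set $\delta:=\ep/16$. Let $S_k^{\op{P}}$ denote the union of the four dyadic parents of $S_k$ (a square of side length $3|S_k|$ containing $S_k$). By Lemmas~\ref{lem-max-good-square} and~\ref{lem-max-line-diam}, combined with Lemma~\ref{lem-more-squares} applied in turn to each of the four dyadic ancestors of $S_k$ making up $S_k^{\op{P}}$, it is a.s.\ the case that for all large enough $k\in\BB N$:
\begin{enumerate}
\item[(i)] $|\rng S_x|\le \delta|S_k|$ for every $x\in S_k^{\op{P}}\cap\BB R$; and
\item[(ii)] $\op{diam}(A)\le \delta|S_k|$ for every $A\in\mcl A$ with $A\cap S_k^{\op{P}}\ne\emptyset$.
\end{enumerate}

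Next, I would extract the sub-path. Since $\mathfrak{x}(j)\to\pm\infty$ as $j\to\pm\infty$ (and is essentially non-decreasing, by the way $\{\mathfrak{e}^*(j)\}$ is defined as the lowest bi-infinite path in the tree), the curve $\bigcup_j\mathfrak{e}^*(j)$ crosses every vertical line. Let $a_k<b_k$ be the $x$-coordinates of the left and right sides of $S_k$, and let $j_-$ (resp.\ $j_+$) be the smallest (resp.\ largest) index $j$ such that $\mathfrak{x}(j)\in[a_k,b_k]$. I would then claim that $P_k:=\bigcup_{j=j_-}^{j_+}\mathfrak{e}^*(j)$ is the desired path, together with one edge on either side to ensure the endpoints reach the boundaries of $S_k$.

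To prove each $\mathfrak{e}^*(j)$ in this sub-path has diameter at most $\ep|S_k|$, I would split into cases. If $j\in\{J_n\}$ so that $\mathfrak{e}^*(j)$ meets some $\rng S_x$, I would argue geometrically that $x\in S_k^{\op{P}}\cap\BB R$: the intersection point $p\in\mathfrak{e}^*(j)\cap\rng S_x$ satisfies $|p-x|\le|\rng S_x|\sqrt 2$ and also lies near $\mathfrak{x}(j)\in[a_k,b_k]$, since $p\in\mathfrak{e}^*(j)$ crosses $\mathfrak{e}(j)$ which contains $\mathfrak{x}(j)\in\BB R$. An inductive propagation argument along the path, combined with the edge-diameter bound being proved, confines $|x-\mathfrak{x}(j)|$ to $O(\delta|S_k|)\ll|S_k|$, so $x\in S_k^{\op{P}}\cap\BB R$ and (i) applies. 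Otherwise $j$ belongs to an excursion $A\in\mcl A$; since any point of $\mathfrak{e}^*(j)$ lies in $A\cap S_k^{\op{P}}$ (verified in the same fashion), (ii) gives $\op{diam}(\mathfrak{e}^*(j))\le\delta|S_k|$. Either way $\op{diam}(\mathfrak{e}^*(j))\le\delta|S_k|\le\ep|S_k|$. The neighborhood property follows similarly: each edge is within $O(\delta|S_k|)$ of a point of $\BB R\cap S_k^{\op{P}}$ via the square or excursion it belongs to, and combined with the horizontal constraint $\mathfrak{x}(j)\in[a_k,b_k]$, this places the edge in the $\ep|S_k|$-neighborhood of $S_k\cap\BB R$. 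Since $\ep<1$, this neighborhood does not touch the top or bottom of $S_k$, so $P_k$ must actually cross the left and right sides of $S_k$ (after possibly extending by the small edge on either side of $j_\pm$ if necessary), completing the proof.

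The main obstacle I expect is the geometric bookkeeping in the third paragraph: verifying that every square $\rng S_x$ and every excursion $A$ met by the candidate sub-path is indexed by a point $x$, respectively has a point, in $S_k^{\op{P}}\cap\BB R$, so that the uniform smallness estimates (i) and (ii) are applicable. This is not a deep issue but requires a careful bootstrap, since a priori an edge $\mathfrak{e}^*(j)$ with $\mathfrak{x}(j)\in[a_k,b_k]$ could in principle pass through a square $\rng S_x$ with $x$ far from $S_k$; one rules this out by using the smallness bound as it is being proved, propagating it along the path starting from edges comfortably inside $S_k$.
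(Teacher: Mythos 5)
The central gap is in your selection of the sub-path and the bootstrap you rely on to control it. You pick indices by the condition $\frk x(j)\in[a_k,b_k]$, i.e.\ by where the \emph{primal} edges $\frk e(j)$ meet $\BB R$. But at this stage of the argument nothing rules out macroscopic primal edges (that is exactly what Lemma~\ref{lem-dual-max-diam} is meant to prove), so there is no a priori link between $\frk x(j)$ and the spatial location of the dual edge $\frk e^*(j)$: a primal edge may touch $\BB R$ inside $[a_k,b_k]$ and yet be crossed by its dual edge at distance $\gg|S_k|$ from $S_k$. This breaks the argument in two places. First, an edge of your window need not lie anywhere near $S_k\cap\BB R$, so its excursion need not meet $S_k^{\op{P}}$ and estimates (i)--(ii) simply do not apply to it; your proposed ``inductive propagation'' has no guaranteed base case inside the window (an edge hitting $\rng S_{x_0}$ for $x_0\in S_k\cap\BB R$ need not have $\frk x(j)\in[a_k,b_k]$, for the same reason) and no mechanism for edges whose excursions avoid the controlled region. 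Second, the crossing property fails: knowing $\frk x(j_--1)<a_k<b_k<\frk x(j_++1)$ does not imply that the union $\bigcup_{j_- -1}^{j_+ +1}\frk e^*(j)$ reaches $x$-coordinates $\le a_k$ and $\ge b_k$, so your ``it cannot exit top or bottom, hence it crosses left to right'' step presupposes exactly what is missing. (The same issue undermines your opening claim that $\frk x(j)\to\pm\infty$ forces the dual curve to cross every vertical line.)

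The paper's proof avoids this by anchoring the sub-path in the geometry of the \emph{dual} path itself: $\mcl J_k^R$ is the first index at which $\frk e^*(j)$ meets the vertical line through the right side of $S_k$, and $\mcl J_k^L$ the last prior index meeting the left vertical line, so the crossing property is true by construction. The key tool you never really invoke is then the pinning property of the squares $\rng S_x$ from Lemma~\ref{lem-rectangle-sep}: any sub-path of $\{\frk e^*(j)\}$ crossing the vertical strip over $\rng S_x$ must hit $\rng S_x$, which (together with Lemma~\ref{lem-max-good-square}) forces the crossing segment to be covered by excursions anchored at squares $\rng S_x$ with $x\in S_k\cap\BB R$; Lemma~\ref{lem-max-line-diam} then gives both the edge-diameter bound and proximity to $\BB R$. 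Two smaller points: your case ``$\frk e^*(j)$ meets $\rng S_x$, hence (i) applies'' is a non sequitur, since an edge meeting a small square can itself be large --- the bound must go through the diameter of the excursion containing that edge; and at the right-hand end of the crossing segment the relevant excursion may be anchored outside $S_k$, which is why the paper proves the variant of Lemma~\ref{lem-max-line-diam} for the shifted sets $A'$ (whose \emph{first} edge meets a square of $\rng{\mcl S}$), a boundary case your proposal does not address.
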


For the proof of Lemma~\ref{lem-dual-path}, we will need a slight extension of Lemma~\ref{lem-max-line-diam}. For an excursion set $A = \bigcup_{j=J_{n-1}+1}^{J_n} \frk e^*(j) \in \mcl A$, we set $A' := \bigcup_{j=J_{n-1}  }^{J_n  -1} \frk e^*(j) $, so that the \emph{first} edge of $A'$, rather than the \emph{last} edge, intersects a square in $\rng{\mcl S}$. The same argument as in the proof of Lemma~\ref{lem-max-line-diam} shows that a.s.\ $\lim_{k\rta\infty} \max\left\{ \op{diam}\left( A ' \right) : A \in \mcl A ,\, A' \cap S_k\not=\emptyset   \right\}  =0$. 

\begin{proof}[Proof of Lemma~\ref{lem-dual-path}]
For $k\in\BB N$, let $\mcl J_k^R$ be the smallest $j\in\BB Z$ for which $\frk e^*(j)$ intersects the vertical line through the right boundary of $S_k$ and let $\mcl J_k^L$ be the last time $j$ before $\mcl J_k^R$ for which $\frk e^*(j)$ intersects the vertical line through the left boundary of $S_k$. Then $\frk e^*|_{[\mcl J_k^L , \mcl J_k^R]_{\BB Z}}$ is a path of edges of $\mcl E\mcl M^*$ between these two horizontal lines and each edge of this path except for $\frk e^*(\mcl J_k^L)$ and $\frk e^*(\mcl J_k^R)$ lies strictly between these two horizontal lines.

By Lemma~\ref{lem-max-good-square}, a.s.\ for each large enough $k\in\BB N$, each of the squares $\rng S_x$ for $x\in S_k\cap\BB R$ is properly contained in $S_k$. 
By the definition of $\rng S_x$ (i.e., the property described in Lemma~\ref{lem-rectangle-sep}) and the preceding paragraph, the path $\frk e^*|_{[\mcl J_k^L , \mcl J_k^R]_{\BB Z}}$ must include an edge which intersects each such square $\rng S_x$, and no edge of $\frk e^*|_{[\mcl J_k^L +1 , \mcl J_k^R-1]_{\BB Z}}$ can intersect a square of the form $\rng S_y$ for $y \in \BB R\setminus S_k$. 
From this and the definition~\eqref{eqn-path-excursion-def} of $\mcl A$, it follows that a.s.\ for large enough $k\in\BB N$, 
\eqb \label{eqn-dual-path-include}
\bigcup_{j=\mcl J_k^L}^{\mcl J_k^R} \frk e^*(j) \subset \BB A_k ,
\eqe
where $\BB A_k$ is the union of all of the of sets of the form $A$ or $A'$ for $A\in\mcl A$ which intersect $\rng S_x$ for some $x\in S_k\cap\BB R$. 
Note that we need to allow for sets of the form $A'$ since the element of $\mcl A$ containing $\frk e^*(\mcl J_k^R)$ need not intersect $\rng S_x$ for any $x\in S_k\cap\BB R$.

By Lemma~\ref{lem-max-line-diam} and the extension of Lemma~\ref{lem-max-line-diam} mentioned just above, a.s.\ the maximum diameter of the sets $A$ or $A'$ in the union defining $\BB A_k$ is of order $o_k(|S_k|)$ as $k\rta\infty$. 
In particular, by combining this with~\eqref{eqn-dual-path-include} we get that a.s.\ the maximal diameter of the edges of $\frk e^*|_{[\mcl J_k^L,\mcl J_k^R]_{\BB Z}}$ is $o_k(|S_k|)$. Since each set in the union defining $\BB A_k$ intersects $\rng S_x$ for some $x \in S_k\cap\BB R$ by definition, Lemma~\ref{lem-max-good-square} implies that a.s.\ $|S_k|^{-1} \max_{z\in \BB A_k} \op{dist}(z,S_k\cap\BB R) \rta 0$ as $k\rta\infty$. It therefore follows from~\eqref{eqn-dual-path-include} that a.s.\ for large enough $k\in\BB N$, each edge of $\frk e^*|_{[\mcl J_k^L,\mcl J_k^R]_{\BB Z}}$ is contained in the $\ep|S_k|$-neighborhood of $S_k\cap\BB R$, as required. 
\end{proof}

\begin{proof}[Proof of Lemma~\ref{lem-dual-max-diam}]
By Lemma~\ref{lem-dyadic-resample} and Lemma~\ref{lem-dual-path}, if $z_k$ is sampled uniformly from $S_k$, then a.s.\ the conditional probability given $(\mcl M,\mcl M^*,\mcl D)$ that there is a path of edges $e^* \in \mcl E\mcl M^*$ with $\op{diam}(e^*) \leq \ep |S_k|$ from the left boundary of $S_k $ to the right boundary of $S_k $ which is contained in the $(\ep/100) |S_k|$-neighborhood of $S_k \cap (\BB R  +z_k)$ tends to 1 as $k\rta\infty$. A symmetric argument shows that the same is true with ``top" and ``bottom" in place of ``left" and ``right" and $\BB R + z_k$ replaced by the vertical line through $z_k$.

Applying this to a large, $\ep$-dependent number $N$ of uniformly random points sampled from $S_k$, we find that it is a.s.\ the case that for large enough $k\in\BB N$, we can find a finite collection $E_k^*$ of edges of $\mcl E\mcl M^*$ such that each $e^* \in E_k^*$ has diameter at most $\ep$ and each connected component of $S_k \setminus \bigcup_{e^* \in E_k^*} e^*$ has diameter at most $\ep$ (see Lemma~\ref{lem-var-lim-infty} for a similar argument). Since no two dual edges can cross, a.s.\ for large enough $k\in\BB N$ the maximal diameter of the edges of $\mcl E\mcl M^*$ which are contained in $S_k$ and lie at Euclidean distance at least $\ep$ from $\bdy S_k$ is at most $\ep$. Applying Lemma~\ref{lem-more-squares} to deal with the edges near the boundary (via exactly the same argument as at the end of the proof of Lemma~\ref{lem-max-cell-diam}) shows that a.s.\ the maximal diameter of the edges of $\mcl E\mcl M^*$ which intersect $S_k$ is of order $o_k(|S_k|)$ $k\rta\infty$. A symmetric argument shows that the same is true of the edges which of $\mcl E\mcl M$ which intersect $S_k$. Using Lemma~\ref{lem-more-squares} again to transfer from the squares $S_k$ to the disks $B_r(0)$ concludes the proof.  
\end{proof}

\bibliography{cibiblong,cibib}

\bibliographystyle{hmralphaabbrv}

\end{document}